\newcommand{\pa}[1]{\left(#1\right)}
\newcommand{\cro}[1]{\left[#1\right]}
\newcommand{\ac}[1]{\left\{#1\right\}}
\newcommand{\bPi}{\boldsymbol{\Pi}}
\newcommand{\E}{\operatorname{\mathbb{E}}}
\renewcommand{\P}{\operatorname{\mathbb{P}}}
\newcommand{\R}{\mathbb{R}}
\newcommand{\xbf}{\mathbf{x}}
\newcommand{\hxbf}{\hat{\mathbf{x}}}
\newcommand{\hxsp}{\hat{\mathbf{x}}^{\textrm{VSA}}}
\newcommand{\bS}{\overline{S}}
\newcommand{\eps}{\varepsilon}
\newcommand{\Ccal}{\mathcal{C}}
\newcommand{\Dcal}{\mathcal{D}}
\newcommand{\Fcal}{\mathcal{F}}
\newcommand{\Ocal}{\mathcal{O}}
\newcommand{\Rcal}{\mathcal{R}}
\newcommand{\Scal}{\mathcal{S}}
 \newcommand{\mucal}{\mu_{{\hat{\bf x}}^{(1)}_S, {\bf x}^*_{S}}(Q)}
\newcommand{\si}{\sigma}
\newcommand{\de}{\delta}
\newcommand{\ep}{\varepsilon}
\DeclareMathOperator*{\argmax}{argmax}
\DeclareMathOperator*{\argmin}{argmin}
\newcommand{\eref}[1]{(\ref{#1})}
\newtheorem{thm}{Theorem}[section]
\newtheorem{defi}[thm]{Definition}
\newtheorem{setting}{Setting}
\newtheorem{prop}[thm]{Proposition}
\newtheorem{lem}[thm]{Lemma}
\newtheorem{cl}[thm]{Claim}
\newtheorem{cor}[thm]{Corollary}
\begin{document}

\begin{frontmatter}

\title{Localization in 1D non-parametric latent space models from pairwise affinities}
\runtitle{Localization in 1D latent space}


\author{\fnms{Christophe} \snm{Giraud}\ead[label=e1]{christophe.giraud@universite-paris-saclay.fr}}

\address{Laboratoire de Math\'ematiques d'Orsay, Universit\'e Paris-Saclay, CNRS, France.\\
\printead{e1}}

\author{\fnms{Yann} \snm{Issartel}\ead[label=e2]{yann.issartel@telecom-paris.fr}}
\address{CREST - ENSAE, \\ Télécom Paris, Institut Polytechnique de Paris, France.\\
\printead{e2}}

\author{\fnms{Nicolas} \snm{Verzelen}\ead[label=e3]{nicolas.verzelen@inrae.fr}}

\runauthor{C. Giraud et al.}

\address{INRAE, Montpellier SupAgro, MISTEA,
Univ. Montpellier, France.\\
\printead{e3}}

\begin{abstract}
 We consider the problem of estimating latent positions in  a one-dimensional torus from pairwise affinities. The observed affinity between a pair of items is modeled as a noisy observation of a function $f(x^*_{i},x^*_{j})$ of the latent positions $x^*_{i},x^*_{j}$ of the two items on the torus. 
 The affinity function $f$ is unknown, and it is only assumed to fulfill some shape constraints ensuring that $f(x,y)$ is large when the distance between $x$ and $y$ is small, and vice-versa. This non-parametric modeling offers a good flexibility to fit data.
 We introduce an estimation procedure that provably  localizes all the latent positions with a maximum error of the order of $\sqrt{\log(n)/n}$, with high-probability. This rate is proven to be minimax optimal. A computationally efficient variant of the procedure is also analyzed under some more restrictive assumptions. Our general results can be instantiated  to the problem of statistical seriation, leading to new bounds for the maximum error in the ordering. 
\end{abstract}

\begin{keyword}[class=MSC]
\end{keyword}

\begin{keyword}
\end{keyword}



\end{frontmatter}



 
\section{Introduction}
\subsection{1D latent localization problem}\label{sect11:intro}
We consider the 1D latent localization problem, where we seek to recover the 1D latent positions of $n$ objects from pairwise similarity measurements. Such problems arise in archeology for relative dating of objects or graves \cite{Robinson51}, in 2D-tomography for angular synchronization \cite{2D-tomo, singer2011angular},  in bioinformatics for reads alignment in {\it de novo} sequencing \cite{bioinfo17}, in computer science for time synchronization in distributed networks \cite{Clock-Synchro04,Clock-Synchro06}, or in matchmaking problems \cite{BradleyTerry1952}. The data are collected as a $n \times n$ symmetric matrix $[A_{ij}]_{1\leq i,j \leq n}$, called affinity matrix, which provides similarity measurements between pairs of objects. These similarity measurements $A_{ij}$ can be real valued scores, or they can be binary pieces of information, as when the matrix $A$ encodes a network structure.

In 1D latent space models \cite{hoff2002latent}, the affinity matrix is assumed to be sampled as follows. The distribution is parametrized by a 1D metric space $(\mathcal{X},d)$, some (possibly random) latent positions $(x^*_1,\ldots, x^*_n)\in \mathcal{X}^n$ and an affinity function 
$f:\mathcal{X}\times \mathcal{X}\to \mathbb{R}$. Then, conditionally on   $(x^*_1,\ldots, x^*_n)$, the upper-diagonal entries $A_{ij}$ of the affinity matrix are sampled independently, with conditional mean $f(x^*_{i},x^*_{j})$. The affinity $f(x^*_i,x^*_j)$ is typically assumed to  decrease as the metric distance $d(x^*_i,x^*_j)$ increases. In particular, close points $x^*_i$ and $x^*_j$ share a high affinity, whereas distant points share a small affinity. These latent space models encompass many classical models, as exemplified in the next paragraphs. \medskip

\noindent 
\textit{Example 1: Random Geometric Graph}~\cite{Gilbert61,penrose2003random,diaz2020learning,de2017adaptive}. We observe a random graph with $n$ nodes labelled by $\ac{1,\ldots,n}$. The graph is encoded into an adjacency matrix $A\in \{0,1\}^{n \times n}$,  by setting $A_{ij}=1$ if there is an edge between nodes $i$ and $j$, and $A_{ij}=0$ otherwise. Let  $\mathcal{C}$ denote the unit sphere in $\mathbb{R}^2$ endowed with the geodesic distance $d$. In the circular random geometric graph model, the edges are sampled are sampled independently, with probability $\P[A_{ij}=1]= g(d(x^*_i,x^*_j))$, where $g: [0,\pi]\mapsto [0,1]$ is a non-increasing function and $x^*_1,\ldots, x^*_n\in \Ccal$ are the  latent positions of the nodes on the sphere. This random graph model is therefore an instance of 1D latent space model where $A_{ij}\in \{0,1\}$, $\mathcal{X}=\Ccal$ and $f(x^*_i,x^*_j) = g(d(x^*_i,x^*_j))$.
\medskip 

\noindent 
\textit{Example 2:  Graphons and $f$-Random Graphs} \cite{diaconis, lovasz2012large}. 
The class of  $f$-random graph models, also called graphon models, 
encompasses all the distributions on random graphs that are invariant by permutation of nodes.
It is parametrized by the set of measurable functions $f:[0,1]\times [0,1]\to [0,1]$. The adjacency matrix
$A$ of the graph is sampled as follows. First, $n$ latent positions $x^*_{1},\ldots, x^*_{n}$ are sampled i.i.d.\ uniformly on $[0,1]$.
Then, conditionally on $x^*_{1},\ldots, x^*_{n}$, the edges are sampled independently, 
with conditional probability $\P[A_{ij}=1|x^*_{1},\ldots, x^*_{n}]= f(x^*_i,x^*_j)$.
The $f$-random graph model is then an instance of 1D latent space model where $A_{ij}\in \{0,1\}$, and $\mathcal{X}=[0,1]$. Unless some additional constraints are imposed on the shape of $f$, the affinity $f(x^*_i,x^*_j)$ may vary arbitrarily with the distance $|x_i^*-x_j^*|$. 
\medskip 

\noindent 
\textit{Example 3: R-Matrices and Statistical Seriation}. 
A Robinson matrix (R-matrix) is any symmetric matrix $B\in \mathbb{R}^{n\times n}$ whose entries decrease when moving away from the diagonal, i.e.\ such that $B_{i,j} \geq  B_{i+1,j}$ and  $B_{i,j} \geq  B_{i,j-1}$, for all $1\leq j \leq i\leq n$. A matrix $F$ is called a pre-R matrix, when there exists a permutation  $\sigma\in \Sigma_n$ of $\ac{1,\ldots,n}$, such that $F_{\sigma}= [F_{\sigma(i),\sigma(j)}]_{i,j}$ is an R-matrix. The  noisy seriation problem \cite{fogel2013convex} amounts to find, from a  noisy observation of a pre-R matrix 
$F$, a permutation $\sigma^*$ such that $F_{\sigma^*}$ is a R-matrix. This problem
appears in genomic sequencing \cite{garriga2011banded}, in interval graph identification \cite{fulkerson1965incidence}, and in envelope reduction for sparse matrices \cite{barnard1995spectral}. 
This problem can be recast in the latent space terminology using $\mathcal{X}=\ac{1,\ldots,n}$, $x^*_i=\sigma^*(i)$ and the affinity function $f(x^*_i,x^*_j)= F_{\sigma^*(i),\sigma^*(j)}$. Since  $F_{\sigma^*}$ is a R-matrix, the function $f(x_i^*,x_j^*)$ is decreasing with the distance $|x_i^*-x_j^*|$. 
\medskip 

\noindent 
\textit{Example 4: Toroidal R-Matrices and Toroidal Seriation}. Consider the set $\{1,\ldots,$ $n\}$ as a torus with the corresponding distance $d(i,j)=\min(|j-i|, |n+i-j|)$ for any $1\leq i, j\leq n$. A toroidal R-matrix is any symmetric matrix $B$ whose entries decrease when moving away from the diagonal with respect to the toroidal distance: 
 $B_{i,j} \geq B_{i+1,j}$ when $d(i,j)< d(i+1,j)$ and $B_{i,j} \geq B_{i,j+1}$ when $d(i,j)< d(i,j+1)$. As in Example 3 above, a pre-toroidal R-matrix is defined as a permutation of a toroidal R-matrix and the statistical seriation model is defined analogously \cite{recanati2018reconstructing}. Again, we can recast this model as a latent space model on $\mathcal{X}=\ac{1,\ldots,n}$ endowed with the toroidal distance. Alternatively, we can also rewrite it as a latent space model on the regular grid $\mathcal{C}_n$ of the unit sphere $\mathcal{C}$ corresponding to the $n$-th unit roots, endowed with the geodesic distance on $\Ccal$. 
\medskip

In the following, we assume that we observe a symmetric matrix $[A_{ij}]_{1\leq i ,j \leq n}$ of pairwise affinity measurements, with 
$A_{ii}=0$ (by convention) and 
$$A_{ij}=f(x^*_{i},x^*_{j})+E_{ij},\quad \textrm{for}\ 1\leq i < j \leq n,$$
where\\
(i) $x^*_{1},\ldots,x^*_{n}$ are  $n$ unobserved latent positions spread on the unit sphere $\mathcal{C}$  in $\mathbb{R}^2$, \\
(ii) $f:\Ccal \times \Ccal \to [0,1]$ is unobserved, symmetric, decreasing with the geodesic distance $d(x,y)$, and\\
(iii) $[E_{ij}]_{1\leq i < j \leq n}$ are some independent sub-Gaussian random variables. \smallskip\\
 This non-parametric framework is very flexible for fitting pairwise affinity data.
 It encompasses the circular random geometric graph model (Example 1) and the toroidal statistical seriation model (Example 4). 
 
Our overall goal is to recover the $n$-tuple  of latent positions $\xbf^*=(x^*_{1},\ldots,x^*_{n})$ $\in \Ccal^n$, with some high-confidence, simultaneously for all individual positions $x^*_{i}$. As the global error of an estimator $\hat \xbf$, say $d_{2}(\hat\xbf,\xbf^*)=\sqrt{\sum_{i=1}^n d(\hat x_{i},x^*_{i})^2}$, provides limited information on each  individual error $d(\hat x_{i},x^*_{i})$, we focus instead on the  maximum error  
\begin{equation}\label{eq:maxd}
d_{\infty}(\hat \xbf,\xbf^*)= \max_{i=1,\ldots n} d(\hat x_{i},x^*_{i}).
\end{equation}
We propose some estimators $\hat \xbf$ achieving, with high-probability, a maximum error $d_{\infty}(\hat \xbf,\xbf^*)$ of the order of $\sqrt{\log(n)/n}$, under the assumptions that the latent positions $x^*_{1},\ldots,x^*_{n}$  are sufficiently spread on $\Ccal$ and some shape conditions relative to the decreasing of $f(x,y)$ with $d(x,y)$. The  $\sqrt{\log(n)/n}$-rate of estimation is shown to be optimal. To the best of our knowledge, these are the first optimal results  on maximum error $d_{\infty}(\hat \xbf,\xbf^*)$ in  latent space models with unknown and non-parametric affinity function $f$.

\subsection{Our contribution}
As explained above, our overall goal is to recover the $n$-tuple of latent positions $\xbf^*=(x^*_{1},\ldots,x^*_{n})$ with a control on the maximum error \eref{eq:maxd}. Unfortunately, this program cannot be carried out literally, as the latent positions are not identifiable from the distribution of the data. Indeed, for any bijective map $\varphi:\Ccal\to \Ccal$, we have $f(x,y)=f\circ \varphi^{-1}(\varphi(x),\varphi(y))$ for all $x,y\in \Ccal$, with the notation   $f\circ \varphi^{-1}(x,y):=f(\varphi^{-1}(x),\varphi^{-1}(y))$. Even if we would enforce some strong shape constraints, like $f(x,y)=1-\alpha d(x,y)$ with $\alpha>0$, since $f(x,y)=f(Qx,Qy)$ for any orthogonal transformation $Q$ of $\Ccal$, the distribution of the data would still be invariant by orthogonal transformation of the latent positions. Hence, we face a delicate identifiability issue. This identifiability issue is fully explained and tackled in Section \ref{sec:identif}. Informally, our remedy is to provide
some estimators $\hat \xbf$ which are, under some assumptions, at the distance $d_{\infty}(\hat \xbf,\xbf^*)=O(\sqrt{\log(n)/n})$ of some specific representative $\xbf^*$ of the latent positions. 

Our shape assumption (ii) on the affinity function $f$ ensures that the matrix $[f(x_{i},x_{j})]_{i,j=1,\ldots,n}$ is (approximately) a toroidal pre-R matrix. We observe that the constant function $f(x,y)=1$ fulfills assumption (ii), and that for this specific function there is no hope to recover any information on the latent positions, even in the noiseless case. To circumvent this issue,  we introduce a {\it bi-Lipschitz} assumption, detailed in Section~\ref{section:setting}, constraining the decay of $f$ with $d$. In the specific case of the random geometric graph model with $g$ continuously  differentiable, this condition merely amounts to require $g'(x)$ to be bounded away from 0. 

Our estimation procedures proceed in two main stages:\\
(1) we start with an initial localization with a global control in $d_{1}(\xbf,{\bf y}):=\sum_{i=1}^n d(x_{i},y_{i})$ distance,\\
(2) then, for each point,  we refine  this first estimator to get a control in $d_{\infty}$ distance.\\
In order to avoid some nasty statistical dependencies between the two stages, we use a sample splitting scheme ensuring that, at the second stage, the refinement  uses data independent from those used at the first stage. 

Let $S\subset \ac{1,\ldots,n}$ be a subset of indices sampled uniformly at random, and $\bS= \ac{1,\ldots,n}\setminus S$. 
At the second step, the refined estimator $\hat\xbf^{(2)}_{\bS}$ of $\xbf^*_{\bS}:=(x^*_{i})_{i\in \bS}$, can take as input any initial estimator  $\hat\xbf^{(1)}_{S}$ of $\xbf^*_{S}=(x^*_{i})_{i\in S}$. This second step has a polynomial computational complexity and, under appropriate assumptions, it fulfills with high-probability
$$d_{\infty}(\hat \xbf^{(2)}_{\bS},\xbf^*_{\bS}) \leq C\ \pa{d_{1}(\hat \xbf^{(1)}_{S},\xbf^*_{S})\over n}\vee \sqrt{\log(n)\over n},$$
for some specific representative $\xbf^*$ of the latent positions. 
Hence, in order to get the desired  bound $d_{\infty}(\hat \xbf^{(2)}_{\bS},\xbf^*_{\bS})=O\left(\sqrt{\log(n)/ n}\right)$, we need an initial control $d_{1}(\hat \xbf^{(1)}_{S},\xbf^*_{S})=O\left(\sqrt{n\log(n)}\right)$. We propose two estimators fulfilling this requirement:\\
(a) a first one, which requires no additional assumptions, but which has a super-polynomial computational complexity;\\
(b) a second one, adapted from~\cite{recanati2018reconstructing}, which has a polynomial computational complexity, but for which we prove a $O\big(\sqrt{n\log(n)}\big)$ control only for a class of random geometric graphs. 

Repeating the sampling of $S$ and merging the resulting estimators, we then get an estimator $\hat \xbf$ achieving, with high-probability and under appropriate assumptions, $d_{\infty}(\hat \xbf,\xbf^*)=O(\sqrt{\log(n)/n})$ for a specific representative $\xbf^*$ of the latent positions. 
A matching lower bound is also derived, proving the optimality of the $\sqrt{\log(n)/n}$ rate. 
The significance of the improvement offered by the refinement step, and the impact of the sample splitting on the localization error are investigated numerically.

\subsection{Related work}

In the last decade, the analysis of interaction data has given rise to numerous works in machine learning and statistics. Most of these works handle  cases where the affinity function $f$ is either known or belong to a known parametric model. There is a long standing debate on the validity of such a rigid modeling \cite{BallingerWilcox1997}. Our modeling assumptions, with only shape constraints on $f$, offers a more  flexible setting to fit data.

\paragraph{Latent points estimation in random geometric graphs.} 
Random geometric graphs have attracted a lot of attraction as a simple model for wireless communications or internet \cite{Gilbert61, penrose2003random}. In the most classical setting, $A_{ij}=f(x^*_{i},x^*_{j})={\bf 1}_{\|x^*_{i}-x^*_{j}\|\leq r}$ for some $r>0$.
The problem of estimating the latent positions $x_1^*,\ldots,x_n^*$ in a square of $\mathbb{R}^2$ has been tackled by  \cite{diaz2020learning}.  
Compared to us, they consider the noiseless setting, where $E_{ij}=0$, with the affinity map $f$ belongs to 1-dimensional parametric model. The problem of latent positions localization has also been investigated in the  random dot-product graph \cite{sussman2013consistent,Lyzinski17,athreya2021estimation}, where, conditionally to the latent positions, the entries $A_{ij}$ of the adjacency matrix are independent\  Bernoulli random variables with mean $f(x^*_{i},x^*_{j})=\langle x^*_{i},x^*_{j}\rangle$, where $\langle\cdot,\cdot\rangle$ is the Euclidean scalar product in $\R^d$. In this case,  the function $f$ is known and the results are of an asymptotic nature. 

\paragraph{Phase synchronization Problems.} The phase synchronization problem \cite{singer2011angular} amounts to estimating unknown angles $\theta_1,\ldots,\theta_n$ from noisy measurements of $\theta_i- \theta_j \textup{ mod } 2\pi$. 
A version of this problem is when we seek to retrieve $x^*_1,\ldots,x^*_n$, which are spread on the unit complex sphere $\mathbb{C}_1 =\{x \in \mathbb{C} : |x| = 1\}$, from   noisy observations of $x^*_i\overline{x}^*_j= e^{\iota (\theta_i - \theta_j)}$.
In this model, some minimax $\ell^2$-bounds on the localization error have been obtained by  \cite{gao2020exact},  without assumptions on the latent positions $x^*_1,\ldots,x^*_n$.
Such a model is close to the pairwise affinity model on $\Ccal$.
The main differences compared to our setting is that we focus on $d_{\infty}$-bounds, with an unknown function $f$, which does not have the affinity shape (ii).

\paragraph{Skills estimation in the Bradley-Terry model.} 
In the Bradley-Terry  model \cite{BradleyTerry1952}, the observations $A_{ij}$ are independent Bernoulli outcomes with mean  $f(x^*_{i},x^*_{j})=\sigma(x^*_{i}-x^*_{j})$, where  $x^*_{i}\in \R$ represents the skill of individual $i$ and $\sigma(x)=e^{x}/(1+e^{x})$ is the sigmoid function. The estimation of the skills by a spectral algorithm, or two-steps variants of it, has received a lot of attention recently   \cite{RankCentrality,TopK2015, BTL2019}. In particular, building on the structure of the problem, rate-minimax  $\ell_{\infty}$-bound have been derived for the spectral algorithm in the Bradley-Terry model when the skills belong to a compact set, possibly with missing at random observations.The Bradley-Terry model is a special instance of the 1D-latent space model. Compared to our setting, the function $f$ is known and it does not fulfill the affinity properties (for example, it is not symmetric).

\paragraph{Seriation from pairwise affinity.}
Given a pre-R matrix $F$, the seriation problem seeks to find the latent order $\si^*$ such that $F_{\si^*}$ is a R-matrix. For this noiseless version of Example 3, efficient algorithms have been proposed using convex optimization \cite{fogel2013convex}, or spectral methods \cite{atkins1998spectral}. The exact seriation problem has been solved on toroidal R-matrices in the noiseless case \cite{recanati2018reconstructing}, by using a spectral algorithm. A perturbation analysis has also been sketched in~\cite{recanati2018reconstructing}. As a byproduct of our analysis, we provide some more explicit recovery bounds in our specific setting with noisy observations. 
 Closer to our contribution, Jannssen and Smith~\cite{janssen2020reconstruction} observe a noisy version of a pre-R matrix and, under some assumptions on the affinity function $f$,  learn a permutation that satisfies $\max_{i\in [n]}|\widehat{\sigma}_i- \sigma^*_i|\leq C \sqrt{n}\log^{5}(n)$. Although their assumptions on $f$ are not directly comparable to ours, the localization rates are (up to logarithmic factors) comparable to ours. We refer to the discussion below Corollary~\ref{cor:seriation} for more details.

\paragraph{Two-step methods for latent space models.} Our work is related to the global-to-local estimation strategy, that was originally introduced in Stochastic Block Models and more generally in clustering analysis~\cite{lei2014generic,gao2017achieving,zhang2016minimax}, for the purpose of deriving sharp recovery bounds with polynomial time procedures. The general idea is to build upon an initial estimator that satisfies a certain (weak) consistency condition, and then apply greedy-type procedures (e.g. Lloyd's algorithm) to obtain minimax recovery bounds. This approach turned out to be fruitful in various latent space problems with discrete structure~\cite{chen2018network,gao2019iterative} and our procedure can be interpreted as one instance of this strategy in a non-parametric setting with a continuous latent space.

\subsection{Notation and organization of the paper}

\textit{Notation:} In the sequel,  $C, C', C''>0$ denote numerical constants that may change from line to line. For two functions or sequences $x$ and $y$, we write $x\lesssim y$ (resp. $x\gtrsim  y$) when, for some numerical constant $C>0$, we have $x\leq Cy$ (resp. $x\geq Cy$). The maximum (resp. minimum) of $x$ and $y$ is denoted by $x\vee y$ (resp. $x\wedge y$).
For any  $x>0$, we write $\lfloor x \rfloor$ for its integer part, and $[x]$ for the set of integers  $[x]=\ac{1,\ldots,\lfloor x \rfloor}$ . 
For $q\geq 1$, the entry-wise $l_q$ norm of a matrix  $F=(f_{ij})$ is denoted by $\|F\|_q= (\sum_{ij}|f_{ij}|^{q})^{1/q}$, the $i^{\textup{th}}$-row of $F$ is denoted by $F_i$, and the Frobenius scalar  product between two matrices $F$ and $G$ is denoted by $\langle F, G\rangle$.  Let $\Sigma_n$ be the collection of permutations of $[n]$. For any permutation 
$\si\in\Sigma_n$, and for any $n$-tuple ${\bf x}$ of size $n$, we define ${\bf x}_{\si}$ as the permuted $n$-tuple ${\bf x}_{\si}=(x_{\si(1)},\ldots,x_{\si(n)})$.

Assimilating  points in the  unit sphere $\Ccal$ of $\mathbb{R}^2$ to complex numbers with unit norm, we can represent  $x \in \Ccal$ by $x = e^{\iota \underline{x}}$, with $\underline{x}\in [0,2\pi)$. We call henceforth argument of $x\in\Ccal$ the real number $\underline{x}$. The geodesic distance $d(x,y)$ on $\mathcal{C}$ can be conveniently defined in terms of the arguments  of $x$ and $y$
\begin{equation}\label{eq:deodesic}
d(x,y) = \left|\underline{x}-\underline{y}\right| \wedge \left(2\pi - \left|\underline{x}-\underline{y}\right|\right).
\end{equation}
For any positive integer $k$, we define the regular grid $\mathcal{C}_k=\{1, e^{\iota 2\pi/k},\ldots,$ $e^{\iota 2\pi(k-1)/k}\}$, which plays an important role in our analysis and algorithms. We denote by $\mathcal{O}$  the orthogonal group  of $\mathbb{R}^2$ made of rotations and reflections, and for any $n$-tuple $\xbf=(x_{1},\ldots,x_{n})\in \Ccal^n$, and any $Q\in \mathcal{O}$, we define
$Q\xbf:= (Qx_{1},\ldots,Qx_{n})$. For two subsets $S,S'$ of $\ac{1,\ldots,n}$, a matrix $A\in \R^{n\times n}$ and a $n$-tuple $\xbf=(x_{1},\ldots,x_{n}) \in \Ccal^n$, we define $A_{SS'}=[A_{ij}]_{i\in S,j\in S'}$ and $\xbf_{S}=[x_{i}]_{i\in S}$. More generally, we denote by $\xbf_{S}\in\Ccal^{S}$ a $|S|$-tuple indexed by $S$.
The complement of a  set $S$, is denoted by $\overline{S}$. 
\medskip 

\textit{Organization:} In  Section~\ref{sec:problem}, we describe the statistical setting and we discuss thoroughly the identifiability issues. The main embedding procedure, called Localize-and-Refine, is presented in Section~\ref{seriation:section:resul}. A spectral variant of this procedure is introduced in Section~\ref{sec:geometric}, with an application to geometric models. 
In  Section \ref{sec:numerique}, we investigate numerically the usefulness of  the sample splitting, and the significance of the improvement offered by the refinement step. 
We summarize our findings and discuss an open problem in Section~\ref{sec:discussion}. All the proofs are postponed to the appendices.

\section{Model assumptions and identifiability issues}\label{sec:problem}

\subsection{Statistical setting}\label{section:setting}  

We observe a realization of a symmetric random matrix $A\in\R^{n\times n}$, whose values on the diagonal are $A_{ii}=0$. We denote by $F_{ij}=\E[A_{ij}]$ the mean value of $A_{ij}$ and by $E_{ij}=A_{ij}-F_{ij}$ the centered random fluctuation. We assume that $A$ has been generated by a latent space model on $\Ccal$: there exist a $n$-tuple $\xbf^*=(x_{1}^*,\ldots,x_{n}^*)\in \Ccal^n$ and a function $f:\Ccal\times \Ccal \to \R$ such that $F_{ij}=f(x^*_{i},x^*_{j})$, so
\begin{equation}\label{eq:model}
A_{ij}=F_{ij}+E_{ij}=f(x^*_{i},x^*_{j})+E_{ij},\quad \textrm{for} \quad 1\leq i<j\leq n.
\end{equation}
Both the function $f$ and the latent positions $x_{1}^*,\ldots,x_{n}^*$ are unknown.
We emphasize that the latent positions $\xbf^*=(x_{1}^*,\ldots,x_{n}^*)$ are assumed to be fixed\footnote{if they were random, our results would apply conditionally on the sampling of $\xbf^*$.} and we denote by $\P_{(\textup{{\bf x}}^*,f)}$ the distribution of $A$. Let us describe our assumptions on the spreading of the  latent positions $x_{1}^*,\ldots,x_{n}^*$, the shape of $f$, and the random fluctuations $E_{ij}$.

\paragraph{Spreading of the latent positions.} We have in mind that the latent positions are well spread over the unit sphere. We do not strictly enforce this condition, but our error bounds depend on how far the latent positions are from a regular position on $\Ccal$. 
More precisely, let us denote by  $\bPi_{ n}$ the set of regular positions on the unit sphere 
$\bPi_{n}=\ac{\xbf=(e^{ \iota 2\pi \si(j)/n})_{1\leq j \leq  n}:\sigma\in\Sigma_{ n}}$. 
Our results involve the $d_{\infty}$-distance of the $n$-tuple of latent positions $\xbf^*$ to  the set $\bPi_{n}$ of regular positions
\begin{equation}\label{alpha:def:new}
d_{\infty}(\xbf^*,\bPi_{n}) := \underset{{\bf y}\in \bPi_{ n}}{\textup{min}} \ \,  d_{\infty}({\bf x}^*,{\bf y})\enspace ,
\end{equation}
with $d_{\infty}({\bf x}^*,{\bf y})$ defined in \eref{eq:maxd}.

\paragraph{Bi-Lipschitz shape of $f$.} As explained in the introduction, we have in mind that $f(x,y)$ decreases with the distance $d(x,y)$. Since there is no hope to recover the latent positions $\xbf^*$ when the function $f$ is flat, we impose a minimal decreasing of $f(x,y)$ with the distance $d(x,y)$. We also require some Lipschitz continuity of $f$ for our analysis. 
These two conditions on $f$ are enforced by the Bi-Lipschitz condition described below.

\begin{defi}\label{defi:BL} {\bf Bi-Lipschitz functions.} \ For any fixed constants $c_e\geq 0$ and $0 < c_l\leq c_L$, we define 
 $\mathcal{BL}[c_l,c_L,c_e]$ as the set made of all functions $f: \Ccal^2 \rightarrow [0,1]$ that are symmetric (i.e. $f(x,y)= f(y,x)$ for all $x,y\in\mathcal{C}$) and that satisfy the two following conditions for all $x,y,y'\in \mathcal{C}$, 
\begin{align}\label{cond:lipsch}
    |f(x,y)-f(x,y')| &\leq c_L d(y,y') + \ep_n \enspace ; \\
\label{cond:lipschLower}
    f(x,y')-f(x,y) &\geq c_l \big{(}d(x,y) - d(x,y')\big{)} -\ep_n\quad \text{if }\,d(x,y)\geq d(x,y')\enspace,
\end{align}  
with $\ep_n= c_e \sqrt{\log(n)/n}$.
\end{defi} 

When $c_e=0$, Condition~\eref{cond:lipsch}  enforces Lipschitz continuity and Condition~\eref{cond:lipschLower} enforces 
a minimal decreasing of $f(x,y)$ with $d(x,y)$. In the geometric case $f=g\circ d$ with $g:[0:\pi]\to[0,1]$ continuously differentiable, these conditions hold when $-c_{L}\leq g'(t)\leq -c_{l}$ for all $t\in [0,\pi]$. 
For $c_{e}>0$, the term $\ep_n$ in (\ref{cond:lipsch}--\ref{cond:lipschLower}) can be interpreted as a possible small relaxation of a strict bi-Lipschitz property.
In the remaining of the paper, we will assume that $f\in \mathcal{BL}[c_l,c_L,c_e]$ for some $c_e\geq 0$ and $0 < c_l\leq c_L$.

\paragraph{SubGaussian errors.} We assume that the entries $E_{ij}$ for $1 \leq i < j\leq n$ of the noise matrix are independent and follow a subGaussian(1) distribution. It means that, for any matrix $B\in \R^{n\times n}$ and $t\geq 0$, we have
\begin{equation}\label{defi:subG}
\P\cro{\sum_{1\leq i < j \leq  n} B_{ij} E_{ij} > t \sqrt{\sum_{1\leq i < j \leq  n} B_{ij}^2} } \leq e^{-t^2/2}.
\end{equation}
Since centered random variables taking values in $[-1,1]$ have a subGaussian(1) distribution, this setting encompasses the case where $A\in \{0,1\}^{n\times n}$ is the adjacency matrix of a random graph,  whose distribution belongs to a latent space model on $\Ccal$.

To keep the notation and the presentation simple, we assume  henceforth that the sample size $n$ is a multiple of $4$, 
and we denote by $n_0$ the integer $n_0=n/4$.

\subsection{Identifiability issues}\label{sec:identif}
Our overall goal is to estimate the latent positions $\xbf^*$. Yet, in general, these latent positions are not identifiable from the distribution of $A$. Indeed, for any bijective $\varphi :\Ccal \to \Ccal$, we have $[F_{ij}]_{i<j}=\cro{(f\circ \varphi^{-1})(\varphi(x^*_{i}),\varphi(x^*_{j})}_{i<j}$, with the notation $f\circ \varphi^{-1}(x,y):=f(\varphi^{-1}(x),\varphi^{-1}(y))$. Hence, it is not possible to recover $\xbf^*$ from $F$ and, unless $\xbf^*$ is identifiable from the distribution of the noise $E$, the $n$-tuple of latent positions $\xbf^*$ is not identifiable. Worse, $F$ can be represented by many different couple $(\xbf,f)$. Hence, we face a serious identifiability issue. However, with the premise that the latent positions are well spread on $\Ccal$, we can give a sensible meaning to our estimation objective. We explain progressively the issues that we face, in order to clarify the problem. 

As a warm-up, let us assume in this paragraph that $f$ is known. Even in this favorable case, there might exist some 
bijective $\varphi :\Ccal \to \Ccal$ such that $f=f\circ \varphi$, and hence $f(x^*_{i},x^*_{j})=f(\varphi(x^*_{i}),\varphi(x^*_{j}))$. For example, when $f=g\circ d$, we have $f=f\circ Q$ for any orthogonal transformation $Q\in \Ocal$. In this last case, unless $\xbf^*$ is identifiable from the distribution of the noise $E$, the best that we can hope is to consistently estimate $\xbf^*$ in terms of the quasi-distance
$$\min_{Q\in \Ocal} d_{\infty}(\hat \xbf,Q\xbf^*),\quad \textrm{where }Q\xbf:= (Qx_{1},\ldots,Qx_{n}).$$

Let us come back to our setting where $f$ is unknown.
We define $\Rcal[F,c_{l},c_{L},c_{e}]$ (or simply $\Rcal[F]$) as the set of representations of $F$ by $n$-tuples in $\Ccal^n$ and bi-Lipschitz functions
\begin{equation}\label{defi:R}
\Rcal[F,c_{l},c_{L},c_{e}]:= \ac{(\xbf,f)\in\Ccal^n\times \mathcal{BL}[c_l,c_L,c_e]: f(x_{i},x_{j})=F_{ij}\ \textrm{for all } i<j}.
\end{equation}
We observe that for any $f\in \mathcal{BL}[c_l,c_L,c_e]$ and any $Q\in \Ocal$, we have $f\circ Q^{-1}\in \mathcal{BL}[c_l,c_L,c_e]$. 
Hence, if $(\xbf^*,f)\in \Rcal[F,c_{l},c_{L},c_{e}]$, then $\ac{(Q\xbf^*,f\circ Q^{-1}):Q\in\Ocal}\subset \Rcal[F,c_{l},c_{L},c_{e}]$. 
Are all the elements in $\Rcal[F,c_{l},c_{L},c_{e}]$ of the form $(Q\xbf^*,f\circ Q^{-1})$, as in the case discussed above?

Let us first focus on the case where $(\xbf,f),(\xbf',f')\in \Rcal(F)$ with $\xbf,\xbf'\in \bPi_{n}$ and $c_{e}=0$. The Proposition~\ref{prop:representative} below ensures that there exists $Q\in \Ocal$ such that $\xbf'=Q\xbf$. Hence, if regular representations $(\xbf,f)$ exist in $\Rcal[F]$, all the other regular representations are given by $(Q\xbf^*,f\circ Q^{-1})$ with $Q\in \Ocal$ letting $\bPi_{n}$ invariant. 

This property breaks down when we move away from regular latent positions in $\bPi_{n}$. Indeed, the next proposition shows that we can have $\min_{Q\in \Ocal} d_{\infty}(\xbf,Q\xbf')$ large even, when $\xbf \in \bPi_{n}$ and $\xbf'$ is well spread on $\Ccal$. More precisely, this property is shown for 
$$\xbf'\in \mathcal{S}_{ev}:=\Big\{ {\bf x} \in \Ccal^{n}: \, \sup_{\,z \in \Ccal} \, \min_{\,i\in[ n]} \, d(x_i, z) \leq 3\pi/ n \Big\}.$$
Such a $n$-tuple $\xbf'$ is well spread on $\Ccal$, since any $z\in \Ccal$ is at a distance at most $3\pi/n$ from one of the $x'_i$.
\begin{prop}\label{prop_identif}
Assume that $F_{ij}=f(x_{i},x_{j})$ for $1\leq i<j\leq n$, with 
 $f(x,y)= 1 - d(x,y)/(2\pi)$ and $x_k = e^{ \iota k 2\pi/ n}$ for all $k \in[ n]$. 
 Then, there exists another  representation $(\xbf',f')\in \Rcal[F,(3\pi)^{-1},\pi^{-1},0]$, with $\xbf'\in \mathcal{S}_{ev}$, such that
 $$\min_{Q\in \Ocal} d_{\infty}(\xbf,Q\xbf') \geq \pi /8\enspace .$$
\end{prop}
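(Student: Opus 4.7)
My plan is to exhibit one explicit pair $(\xbf',f')$ via a bi-Lipschitz reparameterization $\varphi:\Ccal\to\Ccal$. Concretely, I would take the orientation-preserving piecewise-linear homeomorphism $\underline{\varphi}:[0,2\pi]\to[0,2\pi]$ that stretches the arc $[0,\pi]$ onto $[0,3\pi/2]$ (slope $3/2$) and compresses $[\pi,2\pi]$ onto $[3\pi/2,2\pi]$ (slope $1/2$), and set $\varphi(e^{\iota\theta}):=e^{\iota\underline{\varphi}(\theta)}$, $\xbf':=\varphi(\xbf^*)$, and $f'(u,v):=f\bigl(\varphi^{-1}(u),\varphi^{-1}(v)\bigr)$. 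By construction $f'(x'_i,x'_j)=f(x^*_i,x^*_j)=F_{ij}$, so half the work of membership in $\Rcal[F,\cdot,\cdot,0]$ is free.

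The spreading condition $\xbf'\in\mathcal{S}_{ev}$ is easy: the image of the uniform grid has consecutive arc-gaps in $\{\pi/n,3\pi/n\}$, so $\sup_{z\in\Ccal}\min_i d(z,x'_i)\leq 3\pi/(2n)\leq 3\pi/n$. For the non-identifiability bound $\min_{Q\in\Ocal}d_\infty(\xbf^*,Q\xbf')\geq\pi/8$, the idea is that $\xbf'$ has a strong density mismatch between the two semicircles (spacing $3\pi/n$ vs.\ $\pi/n$), while $\xbf^*$ has uniform spacing $2\pi/n$. A direct calculation optimizing over the rotation angle shows that the best any rigid motion can achieve is a displacement of order $\pi/4$ on at least one index (fitting $Q\xbf'$ on the dense half necessarily misaligns the sparse half); this comfortably exceeds $\pi/8$.

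The main work is verifying $f'\in\mathcal{BL}[(3\pi)^{-1},\pi^{-1},0]$. The upper Lipschitz bound $|f'(u,v)-f'(u,v')|\leq d(v,v')/\pi$ follows from the fact that $\underline{\varphi}^{-1}$ has slopes in $\{2/3,2\}$ so $\varphi^{-1}$ is $2$-Lipschitz on $\Ccal$, combined with the original $(2\pi)^{-1}$-Lipschitz behaviour of $f$. The lower Lipschitz bound is the delicate step: for all $u,v,v'\in\Ccal$ with $d(u,v)\geq d(u,v')$ one must show $f'(u,v')-f'(u,v)\geq (3\pi)^{-1}(d(u,v)-d(u,v'))$. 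Substituting $f'=f\circ(\varphi^{-1},\varphi^{-1})$, this reduces to a geometric inequality comparing distances in the image metric to distances of their $\varphi^{-1}$-preimages, requiring a case analysis depending on which sides of the breakpoint (at argument $\pi$) the three points $\varphi^{-1}(u),\varphi^{-1}(v),\varphi^{-1}(v')$ lie.

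The hard part will be controlling the configurations where two of these preimages lie on opposite sides of the breakpoint, since there the local behaviour of $\varphi^{-1}$ changes abruptly and the image metric can even invert the order of distances slightly compared to the pre-image metric. The relaxation of constants from $(c_l,c_L)=(2\pi)^{-1},(2\pi)^{-1}$ (which $f$ satisfies) to the wider range $((3\pi)^{-1},\pi^{-1})$ in the target class $\mathcal{BL}$ is exactly the slack that makes the inequality survive the reparameterization; a priori bounds relating $d\bigl(\varphi^{-1}(u),\varphi^{-1}(v)\bigr)-d\bigl(\varphi^{-1}(u),\varphi^{-1}(v')\bigr)$ to $d(u,v)-d(u,v')$, together with the ratio $(2\pi)^{-1}/(3\pi)^{-1}=3/2$ matching the slope ratio $3/2$ of $\underline{\varphi}$, is what should close the verification.
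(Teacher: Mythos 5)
Your construction follows the same route as the paper's proof: there, too, one produces $(\xbf',f')$ from a piecewise-linear stretch/contract of the circle (the paper contracts the arc of arguments $(0,\pi/2]$ by a factor $1/2$, dilates $(\pi/2,\pi]$ by $3/2$, and leaves the lower semicircle fixed), takes for $f'$ the pullback of $f$ written out arc by arc with explicit gluing formulas for pairs straddling the breakpoints, and gets the spreading and the $\pi/8$ separation exactly as you do. For the separation, by the way, your "direct calculation over rotations" can be replaced by one line: $d(x_n,x_{n/2})=\pi$ while $d(x'_n,x'_{n/2})=\pi/2$, and since any $Q\in\Ocal$ is an isometry, the triangle inequality gives $2\,d_{\infty}(\xbf,Q\xbf')\geq \pi-\pi/2$, i.e. $d_\infty\geq\pi/4\geq\pi/8$.

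The genuine gap is the step you defer, namely the verification that $f'=f\circ(\varphi^{-1},\varphi^{-1})$ belongs to $\mathcal{BL}[(3\pi)^{-1},\pi^{-1},0]$; this is the entire content of the proposition, and the mechanism you invoke to close it (the slack from $c_l=(2\pi)^{-1}$ down to $(3\pi)^{-1}$ matching the slope ratio $3/2$) cannot work as stated. The point is that with $c_e=0$ there is no \emph{additive} slack at all: applying \eqref{cond:lipschLower} with $d(x,y)=d(x,y')$ in both orders forces $f'(x,y)=f'(x,y')$, so for each fixed $x$ the value $f'(x,\cdot)$ must depend on $d(x,\cdot)$ only. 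Your pullback violates this at the breakpoints of $\underline{\varphi}$: take $u_0=e^{\iota 3\pi/2}$ (or $u_0=1$) and $v,v'$ at the same small distance $a$ on its two sides; then $d(\varphi^{-1}(u_0),\varphi^{-1}(v))=2a$ on one side while $d(\varphi^{-1}(u_0),\varphi^{-1}(v'))=2a/3$ on the other, so $f'(u_0,v)\neq f'(u_0,v')$. More generally, choosing $v$ on the slope-$2/3$ side slightly farther than $v'$ on the slope-$2$ side makes $f'(u_0,v')-f'(u_0,v)$ strictly negative while $c_l\big(d(u_0,v)-d(u_0,v')\big)$ is strictly positive, for \emph{every} $c_l>0$ (and by continuity the same happens for $u$ near, not only at, the breakpoint). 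A sign violation cannot be absorbed by shrinking $c_l$; the enlarged bracket $[(3\pi)^{-1},\pi^{-1}]$ only accounts for the changed local slopes of $f'$ away from the breakpoints. So the order inversion you mention in passing is not a nuisance your constants handle — it is precisely the configuration on which the claimed lower bi-Lipschitz inequality fails, and a complete argument must treat these straddling triples explicitly (this is where the paper's write-up concentrates its effort, in the arc-by-arc definition of $f'$ and the cross-arc gluing cases), or else modify the construction so that no inversion around the breakpoints occurs. As it stands, the central verification is missing and the proposed way of supplying it would not survive these configurations.
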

The representation $(\xbf',f')$ can be obtained from $(\xbf,f)$ by slightly stretching and contracting some pieces of the sphere $\Ccal$. The
 detailed proof of this proposition is postponed to Appendix~\ref{proof:lowerbound}.
This result shows that the set of latent positions $\xbf'$ in $ \Rcal[F,(3\pi)^{-1},\pi^{-1},0]$ is much richer than the set $\ac{Q\xbf:Q\in\Ocal}$ of orthogonal transformations of $\xbf$, since it includes some latent positions $\xbf'$ at constant $d_{\infty}$-distance from this set. Yet, the next proposition shows that for any $\xbf, \xbf'$ in $\Rcal[F]$, the $d_{\infty}$-distance of $\xbf'$ to 
$\ac{Q\xbf:Q\in\Ocal}$ is controlled in terms of the $d_{\infty}$-distance of $\xbf$ and $\xbf'$ to the set of regular positions $\bPi_{n}$.
\begin{prop}\label{prop:representative}
Let $F$ be a symmetric matrix given by $F_{ij}=f(x^*_{i},x^*_{j})$ for $1\leq i < j\leq n$, with $\xbf^*\in\Ccal^n$ and $f\in  \mathcal{BL}[c_l,c_L,c_e]$ for some $c_{e}\geq 0$ and  $0 < c_l\leq c_L$. 
Then, there exists a constant $C_{lLe}>0$, depending only on $c_{e}$, $c_{l}$ and $c_{L}$, and such that, for any $(\xbf,f),(\xbf',f')\in \Rcal[F,c_{l},c_{L},c_{e}]$, we have
\begin{equation}\label{eq:identif:Q}
\min_{Q\in \Ocal} d_{\infty}(\xbf,Q\xbf') \leq C_{lLe} \pa{d_{\infty}(\xbf,\bPi_{n})+d_{\infty}(\xbf',\bPi_{n})+\sqrt{\log(n)\over n}}.
\end{equation}
If, in addition,  $c_{e}=0$ and $\xbf,\xbf' \in\bPi_{n}$, then there exists $Q\in \Ocal$ such that $\xbf=Q\xbf'$.
\end{prop}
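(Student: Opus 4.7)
My plan is to first establish the exact case (the second sentence), which captures the underlying rigidity of the problem, and then extend it to the general statement through a stability argument.

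For the exact case, assume $c_e = 0$ and $\xbf,\xbf'\in\bPi_n$, so $x_i = e^{\iota 2\pi \sigma(i)/n}$ and $x'_i = e^{\iota 2\pi \sigma'(i)/n}$ for some $\sigma,\sigma'\in\Sigma_n$. Setting $u_k := e^{\iota 2\pi k/n}$, $g(k,l) := f(u_k,u_l)$ and $g'(k,l) := f'(u_k,u_l)$, the identity $f(x_i,x_j) = F_{ij} = f'(x'_i,x'_j)$ rearranges into $g(\tau(k),\tau(l)) = g'(k,l)$ for $\tau := \sigma\sigma'^{-1}$. Since $c_e = 0$, the lower bi-Lipschitz bound \eref{cond:lipschLower} forces $g(k,\cdot)$ and $g'(k,\cdot)$ to be \emph{strictly} decreasing in the toroidal distance, so for each $k$ the rankings of $\{d(u_k,u_l)\}_l$ and $\{d(u_{\tau(k)},u_{\tau(l)})\}_l$ coincide. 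Matching the two nearest neighbors of $k$ to those of $\tau(k)$ and iterating around the cycle pins $\tau$ down as either $k\mapsto k+c$ or $k\mapsto c-k \pmod n$, and the corresponding $Q\in\Ocal$ then satisfies $Qx'_i = x_i$.

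For the general statement I reduce to an approximate version of the same argument. Pick $\tilde\xbf,\tilde\xbf'\in\bPi_n$ attaining $d_\infty(\xbf,\bPi_n)$ and $d_\infty(\xbf',\bPi_n)$ respectively. Applying \eref{cond:lipsch} twice gives
$$|f(\tilde x_i,\tilde x_j) - f'(\tilde x'_i,\tilde x'_j)| \leq 2c_L\bigl(d_\infty(\xbf,\bPi_n)+d_\infty(\xbf',\bPi_n)\bigr) + 2\varepsilon_n \;=:\; \eta.$$
Re-defining $g,g'$ and $\tau = \sigma\sigma'^{-1}$ relative to $\tilde\xbf,\tilde\xbf'$, this becomes the approximate identity $|g(\tau(k),\tau(l)) - g'(k,l)| \leq \eta$. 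Combined with \eref{cond:lipschLower}, any pair of pairs $(k,l)$, $(k,m)$ whose toroidal distances differ by more than a constant times $\eta/c_l$ is then correctly ranked by $\tau$, giving an \emph{approximate} row-wise rank preservation.

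The main technical obstacle is promoting these row-by-row approximate rank matches into a single global orthogonal $Q$. I plan to anchor at $k=0$, read off a candidate shift $c:=\tau(0)$ and orientation $b\in\{-1,1\}$ from the near-neighbors of $\tau(0)$, and show that $|\tau(k)-(c+bk)|$ on $\mathbb Z/n\mathbb Z$ remains of order $\eta n/c_l$ uniformly in $k$. Naive propagation from index to index would compound errors, so I intend to leverage the cyclic structure directly: a large deviation at some index would generate a detectable rank violation when viewed from a different anchor, forcing uniform control around the whole torus. Translating the resulting bound on $\tau$ back to $\Ccal$ yields $d(\tilde x_k, Q\tilde x'_k)\lesssim \eta/c_l$; combining with the discretization errors $d_\infty(\xbf,\tilde\xbf)=d_\infty(\xbf,\bPi_n)$ and $d_\infty(\xbf',\tilde\xbf')=d_\infty(\xbf',\bPi_n)$, and noting $\eta\lesssim d_\infty(\xbf,\bPi_n)+d_\infty(\xbf',\bPi_n)+\sqrt{\log(n)/n}$ up to constants depending only on $c_e,c_l,c_L$, produces \eref{eq:identif:Q}.
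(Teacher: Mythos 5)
Your reduction to the grid is sound: replacing $\xbf,\xbf'$ by nearest elements $\tilde\xbf,\tilde\xbf'\in\bPi_{n}$ and using \eref{cond:lipsch} does give $|g(\tau(k),\tau(l))-g'(k,l)|\leq \eta$ with $\eta\lesssim c_L\pa{d_{\infty}(\xbf,\bPi_{n})+d_{\infty}(\xbf',\bPi_{n})}+\ep_n$, and combined with \eref{cond:lipschLower} this indeed forces $\tau$ to rank correctly, within each row $k$, any two distances differing by more than $C\eta/c_l$. Your exact-case argument is also essentially the one in Appendix~\ref{app:central}.

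The problem is the step you yourself call the main technical obstacle: promoting row-wise approximate rank preservation to a single $Q\in\Ocal$ with the \emph{uniform} bound $\sup_k d(\tilde x_k,Q\tilde x'_k)\lesssim \eta/c_l$. This is the entire content of \eref{eq:identif:Q}, and it is only announced (``I plan to'', ``I intend to''), not proved. Moreover, the anchoring scheme as described would fail: once the admissible window $\eta n/c_l$ exceeds a few grid steps, the orientation $b$ (rotation versus reflection) cannot be read off from the near neighbours of $\tau(0)$, since those may be permuted arbitrarily inside the window; both the orientation and the uniform control of $|\tau(k)-(c+bk)|$ must come from a global comparison of rows with far-apart anchors, i.e.\ a quantitative, sup-norm analogue of the rearrangement arguments in Lemma~\ref{lem:conerstone:variante} or Lemma~\ref{lem:regret}, which you do not supply. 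Note also that since $f\neq f'$, closeness of values never directly yields closeness of distances; what saves the exact case is that $\tilde\xbf$ and $\tilde\xbf'$ share the same multiset of pairwise distances, and the approximate case needs this exploited quantitatively. For comparison, the paper sidesteps rigidity altogether: it proves Corollary~\ref{cor:thm:bis} without using Proposition~\ref{prop:representative}, then applies the Localize-and-Refine estimator to the noiseless data $A=F$; with positive probability the corollary's bound holds simultaneously for the two representations $\xbf$ and $\xbf'$, and a triangle inequality through the common estimate $\hat\xbf$ yields \eref{eq:identif:Q}. As written, your proposal would constitute a genuinely different, direct proof, but the key global propagation argument is missing, so \eref{eq:identif:Q} is not established.
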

The proof of~\eqref{eq:identif:Q} can be found in Appendix~\ref{sec:prop_representative:deuxieme_preuve}, whereas the proof of the second statement can be found in Appendix~\ref{app:central}
. This result shows that if $\xbf$ and $\xbf'$ are close to $\bPi_{n}$, then $\xbf$ is close to an orthogonal transformation of $\xbf'$. Hence, when we restrict to representations $(\xbf,f)$ 
and $(\xbf',f')$, with latent positions $\xbf$ and $\xbf'$ close to $\bPi_{n}$, the identifiability issue becomes smoother.

\noindent Summarizing our discussion above, we have shown that: \\
(1) for any $(\xbf,f)\in \Rcal[F]$, we have $\ac{(Q\xbf,f\circ Q^{-1}):Q\in\Ocal}\subset \Rcal[F]$;\\
(2) for any $(\xbf,f), (\xbf',f')\in \Rcal[F]$, the $d_{\infty}$-distance of $\xbf'$ to $\ac{Q\xbf:Q\in\Ocal}$ is bounded in terms of the $d_{\infty}$-distance of $\xbf$ and $\xbf'$ to $\bPi_{n}$.

 Accordingly,  to contain the phenomenon described in Proposition~\ref{prop_identif}, we will focus henceforth on the representations $(\xbf,f)$ which are the closest to $\bPi_{n}$.

\paragraph{Problem formulation.} 
Let us explain our estimation strategy, in light of the above discussion. 
In order to circumvent the identifiability issues, we focus on the representations   
$(\xbf,f)\in \Rcal[F,c_{l},c_{L},c_{e}]$ whose latent positions are the closest to $\bPi_{n}$, i.e. we focus on the following set\footnote{$\Rcal_{\bPi_{n}}[F]$ is well-defined and non-empty because the set of all $\xbf$ such that, for some $f$, $(\xbf,f) \in \Rcal[F,c_{l},c_{L},c_{e}]$ is compact.
} of representations
\begin{equation}\label{defi:Rpin}
 \Rcal_{\bPi_{n}}[F]= \Rcal_{\bPi_{n}}[F,c_{l},c_{L},c_{e}]:= \argmin_{(\xbf,f)\in \Rcal[F,c_{l},c_{L},c_{e}]} d_{\infty}(\xbf,\bPi_{n}).
 \end{equation}
Our goal is then to build an estimator $\hat\xbf$, \emph{not depending on $c_{l}$, $c_{L}$ and $c_{e}$}, such that, with high-probability
$$d_{\infty}(\hat \xbf,\xbf^*) \leq C_{lLe} \left(\min_{(\xbf,f)\in \Rcal[F,c_{l},c_{L},c_{e}]} d_{\infty}(\xbf,\bPi_{n})+ \sqrt{\log(n)\over n}\right),$$
for some representation $(\xbf^*,f)\in \Rcal_{\bPi_{n}}[F,c_{l},c_{L},c_{e}]$, and some  constant $C_{lLe}>0$ depending only on $c_{e}$, $c_{l}$ and $c_{L}$. 
For such an estimator, under the premise that $d_{\infty}(\xbf^*,\bPi_{n})$ is small for $(\xbf^*,f)\in \Rcal_{\bPi_{n}}[F,c_{l},c_{L},c_{e}]$, we then estimate accurately a representation $(\xbf^*,f)$ of $F$ in $\Rcal_{\bPi_{n}}[F]$.

As a side remark, we notice that combining such a bound with \eref{eq:identif:Q}, we obtain that, for any $(\xbf,f)\in \Rcal_{\bPi_{n}}[F,c_{l},c_{L},c_{e}]$, we have with high-probability
$$ 
\min_{Q\in \Ocal}d_{\infty}(\hat\xbf,Q\xbf) \leq  C'_{lLe} \left(\min_{(\xbf',f')\in \Rcal[F,c_{l},c_{L},c_{e}]} d_{\infty}(\xbf',\bPi_{n})+ \sqrt{\log(n)\over n}\right).$$

\section{Localize-and-Refine algorithm}\label{seriation:section:resul}
The overall strategy for estimating a $n$-tuple of latent positions in  $\Rcal_{\bPi_{n}}[F]$, is to start with a first estimator $\hxbf^{(1)}\in \bPi_{n}$ with a control in  $d_{1}(\xbf,{\bf y})=\sum_{i=1}^n d(x_{i},y_{i})$ distance, and then to refine the estimation  of each point $x^*_{i}$. In order to avoid complex statistical dependencies between the two steps, we use a sample splitting of the data. 
We sample   $S\subset \ac{1,\ldots,n}$ with cardinality $|S|=n_0=n/4$ uniformly at random, and set $\bS= \ac{1,\ldots,n}\setminus S$. The first estimator $\hat\xbf^{(1)}_{S}$ of $\xbf^*_{S}=(x^*_{i})_{i\in S}$ is computed on $A_{SS}=[A_{ij}]_{i,j\in S}$, while the refined estimator $\hat\xbf^{(2)}_{\bS}$ of $\xbf^*_{\bS}:=(x^*_{i})_{i\in \bS}$ takes as input the estimator  $\hat\xbf^{(1)}_{S}$ and the matrix $A_{S\bS}$. This scheme avoids to have some statistical dependence between $\hat\xbf^{(1)}_{S}$ and $A_{S\bS}$. 
The estimator $\hat\xbf^{(2)}_{\bS}$ provides a localization for points indexed by $\bS$, with an error bound in $d_{\infty}$-norm.
In order to localize all the points, we repeat the process and the final estimator is obtained by carefully merging the estimations. 
These three steps are precisely described in Sections~\ref{linftyEstim}--\ref{subsection:LTSalgo}, 
after the statement of our main results.

\subsection{Main result}
In this section, we consider the following setting for the data.

\begin{setting}\label{setting:general}
Assume that the matrix $A$ of observations is given by \eref{eq:model},  with $\xbf^*\in\Ccal^n$, and $f\in  \mathcal{BL}[c_l,c_L,c_e]$, for some $c_{e}\geq 0$ and  $0 < c_l\leq c_L$ (the set of  Bi-Lipschitz functions $\mathcal{BL}[c_l,c_L,c_e]$ is introduced in Definition~\ref{defi:BL}, page~\pageref{defi:BL}). Assume also that the noise matrix $E$ follows the subGaussian errors assumption (\ref{defi:subG}).
\end{setting}

In this setting, the estimator \eref{def:x:gluage} described in the next subsections fulfills the following risk bound.

\begin{thm}\label{thm:principal}
Assume that the data are generated according to the Setting \ref{setting:general} above.
Then, there exists a constant $C_{lLe}>0$ depending only on $c_{e}$, $c_{l}$ and $c_{L}$ such that, with probability at least $1-5/n^2$, there exists a representation $(\xbf,f)$ in the set  $\Rcal_{\bPi_{n}}[F,c_{l},c_{L},c_{e}]$ defined in (\ref{defi:Rpin}) such that the estimator \eref{def:x:gluage} fulfills
\begin{equation}\label{eq:main:thm}
d_{\infty}(\hat \xbf,\xbf) \leq C_{lLe} \left(\min_{(\xbf',f')\in \Rcal[F,c_{l},c_{L},c_{e}]} d_{\infty}(\xbf',\bPi_{n})+ \sqrt{\log(n)\over n}\right)\enspace ,
\end{equation}
with $\Rcal[F,c_{l},c_{L},c_{e}]$ defined by (\ref{defi:R}).
\end{thm}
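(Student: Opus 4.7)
My plan is to follow the two-stage Localize-and-Refine decomposition that the authors outline just before the theorem, carefully tracking how the identifiability discussion from Section~\ref{sec:identif} enters each step. Condition on a single random split $(S,\bS)$ with $|S|=n_0=n/4$ first, to get a clean statement, and then re-inject the sample-splitting/merging ingredient. Because the noise is subGaussian(1) and the entries of $A_{SS}$ are independent of the entries of $A_{S\bS}$ once $S$ is drawn, the second step can treat $\hxbf^{(1)}_{S}$ as a deterministic input when computing tail bounds for the refinement.

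First I would prove a \emph{global} guarantee for $\hxbf^{(1)}_{S}$ on the sub-sample $S$: with probability at least $1-O(n^{-2})$, there exists a representation $(\xbf^{\circ},f^{\circ})\in\Rcal_{\bPi_{n_0}}[F_{SS}]$ and an orthogonal $Q_{S}\in\Ocal$ such that
\begin{equation*}
 d_{1}(\hxbf^{(1)}_{S},\,Q_{S}\xbf^{\circ}_{S}) \;\le\; C_{lLe}\Bigl(n\cdot d_{\infty}(\xbf^{\circ}_{S},\bPi_{n_0}) + \sqrt{n\log n}\Bigr).
\end{equation*}
This is the content of the Localize subsection, proven by controlling $\|A_{SS}-F_{SS}\|$-type fluctuations with \eref{defi:subG} and comparing an empirical loss to its bi-Lipschitz deterministic version via \eref{cond:lipsch}--\eref{cond:lipschLower}. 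Note that $\hxbf^{(1)}_{S}\in\bPi_{n_0}$ by construction, so only the proximity of $\xbf^{\circ}_{S}$ to $\bPi_{n_0}$ (which is bounded by the oracle term on the right-hand side of \eref{eq:main:thm} through Proposition~\ref{prop:representative}) enters the bound.

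Next I would prove the refinement guarantee: conditional on $S$ and on the high-probability event from the previous step, for the estimator $\hxbf^{(2)}_{\bS}$ obtained point-by-point from $A_{S\bS}$ (the step producing \eref{def:x:gluage}), with probability at least $1-O(n^{-2})$,
\begin{equation*}
 d_{\infty}\bigl(\hxbf^{(2)}_{\bS},\,Q_{S}\xbf^{\circ}_{\bS}\bigr) \;\le\; C_{lLe}\left(\frac{d_{1}(\hxbf^{(1)}_{S},Q_{S}\xbf^{\circ}_{S})}{n}\vee\sqrt{\frac{\log n}{n}}\right).
\end{equation*}
The heart of this step is a local quadratic lower bound on the population loss $y\mapsto \sum_{i\in S}(f(x^{\circ}_{i},y)-A_{ij})^{2}$ near $y=x^{\circ}_{j}$, coming from \eref{cond:lipschLower} together with the well-spreadness of $\hxbf^{(1)}_{S}$ (which is in $\bPi_{n_0}$, hence uniform on $\Ccal$). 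The deviation of the empirical loss from its mean is controlled uniformly over $y\in\Ccal$ via a chaining / union-bound argument over an $O(n^{-2})$-net of $\Ccal$, using the subGaussian assumption on each row $(E_{ij})_{i\in S}$ of size $n_{0}$. Combining the two displays yields $d_{\infty}(\hxbf^{(2)}_{\bS},Q_{S}\xbf^{\circ}_{\bS})\lesssim d_{\infty}(\xbf^{\circ}_{S},\bPi_{n_0})+\sqrt{\log n/n}$.

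The final stage is the merging of several independent sample splits, producing an estimator for \emph{all} indices and not only those in $\bS$. I would repeat the split $O(\log n)$ times so that, with probability $1-O(n^{-2})$, every index $i\in[n]$ lies in $\bS$ for at least one split. The complication is that each split delivers its estimate modulo an unknown orthogonal transformation $Q_{S}$. I would align the splits using a pivot split and Proposition~\ref{prop:representative} applied on the overlap of $\bS$ and a reference index set: the pairwise comparison between two splits' estimates on that overlap yields an orthogonal realignment with error of order $\sqrt{\log n/n}$, because both estimates are themselves within $O(\sqrt{\log n/n})$ of a common bi-Lipschitz representation. After alignment, a coordinate-wise median/mean of the candidate estimates produces $\hxbf$, and a union bound over the splits keeps the overall failure probability below $5/n^{2}$. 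A last appeal to Proposition~\ref{prop:representative} transfers the bound from $(\xbf^{\circ},f^{\circ})\in\Rcal_{\bPi_{n_0}}[F_{SS}]$ to an element $(\xbf,f)\in\Rcal_{\bPi_{n}}[F,c_{l},c_{L},c_{e}]$, yielding exactly \eref{eq:main:thm}.

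The main obstacle I expect is the alignment in the merging step: because the splits are only coupled through the (unobserved) truth $\xbf^{*}$, we need to translate each $Q_{S_{t}}$ into a common orthogonal frame without losing more than $\sqrt{\log n/n}$ in $d_{\infty}$. This forces the uniform spread assumption (through $d_{\infty}(\xbf^{\circ},\bPi_{n})$) to be genuinely used, and requires Proposition~\ref{prop:representative} to be invoked with the explicit constant $C_{lLe}$ so that no extra accumulation of constants occurs when aligning $O(\log n)$ splits. The rest of the argument is essentially deterministic bi-Lipschitz bookkeeping plus standard subGaussian concentration.
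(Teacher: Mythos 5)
Your two–stage skeleton (a $d_1$-guarantee for $\hxbf^{(1)}_S$, pointwise refinement to $d_\infty$ on $\bS$, then merging of splits) is indeed the paper's architecture, but several of the steps you delegate to side arguments have genuine gaps. The most important one is the bias bookkeeping across sub-samples. Both the localization and refinement bounds (Propositions~\ref{l1bound} and~\ref{prop:LHO:perf}) are driven by $d_{\infty}(\xbf^*_S,\bPi_{n_0})$, and the theorem needs this quantity to be comparable to $\min_{(\xbf',f')\in\Rcal[F]}d_\infty(\xbf',\bPi_n)+\sqrt{\log(n)/n}$. You delegate this to Proposition~\ref{prop:representative}, but that proposition compares two representations of the \emph{same} matrix on the \emph{same} index set; it says nothing about the restriction of a configuration to a random quarter of the indices, and the needed comparison is false for an adversarial $S$. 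The paper proves it (Lemma~\ref{cl:equivBias:bis}) by a hypergeometric concentration argument, uniform over arcs, that genuinely uses the randomness of $S$; nothing in your plan replaces it. Relatedly, working with a minimizing representation $(\xbf^{\circ},f^{\circ})$ of the submatrix $F_{SS}$ and later ``transferring'' to an element of $\Rcal_{\bPi_n}[F]$ is unsupported: a representation of $F_{SS}$ need not extend to a representation of $F$, and Proposition~\ref{prop:representative} supplies no such extension. Worse, in this paper the inequality \eqref{eq:identif:Q} is itself deduced from Corollary~\ref{cor:thm:bis} (Appendix~\ref{sec:prop_representative:deuxieme_preuve}), so invoking it inside the proof of the theorem is circular unless you give it an independent proof; the paper instead proves Corollary~\ref{cor:thm:bis} directly for the fixed true representation $(\xbf^*,f)$ and never uses Proposition~\ref{prop:representative} in this proof.

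The merging step also hides the real difficulty. Your claim that agreement of the two estimates on the overlap yields an orthogonal realignment with error $O(\sqrt{\log(n)/n})$ fails precisely when $Q_1^{-1}\widehat{Q}Q_2$ is a reflection: agreement on the overlap points does not control the map on the rest of $\Ccal$ unless those points are well spread, and the paper's proof closes this case by showing that otherwise the overlap points lie in two arcs symmetric about the reflection axis, forcing $d_\infty(\xbf^*_{\bS\cap\bS'},\bPi_{2n_0})\geq \pi/8$, which contradicts spreadness (again via the random-subset lemma). You acknowledge that spreadness must be ``genuinely used'' but give no mechanism. Moreover, your $O(\log n)$ independent splits are unnecessary (two splits with $S'\subset\bS$ cover $[n]$ deterministically, since $S\subset\bS'$) and a union bound over $O(\log n)$ splits overshoots the stated $5/n^2$ failure probability. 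Finally, your justification of the refinement via a ``local quadratic lower bound on the population loss $y\mapsto\sum_{i\in S}(f(x^{\circ}_i,y)-A_{ij})^2$'' does not match the estimator: \eqref{def:estim:Linfini} minimizes $z\mapsto \langle A_{i,S}, D(z,\hxbf^{(1)}_S)\rangle$, the function $f$ is unknown so your loss cannot even be formed, and the paper explicitly notes (Appendix~\ref{app:central}) that for $c_e>0$ and noisy data the criterion admits no simple local quadratic lower bound; the actual argument partitions the circle around $x^*_i$ and exploits the symmetry of the (near-)regular grid $\hxbf^{(1)}_S$, together with a perturbation bound turning approximate distance matrices into approximate positions for Step~1.
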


We emphasize that the estimator \eref{def:x:gluage}  has no tuning parameter. In particular, it does not depend on the unknown constants $c_{e}\geq 0$ and  $0 < c_l\leq c_L$. The first term in the right-hand side of \eref{eq:main:thm} can be assimilated to a bias term, which stems from the bias of the estimator \eref{def:x:gluage} towards regular positions on $\Ccal$. The second term in the right-hand side of \eref{eq:main:thm} is a variance-type term. We observe that, if there exists $c_{a}>0$ such that
\begin{equation}\label{assump1:unif}
\min_{(\xbf',f')\in \Rcal[F,c_{l},c_{L},c_{e}]} d_{\infty}(\xbf',\bPi_{n})\leq c_{a} \sqrt{\frac{\log(n)}{n}}\enspace ,
\end{equation}
then $d_{\infty}(\hat \xbf,\xbf)=O\pa{ \sqrt{{\log(n)}/{n}}}$ with high probability. 
Such a setting arises for example when the latent positions have been sampled uniformly on the sphere, see Corollary~\ref{coro:iid:perf}.
This $\sqrt{{\log(n)}/{n}}$ rate is shown to be minimax optimal in Section~\ref{sec:minimax}. 

We  also give an explicit control for any representative $(\xbf,f)\in$ $\Rcal[F,c_{l},c_{L},c_{e}]$.

\begin{thm}\label{cor:thm:bis}
Under the Setting \ref{setting:general},  there exists a constant $C'_{lLe}>0$ depending only on $c_{e}$, $c_{l}$ and $c_{L}$, such that, with probability at least $1-5/n^2$,  for any $(\xbf,f)\in  \Rcal[F,c_{l},c_{L},c_{e}]$, the estimator \eref{def:x:gluage} fulfills
\begin{equation}\label{eq:main:thm:bis}
\min_{Q\in\Ocal}d_{\infty}(\hat \xbf,Q\xbf) \leq C'_{lLe} \left(d_{\infty}(\xbf,\bPi_{n})+ \sqrt{\log(n)\over n}\right)\enspace .
\end{equation}
\end{thm}

We emphasize that the above statement holds for {\it any} representative $(\xbf,f)\in  \Rcal[F,c_{l},c_{L},c_{e}]$.
Before moving to the description of the estimator \eref{def:x:gluage}, let us give two important instantiations of Theorem~\ref{thm:principal} and \ref{cor:thm:bis}.

\paragraph{Latent model with uniform sampling on $\Ccal$.}
Let us consider the latent model $F_{ij}=f(x^*_{i},x^*_{j})$ when the positions $x^*_1,\ldots,x_{ n}^*$ have been sampled independently and uniformly on $\mathcal{C}$, as in the graphon model. In this case, the Assumption~\eqref{assump1:unif} is satisfied with high probability; see  Appendix~\ref{append:coro} for a proof. We then derive  the next result from Theorem~\ref{cor:thm:bis}.

\begin{cor}\label{coro:iid:perf} 
Assume that  the latent positions $x^*_1,\ldots,x^*_{ n}$ have been sampled i.i.d.\ uniformly  on $\mathcal{C}$ and that $f\in \mathcal{BL}[c_l,c_L,c_e]$.
Then, with probability higher than $1-7/n^{2}$, we have
 \[
  \min_{Q\in\Ocal} d_{\infty}(\hat{{\bf x}},Q{\bf x}^*) \leq C_{lLe} \,  \sqrt{\frac{\log(n)}{n}}\enspace,
 \]
 for some constant $C_{lLe}>0$ depending only on $c_{e}$, $c_{l}$ and $c_{L}$.
\end{cor}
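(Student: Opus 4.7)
The plan is to derive Corollary~\ref{coro:iid:perf} as an immediate consequence of Corollary~\ref{cor:thm:bis} applied to the true representative $({\bf x}^*,f)\in\Rcal[F,c_l,c_L,c_e]$, once we establish that i.i.d.\ uniform latent points are, with high probability, at $d_\infty$-distance $O(\sqrt{\log(n)/n})$ from $\bPi_n$.

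First I would reduce the spreading statement to a classical concentration bound on uniform order statistics. Let $u_j:=\underline{x^*_j}\in[0,2\pi)$ denote the argument of $x^*_j$: under the uniform sampling assumption, the $u_j$'s are i.i.d.\ uniform on $[0,2\pi)$. Let $\pi\in\Sigma_n$ be the (almost surely unique) permutation that sorts them, so that $U_{(i)}:=u_{\pi(i)}$ satisfies $U_{(1)}\leq\ldots\leq U_{(n)}$. The Dvoretzky--Kiefer--Wolfowitz inequality applied to the empirical CDF of the rescaled variables $u_j/(2\pi)$ yields, with probability at least $1-2/n^2$,
\[
\max_{1\leq i\leq n}\left|U_{(i)}-\frac{2\pi i}{n}\right|\leq 2\pi\sqrt{\frac{\log(n)}{n}}.
\]
On this event, the $n$-tuple ${\bf y}:=(e^{\iota 2\pi\pi^{-1}(j)/n})_{j=1}^n$ lies in $\bPi_n$ and, by the definition~\eqref{eq:deodesic} of the geodesic distance, satisfies $d(x^*_j,y_j)\leq 2\pi\sqrt{\log(n)/n}$ for every $j$ (the bound is well below $\pi$ for $n$ large, so no wrap-around issue arises), hence $d_\infty({\bf x}^*,\bPi_n)\leq 2\pi\sqrt{\log(n)/n}$.

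Then I would invoke Corollary~\ref{cor:thm:bis} with $({\bf x},f)=({\bf x}^*,f)$, which on an event of probability at least $1-5/n^2$ yields
\[
\min_{Q\in\Ocal}d_\infty(\hxbf,Q{\bf x}^*)\leq C'_{lLe}\pa{d_\infty({\bf x}^*,\bPi_n)+\sqrt{\log(n)/n}}.
\]
A union bound over the DKW event and the event of Corollary~\ref{cor:thm:bis} delivers the claim with probability at least $1-7/n^2$ and constant $C_{lLe}=(1+2\pi)C'_{lLe}$. I do not anticipate any serious obstacle: the non-trivial algorithmic analysis is already packaged in Corollary~\ref{cor:thm:bis}, and the remaining step is a standard concentration inequality. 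The only mild subtlety is to verify that the randomness of the latent positions (entering through DKW) is independent of the sub-Gaussian noise $E$ (entering through Corollary~\ref{cor:thm:bis}), so the two high-probability events can indeed be intersected cleanly.
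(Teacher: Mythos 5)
Your proof is correct, and its skeleton is the one the paper uses: establish $d_{\infty}(\xbf^*,\bPi_n)\lesssim\sqrt{\log(n)/n}$ with probability $1-2/n^2$ via DKW, then plug this into Corollary~\ref{cor:thm:bis} (applied conditionally on $\xbf^*$, which is legitimate since the noise is sub-Gaussian conditionally on the latent positions) and take a union bound to get $1-7/n^2$. The only difference is how the DKW bound is converted into the spreading bound: the paper applies DKW to interval counts, controls $\sup_{I}|V_I(\xbf^*)|$ where $V_I(\xbf^*)=N_I(\xbf^*)-n|I|/(2\pi)$, and then invokes its Lemma~\ref{equivAlphaVi} (the equivalence between $\sup_I|V_I|$ and $n\,d_{\infty}(\cdot,\bPi_n)$, which it needs anyway for the sample-splitting analysis), whereas you apply DKW directly to the uniform order statistics and exhibit the explicit regular $n$-tuple ${\bf y}\in\bPi_n$ obtained from the sorting permutation, with $d(x^*_j,y_j)\le|U_{(\pi^{-1}(j))}-2\pi \pi^{-1}(j)/n|$ (no wrap-around care is even needed, since the geodesic distance is always dominated by the difference of arguments). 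Your route is more self-contained and elementary for this corollary; the paper's route simply reuses machinery it already built, and both yield the same constant-order bound $d_{\infty}(\xbf^*,\bPi_n)\le C\sqrt{\log(n)/n}$.
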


\paragraph{Toroidal seriation.}
Let us consider the toroidal seriation problem introduced in Example 4 of Section~\ref{sect11:intro}.  In this setting,  the set $[n]$ is considered as a torus, endowed with the torus distance $d(i,j)=\min(|j-i|, |n+i-j|)$ for any $1\leq i \leq j\leq n$, and the matrix $F$ is a pre-toroidal R-matrix. Let  $\si^*\in \Sigma_{ n}$ be a permutation such that $[F_{\sigma^*(i)\sigma^*(j)}]_{i,j}$ is a toroidal R-matrix. 
Our goal is estimate $\si^*$ from the noisy observation $A=F+E$. 
As explained in the introduction, we can recast this problem as a localization problem in a latent space model on the regular grid $\Ccal_{n}$. 
Assimilating points on the sphere $\Ccal$ to unit norm complex numbers, we define the vector ${\bf x}^*\in \bPi_n$ by $x^*_j= \exp(\iota 2\pi \sigma^*(j)/n )$,  and we define $f_{n}:\Ccal_{n}\times \Ccal_{n}\to \R$ by $f(x^*_i,x^*_j)=F_{ij}$. The problem of estimating $\sigma^*$ then amounts to estimating $\xbf^*$ in the latent space model $A_{ij}=f(x^*_i,x^*_j)+E_{ij}$ on $\Ccal_{n}$. We can apply our estimator~\eref{def:x:gluage} and get an estimation
$\hat{\bf x}\in \Ccal^n$. From this estimation, we can derive the map  $\hat{\sigma}: [ n]\rightarrow [ n]$ by setting $\hat{\sigma}_i = \lceil n\underline{\hat{x}}_i/2\pi \rceil$ for $i=1,\ldots, n$, where  $\underline{\hat{x}}_i\in (0,2\pi]$ is the argument of $\hat{x}_i$ and $\lceil z\rceil$ is the upper integer part of $z$. 
While the map $\hat\sigma$ may not be a permutation in $\Sigma_{n}$, it is an estimation of $\sigma^*$ and we can translate Theorem~\ref{cor:thm:bis} into a $\ell^{\infty}$-error between the two. 

\begin{cor}\label{cor:seriation}
Assume that $[F_{\sigma^*(i)\sigma^*(j)}]_{i,j=1,\ldots,n}$ fulfills the  bi-Lipschitz condition with respect to the torus distance:
\begin{align*}
    |F_{\sigma^*(i)\sigma^*(j)}-F_{\sigma^*(i)\sigma^*(k)}| &\leq c_L \frac{2\pi}{n}d(j,k) + \ep_n \\
   F_{\sigma^*(i)\sigma^*(k)}-F_{\sigma^*(i)\sigma^*(j)} &\geq c_l \frac{2\pi}{n} \big{(}d(i,j) - d(i,k)\big{)} -\ep_n\quad \text{if }\,d(i,j)\geq d(i,k)\enspace . 
\end{align*}
Then, there exists a constant $C_{lLe}>0$, depending only on $c_{e}$, $c_{l}$ and $c_{L}$, such that, with probability at least 
$1-5/n^2$, we have
\begin{equation}\label{borne:seriation}
    \min_{\tau \in \Gamma_n} \ \max_{i \in [ n]}\ |\tau \circ \si^*(i)-\hat{\si}(i)| \, \leq C_{lLe} \sqrt{n\log(n)}\enspace ,
\end{equation}
where $\Gamma_n$ is the subgroup of permutations of $\ac{1,\ldots,n}$ generated by the circular and reverse permutations.
\end{cor}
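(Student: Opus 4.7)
The plan is to recast the toroidal seriation problem as a latent-space localization problem on the unit sphere $\Ccal$, apply Corollary~\ref{cor:thm:bis}, and translate the resulting geodesic bound into the claimed integer $\ell^\infty$ bound. First, using $\xbf^* \in \bPi_n$ defined by $x_j^* = \exp(\iota 2\pi\sigma^*(j)/n)$, the geodesic distance on $\Ccal_n$ satisfies $d(x_i^*, x_j^*) = (2\pi/n)\, d(\sigma^*(i),\sigma^*(j))$ (with $d$ the torus distance on $[n]$ on the right-hand side), so the assumed bi-Lipschitz conditions on $F$ translate verbatim into the bi-Lipschitz conditions of Definition~\ref{defi:BL} on $f_n$ with respect to the geodesic distance, with the same constants $c_l, c_L, c_e$. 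I then extend $f_n$ to a symmetric function $f : \Ccal^2 \to [0,1]$ lying in $\mathcal{BL}[c_l, C c_L, c_e]$ via a bilinear interpolation between neighbouring grid points (any Lipschitz extension preserving monotonicity works), which places the model into Setting~\ref{setting:general}.

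Since $\xbf^* \in \bPi_n$, the bias term in Corollary~\ref{cor:thm:bis} vanishes, so with probability at least $1 - 5/n^2$ there exists $Q \in \Ocal$ with $d_\infty(\hat\xbf, Q\xbf^*) \leq C'_{lLe}\sqrt{\log(n)/n}$. To replace $Q$ by an element of $\Gamma_n$, I decompose $Q = R_\theta \circ \rho^{\epsilon}$, with $R_\theta$ the rotation by $\theta \in [0, 2\pi)$, $\rho$ the complex conjugation, and $\epsilon \in \{0, 1\}$. Let $k_0$ be the integer closest to $n\theta/(2\pi)$, and take $\tau \in \Gamma_n$ to be the circular shift by $k_0$, further composed with the reverse permutation if $\epsilon = 1$. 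The grid point $\tilde y_i = \exp(\iota 2\pi \tau\sigma^*(i)/n) \in \Ccal_n$ then lies within geodesic distance $\pi/n$ of $Q x_i^*$, so by the triangle inequality $d(\hat x_i, \tilde y_i) \leq C''_{lLe}\sqrt{\log(n)/n}$ for every $i$.

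Translating this geodesic bound into arguments modulo $2\pi$ and multiplying by $n/(2\pi)$ yields a bound of order $\sqrt{n \log n}$ on the circular distance in $\mathbb{R}/n\mathbb{Z}$ between $n\underline{\hat x}_i/(2\pi)$ and $\tau\sigma^*(i)$. Combined with the definition $\hat\sigma(i) = \lceil n\underline{\hat x}_i/(2\pi)\rceil$ and $|\hat\sigma(i) - n\underline{\hat x}_i/(2\pi)| \leq 1$, this produces the desired $\ell^\infty$ bound.

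The main obstacle is this last step: passing from a circular bound on $\mathbb{Z}/n\mathbb{Z}$ to an absolute-value bound on $\{1,\ldots,n\}$ requires avoiding wrap-around between $\hat\sigma(i)$ and $\tau\sigma^*(i)$. I would address this by performing a further small circular shift of $\tau$ within $\Gamma_n$, chosen so that the cut separating $n$ and $1$ does not straddle any pair $(\hat\sigma(i), \tau\sigma^*(i))$. Exploiting that $\sigma^*$ is a permutation, only $O(\sqrt{n\log n})$ indices $i$ have $\tau\sigma^*(i)$ in the boundary window of width $O(\sqrt{n\log n})$ around $1$ and $n$, and a pigeonhole argument on the $n$ available integer shifts provides an alignment of the cut that simultaneously controls the absolute difference for every $i$, completing the proof.
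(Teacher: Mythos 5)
Your overall route is the same as the paper's (which only sketches this corollary in one sentence): identify $\xbf^*\in\bPi_n$ with $x^*_j=e^{\iota 2\pi\sigma^*(j)/n}$, extend $f_n$ from $\Ccal_n\times\Ccal_n$ to a function in $\mathcal{BL}$ on all of $\Ccal$, apply Corollary~\ref{cor:thm:bis} with zero bias term, round the optimal $Q\in\Ocal$ to a grid isometry, and translate the geodesic $d_\infty$ bound into a bound on indices. Up to checking that your extension really preserves the lower-Lipschitz condition \eqref{cond:lipschLower} (a one-dimensional monotone interpolation does, so this is a fixable detail), everything until the last step is in line with the intended argument.

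The genuine gap is your final wrap-around repair, and it cannot be patched as described. A circular shift of $\tau$ does not ``move the cut'' between $n$ and $1$: it moves the targets $\tau\sigma^*(i)$ while the values $\hat\sigma(i)$ stay fixed, so it changes all the integer differences rather than the location of the discontinuity. Concretely, the event furnished by Corollary~\ref{cor:thm:bis} does not exclude the following configuration (take $Q=\mathrm{Id}$): one index $i$ with $\sigma^*(i)=n$ and $\underline{\hat x}_i$ slightly above $0$, so $\hat\sigma(i)=1$; another index $i'$ with $\sigma^*(i')=1$ and $\underline{\hat x}_{i'}$ slightly below $2\pi$, so $\hat\sigma(i')=n$ (both involve geodesic errors of order $1/n\ll\sqrt{\log(n)/n}$); and all remaining coordinates estimated exactly. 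The exactly estimated bulk, whose values cover $\{2,\dots,n-1\}$, forces any competitive $\tau\in\Gamma_n$ to be a shift by $s\in\{-1,0,1\}$ (any larger shift, and any reflection, already creates an integer error of order $n$ on some well-estimated index), and for each such $s$ one of the two boundary indices has $|\tau\sigma^*(\cdot)-\hat\sigma(\cdot)|\geq n-2$. So no pigeonhole over the $n$ shifts can succeed, and more generally the literal absolute-value bound cannot be deduced from the $d_\infty$ guarantee alone. The translation is immediate, however, if the error on indices is measured with the toroidal distance $\min(|a-b|,\,n-|a-b|)$, which is exactly what the geodesic bound delivers after rounding $Q$ to a grid rotation or reflection; that is the form in which this step should be carried out (the paper's one-line proof does not address this subtlety either), and with that reading your first three steps complete the argument.
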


To prove \eref{borne:seriation}, we extend $f_{n}$
 defined on $\Ccal_n \times \Ccal_n$  to  $f:\mathcal{C}\times \mathcal{C}\to \R$ belonging to $\mathcal{BL}[c_l,c_L,c_e]$,
 and apply Theorem~\ref{cor:thm:bis}.  The minimum over $\Gamma_n$ in the left-hand side of \eref{borne:seriation} cannot be avoided, since $\sigma^*$ is identifiable from $F$ only up to permutations in $\Gamma_n$.   Furthermore, the $\sqrt{n\log(n)}$ rate for the toroidal seriation problem can be shown to be minimax in the above set-up, by combining Theorem~\ref{thm:lowerBound} page \pageref{thm:lowerBound} and the correspondence between the $n$-tuples ${\bf x}^* \in \bPi_n$ and the permutations $\si^*$ of $[n]$.

In a recent work, Janssen and Smith~\cite{janssen2020reconstruction} consider the related seriation problem for R-matrices (Example 3 in the introduction),
in a geometric setting where $F_{\sigma^*_i,\sigma^*_j}= g(|i-j|/n)$ for some unknown permutation  $\sigma^*$, and some unknown function $g$. Hence, in addition to be a pre-R matrix, $F$  is also a Toeplitz matrix. Under additional assumptions on the squared matrix $(F_{\sigma^*_i,\sigma^*_j})^2$, they establish that an algorithm based on a (thresholded version) of the square matrix of observations  $A^2$, achieves, with high probability, an error bound 
$$ \min_{\tau\in \Gamma'_{n}} \ \ \max_{i\in[n]} \, |\tau \circ \si^*(i) - \hat{\si}(i)| \lesssim \sqrt{n} \,(\log(n))^5,$$ 
where $\Gamma_{n}'$ gathers the identity and the reverse permutations. Their assumptions are not comparable to ours, but their rates are similar (up to log factors). Our results then complement this work, by providing another set of conditions on $F$, under which the hidden permutation $\sigma^*$ can be recovered at the rate $\sqrt{n\log(n)}$.

\paragraph{Organization of Section \ref{seriation:section:resul}.}

The description of the estimator \eref{def:x:gluage} is organized as follows. The refinement step $\hat\xbf^{(2)}_{\bS}$ is described in  Section~\ref{linftyEstim}. This step can take as input any initial estimator $\hxbf^{(1)}_{S}$  based on $A_{SS}$ and taking values in $\bPi_{|S|}$. When this initial estimator fulfills with high-probability
\begin{equation}\label{eq:hx1:expl}
d_{1}(\hxbf^{(1)}_{S},\xbf_{S}) \leq C_{lLe} \pa{n\min_{(\xbf',f')\in \Rcal[F]}d_{\infty}(\xbf'_{S},\bPi_{|S|})+\sqrt{n\log(n)}},
\end{equation}
for some $(\xbf,f)\in \Rcal_{\bPi_{n}}[F]$, 
then the refined estimator $\hat\xbf^{(2)}_{\bS}$ is shown to fulfill with high-probability
$$d_{\infty}(\hxbf^{(2)}_{\bS},\xbf_{\bS}) \leq C'_{lLe} \pa{\min_{(\xbf',f')\in \Rcal[F]}d_{\infty}(\xbf'_{S},\bPi_{|S|})+\sqrt{\log(n)\over n}}.$$
In order to get an estimator satisfying the risk bound \eref{eq:main:thm}, we then need an initial estimator $\hxbf^{(1)}_{S}$ fulfilling \eref{eq:hx1:expl}. Such an estimator is provided in Section~\ref{estimL1}. A computationally efficient alternative, based on the spectral decomposition of $A$ is proposed in Section~\ref{sec:geometric}. To get an estimator of the whole $n$-tuple of latent positions, the data splitting is repeated and a final merging step is needed to build $\hxbf$. This final step is described in
Section~\ref{subsection:LTSalgo}.

\subsection{Step 2: refined estimation}\label{linftyEstim}
We start by describing the refinement step which converts an initial estimator with an error bound in $d_{1}$-distance into a refined  estimator with an error bound in $d_{\infty}$-distance. For a subset $S\subset \ac{1,\ldots,n}$ of cardinality $|S|=n_0=n/4$, the refinement step takes as input any initial estimator $\hxbf^{(1)}_{S}$ of $\xbf^*_{S}$  based on $A_{SS}$ and taking values in $\bPi_{n_0}$. It outputs an estimator $\hxbf^{(2)}_{\bS}$ of $\xbf^*_{\bS}$. 

We denote by $D(z, \hxbf^{(1)}_S)= [d(z , \hat x^{(1)}_j)]_{j\in S}$  the vector of distances between $z\in \Ccal$ and the components of the $n_0$-tuple $\hxbf^{(1)}_S$.
The refined estimator $\hat\xbf^{(2)}_{\bS}$ is obtained by solving 
\begin{equation}\label{def:estim:Linfini}
    \hat{x}^{(2)}_i \in \argmin_{z \in \Ccal_{n_0}} \,  \big\langle A_{i , S}, D(z, \hxbf^{(1)}_S)\big\rangle,\quad\textrm{for each}\  i\in\bS,
\end{equation}
where $\Ccal_{n_0}=\{1, e^{\iota 2\pi/n_0},\ldots, e^{\iota 2\pi(n_0-1)/n_0}\}$ is the regular grid of cardinality $n_0$ on $\Ccal$. The principle underlying the definition \eref{def:estim:Linfini} is that $d(\hat{x}_i^{(2)}, \hat{x}^{(1)}_j)$ should be small when $A_{ij}$ is large, and vice-versa. Hence, for any $x_{i}^*\in \Ccal_{n_0}$ and any matrix $A$ with $A_{ij}$ decreasing with $d(x^*_{i},x_{j}^*)$, the minimum is achieved in $x_{i}^*$ when $\hat \xbf_{S}^{(1)}=\xbf^*_{S}\in \bPi_{n_0}$, see Appendix~\ref{app:central} for details.  
Since $A$ is a noisy version of a such a matrix, and since $d(x_{i}^*,\Ccal_{n_0}):=\min_{y\in \Ccal_{n_0}} d(x_{i}^*,y) =O(1/n)$,  the estimator $ \hat{x}^{(2)}_i$ should remain close to $x^*_{i}$ when $\hat \xbf_{S}^{(1)}$  is close to $\xbf^*_{S}$.
The next proposition quantifies this statement, by providing a uniform error bound for $\hat{{\bf x}}^{(2)}_{\bS}$ in terms of the  error bound  $d_{1}(\hxbf^{(1)}_S, Q{\bf x}^*_{S})$ for the initial estimator.

\begin{prop}\label{prop:LHO:perf} 
Let $(\xbf^*,f)\in  \Rcal[F,c_{l},c_{L},c_{e}]$ for some $c_{e}\geq 0$ and  $0 < c_l\leq c_L$.
 Let $S\subset \ac{1,\ldots,n}$ be of cardinality $|S|=n_0=n/4$, and  $\hxbf^{(1)}_{S}$ be any initial estimator of $\xbf^*_{S}$  based on $A_{SS}$ and taking values in $\bPi_{n_0}$. Then, there exists a constant $C_{lLe}$ depending only on $c_{l}$, $c_{L}$ and $c_{e}$, such that, conditionally on $A_{SS}$, and for all $Q\in \Ocal$,  the estimator \eref{def:estim:Linfini}  fulfills 
 with probability at least $1-1/n^2$
\begin{equation}\label{eq:upper_d_infty}
  d_{\infty}(\hat{{\bf x}}^{(2)}_{\overline{S}},  Q{\bf x}^*_{\overline{S}})\leq C_{lLe}\pa{ d_{\infty}({\bf x}^*_S,\bPi_{n_0}) + \frac{  d_{1}(\hxbf^{(1)}_S, Q{\bf x}^*_{S})}{n} + \sqrt{\frac{\log(n)}{n}}}\enspace .  
\end{equation}
\end{prop}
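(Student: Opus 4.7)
Fix $i\in\bS$ and $Q\in\Ocal$, and write $\delta := d(\hat x_i^{(2)}, Qx_i^*)$. Let $\tilde z\in\Ccal_{n_0}$ be a grid point closest to $Qx_i^*$, so $d(\tilde z, Qx_i^*)\le 2\pi/n_0\le 8\pi/n$. Splitting $A_{ij}=F_{ij}+E_{ij}$ in the minimization \eref{def:estim:Linfini} and comparing the values at $\hat x_i^{(2)}$ and $\tilde z$ yields
\[
\sum_{j\in S} F_{ij}\bigl[d(\hat x_i^{(2)},\hat x_j^{(1)}) - d(\tilde z,\hat x_j^{(1)})\bigr] \;\le\; -\sum_{j\in S} E_{ij}\bigl[d(\hat x_i^{(2)},\hat x_j^{(1)}) - d(\tilde z,\hat x_j^{(1)})\bigr].
\]
Because $i\in\bS$, the noise row $E_{i,S}$ is independent of $\hxbf^{(1)}_S$ (which is a function of $A_{SS}$). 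Conditionally on $A_{SS}$, for each fixed $z\in\Ccal_{n_0}$ the reverse triangle inequality $|d(z,\hat x_j^{(1)})-d(\tilde z,\hat x_j^{(1)})|\le d(z,\tilde z)$ combined with \eref{defi:subG} makes the corresponding noise increment sub-Gaussian with variance proxy at most $n_0\,d(z,\tilde z)^2$. A union bound over $z\in\Ccal_{n_0}$ (at most $n$ points) and over $i\in\bS$ then yields, with probability at least $1-1/n^2$,
\[
\Bigl|\sum_{j\in S} E_{ij}\bigl[d(\hat x_i^{(2)},\hat x_j^{(1)}) - d(\tilde z,\hat x_j^{(1)})\bigr]\Bigr| \;\le\; C\sqrt{n\log n}\,d(\hat x_i^{(2)},\tilde z).
\]

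\textbf{Reduction to a weighted-distance functional on $\Ccal$.} Since $F_{ij}\in[0,1]$ and $Q\in\Ocal$ is an isometry, the triangle inequality $|d(z,\hat x_j^{(1)})-d(z,Qx_j^*)|\le d(\hat x_j^{(1)},Qx_j^*)$ lower-bounds the signal side by
\[
\Phi(Q^{-1}\hat x_i^{(2)})-\Phi(Q^{-1}\tilde z) - 2\,d_1(\hxbf^{(1)}_S,Q\xbf^*_S),\qquad \text{where}\quad \Phi(w):=\sum_{j\in S} f(x_i^*,x_j^*)\,d(w,x_j^*).
\]
Setting $w_1:=Q^{-1}\hat x_i^{(2)}$, we have $d(w_1,x_i^*)=\delta$ and $d(Q^{-1}\tilde z,x_i^*)\le 8\pi/n$, so the Lipschitz upper bound \eref{cond:lipsch} yields $|\Phi(Q^{-1}\tilde z)-\Phi(x_i^*)|\le Cn(1/n+\ep_n)$. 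Combining the three displays above, the master inequality reduces to
\[
\Phi(w_1)-\Phi(x_i^*)\;\le\;C\sqrt{n\log n}\,(\delta+1/n)+2\,d_1(\hxbf^{(1)}_S,Q\xbf^*_S)+O(1+n\ep_n).
\]

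\textbf{Geometric core (the main obstacle) and conclusion.} The heart of the proof is a deterministic lower bound on the weighted-median functional,
\[
\Phi(w)-\Phi(x_i^*)\;\gtrsim\;c_l\,n\,d(w,x_i^*)\,\bigl[d(w,x_i^*)-C(d_\infty(\xbf^*_S,\bPi_{n_0})+\ep_n+1/n)\bigr]_+.
\]
The intuition: parametrize $\Ccal$ by arc length around $x_i^*$; as $w$ moves outward by $\delta$, each $j$ with $x_j^*$ on the short arc $[x_i^*,w]$ gives a positive contribution of order $f(x_i^*,x_j^*)\,\delta$ to $\Phi(w)-\Phi(x_i^*)$, summing to roughly $n\delta^2$. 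The $j$'s far from both $w$ and $x_i^*$ contribute signed linear-in-$\delta$ terms whose total would cancel exactly for a perfectly regular configuration; pairing each $x_j^*$ to its nearest element of $\bPi_{n_0}$ and invoking the bi-Lipschitz \emph{lower} bound \eref{cond:lipschLower} controls the residual by $Cn\delta(d_\infty(\xbf^*_S,\bPi_{n_0})+\ep_n)$. This purely deterministic geometric comparison on the circle is the main technical obstacle and the only place where the spreading assumption truly enters. Accepting the lemma and writing $\alpha:=d_\infty(\xbf^*_S,\bPi_{n_0})+\ep_n+1/n$, the previous reduction gives
\[
c_l\, n\,\delta\,[\delta-C\alpha]_+ \;\le\; C\sqrt{n\log n}\,\delta + 2\,d_1(\hxbf^{(1)}_S,Q\xbf^*_S)+O(1+n\ep_n),
\]
so that either $\delta\le C\alpha$, or dividing by $n\delta$ and solving the resulting quadratic in $\delta$ gives $\delta\le C_{lLe}\bigl(d_\infty(\xbf^*_S,\bPi_{n_0})+\sqrt{\log n/n}+d_1(\hxbf^{(1)}_S,Q\xbf^*_S)/n\bigr)$. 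Recalling $\ep_n=c_e\sqrt{\log n/n}$ and taking the maximum over $i\in\bS$ yields \eref{eq:upper_d_infty}.
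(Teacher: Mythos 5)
Your overall architecture (compare the criterion \eref{def:estim:Linfini} at $\hat x_i^{(2)}$ and at a grid point near $Qx_i^*$, control the noise by a union bound over $\Ccal_{n_0}$, and lower-bound the signal difference by a weighted-distance functional $\Phi$ centred at $x_i^*$ — the same functional the paper calls $L_i$) matches the paper's. But there is a genuine gap in your reduction step, and it propagates to the conclusion. To pass from $\sum_{j\in S}F_{ij}[d(\hat x_i^{(2)},\hat x_j^{(1)})-d(\tilde z,\hat x_j^{(1)})]$ to $\Phi(Q^{-1}\hat x_i^{(2)})-\Phi(Q^{-1}\tilde z)$ you swap $\hat x_j^{(1)}\leftrightarrow Qx_j^*$ \emph{inside the distances}, using only $F_{ij}\le 1$, which costs an additive $2\,d_1(\hxbf^{(1)}_S,Q\xbf^*_S)$ with no factor of $\delta$. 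Your master inequality then has the shape $n\delta[\delta-C\alpha]_+\lesssim \sqrt{n\log n}\,\delta+d_1+(1+n\ep_n)$, and solving this quadratic only gives $\delta\lesssim \alpha+\sqrt{\log n/n}+\sqrt{d_1/n}+\sqrt{\ep_n}$; "dividing by $n\delta$" does not turn $d_1/(n\delta)$ into $d_1/n$. This is strictly weaker than \eref{eq:upper_d_infty}: in the relevant regime $d_1\asymp\sqrt{n\log n}$ you obtain $(\log n/n)^{1/4}$ instead of $\sqrt{\log n/n}$. The paper avoids this loss by keeping $\hxbf^{(1)}_S$ inside the distances and instead swapping the \emph{weights}: since $\hxbf^{(1)}_S\in\bPi_{n_0}$, the multiset $\{Q^{-1}\hat x_j^{(1)}\}_j$ coincides (up to relabelling and an $O(1/n)$ adjustment) with the regular grid, so the replacement $f(x_i^*,Q^{-1}\hat x_j^{(1)})\to f(x_i^*,x_j^*)$ is controlled by the Lipschitz bound \eref{cond:lipsch} while the other factor is bounded by $|d(Q^{-1}\hat x_i^{(2)},Q^{-1}\hat x_j^{(1)})-d(x_i^*,Q^{-1}\hat x_j^{(1)})|\le\delta$; the first-stage error then enters as $\delta\,(c_L d_1+n\ep_n)$, i.e.\ multiplied by $\delta$, which is exactly what makes the quadratic yield $\delta\lesssim d_1/n+\sqrt{\log n/n}+d_\infty(\xbf^*_S,\bPi_{n_0})$. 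Without this multiplicative structure your stated conclusion does not follow from your inequality.

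Two secondary points. First, the bound $|\Phi(Q^{-1}\tilde z)-\Phi(x_i^*)|\le Cn(1/n+\ep_n)$ is both unnecessary and harmful: the weights of $\Phi$ are fixed, so the plain triangle inequality gives $|\Phi(Q^{-1}\tilde z)-\Phi(x_i^*)|\le n_0\,d(Q^{-1}\tilde z,x_i^*)=O(1)$; the spurious $n\ep_n\asymp\sqrt{n\log n}$ term alone already injects a $(\log n/n)^{1/4}$ after solving the quadratic. Second, your deterministic geometric lemma, stated with a bare positive part and no additive slack, is slightly too strong: when $c_e>0$ and the $x_j^*$ are irregular, $x_i^*$ need not minimize $\Phi$, so $\Phi(w)-\Phi(x_i^*)$ can be mildly negative while your right-hand side is zero. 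The paper's corresponding lower bounds (its Lemmas on $L_i$ and $\widetilde L_i$) carry explicit additive remainders of order $\sqrt{\log^3(n)/n}$ and $O(1+n(d_\infty(\xbf^*_S,\bPi_{n_0})+\sqrt{\log n/n})\,\delta)$ for exactly this reason; with such remainders your case analysis still goes through, but the inequality as you wrote it is not correct as stated.
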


The right-hand side of~\eqref{eq:upper_d_infty} is made of three terms. The first one is the uniform approximation error of ${\bf x}^*_{S}$ by $\bPi_{n_0}$, as defined in \eqref{alpha:def:new}. It is a bias-type term  which stems from the fact that we aim at estimating positions that are almost evenly spaced.   The second term accounts for the error of the preliminary estimator ${\bf x}^{(1)}_{S}$ in $d_1$-distance and the last-one is a variance-type term.  The proof of Proposition~\ref{prop:LHO:perf} can be found in Appendix~\ref{sect:proof:lem:perf:LHO:specificase}.

The time complexity for computing $\hat{x}^{(2)}_i$ is linear in $n_0$ and the algorithm can be parallelized for computing $\hxbf^{(2)}_{\bS}$. To decrease the time complexity, it is possible to restrict to $z\in \Ccal_{\sqrt{n_0}}$ in \eqref{def:estim:Linfini}  instead of $z\in \Ccal_{n_0}$. In that case, the proof of~\eqref{eq:upper_d_infty} still holds, with different constants.

\subsection{Step 1: initial  localization} \label{estimL1}
In view of the above Proposition \ref{prop:LHO:perf}, we seek to build  an initial estimator $\hat{{\bf x}}^{(1)}_S$ fulfilling 
\eref{eq:hx1:expl} for some $\xbf_{S}=Q\xbf_{S}^*$, with $Q\in\Ocal$. Such an estimator can be obtained by solving
\begin{equation}\label{def:estim:l1bound}
    \hat{{\bf x}}^{(1)}_S \in  \argmin_{{\bf x}_{S} \in \bPi_{n_0}} \,  \langle A_{SS}, D({\bf x}_{S})\rangle,\quad \textrm{with}\ D({\bf x}_{S})= \big{[} d(x_i , x_j) \big{]}_{i,j \in S}\enspace .
\end{equation}
The estimator $\hat{{\bf x}}^{(1)}_S$ is chosen in such a way that the distance $d(\hat{x}^{(1)}_i,\hat{x}^{(1)}_j)$ should be small when the signal $F_{ij} = f(x_i^*,x_j^*)$ is large,  and conversely, it should be large when $F_{ij}$ is  small. 
To grasp the principle underlying the definition  \eqref{def:estim:l1bound}, let us look at the noiseless geometric affine  setting, where the observations are $A_{ij}=1-\alpha d(x^*_{i},x^*_{j})$, and where the positions ${\bf x}^*_{S}$ are evenly spread, i.e.\ ${\bf x}^*_{S}\in \bPi_{n_0}$. 
Then, one readily checks that 
\[\argmin_{{\bf x}_{S} \in \bPi_{n_0}} \,  \langle A_{S S}, D({\bf x}_{S})\rangle = \underset{{\bf x}_{S} \in \bPi_{n_0}}{\textup{argmax}} \,  \langle D({\bf x}^*_{S}), D({\bf x}_{S})\rangle \enspace ,\]
whose  maximum is achieved at all ${\bf x}_{S} = Q{\bf x}^{*}_{S}$ with $Q$  any  orthogonal transformation preserving $\bPi_{n_0}$.
In other words, the estimator~\eqref{def:estim:l1bound} exactly recovers --up to distance preserving transformations-- the positions in this ideal setting.

The next proposition establishes a $d_{1}$-bound with the flavor of \eref{eq:hx1:expl} for the estimator~\eqref{def:estim:l1bound}. 

\begin{prop}\label{l1bound} Let $(\xbf^*,f)\in \Rcal[F,c_l,c_L,c_e]$ and let $S\subset\ac{1,\ldots,n}$ be a subset of cardinality $n_0=n/4$.
Then, there exists a constant $C_{lLe}$ depending only on $c_{l}$, $c_{L}$ and $c_{e}$, such that, the estimator  $\hat{{\bf x}}^{(1)}_S$ defined by \eqref{def:estim:l1bound} satisfies 
\begin{equation}\label{eq:thm:L1}
 \min_{Q\in \Ocal} d_{1}(\hat{{\bf x}}^{(1)}_S, Q{\bf x}^*_{S}) \leq C_{lLe} \pa{n  d_{\infty}({\bf x}^*_{S},\bPi_{n_0}) + \sqrt{n \log{n}}}\enspace ,
 \end{equation} 
with probability higher than $1-1/n^2$.
\end{prop}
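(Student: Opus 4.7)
My plan is to follow the classical three-step M-estimator analysis: a basic inequality from optimality, a signal (curvature) lower bound, and a uniform noise concentration; the quadratic curvature estimate in Step~2 is the main new ingredient.

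\textbf{Step 1 (basic inequality).} Fix a representative $(\xbf^*, f) \in \Rcal[F, c_l, c_L, c_e]$, and choose $\bar\xbf_S \in \bPi_{n_0}$ and $Q^* \in \Ocal$ minimizing $d_\infty(Q^*\xbf^*_S, \bar\xbf_S) \leq d_\infty(\xbf^*_S, \bPi_{n_0}) + 2\pi/n_0$. Replacing $\xbf^*$ by $Q^*\xbf^*$ (allowed by the $\Ocal$-invariance discussed in Section~\ref{sec:identif}), I may assume $Q^* = \Id$. Since $\bar\xbf_S \in \bPi_{n_0}$ and $\hat\xbf^{(1)}_S$ minimizes $\langle A_{SS}, D(\cdot)\rangle$ over $\bPi_{n_0}$, writing $A = F+E$ gives the basic inequality
$$\langle F_{SS}, D(\hat\xbf^{(1)}_S) - D(\bar\xbf_S)\rangle \;\leq\; -\langle E_{SS}, D(\hat\xbf^{(1)}_S) - D(\bar\xbf_S)\rangle.$$

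\textbf{Step 2 (signal curvature lower bound: the main obstacle).} Split $F = \tilde F + \delta F$ with $\tilde F_{ij} = f(\bar x_i, \bar x_j)$. The bias $\delta F$ is controlled by the Lipschitz condition~\eqref{cond:lipsch}, giving $\|\delta F\|_\infty \lesssim c_L d_\infty(\xbf^*_S, \bPi_{n_0}) + \varepsilon_n$ and therefore $|\langle \delta F, D(\mathbf{y}) - D(\bar\xbf_S)\rangle| \lesssim (c_L d_\infty(\xbf^*_S, \bPi_{n_0}) + \varepsilon_n)\, n\, d_1(\mathbf{y}, \bar\xbf_S)$ via $\|D(\mathbf{y}) - D(\bar\xbf_S)\|_1 \leq 2 n_0\, d_1(\mathbf{y}, \bar\xbf_S)$. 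The crux is the \emph{quadratic} curvature inequality
$$\langle \tilde F, D(\mathbf{y}) - D(\bar\xbf_S)\rangle \;\geq\; c_1\, n\, \sum_{i \in S} d(y_i, \bar x_i)^2$$
for every $\mathbf{y} \in \bPi_{n_0}$, with $c_1 = c_1(c_l) > 0$ and with the representative of the orbit $\{Q\bar\xbf_S : Q\in\Ocal\}$ chosen to minimize $\sum_i d(y_i, \bar x_i)^2$. The key mechanism is a symmetrization: writing $\tilde F_{ij} = g(d(\bar x_i, \bar x_j))$ with $g$ strictly decreasing by~\eqref{cond:lipschLower} (which yields $(g(u) - g(v))(v - u) \geq c_l (u - v)^2$), one first lower bounds the signal by $c_l \sum_{i,j}(d(y_i,y_j) - d(\bar x_i, \bar x_j))^2$ pair-by-pair; a rearrangement computation on the regular torus grid then converts this matrix sum into a coordinate-wise sum, producing the factor $n$ that reflects the averaging over all partners $j$. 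The scaling can be cross-checked on prototypes (transposition of two positions at distance $\theta$: both sides are $\asymp n\theta^2$; random permutation: both sides are $\asymp n^2$).

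\textbf{Step 3 (noise control and conclusion).} The elementary bound $|d(y_i,y_j) - d(\bar x_i, \bar x_j)| \leq d(y_i, \bar x_i) + d(y_j, \bar x_j)$ gives $\|D(\mathbf{y}) - D(\bar\xbf_S)\|_F^2 \leq 4 n_0 \sum_{i\in S} d(y_i, \bar x_i)^2$. Applying \eqref{defi:subG} with $t \asymp \sqrt{n \log n}$ and a union bound over the $n_0!$ elements of $\bPi_{n_0}$ (so $\log|\bPi_{n_0}| \lesssim n\log n$), with probability at least $1 - n^{-2}$
$$\sup_{\mathbf{y}\in\bPi_{n_0}} \bigl|\langle E_{SS}, D(\mathbf{y}) - D(\bar\xbf_S)\rangle\bigr| \;\leq\; C\, n\, \sqrt{\log n}\, \Big(\textstyle\sum_{i\in S} d(y_i, \bar x_i)^2\Big)^{1/2}.$$
Combining Steps 1--3 and setting $s^2 := \sum_{i\in S} d(\hat x^{(1)}_i, \bar x_i)^2$ yields
$$c_1\, n\, s^2 \;\leq\; C\, n\, s\, \sqrt{\log n} \;+\; C'\,\bigl(c_L d_\infty(\xbf^*_S, \bPi_{n_0}) + \varepsilon_n\bigr)\, n\, d_1(\hat\xbf^{(1)}_S, \bar\xbf_S).$$
Cauchy--Schwarz gives $d_1(\hat\xbf^{(1)}_S, \bar\xbf_S) \leq \sqrt{n_0}\, s$, reducing this to a quadratic in $s$ with solution $s \leq C''\bigl(\sqrt{\log n} + d_\infty(\xbf^*_S, \bPi_{n_0})\sqrt{n}\bigr)$; multiplying by $\sqrt{n_0}$ and applying the triangle inequality with $d_1(\bar\xbf_S, \xbf^*_S) \leq n_0\, d_\infty(\xbf^*_S,\bar\xbf_S)$ yields~\eqref{eq:thm:L1}.

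\textbf{Main obstacle.} The heart of the proof is the quadratic curvature inequality of Step~2: a merely linear lower bound in $d_1$ would give the suboptimal bound $r \lesssim n^2 \log n$, whereas the quadratic-in-$d_2$ scaling is exactly what matches the noise at the critical level $d_1 \asymp \sqrt{n\log n}$. Turning the transposition-level computation into a uniform bound over the whole orbit structure of $\bPi_{n_0}$ is the delicate combinatorial step.
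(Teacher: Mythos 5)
Your overall architecture (basic inequality, curvature lower bound, uniform noise bound over $\bPi_{n_0}$, quadratic resolution in $s$) has the right shape, and your Steps 1 and 3 essentially match the paper's proof. The gap is Step 2, which you flag as the crux but never prove, and it conceals two distinct difficulties. First, the lower bound of $\langle \tilde F, D(\mathbf{y})-D(\bar\xbf_S)\rangle$ by $c_l\sum_{i,j}\big(d(y_i,y_j)-d(\bar x_i,\bar x_j)\big)^2$ is not a pair-by-pair consequence of monotonicity: individual terms $f(\bar x_i,\bar x_j)\,[d(y_i,y_j)-d(\bar x_i,\bar x_j)]$ can be negative, and one needs a genuine rearrangement/Abel-summation argument exploiting that, for each row $i$, the multisets $\{d(y_i,y_j)\}_j$ and $\{d(\bar x_i,\bar x_j)\}_j$ coincide because both $n_0$-tuples lie in $\bPi_{n_0}$; this is exactly Lemma~\ref{lem:regret} (cf.\ Lemma~\ref{lem:conerstone:variante}), and the slack $\ep_n$ in \eqref{cond:lipschLower} — together with the fact that $f$ is bi-Lipschitz but not geometric, so writing $\tilde F_{ij}=g(d(\bar x_i,\bar x_j))$ with $g$ strictly decreasing is not legitimate, only the row-wise property up to $\ep_n$ is — makes this step more delicate than your sketch suggests. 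Second, and more seriously, your curvature inequality is stated in terms of $n\min_{Q\in\Ocal}\sum_i d(y_i,Q\bar x_i)^2$ (aligned coordinate-wise errors), whereas the rearrangement machinery only controls the distance-matrix discrepancy $\|D(\mathbf{y})-D(\bar\xbf_S)\|_2^2$. Passing from the latter to the former is a quantitative rigidity statement (configurations with nearly equal mutual distances are close up to an orthogonal transformation); the paper obtains it by comparing Gram matrices (Lemma~\ref{cl:distAuPosi}, Lemma~\ref{cl:bound:nu:ery}) and invoking the perturbation bound of Proposition~\ref{thm:citationEry}, including verifying its condition $2\nu\|N^{\dagger}\|_{op}^2\leq 1$ and handling the complementary regime separately. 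Your prototype checks (a single transposition, a random permutation) do not substitute for this: nothing in your argument rules out a $\mathbf{y}\in\bPi_{n_0}$ whose pairwise distances nearly agree with those of $\bar\xbf_S$ while $\mathbf{y}$ is far from every $Q\bar\xbf_S$, which is precisely what the Procrustes-type step excludes.

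If you repair Step 2 by splitting it as the paper does — (i) a quadratic lower bound in $\|D(\mathbf{y})-D(\bar\xbf_S)\|_2$ via the rearrangement argument of Lemma~\ref{lem:regret}, then (ii) conversion of distance-matrix closeness into positional closeness up to $\Ocal$ via the Gram-matrix perturbation result — your Steps 1 and 3 combine exactly as in the proof of Lemma~\ref{cl:estimDist}, and the quadratic resolution you wrote down then delivers \eqref{eq:thm:L1}.
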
 

To prove Proposition \ref{l1bound} (in Appendix \ref{sec:proof:prop:l1bound:noconstr}), we first establish  that $\| D({\bf x}^*_{S}) -D(\hat{{\bf x}}^{(1)}_S)\|_2$ is small, meaning that the distances between the estimated positions $\big\{\hat{ x}_i^{(1)}:i\in S\big\}$ are close to the distances between the true positions $\ac{x^*_{i}:i\in S}$. Then, relying on a recent result on matrix perturbation from~\cite{arias2020perturbation}, we deduce that $\|\hat{{\bf x}}^{(1)}_S-Q{\bf x}^*_{S}\|_2$ is small, where $Q \in \Ocal$ is a distance preserving transformation and where we consider here $\hat{{\bf x}}^{(1)}_S$ and $Q{\bf x}^*_{S}$ as $2\times n_0$ matrices. The bound~\eref{eq:thm:L1} then follows by connecting the Euclidean distance in $\mathbb{R}^2$ to 
the $d_{1}$-distance. 

From a computational point of view, the minimization problem~\eqref{def:estim:l1bound} is an instance of the Quadratic Assignment Problem which is known to be NP-Hard and even hard to approximate~\cite{QAP1,QAP2}. In section~\ref{sec:geometric}, we propose a computationally  efficient alternative to  \eqref{def:estim:l1bound}, and we provide theoretical guarantees for this alternative under additional model assumptions.

\subsection{Final merging step} \label{subsection:LTSalgo}
For a given subset $S\subset \ac{1,\ldots,n}$ of cardinality $n_0=n/4$, combining the initial estimator~\eref{def:estim:l1bound} with the refined localisation~\eref{def:estim:Linfini}, we get an estimator $\hxbf^{(2)}_{\bS}$ with an error bound on 
$d_{\infty}(\hxbf^{(2)}_{\bS},Q \xbf^*_{\bS})$ for some orthogonal transformation $Q\in \Ocal$. 
In order to get an estimation of all the latent positions, we repeat the process by sampling $S'\subset \bS$ of cardinality $n_0=n/4$ and by computing 
 $\hat\xbf^{(2')}_{\bS'}$ with~\eref{def:estim:l1bound} and~\eref{def:estim:Linfini}. We then get an estimator   with an error bound on 
$d_{\infty}(\hat\xbf^{(2')}_{\bS'},Q' \xbf^*_{\bS'})$ for some orthogonal transformation $Q'\in \Ocal$. 
In order to get a final estimator $\hxbf$, we still have to deal with the fact that we may have $Q\neq Q'$, and hence the trivial merge $\hxbf=(\hxbf^{(2)}_{\bS},\hat\xbf^{(2')}_{S})$ may not be a good one. Hence, we need to synchronize the estimators $\hxbf^{(2)}_{\bS}$ and $\hat\xbf^{(2')}_{\bS'}$. This synchronization is obtained by solving
\begin{equation}\label{eq:merge}
\widehat{Q} \in \argmin_{Q \in \Ocal} \, d_{\infty}(\hat{{\bf x}}^{(2)}_{\overline{S} \cap \overline{S}'}, Q\hat{{\bf x}}^{(2')}_{\overline{S} \cap \overline{S}'}),
\end{equation}
and by defining the final estimator as $\hat{{\bf x}}=(\hat{{\bf x}}^{(2)}_{\bS}, \widehat{Q}\hat{{\bf x}}^{(2')}_{S})$.
Putting pieces together, we then get the following estimation procedure.

\noindent { \def\arraystretch{1.3}
\begin{tabular}{|l|}
\hline
{\bf Localize-and-Refine procedure} \\
\hline
\begin{minipage}{0.95\textwidth} \centering
\begin{minipage}{0.9\textwidth}

\medskip
\underline{Input:} Observations matrix $A$. 

\medskip
\noindent {\bf A) Localization of $3n/4$ points} 
\begin{enumerate}[topsep=3pt]
    \item Pick uniformly at random a subset $S \subset [n]$ of cardinality $|S|= n/4$.
\item Compute $\hxbf^{(2)}_{\bS}$ by solving
\begin{eqnarray*}
\hat{{\bf x}}^{(1)}_S &\in&  \argmin_{{\bf x}_{S} \in \bPi_{n_0}} \langle A_{SS}, D({\bf x}_{S})\rangle \,,\\
\hat{x}^{(2)}_i &\in& \argmin_{z \in \Ccal_{n_0}}  \big\langle A_{i , S}, D(z, \hxbf^{(1)}_S)\big\rangle,\ \ \textrm{for}\  i\in\bS\,,
\end{eqnarray*}
with $D({\bf x}_{S})= \big{[} d(x_i , x_j) \big{]}_{i,j \in S}$ and \ \ $D(z,\xbf_{S})= \big{[} d(z , x_j) \big{]}_{j \in S}$\,.
\end{enumerate}


\noindent {\bf B) Localization of $3n/4$ (other) points}

\begin{enumerate}[topsep=3pt]
 \item Pick uniformly at random a subset $S' \subset \overline{S}$ of size $|S'|= n/4 $.
\item 
Compute $\hat\xbf^{(2')}_{\bS'}$ by solving
\begin{eqnarray*}
\hat{{\bf x}}^{(1')}_{S'} &\in&  \argmin_{{\bf x}_{S'} \in \bPi_{n_0}}  \langle A_{S'S'}, D({\bf x}_{S'})\rangle,\\
\hat{x}^{(2')}_i &\in& \argmin_{z \in \Ccal_{n_0}}  \big\langle A_{i , S'}, D(z, \hat\xbf^{(1')}_{S'})\big\rangle,\ \ \textrm{for}\  i\in\bS'\,,
\end{eqnarray*}
with $D({\bf x}_{S'})= \big{[} d(x_i , x_j) \big{]}_{i,j \in S'}$ and $D(z,\xbf_{S'})= \big{[} d(z , x_j) \big{]}_{j \in S'}$.
\end{enumerate}


\noindent {\bf C) Merging the two localizations}

\begin{enumerate}[topsep=3pt]
\item Compute $\widehat{Q}$ by solving \eref{eq:merge}.
\item \underline{Output:} $\hat{{\bf x}}\in\Ccal^n$ defined by
\begin{equation}\label{def:x:gluage}
\hat{x}_j = \left\{
    \begin{array}{ll}
       \hat{x}^{(2)}_j \ \, & \mbox{ if } j \in \overline{S} \enspace, \\
       \widehat{Q}\hat{x}^{(2')}_j \ \, & \mbox{ if } j \in S \enspace. 
    \end{array}
\right.\end{equation}
\end{enumerate}
\end{minipage}%
\end{minipage} \vspace{3pt}\\
\hline
\end{tabular} }
\medskip

The minimization problem \eref{eq:merge} can be solved efficiently. For example, we can observe  that the minimum is achieved at some $\hat Q\in\Ocal$ preserving $\Ccal_{n_0}$, and since there are at most $2n_0$  such orthogonal transformations, we can enumerate them. We refer to Section~\ref{sec:numerique} for details.







\section{Spectral Localization in the geometric latent model}\label{sec:geometric}
The computation of the initial localization $\hxbf^{(1)}_{S}$ requires to minimize
 \eqref{def:estim:l1bound} over $\bPi_{n_0}$, which is an instance of the Quadratic Assignment Problem (QAP), which is known to be NP-hard and hard to approximate~\cite{QAP1,QAP2}. 
A spectral relaxation of the QAP has been shown to be successful for reordering a pre (toroidal) R-matrix~\cite{atkins1998spectral, recanati2018reconstructing}, and hence for solving the noiseless seriation problem for R-matrices. 
This vanilla spectral algorithm proposed in~\cite{atkins1998spectral}
takes as input any symmetric matrix $M\in\R^{N\times N}$ and output $N$ points in $\R^2$. \medskip

\noindent { \def\arraystretch{1.3}
\begin{tabular}{|l|}
\hline
{\bf Vanilla Spectral Algorithm (VSA)} \label{alg:spec:geo} \\
\hline
\begin{minipage}{0.95\textwidth} \centering
\begin{minipage}{0.9\textwidth}

\medskip
\noindent\underline{Input:} symmetric matrix $M\in\R^{N\times N}$ with eigenvalues $\hat{\lambda}_{0}\geq \ldots \geq \hat{\lambda}_{N-1}$.

\medskip

\noindent\underline{Compute:}  two orthonormal eigenvectors $\hat u,\hat v\in\R^{N}$ associated with the second and third eigenvalues $\hat{\lambda}_{1}$ and $\hat{\lambda}_{2}$  of $M$
\medskip

\noindent\underline{Output:} 
\begin{equation}\label{eq:VSA}
\hxsp=\pa{\sqrt{\frac{N}{2}}(\hat{u}_1, \hat{v}_1)^T,\ldots,\sqrt{\frac{N}{2}}(\hat{u}_N, \hat{v}_N)^T} \in \R^{2\times N}
\end{equation}
\medskip

\end{minipage}%
\end{minipage} \\
\hline
\end{tabular} }

\medskip

In this section, we adapt this vanilla spectral algorithm in order to get a computationally efficient initial estimator $\tilde\xbf^{(1)}_{S}$. 
In Section~\ref{sec:init:spectral}, we describe the estimator $\tilde\xbf^{(1)}_{S}$ and provide some error bounds in $d_{\infty}$-distance for the Localize-and-Refine algorithm based on $\tilde\xbf^{(1)}_{S}$. The main difference compared to Section~\ref{seriation:section:resul} is that our theory is limited to cases where the  function 
$f$ is \textit{geometric}, that is, 
\begin{equation}\label{def:geometricSetting}
 f(x,y) = g(d(x,y)) \quad \textrm{for all}\ x,y \in \Ccal,
\end{equation}
for some  $g: [0,\pi] \rightarrow [0,1]$. 
In Section~\ref{sec:vanilla:spectral}, we complement this result by providing an error bound in $\ell^1$-norm for the vanilla spectral algorithm (VSA) in the geometric case.

\subsection{Spectral Localization algorithm}\label{sec:init:spectral}

We observe that the output $\hxsp_{S}$ of the vanilla spectral algorithm applied to $A_{SS}$ does not belong to $\bPi_{n_0}$, and even not to $\Ccal^{n_0}$. Hence, we need an additional approximation step in order to get an estimator  $\tilde\xbf^{(1)}_{S}$ that can be plugged in our Localize-and-Refine procedure \eref{def:estim:Linfini}. In the description of the algorithm below, we identify points on the circle $\Ccal$ to unit norm complex numbers. Besides, for such a point $z$, we write $\|z\|_1$ for its $\ell^1$ norm in $\mathbb{R}^2$.

\medskip

\noindent { \def\arraystretch{1.3}
\begin{tabular}{|l|}
\hline
{\bf   Spectral Localization (LS)} \label{alg:LHO:geo} \\
\hline
\begin{minipage}{0.95\textwidth} \centering
\begin{minipage}{0.9\textwidth}

\medskip

 \underline{Input:} a subset $S=\ac{i_{1},\ldots,i_{n_0}}\subset[n]$ with $i_{1}<\ldots<i_{n_0}$, and data matrix $A_{SS}$. 

\medskip
\noindent \underline{Vanilla spectral localization:} compute
$\hxsp_{S}:= [\hat{x}^{\textup{VSA}}_{i_{\ell}}]_{l\in[n_0]} =\textrm{VSA}(A_{SS})$

\medskip 

\noindent \underline{Uniform Approximation (UA) in $\bPi_{n_0}$:}  \label{alg:findDiscAprox}

\begin{enumerate}
\item Set $z_{\ell}=\hat{x}^{\textup{VSA}}_{i_{\ell}}/\|\hat{x}^{\textup{VSA}}_{i_{\ell}}\|_2$, for $\ell=1,\ldots,n_0$.  

\item Pick any permutation $\si$ such that $z_{\sigma(1)},\ldots,z_{\sigma(n_0)}$ is in trigonometric order.

\item Set $\displaystyle{\tilde x^{(1)}_{i_{\sigma(\ell)}}=e^{\iota {2\pi (\widehat k+\ell)\over n_0}},\quad \textrm{where}\quad 
\widehat k\in\argmin_{k\in [n_0]}\sum_{\ell=1}^{n_0} \left\|e^{\iota {2\pi (k+\ell)\over n_0}}-z_{\si(\ell)}\right\|_1}.$

\end{enumerate}

\noindent \underline{Output:} $\tilde\xbf^{(1)}_{S}:=[\tilde x^{(1)}_{i_{\ell}}]_{l\in[n_0]} \in \bPi_{n_0}$.
\end{minipage}%
\end{minipage} \\
\hline
\end{tabular} }

 \medskip 
 
 The next theorem provides  an error bound in $d_{\infty}$-distance for the Localize-and-Refine procedure~\eref{def:x:gluage}, when we replace $\hxbf^{(1)}_{S}$ by $\tilde\xbf^{(1)}_{S}$. This bound involves the two spectral gaps $\Delta_1 = \lambda_{0}^* -\lambda_{1}^*$ and $\Delta_2= \lambda_{2}^* - \lambda_{3}^*$, where  $\lambda_{0}^*\geq \ldots \geq \lambda_{n-1}^*$ denote the eigenvalues of the signal matrix $F$.

 \begin{thm}\label{thm:graphgeo:new} Let $n\geq 16$,  $c_a,c_{b},c_{e} >0$, and $0<c_{l}\leq c_{L}$. Let $(\textup{\textbf{x}}^*,f)\in \Rcal[F,c_l,c_L,c_e]$ with $f$ a geometric function $f=g\circ d$. Assume that $\xbf^*$  fulfills 
 \begin{equation}\label{GraphGeo:assump:latentPosition}
 d_{\infty}(\xbf^*,\bPi_{n}) \leq c_a  \sqrt{\frac{\log(n)}{n}}\enspace ,
\end{equation}
and that  the spectral gaps satisfy $\Delta_1 \wedge \Delta_2\geq c_b n$.
Then, 
there exists a constant $C_{lLeab}>0$ depending only on $c_{e}$, $c_{l}$, $c_{L}$, $c_{a}$ and $c_{b}$, such that, with probability at least $1-9/n^2$, the spectral Localize-and-Refine procedure~\eref{def:x:gluage}  with $\hxbf^{(1)}_{S}$ and $\hxbf^{(1')}_{S'}$ replaced by $\tilde\xbf^{(1)}_{S}$ and $\tilde\xbf^{(1')}_{S'}$
satisfies the uniform bound 
\begin{equation*}
    \min_{Q\in\Ocal} d_{\infty}(\hat{{\bf x}},Q{\bf x}^*) \leq C_{lLeab} \,  \sqrt{\frac{\log(n)}{n}} \enspace .
\end{equation*}
\end{thm}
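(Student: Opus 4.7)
The plan is to reduce Theorem \ref{thm:graphgeo:new} to Proposition \ref{prop:LHO:perf} and the merging step of Section \ref{subsection:LTSalgo}, by proving that the spectral initial estimator $\tilde\xbf^{(1)}_S$ enjoys a $d_1$-guarantee of the same shape as \eref{eq:thm:L1}. Concretely, I aim to show that, on an event of probability at least $1 - C/n^2$,
\begin{equation*}
\min_{Q \in \Ocal} d_1\pa{\tilde\xbf^{(1)}_S,\,  Q \xbf^*_S} \lesssim n\cdot d_\infty(\xbf^*_S, \bPi_{n_0}) + \sqrt{n \log n}\enspace .
\end{equation*}
A preliminary step is to propagate \eref{GraphGeo:assump:latentPosition} to the random subsample: since $S$ is drawn uniformly, a Bernstein-type concentration on the number of $x^*_j$ falling in any arc of $\Ccal$ yields $d_\infty(\xbf^*_S, \bPi_{n_0}) \lesssim \sqrt{\log(n)/n}$ with high probability. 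Plugging both facts into Proposition \ref{prop:LHO:perf} delivers $d_\infty(\hxbf^{(2)}_{\bS}, Q\xbf^*_{\bS}) \lesssim \sqrt{\log(n)/n}$, and performing the same construction on $S'$ and merging via \eref{eq:merge} completes the proof, with a union bound yielding the $1-9/n^2$ probability.

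The heart of the argument is the analysis of the VSA applied to $A_{SS}$. I introduce an idealized signal matrix $F^\star_{SS}$ defined by $F^\star_{SS}(i,j)=g(d(x^\star_i, x^\star_j))$, where $\xbf^\star_S\in \bPi_{n_0}$ is a regular arrangement closest to $\xbf^*_S$ in $d_\infty$. Since $f = g \circ d$, the matrix $F^\star_{SS}$ is circulant and its eigendecomposition is explicit by discrete Fourier analysis: the $2$-dimensional eigenspace associated with the second and third eigenvalues is spanned by cosine and sine of the angular positions of $\xbf^\star_S$, so that after the $\sqrt{n_0/2}$-rescaling of \eref{eq:VSA} one recovers $\xbf^\star_S$ exactly, up to an orthogonal transformation of $\R^2$. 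The bi-Lipschitz condition \eref{cond:lipsch} combined with $d_\infty(\xbf^*_S, \bPi_{n_0})\lesssim \sqrt{\log(n)/n}$ yields $\|F_{SS}-F^\star_{SS}\|_{\mathrm{op}}\lesssim \sqrt{n\log n}$, while \eref{defi:subG} gives $\|E_{SS}\|_{\mathrm{op}}\lesssim \sqrt{n}$ on an event of probability at least $1-1/n^2$.

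The spectral-gap assumption $\Delta_1\wedge\Delta_2\gtrsim n$ transfers from $F$ to $F^\star_{SS}$ up to constants, via Weyl's inequality applied to both the subsampling and the above perturbation. Applying Davis-Kahan at the level of the rank-$2$ projector associated with the second and third eigenvalues of $A_{SS}$ yields an orthogonal $O\in\Ocal$ such that $\|\hxsp_S - O\xbf^{\star}_S\|_F \lesssim \sqrt{\log n}$. Combined with $\|\xbf^\star_S - \xbf^*_S\|_F\lesssim \sqrt{\log n}$ and Cauchy-Schwarz, this gives $\|\hxsp_S - O\xbf^*_S\|_1 \lesssim \sqrt{n\log n}$. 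The UA step is then handled as follows: radial projection onto $\Ccal$ is a contraction in a neighborhood of the unit sphere, so $d_1((z_\ell)_\ell, (O x^*_{i_\ell})_\ell)\lesssim \sqrt{n\log n}$ after sorting consistently in trigonometric order, and the final snap to $\bPi_{n_0}$ via the optimal shift $\widehat k$ loses at most an additive term of order $n_0\cdot d_\infty(\xbf^*_S,\bPi_{n_0})\lesssim \sqrt{n\log n}$, delivering the announced $d_1$-bound on $\tilde\xbf^{(1)}_S$.

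The main obstacle I anticipate is the Davis-Kahan step inside the $2$-dimensional invariant plane: controlling the eigen-basis rotation there requires working with the rank-$2$ projector rather than individual eigenvectors, and then choosing a basis alignment compatible with both the radial projection and the discrete shift of the UA step. A secondary technical point is establishing rigorously that the subsampled positions $\xbf^*_S$ still approach a regular grid at rate $\sqrt{\log(n)/n}$, which should follow from a Bernstein-style bound on the cardinality of $\xbf^*$ in small arcs of $\Ccal$.
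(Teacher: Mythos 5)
Your architecture is essentially the paper's own (subsample spreading via hypergeometric concentration, circulant idealization of the signal, Weyl plus rank-2 Davis--Kahan, Cauchy--Schwarz to pass to $\ell^1$, then UA, refinement and merging), and the orders of magnitude you announce are correct. However, two steps at the heart of the spectral analysis are asserted rather than proved, and as stated they do not go through. First, the transfer of the gap condition $\Delta_1\wedge\Delta_2\geq c_b n$ from $F$ to the subsampled matrix cannot be done "via Weyl's inequality applied to the subsampling": Weyl compares eigenvalues of two matrices of the \emph{same} dimension, whereas $F$ is $n\times n$ and $F_{SS}$ (or your $F^\star_{SS}$) is $n_0\times n_0$; Cauchy interlacing for principal submatrices only gives one-sided bounds and does not preserve gaps. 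A genuine additional idea is needed here — the paper replicates each sampled point four times to build an $n\times n$ matrix whose nonzero spectrum is a rescaling of that of $F_{SS}$ and which is close to a row/column permutation of $F$, and only then applies Weyl (Lemma~\ref{lem:implication:specgaps:S}); an alternative would be a Riemann-sum comparison of the discrete Fourier coefficients $\Fcal_{m,n}(g)$ and $\Fcal_{m,n_0}(g)$, but some such argument must be supplied. Second, your claim that the eigenspace of the second and third eigenvalues of $F^\star_{SS}$ is spanned by the cosines and sines of the angular positions (the \emph{first} harmonic) is not a consequence of Fourier diagonalization or of the gap assumption alone: the circulant matrix has eigenvalue pairs $\alpha_m$ for every harmonic $m\geq 1$, and the hypotheses $\Delta_1\wedge\Delta_2\gtrsim n$ are compatible with a higher harmonic $\alpha_m$, $m\geq 2$, occupying the second/third slots, in which case VSA returns an embedding wound $m$ times around $\Ccal$ and your $d_1$-bound on $\tilde\xbf^{(1)}_S$ fails. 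Ruling this out requires the (approximate) monotonicity of $g$, i.e.\ the circular-R ranking property $\alpha_1\geq\alpha_j$ for $j\geq 2$ (Lemma~\ref{lem:strictEigenvaluesOptim}), applied after approximating $g$ by a genuinely non-increasing profile to absorb the $\ep_n$ slack.

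A secondary soft spot is the UA step. The radial projection is not a contraction near the origin (a VSA output close to $0$ can be sent anywhere on $\Ccal$); the correct and simpler argument is that projecting onto $\Ccal$ at most doubles the distance to any point of $\Ccal$, by the triangle inequality. More importantly, the statement that sorting the projected points in trigonometric order and then choosing the best shift in $\bPi_{n_0}$ loses only an additive $O(n\, d_\infty(\xbf^*_S,\bPi_{n_0}))$ hides a real difficulty: on the circle, unlike on the line, matching two point configurations by sorting is not automatically $d_1$-optimal, and the paper needs a dedicated combinatorial lemma (Lemma~\ref{goal:proof:lem:approx:esti:ComplJ}, proved by partitioning indices into well- and badly-matched points and a Markov-type counting argument) to show it is optimal up to a constant factor. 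Your outline would need this, or an equivalent argument, to convert the $\ell^1$-closeness of $\hxsp_S$ to $Q\xbf^*_S$ into the required $d_1$-bound for an element of $\bPi_{n_0}$.
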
 

Similarly as in Theorem~\ref{thm:principal} and \ref{cor:thm:bis}, we estimate the latent positions at the optimal $\sqrt{\log(n)/n}$ rate in $d_{\infty}$-distance, but under the additional assumptions that $f$ is a geometric function~\eref{def:geometricSetting} and $F$ fulfills the spectral gap condition  $\Delta_1 \wedge \Delta_2\geq c_b n$. The proof of Theorem~\ref{thm:graphgeo:new} is given in Appendix~\ref{sec:appendix:spectral}.
The proof mainly relies on controlling the $\ell^1$-norm between $\hxsp$ and $Q\textup{\textbf{x}}^*$. This result, which has its own interest, is presented in Proposition~\ref{conj:graphgeo:l1}, in Section~\ref{sec:vanilla:spectral}. 
Below, we exhibit  two cases where the spectral gap condition $\Delta_1 \wedge \Delta_2\geq c_b n$ holds.

\paragraph{Example: geometric model with Fourier gaps.} 
The eigenvalues of $F$ are closely related to the discrete Fourier transform of $g$, so that we can bound the spectral gaps $\Delta_1$ and $\Delta_2$ in terms of these Fourier coefficients. 
More precisely, the function $f$ is given by $f(x,y)= g(d(x,y))$, with $g$ defined on $[0,\pi]$. One can  extend $g$ to $[0,2\pi)$ by taking $g(x)= g(2\pi-x)$ for any  $x\in(\pi, 2\pi)$. 
Then, for any integer $n$, the discrete Fourier transform of $\ac{g(j \frac{2\pi}{n}):j=0,\ldots, n-1}$ is defined by    
\begin{equation}\label{spectre:fourier:paper}
\Fcal_{k,n}(g) =  \sum_{j=0}^{n-1} g\left(j \frac{2\pi}{n} \right) \cos\left(j \frac{2\pi k}{n}\right),\quad \textrm{for}\ k=0,\ldots, n-1.
\end{equation}
The following lemma bounds the spectral gaps $\Delta_1$ and $\Delta_2$ in terms of the gaps between the Fourier coefficients.

\begin{lem}\label{lem:fourier}  
 Let $c_a,c_{b},c_{e} >0$, and $0<c_{l}\leq c_{L}$. Let $(\textup{\textbf{x}}^*,f)\in \Rcal[F,c_l,c_L,c_e]$ with $f$ a geometric function $f=g\circ d$, and $\xbf^*$  fulfilling \eref{GraphGeo:assump:latentPosition}.

Let us set $\Phi_{1} = \Fcal_{0,n}(g) - \Fcal_{1,n}(g)$ and 
$\Phi_{2} =  \min_{j =2,\ldots,\lfloor n/2\rfloor} \ \, \Fcal_{1,n}(g) - \Fcal_{j,n}(g)$. Then, there exists a constant  $C_{lLea}>0$, depending only on $c_{e}$, $c_{l}$, $c_{L}$ and $c_{a}$, such that
\[
\left| \Delta_1 -\Phi_{1} \right| \vee  \left| \Delta_2 -\Phi_{2} \right| \, \leq C_{lLea} \sqrt{n \log(n)}\enspace .
\]
\end{lem}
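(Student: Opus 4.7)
The plan is to compare $F$ to an idealized matrix $F^{\mathrm{reg}}$ obtained by replacing $\xbf^*$ with its closest regular $n$-tuple in $\bPi_n$, to identify the spectrum of $F^{\mathrm{reg}}$ via standard Fourier analysis of circulant matrices, and to conclude by Weyl's inequality. By assumption~\eqref{GraphGeo:assump:latentPosition}, there exists $\xbf^{\mathrm{reg}} \in \bPi_n$ with $d_\infty(\xbf^*, \xbf^{\mathrm{reg}}) \leq c_a \sqrt{\log(n)/n}$. Since spectra are invariant under simultaneous permutation of rows and columns, I may assume $x_i^{\mathrm{reg}} = e^{\iota 2\pi i/n}$. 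I then define $F^{\mathrm{reg}}$ by $F^{\mathrm{reg}}_{ij} = g(d(x_i^{\mathrm{reg}}, x_j^{\mathrm{reg}}))$ for $i \neq j$ and $F^{\mathrm{reg}}_{ii} = 0$, and write $F^{\mathrm{reg}} = C - g(0) I_n$, where $C$ is the symmetric circulant matrix whose first row is $(g(0), g(2\pi/n), \ldots, g(2\pi(n-1)/n))$ (using the symmetric extension of $g$ to $[0, 2\pi)$).

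A standard circulant-spectrum computation yields the eigenvalues of $C$ as $\Fcal_{\ell,n}(g)$ for $\ell = 0, \ldots, n-1$, and therefore those of $F^{\mathrm{reg}}$ as $\Fcal_{\ell,n}(g) - g(0)$. Because $g \geq 0$, one has $|\Fcal_{\ell,n}(g)| \leq \Fcal_{0,n}(g)$ for every $\ell$, so the top eigenvalue is always $\lambda_0^{\mathrm{reg}} = \Fcal_{0,n}(g) - g(0)$. Moreover, the symmetry $\Fcal_{\ell,n}(g) = \Fcal_{n-\ell,n}(g)$ forces each intermediate Fourier coefficient to be at least a double eigenvalue. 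In the regime $\Phi_2 \geq 0$---which is the relevant one in view of the hypothesis $\Delta_2 \geq c_b n$ of Theorem~\ref{thm:graphgeo:new}---the ordered eigenvalues of $F^{\mathrm{reg}}$ are $\Fcal_{0,n}(g)-g(0)$, then $\Fcal_{1,n}(g)-g(0)$ with multiplicity two, then $\max_{j \geq 2}\Fcal_{j,n}(g) - g(0)$, and consequently $\Delta_1^{\mathrm{reg}} = \Phi_1$ and $\Delta_2^{\mathrm{reg}} = \Phi_2$.

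For the perturbation, I apply the Lipschitz condition~\eqref{cond:lipsch} of Definition~\ref{defi:BL} twice (once in each coordinate), which together with $d_\infty(\xbf^*, \xbf^{\mathrm{reg}}) \leq c_a\sqrt{\log(n)/n}$ yields $|F_{ij} - F^{\mathrm{reg}}_{ij}| \leq 2(c_L c_a + c_e)\sqrt{\log(n)/n}$ for $i \neq j$, while the diagonals agree. Hence $\|F - F^{\mathrm{reg}}\|_{\mathrm{op}} \leq \|F - F^{\mathrm{reg}}\|_F \leq 2(c_L c_a + c_e)\sqrt{n\log(n)}$, and Weyl's inequality gives $|\lambda_k^* - \lambda_k^{\mathrm{reg}}| \leq \|F - F^{\mathrm{reg}}\|_{\mathrm{op}}$ for every $k$. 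Combined with $\Delta_1^{\mathrm{reg}} = \Phi_1$ and $\Delta_2^{\mathrm{reg}} = \Phi_2$, this yields the announced bound $|\Delta_1 - \Phi_1| \vee |\Delta_2 - \Phi_2| \leq 2\|F - F^{\mathrm{reg}}\|_{\mathrm{op}} \leq 4(c_L c_a + c_e)\sqrt{n\log(n)}$. The main structural point is identifying the spectrum of $F^{\mathrm{reg}}$---notably the double multiplicity arising from $\Fcal_{\ell,n}(g)=\Fcal_{n-\ell,n}(g)$ and the fact that $\Fcal_{0,n}(g)$ always dominates since $g \geq 0$; once these are in place, the remainder is a routine Weyl perturbation argument.
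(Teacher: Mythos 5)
Your overall strategy coincides with the paper's: replace $\xbf^*$ by a nearby regular $n$-tuple, observe that the resulting signal matrix is (up to permutation) circulant with eigenvalues given by the discrete Fourier coefficients $\Fcal_{\ell,n}(g)$, and transfer the gaps to $F$ via an entrywise bound and Weyl's inequality. The entrywise perturbation step and the identification of $\Fcal_{0,n}(g)$ as the top eigenvalue are fine. The gap is in how you order the remaining eigenvalues of $F^{\mathrm{reg}}$: your identification $\Delta_1^{\mathrm{reg}}=\Phi_1$, $\Delta_2^{\mathrm{reg}}=\Phi_2$ requires $\Fcal_{1,n}(g)\geq \Fcal_{j,n}(g)$ for all $j\geq 2$, i.e.\ $\Phi_2\geq 0$, and you justify restricting to this regime by invoking the hypothesis $\Delta_1\wedge\Delta_2\geq c_b n$ of Theorem~\ref{thm:graphgeo:new}. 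That hypothesis is not part of the lemma, and appealing to it is circular in exactly the situation the lemma is designed for: its stated purpose (see the discussion after it, and the second inequality of Proposition~\ref{conj:graphgeo:l1}) is to let one \emph{replace} the spectral-gap condition on $\Delta_1\wedge\Delta_2$ by a condition on $\Phi_1\wedge\Phi_2$, so one cannot assume knowledge of $\Delta_2$ when proving it. Moreover $\Phi_2\geq 0$ is not automatic here: because of the slack $\ep_n$ in Definition~\ref{defi:BL}, the function $g$ is only decreasing up to an additive $\ep_n$, so $\Fcal_{1,n}(g)$ need not dominate the higher coefficients, and when $\Phi_2<0$ your description of the ordered spectrum of $F^{\mathrm{reg}}$ (hence $\Delta_2^{\mathrm{reg}}=\Phi_2$) breaks down, while the two-sided bound $|\Delta_2-\Phi_2|\leq C\sqrt{n\log n}$ still has to be proved (in particular one must show $\Delta_2\lesssim\sqrt{n\log n}$ in that case, which also requires controlling how negative $\Phi_2$ can be).

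The paper closes precisely this hole by an extra approximation step: it replaces the sequence $a_j=g(2\pi j/n)$ by a genuinely non-increasing sequence ${\bf a}'$ with $\max_j|a_j-a'_j|\lesssim\sqrt{\log(n)/n}$ (possible by \eqref{cond:lipschLower}, as in Lemma~\ref{cl:approxRmatrix}), forms the associated circulant circular R-matrix $R$, and invokes Lemma~\ref{lem:strictEigenvaluesOptim} to certify that for $R$ the coefficient $\alpha_1$ \emph{is} the second-largest eigenvalue, so the ordered spectrum of $R$ is known unconditionally; Weyl's inequality applied to $\|F-R_\tau\|_{\mathrm{op}}\lesssim\sqrt{n\log n}$ and the bound $|\Fcal_{m,n}(g)-\alpha_m|\lesssim\sqrt{n\log n}$ then give the two-sided estimate without any sign assumption on $\Phi_2$. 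To repair your argument you should insert this monotone-approximation/R-matrix step (or an equivalent argument showing $\Phi_2\geq -C\sqrt{n\log n}$ and handling the ordering in that regime) rather than conditioning on $\Phi_2\geq 0$.
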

Hence,  Theorem~\ref{thm:graphgeo:new} still holds when we replace the gap condition $\Delta_1 \wedge \Delta_2\geq c_b n$  by 
the condition $\Phi_1 \wedge \Phi_2\geq c_b n$. 
So, when the first discrete Fourier coefficients of $g$ are well separated from the other coefficients, the spectral version of the Localize-and-Refine algorithm estimates, in polynomial time, the latent positions at the optimal $\sqrt{\log(n)/n}$ rate in $d_{\infty}$-distance. 
Below, we give an  example where the Fourier coefficients can be explicitly computed and where $\Phi_{1},\Phi_{2}$ are proportional to $n$.

\paragraph{Example: affine geometric model.}\label{sec:example:spectral}

As a simple instantiation of Theorem~\ref{thm:graphgeo:new} and Lemma~\ref{lem:fourier}, let us consider the geometric function $f(x,y)= 1- d(x,y)/(2\pi)$. The corresponding univariate function $g(z)=1-z/(2\pi)$ is affine and its discrete Fourier coefficients can be computed explicitly in terms of trigonometric functions. In  Appendix \ref{proof:specGap}, we prove that  $\Phi_1 \wedge \Phi_2\geq c_b n$ for some numerical constant $c_{b}>0$. We then get the next corollary of Theorem \ref{thm:graphgeo:new}.

\begin{cor}\label{coro:gapspectral}
Let $f$ be the function defined as $f(x,y)=1- d(x,y)/(2\pi)$. Assume that the latent positions ${\bf x}^*\in \Ccal^{n}$ fulfill \eqref{GraphGeo:assump:latentPosition}. Then, there exist constants $C_{a}$ and $C'_a$ depending only on $c_{a}$ such that for all $n\geq C'_a$, with probability higher than $1-9/n^2$,  the spectral Localize-and-Refine procedure~\eref{def:x:gluage}, with $\hxbf^{(1)}_{S}$ and $\hxbf^{(1')}_{S'}$ replaced by $\tilde\xbf^{(1)}_{S}$ and $\tilde\xbf^{(1')}_{S'}$,
satisfies the uniform bound 
\begin{equation*}
    \min_{Q\in\Ocal} d_{\infty}(\hat{{\bf x}},Q{\bf x}^*) \leq C_{a} \,  \sqrt{\frac{\log(n)}{n}} \enspace .
\end{equation*}
\end{cor}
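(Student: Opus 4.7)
\textbf{Proof plan for Corollary \ref{coro:gapspectral}.}

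The plan is to obtain Corollary \ref{coro:gapspectral} as a direct instantiation of Theorem \ref{thm:graphgeo:new}, whose two hypotheses to check are (i) membership of $(\xbf^*,f)$ in some class $\Rcal[F,c_l,c_L,c_e]$ and (ii) the spectral gap bound $\Delta_1\wedge \Delta_2\geq c_b n$. The assumption on $\xbf^*$ already matches \eqref{GraphGeo:assump:latentPosition}, so the real work is verifying these two points.

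For the bi-Lipschitz property, the affine function $f(x,y)=1-d(x,y)/(2\pi)$ is handled by a one-line triangle-inequality argument: using $|d(x,y)-d(x,y')|\leq d(y,y')$, one gets both Condition \eqref{cond:lipsch} and Condition \eqref{cond:lipschLower} with $c_l=c_L=1/(2\pi)$ and $c_e=0$, so $f\in \mathcal{BL}[1/(2\pi),1/(2\pi),0]$. In particular, $(\xbf^*,f)\in\Rcal[F,1/(2\pi),1/(2\pi),0]$.

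The heart of the proof is bounding the Fourier quantities $\Phi_1$ and $\Phi_2$ from Lemma \ref{lem:fourier}. The univariate profile $g(t)=1-t/(2\pi)$ on $[0,\pi]$, extended by $g(t)=g(2\pi-t)$ on $(\pi,2\pi)$, is the periodic tent function whose continuous Fourier expansion is
\begin{equation*}
g(t)=\frac{3}{4}+\sum_{k\geq 1,\ k\text{ odd}}\frac{2}{\pi^2 k^2}\cos(kt).
\end{equation*}
The discrete Fourier sums $\Fcal_{k,n}(g)$ defined in \eqref{spectre:fourier:paper} are, by the standard aliasing identity, equal to $n/2$ times the sum of the continuous coefficients $a_{k+mn}$ over $m\in\mathbb{Z}$ (with an extra factor $2$ when $k=0$), so that $\Fcal_{0,n}(g)=3n/4+O(1)$, $\Fcal_{1,n}(g)=n/\pi^2+O(1/n)$, and for $j\in\{2,\ldots,\lfloor n/2\rfloor\}$ one has $\Fcal_{j,n}(g)=0+O(1/n)$ when $j$ is even and $\Fcal_{j,n}(g)=n/(\pi^2 j^2)+O(1/n)$ when $j$ is odd. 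Consequently,
\begin{equation*}
\Phi_1=\Fcal_{0,n}(g)-\Fcal_{1,n}(g)\geq (3/4-1/\pi^2)\,n - O(1),\qquad
\Phi_2\geq \tfrac{8}{9\pi^2}\,n - O(1),
\end{equation*}
the worst case for $\Phi_2$ being $j=3$. Both quantities are therefore at least $c_b\, n$ for some explicit numerical constant $c_b>0$ and for $n\geq C'_a$.

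Finally, by Lemma \ref{lem:fourier}, $|\Delta_1-\Phi_1|\vee |\Delta_2-\Phi_2|\leq C\sqrt{n\log n}$ for a constant depending only on $c_a$, so for $n$ larger than an explicit threshold $C'_a$ one deduces $\Delta_1\wedge \Delta_2\geq (c_b/2)\,n$. Theorem \ref{thm:graphgeo:new} then delivers the claimed bound $\min_{Q\in \Ocal}d_\infty(\hat\xbf,Q\xbf^*)\leq C_a\sqrt{\log(n)/n}$ with probability at least $1-9/n^2$. The only step that requires genuine computation is the evaluation of the Fourier coefficients of the tent function; everything else is bookkeeping. The likely main obstacle is controlling the aliasing/discretization errors uniformly in $j\in\{2,\ldots,\lfloor n/2\rfloor\}$ so that the lower bound on $\Phi_2$ remains of order $n$ for every such $j$, which is handled by summing the $1/k^2$ tail of the continuous coefficients.
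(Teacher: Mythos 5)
Your proposal is correct, and it follows the paper's overall skeleton — reduce Corollary~\ref{coro:gapspectral} to the Fourier-gap bound $\Phi_1\wedge\Phi_2\gtrsim n$, transfer it to $\Delta_1\wedge\Delta_2$ via Lemma~\ref{lem:fourier}, and invoke Theorem~\ref{thm:graphgeo:new} — but the central computation is done by a genuinely different route. The paper evaluates the discrete sums $\Fcal_{m,n}(g)$ exactly: writing $\alpha_m=-\tfrac{2}{n}\mathrm{Re}[-\iota f'(2\pi m/n)]$ with $f(x)=\sum_{j=0}^p e^{\iota jx}$, it obtains the closed forms $\alpha_m=\cos^2\!\big(\tfrac{m\pi}{2n}\big)/\big(n\sin^2(\tfrac{\pi m}{n})\big)$ for odd $m$ and the $\sin^2$ analogue for even $m$, and then uses monotonicity (odd-indexed coefficients decreasing, even-indexed dominated by their predecessors) to reduce the minimum over $j$ to comparing $\alpha_0,\alpha_1,\alpha_2,\alpha_3$; the parity of $n$ is treated separately (odd $n$ written out, even $n$ claimed similar). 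You instead expand the tent profile in its continuous Fourier series $3/4+\sum_{k\ \mathrm{odd}}\tfrac{2}{\pi^2k^2}\cos(kt)$ and use the aliasing identity; because the coefficients are nonnegative and their $1/k^2$ tail sums to $O(1/n)$ beyond index $n/2$, you get $\Fcal_{j,n}(g)$ uniformly in $j$ up to $O(1/n)$, so the minimum defining $\Phi_2$ is read off directly (worst case $j=3$, giving $\Phi_2\geq\tfrac{8}{9\pi^2}n-O(1)$), and the argument is parity-free. What each buys: the paper's route is self-contained (no appeal to classical Fourier series of the tent function) and gives exact finite-$n$ expressions; yours is shorter, handles both parities at once, and produces the correct asymptotic constants directly — indeed your $\Fcal_{3,n}\sim n/(9\pi^2)$ matches the paper's closed form, whereas the paper's displayed equivalence $\alpha_3\sim n/(4\pi^2)$ appears to be a slip (immaterial, since only the order $n$ of the gaps is needed). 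Your explicit check that $f\in\mathcal{BL}[(2\pi)^{-1},(2\pi)^{-1},0]$, and the final step $\Delta_1\wedge\Delta_2\geq (c_b/2)n$ for $n$ large via Lemma~\ref{lem:fourier}, are exactly what is needed to apply Theorem~\ref{thm:graphgeo:new}.
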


Theorem~\ref{thm:lowerBound} in the next section shows that this estimation rate is optimal.

\subsection{$\ell^1$-bound for the vanilla spectral algorithm}\label{sec:vanilla:spectral}
As a byproduct of our analysis, we provide an $\ell^{1}$-bound for the  estimation of the latent positions with the vanilla spectral algorithm (VSA), in the geometric latent model. Recanati et al.~\cite{recanati2018reconstructing} have already shown that VSA succeeds to recover the hidden permutation in the noiseless seriation problem with R-matrices. We extend their work 
 to the geometric latent model on $\Ccal$.

Starting from the noisy observation $A=F+E$ with $F_{ij}=g\circ d(x^*_{i},x^*_{j})$, we apply VSA to the whole matrix $A$ and get an estimation $\hxsp\in \R^{2\times n}$ of $\xbf^*$. 
The next proposition provides a bound in terms of the $\ell^{1}$-distance
$$\|\hxsp- {\bf x}\|_1:= \sum_{j=1}^n \sum_{i=1}^{2} \left|\hat{x}^{\textrm{VSA}}_{ij}-x_{ij}\right|,$$ 
and in terms of the spectral gaps $\Delta_1 = \lambda_{0}^* -\lambda_{1}^*$ and $\Delta_2= \lambda_{2}^* - \lambda_{3}^*$, where  $\lambda_{0}^*\geq \ldots \geq \lambda_{n_0-1}^*$ are the eigenvalues of the signal matrix $F$.

\begin{prop}\label{conj:graphgeo:l1} 
Let $n\geq 4$, and let $f=g\circ d \in\mathcal{BL}[c_l,c_L,c_e]$ be a bi-Lipschitz geometric function.  Assume that the  latent positions ${\bf x}^*$ fulfill the Assumption~ \eref{GraphGeo:assump:latentPosition} , with $c_a>0$.
 Then, there exist two constants $C_{lLea}$ and $C'_{lLea}$, depending only on $c_{l}$, $c_{L}$, $c_{e}$ and $c_{a}$, such that,
 with probability at least $1-1/n^2$, the vanilla spectral estimator $\hxsp$ satisfies 
\begin{align*}
  \min_{Q\in \Ocal}  \|\hxsp -Q{\bf x}^*\|_{1} &\leq C_{lLea} \, \frac{n \sqrt{n \log(n)}}{\left(\Delta_1 \wedge \Delta_2\right) \vee 1} \\
  &\leq C_{lLea}  \frac{n \sqrt{n \log(n)}}{\left[(\Phi_1 \wedge \Phi_2) - C'_{lLea} \sqrt{n \log(n)} \right] \vee 1}  \, .
\end{align*}
\end{prop}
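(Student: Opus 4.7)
The strategy is to couple two perturbation arguments through an idealized intermediate signal matrix corresponding to exactly regular latent positions. Let $\xbf^{\text{reg}}\in\bPi_n$ realize $d_\infty(\xbf^*,\bPi_n)$, and define $F^{\text{reg}}_{ij}=g(d(x^{\text{reg}}_i,x^{\text{reg}}_j))$. Writing $x^{\text{reg}}_i=e^{\iota 2\pi\sigma(i)/n}$ for some permutation $\sigma$, the matrix $F^{\text{reg}}$ is permutation-conjugate to a symmetric circulant matrix, so its eigenvalues are exactly the Fourier coefficients $\Fcal_{k,n}(g)$ and the eigenspace at $\Fcal_{1,n}(g)$ admits the orthonormal basis $u^{\text{reg}}_i=\sqrt{2/n}\cos(2\pi\sigma(i)/n)$, $v^{\text{reg}}_i=\sqrt{2/n}\sin(2\pi\sigma(i)/n)$. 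The crucial identity $\sqrt{n/2}\,(U^{\text{reg}})^T=\xbf^{\text{reg}}$ (with $\xbf^{\text{reg}}$ viewed as a $2\times n$ matrix of planar positions) bridges the spectral objects and the latent positions.

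The total perturbation $A-F^{\text{reg}}=E+(F-F^{\text{reg}})$ splits into a stochastic part and a bias part. Standard concentration for symmetric sub-Gaussian random matrices yields $\|E\|_{op}\lesssim\sqrt{n\log n}$ with probability at least $1-1/n^2$. The Lipschitz bound \eqref{cond:lipsch} and the regularity assumption~\eqref{GraphGeo:assump:latentPosition} give
\[
\|F-F^{\text{reg}}\|_{op}\le\|F-F^{\text{reg}}\|_F\lesssim n\,d_\infty(\xbf^*,\bPi_n)+n\,\ep_n\lesssim\sqrt{n\log n}.
\]
Weyl's inequality then transfers the gap information from $F$ to $F^{\text{reg}}$, and Lemma~\ref{lem:fourier} ensures that the second and third largest eigenvalues of $F^{\text{reg}}$ are precisely $\Fcal_{1,n}(g)$ and remain isolated with gap $\gtrsim\Delta_1\wedge\Delta_2$ (modulo an $O(\sqrt{n\log n})$ correction that is harmless under the quoted bound).

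Applying the Davis-Kahan $\sin\Theta$ theorem (Procrustes variant) to the pair $(A,F^{\text{reg}})$, with total perturbation $\lesssim\sqrt{n\log n}$ and spectral gap $\gtrsim\Delta_1\wedge\Delta_2$, yields a $2\times 2$ orthogonal matrix $R$ with
\[
\|\hat U-U^{\text{reg}}R\|_F\,\lesssim\,\frac{\sqrt{n\log n}}{\Delta_1\wedge\Delta_2},
\]
where $\hat U\in\R^{n\times 2}$ collects the eigenvectors of $A$ used by the VSA, so that $\hxsp=\sqrt{n/2}\,\hat U^T$. Transposing and rescaling by $\sqrt{n/2}$, and identifying right-multiplication by $R$ with a rigid motion $Q=R^T\in\Ocal$ acting on the 2D positions, this becomes $\|\hxsp-Q\xbf^{\text{reg}}\|_F\lesssim\sqrt{n}\cdot\sqrt{n\log n}/(\Delta_1\wedge\Delta_2)$. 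A triangle inequality with $\|Q\xbf^{\text{reg}}-Q\xbf^*\|_F\le\sqrt{n}\,d_\infty(\xbf^*,\bPi_n)\lesssim\sqrt{\log n}$, followed by the Cauchy-Schwarz conversion $\|M\|_1\le\sqrt{2n}\|M\|_F$ for $M\in\R^{2\times n}$, gives the first bound of the Proposition (the $\sqrt{\log n}$ contribution is absorbed into the first term since $\Delta_1\wedge\Delta_2\le \|F\|_{op}\le n$, so the first term always dominates). The second inequality is then immediate from Lemma~\ref{lem:fourier}, which provides $\Delta_j\ge\Phi_j-C'_{lLea}\sqrt{n\log n}$ for $j=1,2$.

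The main obstacle is the spectral identification in the second paragraph: one must verify, using only the Weyl-type bound on $\|F-F^{\text{reg}}\|_{op}$ combined with Lemma~\ref{lem:fourier}, that the subspace associated with the second and third largest eigenvalues of $F$---the very subspace targeted by the VSA---is indeed a small perturbation of the frequency-$1$ Fourier subspace of $F^{\text{reg}}$. Without this identification, the Davis-Kahan conclusion would couple $\hat U$ to the wrong subspace of $F^{\text{reg}}$ and carry no positional information. Once this bridge between the eigenspaces and the latent positions is secured, the remainder is careful bookkeeping of perturbation norms and norm conversions.
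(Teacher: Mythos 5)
Your architecture is essentially the paper's: compare $A$ to a circulant surrogate whose frequency-one eigenvectors encode the regular positions, bound the total perturbation by $\sqrt{n\log n}$ (sub-Gaussian operator norm plus Lipschitz bias), transfer gap information by Weyl, apply Davis--Kahan with $d=2$, and finish with the $\ell^1$/Frobenius conversion and the trivial domination of the bias term. The one place you deviate --- working directly with $F^{\mathrm{reg}}$ rather than with the monotonized circulant circular R-matrix $R$ of Lemma~\ref{cl:approxRmatrix} --- is exactly where your write-up stops short. In one paragraph you assert that Lemma~\ref{lem:fourier} ``ensures'' that the second and third largest eigenvalues of $F^{\mathrm{reg}}$ are $\Fcal_{1,n}(g)$, and in your closing paragraph you concede that this identification is the ``main obstacle'' that ``must be verified'' --- without verifying it. As written this is a genuine gap: Lemma~\ref{lem:fourier} only compares $\Delta_1,\Delta_2$ with $\Phi_1,\Phi_2$; it does not by itself order the spectrum of $F^{\mathrm{reg}}$, and because $g$ satisfies only the relaxed bi-Lipschitz condition (slack $\ep_n$), $F^{\mathrm{reg}}$ need not be a circular R-matrix, so there is no structural reason for $\Fcal_{1,n}(g)$ to occupy positions two and three.

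The gap is fillable in two short steps that must appear explicitly. First, reduce to the non-trivial regime: since $\min_{Q\in\Ocal}\|\hxsp-Q\xbf^*\|_1\lesssim n$, the claimed bound holds trivially unless $\Delta_1\wedge\Delta_2\geq C\sqrt{n\log n}$ for a large enough constant (this also disposes of the ``$\vee\,1$''). Second, in that regime Lemma~\ref{lem:fourier} gives $\Phi_1\wedge\Phi_2\geq(\Delta_1\wedge\Delta_2)/2>0$, and since the spectrum of $F^{\mathrm{reg}}$ is exactly the set of Fourier coefficients $\{\Fcal_{m,n}(g)\}$ (Lemma~\ref{lem:spectrumformula}), positivity of $\Phi_1$ and $\Phi_2$ forces $\Fcal_{0,n}(g)>\Fcal_{1,n}(g)>\Fcal_{j,n}(g)$ for all $j\geq 2$; hence the Davis--Kahan gaps $(\lambda_0-\lambda_1)\wedge(\lambda_2-\lambda_3)$ of $F^{\mathrm{reg}}$ equal $\Phi_1\wedge\Phi_2\gtrsim\Delta_1\wedge\Delta_2$ and the targeted eigenspace is indeed the frequency-one cosine/sine plane, i.e.\ $\xbf^{\mathrm{reg}}$ up to an element of $\Ocal$. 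The paper instead avoids any gap assumption at this step by monotonizing $g$ into an exact circulant circular R-matrix and invoking Lemma~\ref{lem:strictEigenvaluesOptim}; either route works, but one of them has to be carried out. Two minor bookkeeping points you omit: the diagonal ($A_{ii}=0\neq g(0)$, handled by an extra $O(1)$ in operator norm), and the fact that the phrase about ``the eigenvalues of $F$'' is a red herring --- Davis--Kahan is applied to the pair $(A,F^{\mathrm{reg}})$, so only the ordering and gaps of $F^{\mathrm{reg}}$ need to be identified.
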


Proposition~\ref{conj:graphgeo:l1}  is proved in Appendix \ref{proof:bidon:spectral:remaniement}. 
It provides an $\ell^{1}$-localization bound depending on the spectral gap $\Delta_1\wedge \Delta_2$ of the signal matrix $F$. Since there are only $n$ positions to be estimated in the bounded space $\Ccal$, this bound is uninformative when the spectral gaps  $\Delta_1 \wedge \Delta_2$ are smaller than $\sqrt{n\log(n)}$. Conversely,  when the spectral gaps are of the order of $n$, we get an $\ell^{1}$-bound of the desired scaling $\sqrt{n\log(n)}$.  

Proposition \ref{conj:graphgeo:l1}  is based on the fact that the signal matrix $F$ is well approximated by a circulant and circular-R matrix, which benefits from nice spectral properties, see Appendix \ref{appendix:spectral}. This type of R-matrices was already studied in \cite{recanati2018reconstructing} to derive some error bounds on the reconstruction of positions -- see Proposition D in \cite{recanati2018reconstructing}. Here, Proposition \ref{conj:graphgeo:l1} extends their result by providing some explicit bounds in the stochastic setting and also by considering some more general signals $F$, which are not assumed to be an exact circulant and circular R-matrix.





\section{Minimax lower bound}\label{sec:minimax}

In this section, we prove that the $\sqrt{\log(n)/n}$ rate in Theorem~\ref{thm:principal} is minimax optimal.
Let us consider the observation model $A = F + E$, where we assume that the entries $\ac{A_{ij}:i<j}$ follow independent Bernoulli distributions with parameters $f(x_i^*,x_j^*)$. We focus on this specific case of sub-Gaussian distributions in the lower bound, as  we have in mind random graph applications.  We emphasize that the same lower-bound  holds for Gaussian noise.

To prove the lower bound, we consider the simpler setting where $f_0$ is known to the statistician, and is an affine function of $d$, 
\[
 f_0(x,y) = (3/4)- d(x,y)/(4\pi)\,,\quad \textrm{for all}\ x,y\in \Ccal. 
\]
This function $f_0$ corresponds to a geometric latent model as discussed in the introduction, and it satisfies the bi-Lipschitz assumption~(\ref{cond:lipsch} - \ref{cond:lipschLower}) for $c_e=0$ and $c_l=c_L=(4\pi)^{-1}$. 
In this simple scenario, the latent  positions are identifiable up to the orthogonal transformations in $\Ocal$, so we derive a lower bound in terms of the quasi-metric $\min_{Q\in\Ocal}d_{\infty}(\hat{{\bf x}},Q{\bf x}^*)$. 
Recall that $\P_{(\textup{{\bf x}}^*,f_0)}$ denotes the distribution of $A$ with representation $({\bf x}^*,f_0)$.

\begin{thm}\label{thm:lowerBound} There exist two positive constants $C, C'$ such that  for any $n\geq C'$, we have the lower bound 
 $$\inf_{\hxbf}\ \sup_{{\bf x}^* \in \bPi_n} \  \P_{(\textup{{\bf x}}^*,f_0)} \cro{\min_{Q\in\Ocal}d_{\infty}(\hat{{\bf x}},Q{\bf x}^*) \geq C \,  \sqrt{\frac{\log(n)}{n}}\,} \geq \frac{1}{2}\enspace ,$$
 where the  infimum holds over all $\sigma(A)$-measurable functions $\hxbf$. 
 \end{thm}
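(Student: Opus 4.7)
The plan is to apply Fano's method to a family of hypotheses in $\bPi_n$ obtained by long-range transpositions of the regular grid. Fix a small constant $c_0>0$ to be calibrated later, and set $m=\lfloor c_0\sqrt{n\log(n)}\,\rfloor$. Let $\xbf^{(0)}=(e^{\iota 2\pi k/n})_{k=1}^{n}\in\bPi_n$. Choose a set $I\subset [n]$ of indices with pairwise gaps at least $3m$ and cardinality $|I|\geq n/(3m)\asymp \sqrt{n/\log(n)}$. For each $i\in I$, let $\xbf^{(i)}\in\bPi_n$ be obtained from $\xbf^{(0)}$ by swapping coordinates $i$ and $i+m$: $x^{(i)}_i=e^{\iota 2\pi(i+m)/n}$, $x^{(i)}_{i+m}=e^{\iota 2\pi i/n}$, and $x^{(i)}_k=x^{(0)}_k$ otherwise. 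These hypotheses all lie in $\bPi_n$ and correspond to permutations that differ in four positions when $i\neq j$.

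\textbf{Separation.} I claim $\min_{Q\in\Ocal}d_\infty(\xbf^{(i)},Q\xbf^{(j)})\geq 2\pi m/n$ for all distinct $i,j\in I$. If $Q$ is the rotation of angle $\theta$, then (since $|i-j|\geq 3m$ makes $i,j$ untouched by the respective swaps) examining coordinate $i$ gives the distance $|2\pi m/n-\theta|$ while coordinate $j$ gives $|2\pi m/n+\theta|$, whose maximum is at least $2\pi m/n$. For rotations by angles close to non-trivial multiples of $2\pi/n$, an explicit analysis over the four affected indices $i,i+m,j,j+m$ gives the same lower bound. Reflections are handled by a triangle inequality around $\xbf^{(0)}$, using that $d_\infty(\xbf^{(0)},Q\xbf^{(0)})\geq c$ for some positive constant $c$ whenever $Q$ is a reflection, which dominates $2\pi m/n$ for $n$ large.

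\textbf{KL bound and Fano.} The mean matrices $F^{(i)}$ and $F^{(j)}$ agree except on the $O(n)$ off-diagonal entries having at least one index in $\{i,i+m,j,j+m\}$. Each such entry differs by at most $c_L \cdot 4\pi m/n=m/n$ by the Lipschitz property of $f_0$ with $c_L=1/(4\pi)$. Since every Bernoulli parameter lies in $[1/2,3/4]$, bounded away from $0$ and $1$, the Bernoulli KL is bounded by a universal constant times the squared difference of parameters, yielding
\[
 \mathrm{KL}\!\big(\P_{(\xbf^{(j)},f_0)},\,\P_{(\xbf^{(i)},f_0)}\big)\ \leq\ C_1\,n\,(m/n)^2\ \leq\ C_1\,c_0^2\log(n).
\]
Since $\log|I|\geq(1/4)\log(n)$ for $n$ large, choosing $c_0$ so small that $C_1 c_0^2<1/8$ and applying the standard Fano inequality to $\{\P_{(\xbf^{(i)},f_0)}\}_{i\in I}$, combined with the $2\pi m/n$ separation from Step 1, yields for any $\sigma(A)$-measurable estimator $\hxbf$:
\[
 \max_{i\in I}\,\P_{(\xbf^{(i)},f_0)}\!\left[\min_{Q\in\Ocal}d_\infty(\hxbf,\xbf^{(i)})\geq \pi m/n\right]\ \geq\ \tfrac12.
\]
As $\pi m/n\gtrsim \sqrt{\log(n)/n}$, this is the desired bound with a suitable constant $C>0$.

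\textbf{Main obstacle.} The delicate step is the separation argument of Step 1, because the quasi-metric $\min_{Q\in\Ocal}d_\infty(\cdot,\cdot)$ could in principle be substantially smaller than $d_\infty$ itself when $Q$ is a non-trivial symmetry of $\bPi_n$. The spacing condition $|i-j|\geq 3m$ on the perturbation sites and a careful case split on $Q$ (near-identity rotations, rotations by multiples of $2\pi/n$, reflections) are needed to confirm that no $Q$ can bring the perturbed configurations below the scale $2\pi m/n$.
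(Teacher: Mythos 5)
Your proposal is correct and follows essentially the same route as the paper: a Fano argument over configurations in $\bPi_n$ obtained from the regular grid by transposing two positions at index-distance of order $\sqrt{n\log(n)}$, with the Kullback--Leibler divergence bounded by the squared differences of Bernoulli parameters (which are bounded away from $0$ and $1$), giving $\mathrm{KL}\lesssim k^2/n\asymp\log(n)$ and a separation of order $\sqrt{\log(n)/n}$ in the quasi-metric $\min_{Q\in\Ocal}d_\infty$. The only difference is cosmetic: the paper uses all $n/2$ transpositions $s\mapsto s+k$ while you keep a sparser subfamily with spacing $3m$ to ease the separation check, which still leaves a log-packing number of order $\log(n)$, so Fano applies identically.
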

 
 The proof of the Theorem \ref{thm:lowerBound} is given in Appendix~\ref{append:LB}.
 The lower bound is written over the collection of $n$-tuples ${\bf x}^* \in \bPi_n$, which is a subclass of the class considered in our upper bounds (since all ${\bf x}^* \in \bPi_n$ satisfy the condition \eqref{assump1:unif} for any $c_a \geq 0$). The lower bound matches the upper bound in Theorem \ref{cor:thm:bis} up to some multiplicative constants. Therefore, it implies the optimality of the $\sqrt{\log(n)/n}$ estimation rate of our estimator (in the minimax sense).   The fact that the lower bound holds even for a known function entails that the  rate $\sqrt{\log(n)/n}$ is not driven by the (absence of) knowledge of the affinity function in our setting. Moreover, since the affine function $f_0$ satisfies the bi-Lipschitz assumption~(\ref{cond:lipsch}--\ref{cond:lipschLower}) for $c_e=0$, i.e. $f_0\in \mathcal{BL}[(4\pi)^{-1},(4\pi)^{-1},0]$, this entails that the rate $\sqrt{\log(n)/n}$ is not due to the slack $\ep_n = c_e\sqrt{\log(n)/n}$ in the bi-Lipschitz assumption. In fact, we precisely allow this slack   $c_e\sqrt{\log(n)/n}$ in~(\ref{cond:lipsch}--\ref{cond:lipschLower}) because this generalization does not worsen the estimation rate compared to pure bi-Lipschitz functions ($c_e=0$). Finally, since the set of $n$-tuples ${\bf x}^* \in \bPi_n$ is in correspondence with the set of permutations $\si^*$ of $[n]$, Theorem \ref{thm:lowerBound} ensures
 that the bound~\eref{borne:seriation} is rate-optimal  for the bi-Lipschitz seriation problem.

\section{Numerical experiments}\label{sec:numerique}
 

\subsection{Optimal rate}

In Figure \eqref{fig0:optimality}, we study the ratio $r = \frac{ \textup{min}_{Q \in \mathcal{Q}} \, d_{\infty}(\hat{\textbf{x}},Q\textbf{x}^*)}{v_{opt}}$
of the maximum error  of the  Localize-and-Refine algorithm $\hat{\textbf{x}}$ (without data splitting) and the optimal rate $v_{opt} = \sqrt{\log(n)/n}$. For each sample size $n=100, 200, 300, 400$, a dot represents the average of $50$  ratios $r_1,\ldots,r_{50}$ obtained on independent data sets $A^{(1)}, \ldots,A^{(50)}$.  Each data matrix $A^{(j)}$, $j\in[50]$, has been generated as in the model (3), with the three following specifications. The latent points $x_1^*,\ldots,x_n^*$ are sampled independently and uniformly on $\mathcal{C}$.  The affinity function is  the affine geometric function $f(x,y) = 1 - d(x,y)/(2 \pi)$. The entries $E_{ij}$, $1 \leq i < j\leq n$, of the noise matrix  are independent Gaussian random variables, with a standard deviation that is  either equal to $0.1$ (green curve) or to $0.5$ (red curve).

One can observe in Figure \eqref{fig0:optimality} that the (averaged) ratio for  $\sigma = 0.1$ is (approximately) constant and  equal to $1$, while for $\sigma = 0.5$ it   decreases from $7$ to $5$. In other words, the maximum error of the Localize-and-Refine algorithm follows a  $\sqrt{\log(n)/n}$ rate, up to a multiplicative constant   $C \in [\frac{1}{2},8]$,  for sample sizes  $n\geq 100$. This corroborates the  conclusion of our theoretical findings (upper bound of Corollary \ref{coro:iid:perf}  and lower bound of  Theorem \ref{thm:lowerBound}) that the Localize-and-Refine algorithm achieves the  optimal $\sqrt{\log(n)/n}$ rate up to a multiplicative constant $C$ that is  bounded away from zero and bounded from above. An interesting question (for future research) would be to understand the dependencies of $C$ in the problem parameters.  Figure \eqref{fig0:optimality} indeed shows that $C$  behaves differently when $\si=0.1$ or $\si=0.5$, and that $C$ varies with $n$. 

\begin{figure}
\centering
  \includegraphics[width=0.7\linewidth]{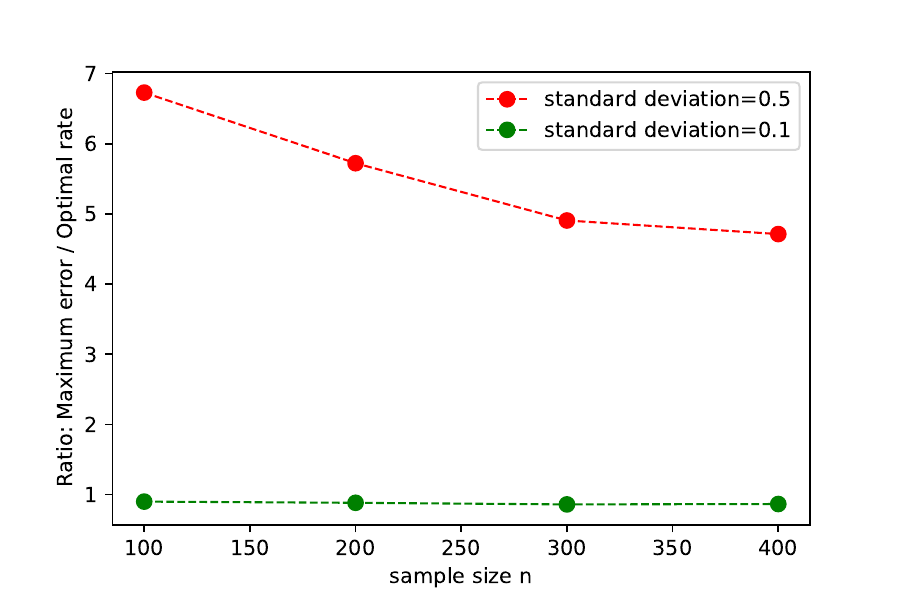}
\caption{Ratio of the maximum error  of the  Localize-and-Refine algorithm and the optimal rate $\sqrt{\log(n)/n}$, with noise level $\sigma = 0.1$ (green) and $\sigma = 0.5$ (red). }
\label{fig0:optimality}
\end{figure}

\subsection{Usefulness of data splitting? of refined estimation step 2?}
In this section, we investigate two questions relative to 
 the empirical performance of the Localize-and-Refine algorithm with the initial localization $\hat{\textbf{x}}^{(1)}_S$  given by the spectral output  $\tilde{\textbf{x}}^{(1)}_S$ (defined page \pageref{alg:LHO:geo}): (i) Is the data splitting  useful in practice? (ii) Does the refined estimation (step 2) empirically improve the initial localization (step 1)?

\begin{figure}%
\centering
\begin{subfigure}{.5\textwidth}
  \centering
  \includegraphics[width=1\linewidth]{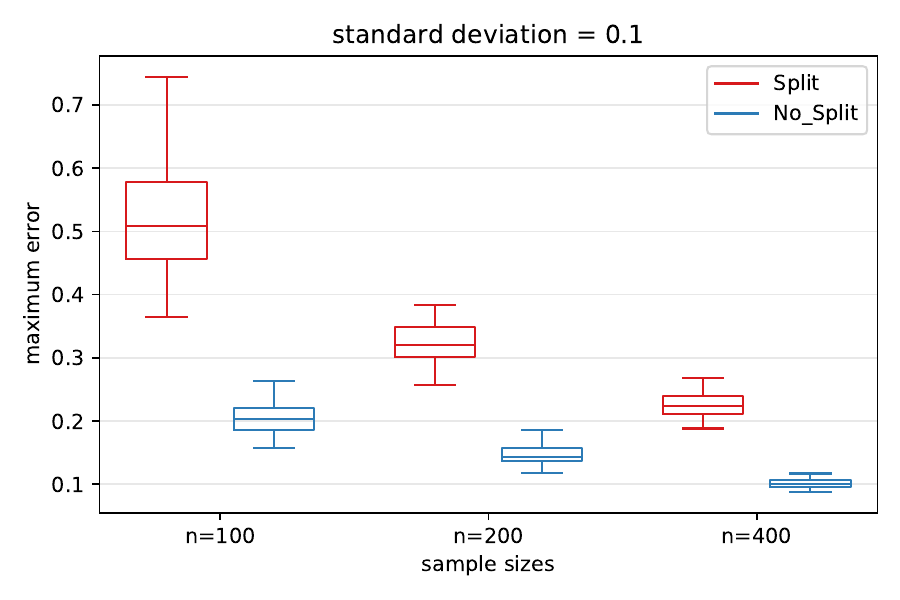}
  \caption{affine signal}
  \label{fig1:sub1}
\end{subfigure}%
\begin{subfigure}{.5\textwidth}
  \centering
  \includegraphics[width=1\linewidth]{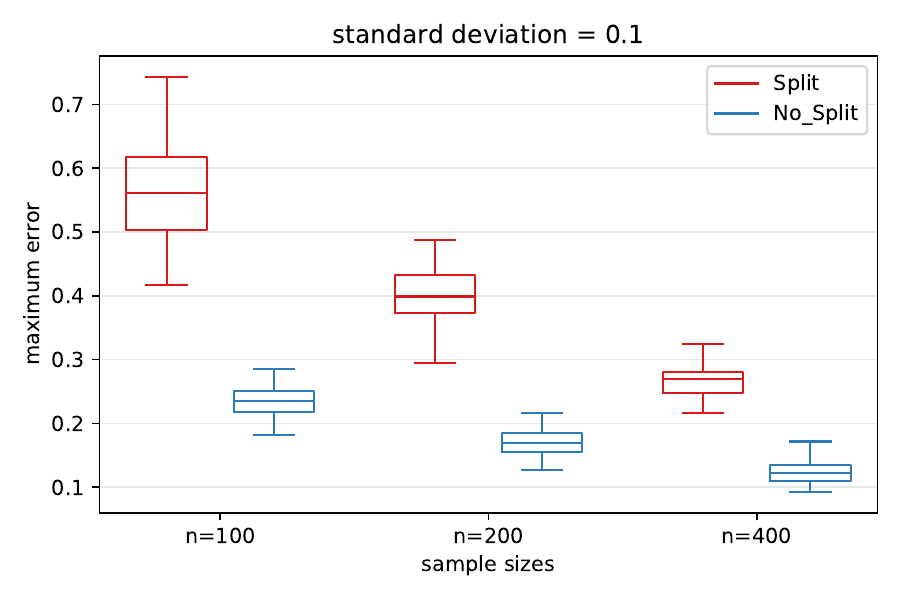}
  \caption{logit signal}
  \label{fig1:sub2}
\end{subfigure}

\begin{subfigure}{.5\textwidth}
  \centering
  \includegraphics[width=1\linewidth]{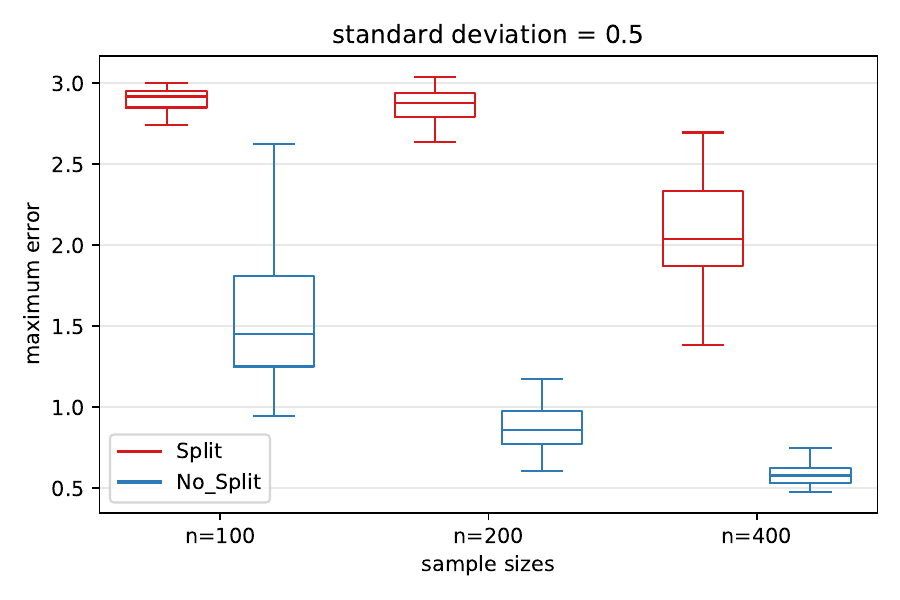}
  \caption{affine signal}
  \label{fig1:sub3}
\end{subfigure}%
\begin{subfigure}{.5\textwidth}
  \centering
  \includegraphics[width=1\linewidth]{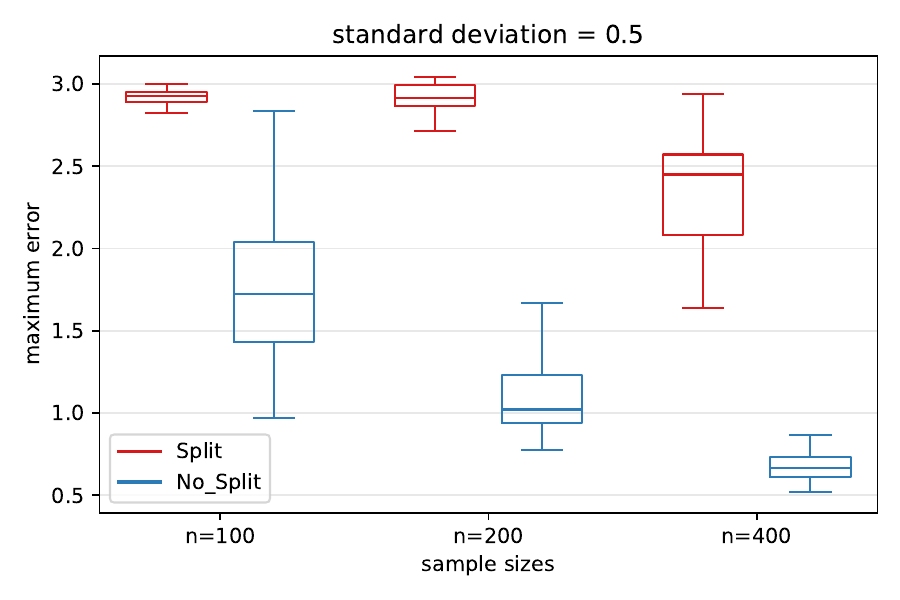}
  \caption{logit signal}
  \label{fig1:sub4}
\end{subfigure}

\caption{Localization error in $d_{\infty}$-distance for the  Localize-and-Refine algorithm with data-splitting (red) and without data-splitting (blue). Top line: noise with standard-deviation sd= 0.1. Bottom line: noise with sd=0.5. Left: affine geometric affinity function $f(x,y) = 1 - d(x,y)/(2 \pi)$. Right: Logit geometric affinity function $f(x,y) = \exp [-d(x,y)] / \left(1+ \exp[-d(x,y)] \right)$.}
\label{fig1:spltting}
\end{figure}

In each Figure \ref{fig1:spltting} and \ref{fig2:spectral}, we compare two algorithms, presenting boxplots of their localization errors in $d_{\infty}$-distance. Each boxplot represents the distribution of $50$ errors made on $50$ samplings of the data matrices $A$.  Each data matrix $A$ is generated as in the model (3), with the three following specifications. The latent points $x_1^*,\ldots,x_n^*$ are sampled independently and uniformly on $\mathcal{C}$.  For the affinity function, we choose  either the affine geometric function $f(x,y) = 1 - d(x,y)/(2 \pi)$, or the logit geometric function $f(x,y) = \exp [-d(x,y)] / \left(1+ \exp[-d(x,y)] \right)$. In the noise matrix, the entries $E_{ij}$, $1 \leq i < j\leq n$,  are independent Gaussian random variables, with a standard deviation that is either equal to $0.1$ (top line) or $0.5$ (bottom line). The same protocol is used in Figure \ref{fig3:spectral_L1_error}, except that we measure the localization error in $d_1$-distance, instead of $d_{\infty}$-distance. \smallskip

\textit{Question (i):} We use a data splitting scheme in the Localize-and-Refine algorithm in order to ensure independence between the data used in the two steps. This independence was convenient to prove theoretical guarantees (as Theorem 4.1). Yet, data splitting makes the initial localization run on a $(n/4)\times (n/4)$ data matrix, instead of the whole $n\times n$ matrix, which is expected to enlarge the variance of this initial localization by a factor 4. So, one can wonder whether the splitting is necessary and useful in practice. 
To answer this question, we illustrate in Figure \ref{fig1:spltting} the difference between the performances of the Localize-and-Refine algorithm and the homologous procedure  without data splitting (the former is plotted in red, the latter in blue). One can observe that the $d_{\infty}$-localization error is much smaller for the procedure without splitting. A plausible explanation for the good performances without data-splitting is that the statistical dependence between the steps 1 and 2 of the algorithm is negligible for large $n$, rendering the data splitting useless. Indeed, in the no-splitting version of the algorithm, step~1 uses $O(n^2)$ observations to release a first localization $\tilde{\textbf{x}}^{(1)}$ of the $n$ positions, then step~2 refines the estimation of a position $x_i$ using $n$ observations, which only represents a fraction $O(1/n)$ of the observations used in step 1. Hence, the dependence between $\tilde{\textbf{x}}^{(1)}$ in step 1 and the $n$ observations in step 2 could be sufficiently small to not require a data splitting. 
Accordingly, we recommend the version of  the Localize-and-Refine algorithm without data-spliting for practical use.
As a future direction of research, it would be interesting to investigate the theoretical performance of the algorithm without data splitting,  in order to  bridge the gap between the theory and the practice.
\smallskip

\textit{Question (ii):}  The strategy of the Localize-and-Refine algorithm is to get an initial localization with controlled $d_{1}$-error and then to refine the localization in order to ensure a control in the $d_{\infty}$-metric. 
A natural question is whether the refinement step 2  improves the initial localization obtained by  the Spectral algorithm in step 1.
We investigate numerically this question in Figure \ref{fig2:spectral}, by comparing the $d_{\infty}$-error of the Spectral Localization procedure (plotted in red) and of the Localize-and-Refine algorithm without data splitting (in blue). 
One can observe contrasting results, depending on the standard-deviation of the noise. 
When the standard-deviation is 0.5 (bottom line), the second step offers no significant improvement in the $d_{\infty}$-localization error.  Conversely, when the standard-deviation is 0.1 (top line), the $d_{\infty}$-error is significantly improved by the refinement step. This suggests that, to be useful, the refinement step requires a precise enough initial localization. 
We complement Figure \ref{fig2:spectral} with Figure \ref{fig3:spectral_L1_error},  which displays the errors in  $d_1$-distance (scaled by $1/n$ for a better comparison), instead of the $d_{\infty}$-distance, though this loss function is not our main concern in this paper. In Figure \ref{fig3:spectral_L1_error}, we observe  a  behavior in the $(d_{1}/n)$-metric very similar to the behavior in  the  $d_{\infty}$-metric, displayed in Figure \ref{fig2:spectral}.
In the light of the numerical performance of the Spectral Localization in Figure \ref{fig2:spectral}, an interesting open question is wether we can prove theoretical guarantees on the $d_{\infty}$-localization error of this procedure.

\begin{figure}%
\centering
\begin{subfigure}{.5\textwidth}
  \centering
  \includegraphics[width=1\linewidth]{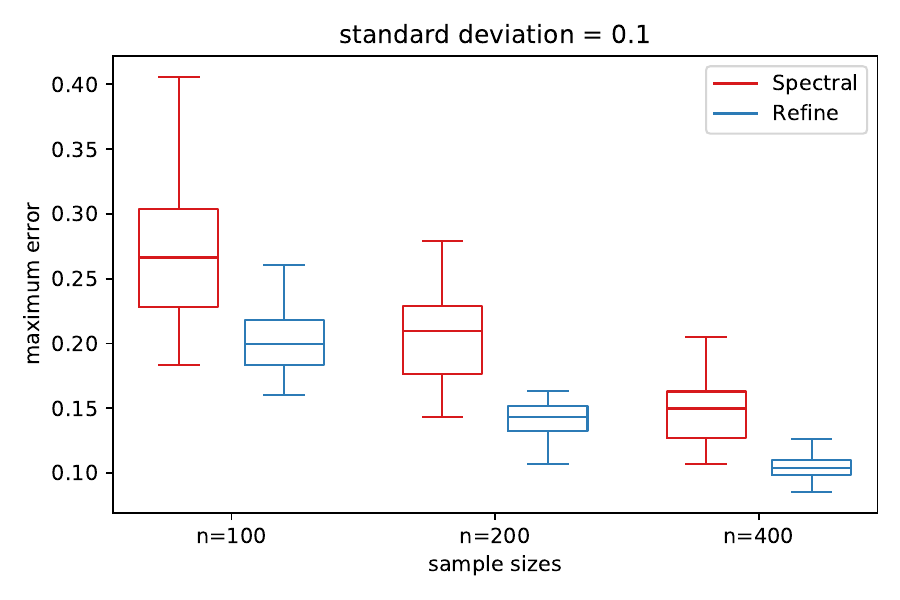}
  \caption{affine signal}
  \label{fig2:sub1}
\end{subfigure}%
\begin{subfigure}{.5\textwidth}
  \centering
  \includegraphics[width=1\linewidth]{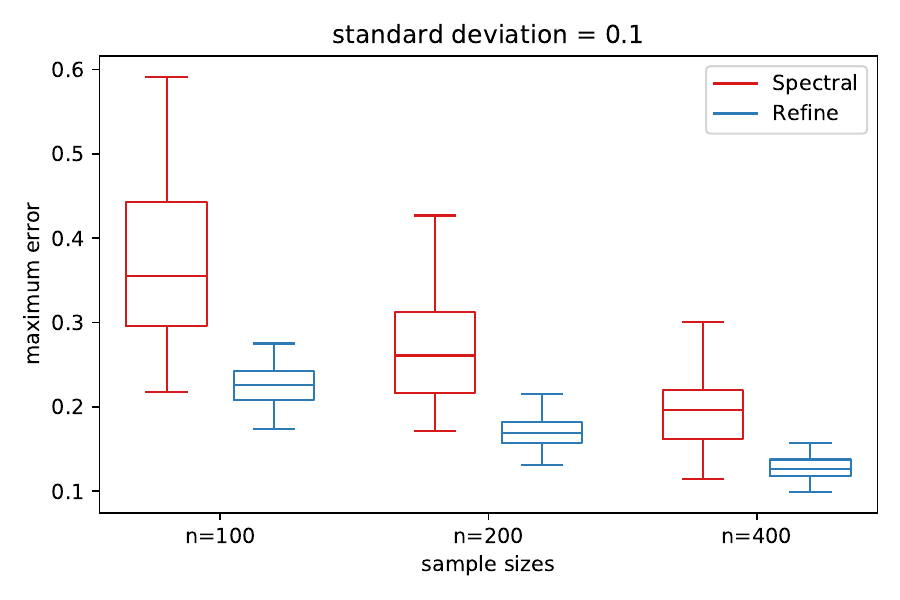}
  \caption{logit signal}
  \label{fig2:sub2}
\end{subfigure}

\begin{subfigure}{.5\textwidth}
  \centering
  \includegraphics[width=1\linewidth]{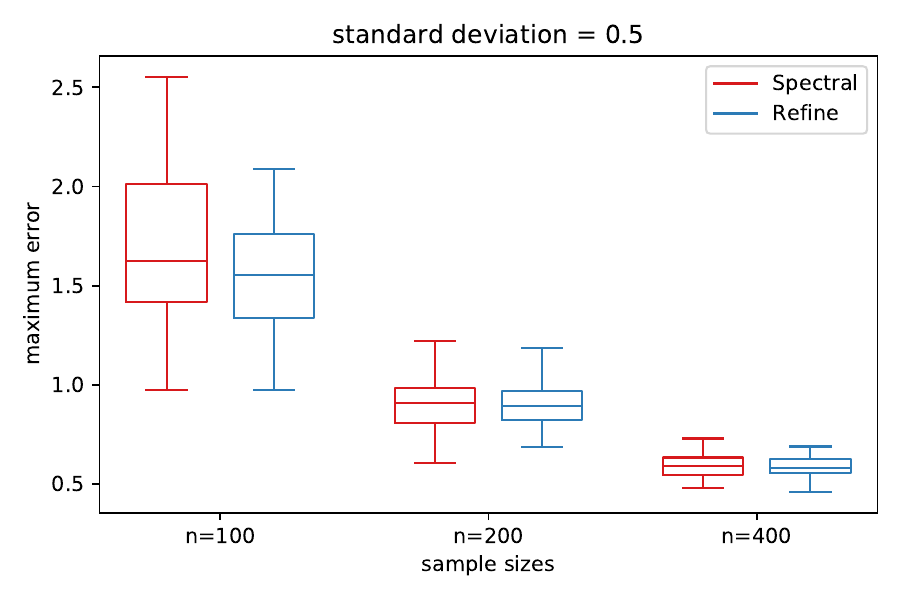}
  \caption{affine signal}
  \label{fig2:sub3}
\end{subfigure}%
\begin{subfigure}{.5\textwidth}
  \centering
  \includegraphics[width=1\linewidth]{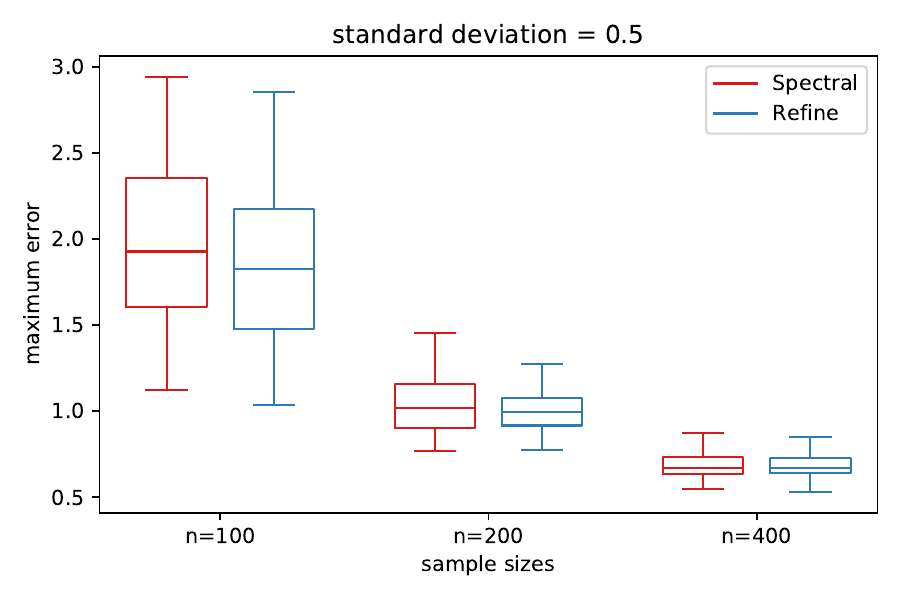}
  \caption{logit signal}
  \label{fig2:sub4}
\end{subfigure}

\caption{Localization error in $d_{\infty}$-distance for the Spectral algorithm (red) and the Localize-and-Refine algorithm (blue). Top line: noise with sd=0.1. Bottom line: noise with sd=0.5. Left: affine geometric affinity function. Right: Logit geometric affinity function.}
\label{fig2:spectral}
\end{figure}

\begin{figure}%
\centering
\begin{subfigure}{.5\textwidth}
  \centering
  \includegraphics[width=1\linewidth]{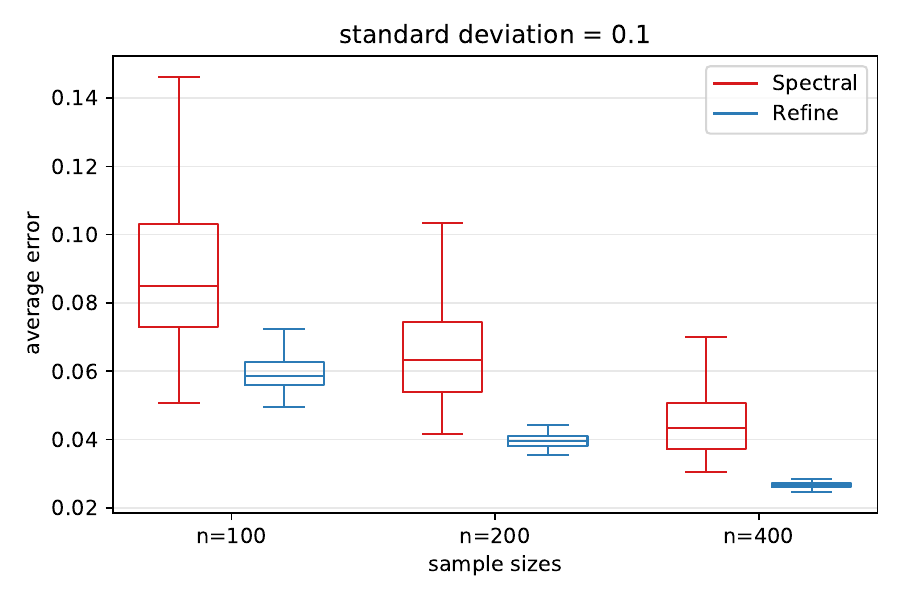}
  \caption{affine signal}
  \label{fig3:sub1}
\end{subfigure}%
\begin{subfigure}{.5\textwidth}
  \centering
  \includegraphics[width=1\linewidth]{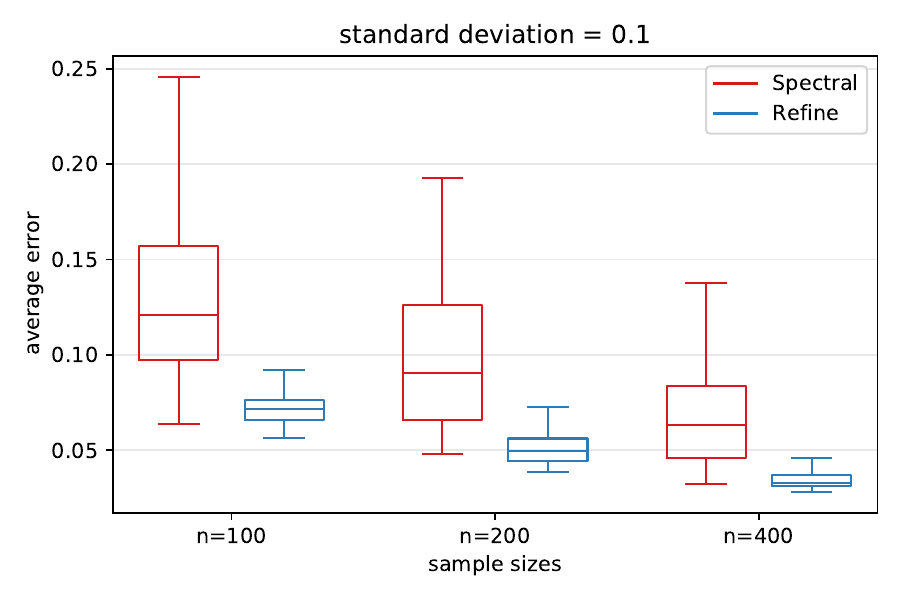}
  \caption{logit signal}
  \label{fig3:sub2}
\end{subfigure}

\begin{subfigure}{.5\textwidth}
  \centering
  \includegraphics[width=1\linewidth]{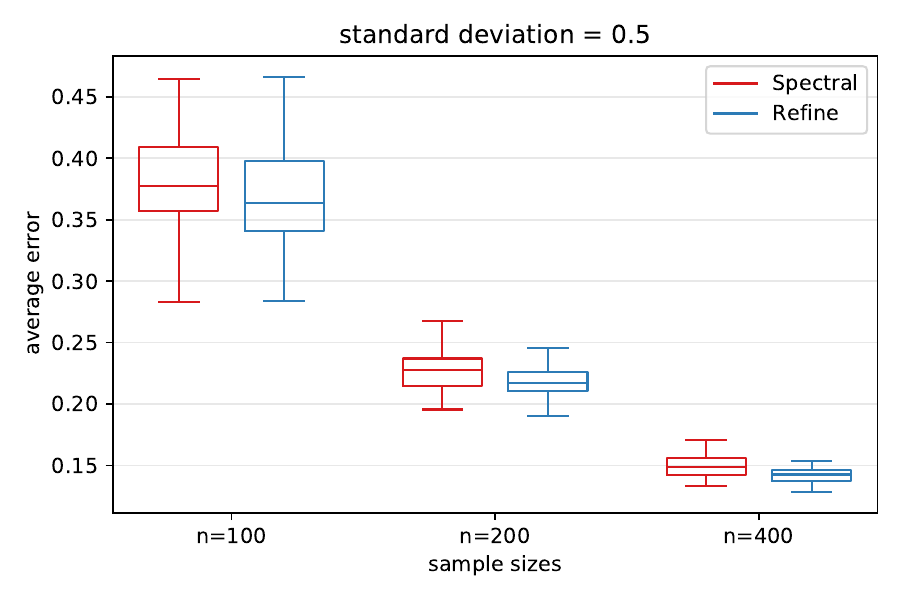}
  \caption{affine signal}
  \label{fig3:sub3}
\end{subfigure}%
\begin{subfigure}{.5\textwidth}
  \centering
  \includegraphics[width=1\linewidth]{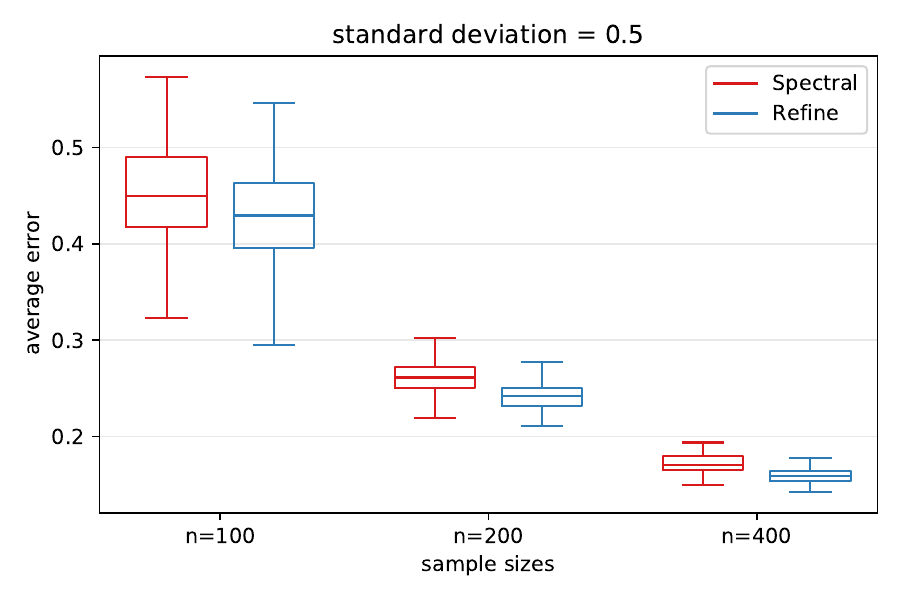}
  \caption{logit signal}
  \label{fig3:sub4}
\end{subfigure}

\caption{Localization error in $\pa{{1\over n}d_{1}}$-distance for the Spectral algorithm (red) and the Localize-and-Refine algorithm (blue). Top line: noise with sd=0.1. Bottom line: noise with sd=0.5. Left: affine geometric affinity function. Right: Logit geometric affinity function.}
\label{fig3:spectral_L1_error}
\end{figure}

\section{Discussion}\label{sec:discussion}

Relying on observations of pairwise affinities  in a latent space model,  we studied the problem of uniformly localizing positions ${\bf x}^*=(x_1^*,\ldots,x_n^*)$ on the unit sphere $\Ccal\subset \mathbb{R}^2$. Under bi-Lipschitz assumptions on the affinity function, we established the  rate $\sqrt{\log(n)/n}$ for the uniform localization of balanced $n$-tuples ${\bf x}^*\in \bPi_{n}$.  We also proved that non-trivial estimation error is still possible when the latent points do not form a balanced $n$-tuple (${\bf x}^*\notin \bPi_{ n}$) to the price of  an additional bias $d_{\infty}({\bf x}^*,\bPi_{n})$. This bias remains small compared to the $\sqrt{\log(n)/n}$ rate when the points have been sampled uniformly at random on $\Ccal$. 

We also analyzed a spectral embedding alternative in Section~\ref{sec:geometric}, which benefits from a polynomial-time complexity. 
When the function $f$ is geometric and when the associated Fourier coefficients are suitably separated, this spectral method achieves the optimal rate $\sqrt{\log(n)/n}$ for uniform localization.  
{Yet, the spectral embedding takes advantage of  the structure of Toeplitz R-matrix. Since this structure disappears in the general case of bi-Lipschitz functions, there is no apparent reason for the spectral algorithm to work over the whole class of bi-Lipschitz functions. 

As our non-polynomial-time algorithm is based on an instance of the Quadratic Assignment Problem, which is known to be NP Hard and even hard to approximate, the existence of polynomial-time algorithms achieving the  $\sqrt{\log(n)/n}$ rate over the whole class of bi-Lipschitz functions remains an open question.}

{
The latent positions are not identifiable when $f$ is unknown, and our main hypothesis is that there exists a representation $(\textbf{x},f)$ with $\textbf{x}$ close to $\Pi_{n}$ and $f$ bi-Lipschitz. We use as reference the regular distribution $\Pi_{n}$, since regular and uniform distributions are the ones that appear in classical models like graphon, $f$-random graphs, or statistical seriation. Our algorithms builds on this hypothesis,  and consequently the bias
 $\textup{min}_{(\textup{\textbf{x}},f) \in \mathcal{R}(c_l,c_L,c_e)} d_{\infty}(\textup{\textbf{x}},\Pi_n)$ appears in our bounds, where the minimum is over  the set $\mathcal{R}(c_l,c_L,c_e)$ of bi-Lipschitz representatives $(\textup{\textbf{x}},f)$.  This minimum leaves room to handle situations where the latent positions do not match the regular grid $\Pi_n$ but are only more or less evenly spread. For instance, the minimal bias is zero for some representations $(\textbf{x},f)$, with $\textup{\textbf{x}}$ as far apart from $\Pi_n$ as $d_{\infty}(\textup{\textbf{x}},\Pi_n)\geq \pi/8$ (Proposition \ref{prop_identif}). 
Yet, there are many practical situations where the affinity matrix is clustered, that we cannot handle. In the case where the affinity matrix is clustered ($f$ bi-Lipschitz, but the $x_{i}$ are clustered), the problem becomes a clustering problem, rather than a seriation problem, and our algorithms are not suited for clustering data. The question of handling simultaneously clustering and seriation is very interesting, but it is beyond the scope of this paper.}

{In this manuscript, we focused our attention to symmetric pairwise affinity functions $f$. However, other one-dimensional localization models such as Bradley-Terry model or more generally ranking problems, do not satisfy the symmetry assumption. Still, we hope that our general two-step approach can leverage other structural assumptions.
In ranking, a natural counterpart of our model~\eqref{eq:model} is the so-called SST model introduced by~\cite{shah2016stochastically}, which is defined as follows. We observe $A_{ij}= f(x_i^*, x_j^*)+E_{ij}$ where the function $f:(x,y) \in [0,1]\times[0,1]\mapsto f(x,y) \in [0,1]$ is non-decreasing with respect to $x$ and non-increasing with respect to $y$ and satisfies the skew symmetry assumption, that is $f(x,y)= 1-f(y,x)$. In this setting, $f(x_i^*,x_j^*)$ stands for the probability that player $i$ wins a game against player $j$. Note that the latent space is now $[0,1]$ and not the torus $\mathcal{C}$ anymore. Although our methodology does not apply verbatim, we could adapt the Localize and Refine procedure for the latent space $[0,1]$. To exploit the bi-isotonic and skew-symmetric assumptions, the refinement estimator of~\eqref{def:estim:Linfini} could for instance be  replaced by  
\[
    \hat{x}^{(2)}_i \in \argmax_{z \in G_{n_0}} \,  \big\langle (A_{i , S}-\frac{1}{2}),  z-\hxbf^{(1)}_S\big\rangle,\quad\textrm{for each}\  i\in\bS\ , 
\]
where $G_{n_0}$ stands for the regular grid $\{\tfrac{1}{n_0}, \tfrac{2}{n_0},\ldots, 1\}$ and $\hxbf^{(1)}_S$ is a suitable first-step estimator. In comparison to 
\eqref{def:estim:Linfini}, $D(z,\hxbf^{(1)}_S)$ is replaced by $z -\hxbf^{(1)}_S$. We expect that, with a suitable initialization $\hxbf^{(1)}_S$ and under bi-Lipschitz assumptions, the resulting procedure achieves near-optimal localization rates. This is an interesting direction for future research.  
}


\appendix


\section{Some intuition on our analysis}\label{app:central}

To get some intuition on the rationale behind our analysis, 
we single out the next lemma, which is a cornerstone of the analysis at least in the simplified situation where $c_e=0$ and where the latent positions belong to $\bPi_n$. Then, we discuss some consequences in simplified versions of our work.

\begin{lem}\label{lem:conerstone:variante}
Let $\alpha,\eps\geq 0$ be two non-negative constants, and let $(a_{j})_{j=1,\ldots,p}$ and $(d_{j})_{j=1,\ldots,p}$ be two sequences fulfilling
$$a_{1}\geq a_{2}\geq \ldots \geq a_{p},\quad d_{1}\leq d_{2}\leq \ldots\leq d_{p},\quad \textrm{and} \quad 
a_{j}-a_{j+1}\geq \alpha (d_{j+1}-d_{j})-\eps \,,$$
for $j=1,\ldots,p-1$.
Then, for any permutation $\sigma:[p]\to[p]$ we have
\begin{equation}\label{eq:cornerstone:variante}
\sum_{j=1}^p a_{j}(d_{\sigma(j)}-d_{j})\geq {\alpha\over 2} \sum_{j=1}^p (d_{j}-d_{\sigma(j)})^2-\eps \sum_{j=1}^p j(d_{j}-d_{\sigma(j)}).
\end{equation}
\end{lem}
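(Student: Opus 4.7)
The plan is to proceed by summation by parts (Abel summation) and then identify each of the two resulting sums with one of the terms on the right-hand side of~\eqref{eq:cornerstone:variante}.

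First I would introduce the forward differences $b_k := a_k - a_{k+1}$ for $k=1,\ldots,p-1$, which satisfy $b_k \geq \alpha(d_{k+1}-d_k)-\eps$ by hypothesis, and the partial sums $\Delta_k := \sum_{j=1}^{k}(d_{\sigma(j)}-d_j)$. A key observation is that $\Delta_k \geq 0$: since $(d_j)$ is non-decreasing, $\sum_{j=1}^{k} d_j$ is the minimum of $\sum_{j\in T} d_j$ over all size-$k$ subsets $T\subset[p]$, hence $\sum_{j=1}^{k} d_{\sigma(j)} \geq \sum_{j=1}^{k} d_j$. Writing $a_j = a_p + \sum_{k=j}^{p-1} b_k$ and using $\sum_{j=1}^{p}(d_{\sigma(j)}-d_j)=0$, Abel summation yields
\[
\sum_{j=1}^{p} a_j (d_{\sigma(j)}-d_j) \;=\; \sum_{k=1}^{p-1} b_k\,\Delta_k \;\geq\; \alpha\sum_{k=1}^{p-1}(d_{k+1}-d_k)\,\Delta_k \;-\; \eps\sum_{k=1}^{p-1}\Delta_k,
\]
where the inequality uses $\Delta_k \geq 0$ together with the lower bound on $b_k$.

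Next I would compute each of the two sums in the right-hand side explicitly. Swapping the order of summation and telescoping in $k$,
\[
\sum_{k=1}^{p-1}(d_{k+1}-d_k)\,\Delta_k \;=\; \sum_{j=1}^{p}(d_{\sigma(j)}-d_j)\,(d_p-d_j) \;=\; \sum_{j=1}^{p} d_j^{2} \;-\; \sum_{j=1}^{p} d_j\, d_{\sigma(j)},
\]
where the last equality follows from $\sum_j (d_{\sigma(j)}-d_j)=0$. Using the permutation identity $\sum_j d_{\sigma(j)}^{2}=\sum_j d_j^{2}$, we recognize that this equals $\tfrac12 \sum_{j=1}^{p}(d_j-d_{\sigma(j)})^2$. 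Analogously,
\[
\sum_{k=1}^{p-1}\Delta_k \;=\; \sum_{j=1}^{p-1}(p-j)\,(d_{\sigma(j)}-d_j) \;=\; -\sum_{j=1}^{p} j\,(d_{\sigma(j)}-d_j) \;=\; \sum_{j=1}^{p} j\,(d_j-d_{\sigma(j)}),
\]
where the middle equality uses $\sum_j(d_{\sigma(j)}-d_j)=0$ to replace $p-j$ by $-j$.

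Combining these two identities with the summation-by-parts bound gives exactly~\eqref{eq:cornerstone:variante}. The only conceptual step is the sign check $\Delta_k \geq 0$, which is precisely what makes the lower bound $b_k \geq \alpha(d_{k+1}-d_k)-\eps$ usable; everything else is algebraic manipulation. The main obstacle I anticipate is bookkeeping the boundary terms in the Abel summation (choice of reference value $a_p$ vs.\ $a_1$) and the rewriting of $\sum_j (p-j)(\cdot)$ as $\sum_j j(\cdot)$, but both are handled cleanly using the fact that $\sigma$ is a permutation.
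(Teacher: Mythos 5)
Your proof is correct and follows essentially the same route as the paper's: an Abel/summation-by-parts argument hinging on the key observation that the partial sums $\Delta_k=\sum_{j\le k}(d_{\sigma(j)}-d_j)$ are non-negative, which makes the lower bound on $a_k-a_{k+1}$ usable, followed by the same algebraic identities to recover the quadratic and the $\eps\sum_j j(d_j-d_{\sigma(j)})$ terms. The only differences from the paper (explicitly swapping summation order and telescoping rather than performing a second Abel transformation) are cosmetic.
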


\noindent{\bf Proof of Lemma~\ref{lem:conerstone:variante}.}
Let us set the notation $D_{j}=d_{1}+\ldots+d_{j}$, and
$d^{\sigma}_{j}=d_{\sigma(j)}$, and $D^{\sigma}_{j}=d^{\sigma}_{1}+\ldots+d^{\sigma}_{j}$ for $j=1,\ldots,p$.
Since $d$ is non-decreasing, and since $\sigma$ is a permutation of $[p]$, we have
$$D^{\sigma}_{j}\geq D_{j},\ \ \textrm{for}\ j=1,\ldots,p,\quad \textrm{and}\quad D^{\sigma}_{p}=D_{p}\enspace .$$
Writing $d^{\sigma}_{j}=D^{\sigma}_{j}-D^{\sigma}_{j-1}$ and rearranging the sums, we get
\begin{align*}
\sum_{j=1}^p a_{j}(d^{\sigma}_{j}-d_{j})  &= \sum_{j=1}^p a_{j}\big((D^{\sigma}_{j}-D^{\sigma}_{j-1})-(D_{j}-D_{j-1})\big)\\
&= \underbrace{a_{p}\pa{D^{\sigma}_{p}-D_{p}}}_{=0}+\sum_{j=1}^{p-1} \underbrace{(a_{j}-a_{j+1})}_{\geq \alpha (d_{j+1}-d_{j})-\eps} \underbrace{(D^{\sigma}_{j}-D_{j})}_{\geq 0}\\
&\geq \sum_{j=1}^{p-1}  [\alpha(d_{j+1}-d_{j})-\eps(j+1-j)](D^{\sigma}_{j}-D_{j})\\
&=  \sum_{j=1}^p ( \alpha d_{j}-\eps j)(d_{j}-d^{\sigma}_{j})\\
&={\alpha\over 2} \sum_{j=1}^p (d_{j}-d^{\sigma}_{j})^2-\eps \sum_{j=1}^p j(d_{j}-d^{\sigma}_{j})\enspace , 
\end{align*}
where we used Abel transformation in the penultimate line. 
The proof of Lemma \ref{lem:conerstone:variante} is complete. \hfill$\square$
\medskip

Let us discuss some immediate consequences of the above lemma for our problem.
Let us consider the case where the entries $A_{ij}$ of the matrix $A$ decrease with $d(x^*_{i},x^*_{j})$ for some
$\xbf^*\in\bPi_{n}$. For a fixed $i$, let $\tau$ be a permutation of $[n]$ such that
$\big\{d(x^*_{i},x^*_{\tau(j)}): {j=1,\ldots,n}\big\}$ is ranked in increasing order. Let us set $a_{j}=A_{i\tau(j)}$ and $d_{j}=d(x^*_{i},x^*_{\tau(j)})$.
Since the entries $A_{ij}$ decrease with $d(x^*_{i},x^*_{j})$, the sequences $(a_{j})_{j=1,\ldots,n}$ and $(d_{j})_{j=1,\ldots,n}$ fulfill the conditions of Lemma~\ref{lem:conerstone:variante} with $\alpha=\eps=0$. 
Let us pick $k\in [n]$ and let us denote by $\sigma_{k}$ the permutation of $[n]$ such that
$d(x^*_{k},x^*_{\tau(j)})=d(x^*_{i},x^*_{\tau(\sigma_{k}(j))})=d_{\sigma_{k}(j)}$ {-- this is possible because $\xbf^*\in \bPi_n$}. We notice that $\sigma_{i}=Id$.
Then, Lemma~\ref{lem:conerstone:variante} ensures that
$$\sum_{j=1}^n A_{i\tau(j)} d(x^*_{k},x^*_{\tau(j)}) = \sum_{j=1}^n A_{i\tau(j)} d(x^*_{i},x^*_{\tau(\sigma_{k}(j))})  \geq \sum_{j=1}^n A_{i\tau(j)} d(x^*_{i},x^*_{\tau(j)})\enspace ,$$
so that
$$x^*_{i} \in \argmin_{z \in \Ccal_{n}} \sum_{j=1}^n A_{ij} d(z,x^*_{j})\enspace .$$
{This justifies that the criterion underlying the refined estimator~\eqref{def:estim:Linfini} is able to recover the true latent position $x^*_j$ at least in an idealized setting where the observations are noiseless, the entries of $A_{ij}$ are decreasing with $d(x^*_i,x^*_j)$, and the true latent positions $x^*_j$ are plugged in~\eqref{def:estim:Linfini} instead of the initial estimator $\widehat{x}^{(1)}$. 
}

\medskip 

When, in addition, we have a lower Lipschitz condition
\[    
A_{i\tau(j)} - A_{i\tau(j+1)}\geq c_{l} \pa{d(x^*_{i},x^*_{\tau(j+1)})-d(x^*_{i},x^*_{\tau(j)})}\enspace ,  
\]
then, applying Lemma~\ref{lem:conerstone:variante}, we can lower bound the difference 
$$\sum_{j=1}^n A_{ij}d(x^*_{k},x^*_{j}) -\sum_{j=1}^n A_{ij}d(x^*_{i},x^*_{j})
\geq {c_{l}\over 2} \sum_{j=1}^n \pa{d(x^*_{i},x^*_{j})-d(x^*_{k},x^*_{j})}^2\enspace  .$$
In particular, we observe that, for all $z\in\Ccal_{n}$,
$$\sum_{j=1}^n A_{ij} d(z,x^*_{j}) \geq \sum_{j=1}^n A_{ij}d(x^*_{i},x^*_{j})
+{c_{l}\over 2} \sum_{j=1}^n \pa{d(x^*_{i},x^*_{j})-d(z,x^*_{j})}^2,$$
so the sum $\sum_{j} A_{ij} d(z,x^*_{j})$  locally increases, when $z$ moves away from $x_{i}^*$. {In the general case, where $c_e>0$ and the observations are noisy, the criterion does not satisfy a simple local quadratic lower bound and we need to rely on finer arguments than Lemma~\ref{lem:conerstone:variante} -- see e.g. the proofs  Lemma~\ref{claim:LLowerB} and~\ref{lem:regret}.}
\medskip

Finally, we sketch here the proof of the second result of Proposition~\ref{prop:representative}, in the specific case where $c_e=0$. Consider any two representations $(\xbf,f)$ and $(\xbf,f')$ in $\Rcal[F,c_l,c_L,0]$ with $\xbf$, $\xbf'\in \bPi_n$. Since both $\xbf$ and $\xbf'$ belong to $\bPi_n$, this implies that, for any fixed $i$, the vectors $(d(x_i,x_j))_j$ and $(d(x'_i,x'_j))_j$ are equal, up to a permutation of the entries.  As $c_e=0$, the lower Lipschitz condition~\eqref{cond:lipschLower} ensures that $(F_{i,j})_{j=1,\ldots, n}$ is decreasing both with respect to $d(x_i,x_j)$ and $d(x'_i,x'_j)$. As a consequence, we have $d(x_i,x_j)= d(x'_i,x'_j)$ for any $i$, $j$ in $[n]$. We now show that this implies that $\xbf= Q\xbf'$ for some $Q\in \mathcal{O}$.  Denote $\sigma$ the permutation of $[n]$ such that $\sigma(1)=1$ and the arguments $\underline{x}_j$ satisfy $\underline{x}_{\sigma(i+1)}=  \underline{x}_{\sigma(i)}+2\pi/n$. As a consequence, we have $d(\xbf'_{\sigma(i)},\xbf'_{\sigma(i+1)})=  d(\xbf_{\sigma(i)},\xbf_{\sigma(i+1)})= 2\pi/n$. This implies that either  $\underline{x'}_{\sigma(i+1)}=  \underline{x'}_{\sigma(i)}+2\pi/n$ for all $i$, or $\underline{x'}_{\sigma(i+1)}=  \underline{x'}_{\sigma(i)}-2\pi/n$ for all $i$. In the former case, one easily sees that $\xbf= Q\xbf'$, where $Q$ is the rotation satisfying $x_1=Qx'_1$, whereas in the latter case, we have $\xbf= Q\xbf'$, where $Q$ is the reflection satisfying $x_1=Qx'_1$.

\section{Proofs of main results}\label{sect:proof:lem:perf:LHO:specificase}

Recall that $n = 4 n_0$.
Given an orthogonal transformation $Q\in \Ocal$, we define the $d_1$-loss relative to $Q$ as
\begin{equation}
    \mucal := \frac{d_1(\hat{\bf x}^{(1)}_S, Q{\bf x}^*_{S})}{n_0} \enspace.
\end{equation} 

Before proving Proposition~\ref{prop:LHO:perf}, we study, as a warm-up, the simpler situation where all the latent positions $x_i^*$ are elements of the regular grid $\Ccal_{n_0}$ and where the vector ${\bf x}^*_{S}$ (composed of $n_0$ coordinates of ${\bf x}^*$) belongs to $\bPi_{n_0}$. In this case, $d_{\infty}({\bf x}^{*}_{S},\bPi_{n_0})= 0 $.

\begin{lem}\label{LHO:perf} In addition of the assumptions listed in Proposition~\ref{prop:LHO:perf}, we assume that ${\bf x}^* \in \Ccal_{n_0}^{  n}$ and ${\bf x}^*_{S} \in \bPi_{n_0}$. Then for any $Q\in \Ocal$ and any $i\in \overline{S}$, the estimator \eqref{def:estim:Linfini} satisfies the following bound  $$d_{\infty}(\hat{x}_i^{(2)},  Q x^*_i) \leq C_{l,L,e}  \left(\mucal + \sqrt{\frac{\log(n)}{n}}\right)\enspace, $$
with probability at least $1-1/n^3$.
\end{lem}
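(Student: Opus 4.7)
Fix $Q\in\Ocal$ arbitrary. Since $(Q\xbf^*, f\circ Q^{-1}) \in \Rcal[F,c_l,c_L,c_e]$ and the estimator $\hat\xbf^{(2)}_{\bS}$ depends only on $A$, we may replace $\xbf^*$ by $Q\xbf^*$ and reduce to the case $Q=\text{Id}$. Set $\delta := d(\hat x^{(2)}_i, x^*_i)$ and introduce the empirical and population criteria
\[\hat L(z) := \sum_{j\in S} A_{ij}\,d(z, \hat x^{(1)}_j),\qquad L(z) := \sum_{j\in S} F_{ij}\,d(z, x^*_j).\]
Because $\xbf^* \in \Ccal_{n_0}^n$, we have $x^*_i\in\Ccal_{n_0}$, so the optimality $\hat L(\hat x^{(2)}_i) \leq \hat L(x^*_i)$ rearranges into $L(\hat x^{(2)}_i) - L(x^*_i) \leq \Xi + \Pi$, where $\Xi := \sum_{j\in S} E_{ij}[d(x^*_i,\hat x^{(1)}_j) - d(\hat x^{(2)}_i,\hat x^{(1)}_j)]$ gathers the stochastic noise and $\Pi := \sum_{j\in S} F_{ij}\big\{[d(x^*_i,\hat x^{(1)}_j)-d(x^*_i,x^*_j)] - [d(\hat x^{(2)}_i,\hat x^{(1)}_j)-d(\hat x^{(2)}_i,x^*_j)]\big\}$ is the position-drift term stemming from the use of $\hat\xbf^{(1)}_S$ in place of $\xbf^*_S$.

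\textbf{Quadratic lower bound.} The key structural observation is that $\xbf^*_S \in \bPi_{n_0}$ together with $\hat x^{(2)}_i, x^*_i \in \Ccal_{n_0}$ force the multisets $\{d(\hat x^{(2)}_i, x^*_j): j\in S\}$ and $\{d(x^*_i, x^*_j): j\in S\}$ to coincide; concretely, there exists a permutation $\sigma$ with $d(\hat x^{(2)}_i, x^*_{\tau(j)}) = d_{\sigma(j)}$, where $\tau$ sorts the values $d_j := d(x^*_i, x^*_{\tau(j)})$ in increasing order, and this $\sigma$ encodes a cyclic shift of magnitude $\delta$ on the regular grid $\Ccal_{n_0}$. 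Setting $a_j := F_{i,\tau(j)}$, the lower-Lipschitz condition \eref{cond:lipschLower} gives $a_j - a_{j+1} \geq c_l(d_{j+1}-d_j) - \ep_n$, so Lemma~\ref{lem:conerstone:variante} applied with $\alpha=c_l$, $\eps=\ep_n$ yields
\[L(\hat x^{(2)}_i) - L(x^*_i) \,\geq\, \tfrac{c_l}{2}\sum_j (d_j-d_{\sigma(j)})^2 - \ep_n\sum_j j(d_j-d_{\sigma(j)}).\]
Exploiting the cyclic-shift structure of $\sigma$ on $\Ccal_{n_0}$, a direct computation gives $\sum_j(d_j-d_{\sigma(j)})^2 \asymp n_0\delta^2$ and, thanks to the Abel-transform cancellations, $\sum_j j(d_j-d_{\sigma(j)}) = O(n_0\delta^2)$ as well. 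Since $\ep_n$ is small compared to $c_l$ for large $n$, we conclude $L(\hat x^{(2)}_i) - L(x^*_i) \gtrsim c_l\, n_0\delta^2$.

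\textbf{Fluctuation bounds and conclusion.} The data-splitting guarantees that $(E_{ij})_{i\in\bS,j\in S}$ is independent of $\hat\xbf^{(1)}_S$ conditionally on $A_{SS}$. By subGaussianity and the reverse triangle inequality, for each $z\in\Ccal_{n_0}$ the centered sum $\sum_j E_{ij}[d(x^*_i,\hat x^{(1)}_j) - d(z,\hat x^{(1)}_j)]$ has variance proxy at most $\sum_j (d(x^*_i,\hat x^{(1)}_j) - d(z,\hat x^{(1)}_j))^2 \leq n_0\,d(z,x^*_i)^2$; a union bound over the $n_0$ elements of $\Ccal_{n_0}$ gives $|\Xi| \lesssim \delta\sqrt{n_0\log n}$ with probability at least $1 - 1/n^3$. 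For $\Pi$, each map $g_j(z) := d(z, \hat x^{(1)}_j) - d(z, x^*_j)$ is piecewise constant on $\Ccal$ with exactly two jumps of magnitude $\pm 2 d(\hat x^{(1)}_j, x^*_j)$, located near $x^*_j$ and its antipode; because $\xbf^*_S \in \bPi_{n_0}$ spreads these jump locations uniformly over $\Ccal$, at most $O(n_0\delta+1)$ of them can lie in the arc of length $\delta$ between $x^*_i$ and $\hat x^{(2)}_i$, and a weighted count combined with Cauchy--Schwarz gives $|\Pi| \lesssim n_0\mucal\,\delta$ (up to lower-order terms). Plugging all bounds into the basic inequality, $c_l\, n_0\delta^2 \lesssim \delta\sqrt{n_0\log n} + n_0\mucal\,\delta$, and dividing by $n_0\delta$ delivers $\delta \lesssim \mucal + \sqrt{\log n/n}$.

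\textbf{Main obstacle.} The delicate point is the sharp, linear-in-$\delta$ estimate $|\Pi| \lesssim n_0\mucal\,\delta$: the naive triangle inequality only yields $|\Pi|\leq 2 n_0\mucal$, which plugged back would produce the significantly worse rate $\delta \lesssim \sqrt{\mucal + \sqrt{\log n/n}}$ (in particular $(\log n / n)^{1/4}$ in the typical regime $\mucal\asymp \sqrt{\log n/n}$). Obtaining the linear-in-$\delta$ bound therefore requires a combinatorial argument that uses the regularity of $\xbf^*_S\in\bPi_{n_0}$ to rule out that the jump points of the step functions $g_j$ all accumulate in the short arc joining $x^*_i$ and $\hat x^{(2)}_i$.
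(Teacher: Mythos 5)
Your overall architecture (basic inequality from the optimality of $\hat{x}^{(2)}_i$, a quadratic lower bound on the population criterion, a sub-Gaussian union bound over $\Ccal_{n_0}$ for the noise) is the right one, and your treatment of $\Xi$ matches the paper. But the two load-bearing steps are not justified, and the first one is actually false as argued. For the quadratic lower bound you apply Lemma~\ref{lem:conerstone:variante} with $\eps=\ep_n$ and claim that ``Abel-transform cancellations'' give $\sum_j j(d_j-d_{\sigma(j)})=O(n_0\delta^2)$. They do not: on the grid, the rank of a point at distance $d_j$ from $x^*_i$ is $j\approx \tfrac{n_0}{\pi}d_j$, so $\sum_j j(d_j-d_{\sigma(j)})\approx \tfrac{n_0}{\pi}\sum_j d_j(d_j-d_{\sigma(j)})=\tfrac{n_0}{2\pi}\sum_j(d_j-d_{\sigma(j)})^2\asymp n_0^2\delta^2$ (using that $\sigma$ permutes the $d_j$'s). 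Hence the error term in Lemma~\ref{lem:conerstone:variante} is of order $\ep_n n_0^{2}\delta^{2}\asymp c_e\sqrt{n\log n}\cdot n_0\delta^2$, which dominates the main term $\tfrac{c_l}{2}n_0\delta^2$ by a factor $\sqrt{n\log n}$ whenever $c_e>0$; the inequality you obtain is vacuous (its right-hand side is very negative), and the conclusion $L(\hat{x}^{(2)}_i)-L(x^*_i)\gtrsim c_l n_0\delta^2$ does not follow. (Also, for $\ep_n>0$ the monotonicity hypothesis $a_1\geq\dots\geq a_p$ of Lemma~\ref{lem:conerstone:variante} need not hold, though that is minor.) This is exactly the obstruction the paper points out at the end of Appendix~\ref{app:central}; its Lemma~\ref{claim:LLowerB} replaces the cornerstone lemma by a reflection-pairing argument on the partition $I_1,\dots,I_4$, in which each $\ep_n$ slack multiplies a per-pair displacement of order $\delta$, so that the slack only produces terms of order $n\delta\ep_n$ and $\sqrt{\log^3(n)/n}$, linear rather than quadratic-times-$n^2$ in $\delta$.

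The second gap is the one you flag yourself: the linear-in-$\delta$ drift bound $|\Pi|\lesssim n_0\,\mucal\,\delta$ is neither proved nor true in general. The function $g_j(z)=d(z,\hat x^{(1)}_j)-d(z,x^*_j)$ is piecewise affine (not piecewise constant), constant except on the two arcs of length $d(\hat x^{(1)}_j,x^*_j)$ joining $\hat x^{(1)}_j$ to $x^*_j$ and their antipodes, and your counting argument only yields something like $\sum_j \min\bigl(d(\hat x^{(1)}_j,x^*_j),\delta\bigr)$ over the $j$'s whose transition arcs meet the arc $[x^*_i,\hat x^{(2)}_i]$; if the $d_1$-error of $\hat\xbf^{(1)}_S$ concentrates on the labels near $x^*_i$ (which $\bPi_{n_0}$ does not forbid), this can be of order $n_0\mucal$ with no factor $\delta$. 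The paper avoids $\Pi$ altogether: in Lemma~\ref{claimLUpBound} it uses that $\{x^*_j\}_{j\in S}$ and $\{Q^{-1}\hat x^{(1)}_j\}_{j\in S}$ are the same multiset $\Ccal_{n_0}$ to re-index the sum, and then pays the drift through the Lipschitz condition~\eqref{cond:lipsch} on $f$ (Lemma~\ref{ineq:demo:intermLem1}), where the factor $\delta$ comes for free from $|d(\hat z_i,z^{(1)}_j)-d(x^*_i,z^{(1)}_j)|\leq\delta$; this is what produces the $\delta\,(n\mucal+\sqrt{n\log n})$ bound cleanly. Finally, a small fixable point: your reduction to $Q=\mathrm{Id}$ replaces $\xbf^*$ by $Q\xbf^*$, which leaves $\Ccal_{n_0}^{\,n}$ only if $Q$ preserves the grid; as in the paper, you should first approximate a general $Q$ by a grid-preserving one, at an extra cost $O(1/n)$.
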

Taking a union bound over the $n-n_0$ indices $i\in \overline{S}$, we  $$d_{\infty}(\hat{{\bf x}}_{\overline{S}}, Q{\bf x}^*_{\overline{S}}) \leq C_{l,L,e} \left( \mucal + \sqrt{\frac{\log(n)}{n}}\right)\enspace ,$$
with probability higher than $1-1/n^2$. This is exactly the conclusion of Proposition~\ref{prop:LHO:perf} in the special case where ${\bf x}^* \in \Ccal_{n_0}^{  n}$ and ${\bf x}^*_{S}\in \bPi_{n_0}$.
The proof of Proposition~\ref{prop:LHO:perf} for general ${\bf x}^*$ follows the same scheme as that of Lemma \ref{LHO:perf}, but also requires some slight refinements. We first prove Lemma~\ref{LHO:perf} before turning to the general case.

\subsection{Proof of Lemma \ref{LHO:perf}}

First, we claim that it suffices to restrict our attention to transformations $Q\in$$\mathcal{O}$ that let $\Ccal_{n_0}$ invariant. Indeed, for general $Q$, there exists an orthogonal transformation $Q'$, letting $\Ccal_{n_0}$ invariant, and 
such that $\max_{z \in \Ccal_{n_0}}$ $d(Qz,Q'z)\lesssim 1/n_0$. Replacing $Q'$ by $Q$ in the statement of Lemma \ref{LHO:perf} only entails an additional term of order $1/n_0$ which is negligible  compared to the term $\sqrt{\log(n)/n}$.

Let $i\in\overline{S}$. In the two next lemmas, we bound 
\begin{equation}\label{eq:demo:Li}
L_i := \langle F_{i ,S}, \, D(Q^{-1}\hat{x}_i^{(2)}, {\bf x}^*_{S}) - D(x_i^*, {\bf x}^*_{S}) \rangle
\end{equation} from above and below. We recall that $F_{i,S}$ is the vector ($f(x_i^*,x_j^*$) for $j\in S$.

\begin{lem}\label{claimLUpBound}
With probability at least $1-1/n^3$, we have 
\[
 L_i \leq  C_{L,e}  d(\hat{x}_i^{(2)}, Qx^*_i) \left(n\mucal + \sqrt{n \log(n)}\right)\enspace ,\]
 for some constant $C_{L,e}>0$.   
\end{lem}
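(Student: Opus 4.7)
My plan is to exploit the optimality of $\hat{x}^{(2)}_i$ as a minimizer in~\eqref{def:estim:Linfini}. Since $\xbf^*_S \subset \bPi_{n_0} \subset \Ccal_{n_0}$, I can assume, up to replacing $Q$ by the nearest orthogonal transformation preserving $\Ccal_{n_0}$ (an $O(1/n_0)$ error absorbed into the final bound), that $Qx_i^* \in \Ccal_{n_0}$. The basic inequality $\langle A_{i,S}, D(\hat x^{(2)}_i, \hxbf^{(1)}_S) - D(Qx^*_i, \hxbf^{(1)}_S)\rangle \leq 0$, combined with $A = F + E$ and the $Q$-invariance of $d$, rearranges to
\[
L_i \leq \langle F_{i,S}, \Phi\rangle - \langle E_{i,S},\, D(\hat x^{(2)}_i, \hxbf^{(1)}_S) - D(Qx^*_i, \hxbf^{(1)}_S)\rangle,
\]
with the ``bias'' residual $\Phi_j := [d(\hat x^{(2)}_i, Qx^*_j) - d(\hat x^{(2)}_i, \hat x^{(1)}_j)] - [d(Qx^*_i, Qx^*_j) - d(Qx^*_i, \hat x^{(1)}_j)]$ measuring the error incurred by substituting $Q\xbf^*_S$ for $\hxbf^{(1)}_S$ in the criterion.

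To control the bias, the key observation is that the map $z \mapsto d(z,y)-d(z,y')$ is simultaneously $1$-Lipschitz in $z$ and bounded in absolute value by $d(y,y')$. Applying both bounds gives the refined pointwise estimate $|\Phi_j| \leq 2\min(d(\hat x^{(2)}_i, Qx^*_i),\, d(\hat x^{(1)}_j, Qx^*_j))$. Combined with $F_{ij} \in [0,1]$, $\min(a,b) \leq \sqrt{ab}$, and Cauchy--Schwarz in $j$, this yields $|\langle F_{i,S}, \Phi\rangle| \leq 2 n_0 \sqrt{d(\hat x^{(2)}_i, Qx^*_i)\,\mucal}$, which factorizes a $\sqrt{d(\hat x^{(2)}_i, Qx^*_i)}$; an AM--GM step $2\sqrt{ab} \leq \alpha a + b/\alpha$ then converts this into a bound of the form $C_L \, d(\hat x^{(2)}_i, Qx^*_i)\cdot n\mucal$ plus lower-order corrections, matching the claimed form.

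For the noise, conditionally on $A_{SS}$ the preliminary estimator $\hxbf^{(1)}_S$ is deterministic, while $E_{i,S}$ with $i\in\bS$ is subGaussian$(1)$ and independent of it. For each $z\in\Ccal_{n_0}$, the triangle inequality gives $\|D(z,\hxbf^{(1)}_S) - D(Qx^*_i,\hxbf^{(1)}_S)\|_2 \leq \sqrt{n_0}\,d(z,Qx^*_i)$, so by~\eqref{defi:subG} with $t = 2\sqrt{2\log n}$ the inner product is bounded by $2\sqrt{2 n_0 \log n}\,d(z,Qx^*_i)$ with probability at least $1-2/n^4$. A union bound over the $n_0 \leq n$ points of $\Ccal_{n_0}$ lifts this to a uniform bound in $z$ with probability at least $1-2/n^3$, and specializing to the (data-dependent) point $z = \hat x^{(2)}_i$ controls the noise term by $2\sqrt{2n_0\log n}\,d(\hat x^{(2)}_i, Qx^*_i)$. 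Assembling the three pieces yields the announced upper bound on $L_i$.

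The main difficulty is obtaining the factor $d(\hat x^{(2)}_i, Qx^*_i)$ in front of the $n\mucal$ term: the naive pointwise bound $|\Phi_j| \leq 2 d(\hat x^{(1)}_j, Qx^*_j)$ alone yields only $|\langle F_{i,S}, \Phi\rangle| \leq 2n_0\mucal$, without any $d(\hat x^{(2)}_i, Qx^*_i)$ factor, which would be too weak in the eventual comparison with the quadratic lower bound $L_i \gtrsim n\,d(\hat x^{(2)}_i, Qx^*_i)^2$ supplied by the companion lemma. Keeping the $\min$ form of the estimate and routing it through Cauchy--Schwarz is precisely what generates the $\sqrt{d(\hat x^{(2)}_i, Qx^*_i)}$ factor that the AM--GM step can convert into the required linear one. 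Handling the reduction to $Q$ preserving $\Ccal_{n_0}$ and tracking the $\ep_n$ slack from the bi-Lipschitz definition---where the $c_e$ dependence in the constant $C_{L,e}$ ultimately enters---are routine but demand careful bookkeeping.
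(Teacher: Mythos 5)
Your decomposition, reduction to grid-preserving $Q$, and treatment of the noise term via the basic inequality and a union bound over $\Ccal_{n_0}$ all match the paper. The gap is in the bias term $\langle F_{i,S},\Phi\rangle$. Writing $d:=d(\hat{x}_i^{(2)},Qx_i^*)$ and $\mu:=\mucal$, your pointwise bound $|\Phi_j|\leq 2\min\big(d,\,d(\hat{x}_j^{(1)},Qx_j^*)\big)$ followed by $\min(a,b)\leq\sqrt{ab}$ and Cauchy--Schwarz gives $|\langle F_{i,S},\Phi\rangle|\leq 2n_0\sqrt{d\,\mu}$, and this quantity is \emph{not} dominated by $C\,d\,(n\mu+\sqrt{n\log n})$: take $d\asymp\mu\asymp\sqrt{\log(n)/n}$, then $n\sqrt{d\mu}\asymp\sqrt{n\log n}$ while $d(n\mu+\sqrt{n\log n})\asymp\log n$, a discrepancy of order $\sqrt{n/\log n}$. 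The AM--GM step cannot repair this: $2n_0\sqrt{d\mu}\leq n_0(\alpha d+\mu/\alpha)$ leaves the term $n_0\mu/\alpha$ without a factor of $d$; forcing $n_0\mu/\alpha\lesssim d\,n\mu$ requires $\alpha\gtrsim 1/d$, which makes the other term $n_0\alpha d\gtrsim n$, again not of the claimed form. So the lemma as stated does not follow from your intermediate bound (and a bound of the form $n\sqrt{d\mu}$ would, downstream, only yield $d\lesssim \mu^{1/3}+\sqrt{\log(n)/n}$ instead of $d\lesssim\mu+\sqrt{\log(n)/n}$).

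The missing idea is where the substitution error is placed. The paper does not swap $\hxbf^{(1)}_S$ for $Q\xbf^*_S$ inside the distance vectors; instead it uses that, since both $\xbf^*_S$ and $Q^{-1}\hxbf^{(1)}_S$ lie in $\bPi_{n_0}$, the multisets $\{x_j^*\}_{j\in S}$ and $\{Q^{-1}\hat{x}_j^{(1)}\}_{j\in S}$ are both equal to $\Ccal_{n_0}$, so the sum defining $L_i$ can be re-indexed to run over $z_j^{(1)}:=Q^{-1}\hat{x}_j^{(1)}$. The bias then takes the product form
\begin{equation*}
\sum_{j\in S}\big[f(x_i^*,z_j^{(1)})-f(x_i^*,x_j^*)\big]\big[d(Q^{-1}\hat{x}_i^{(2)},z_j^{(1)})-d(x_i^*,z_j^{(1)})\big]\enspace,
\end{equation*}
in which the first factor is controlled by the Lipschitz condition~\eqref{cond:lipsch} as $c_Ld(z_j^{(1)},x_j^*)+\ep_n$ (summing to $c_Ln_0\mu+n_0\ep_n$), while the second factor is bounded by $d$ via the triangle inequality; their product directly yields $d\,(C_{L,e}\,n\mu+C_{L,e}\sqrt{n\log n})$, i.e.\ the factor $d$ multiplies $n\mu$ without any geometric-mean loss. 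The remaining term is then exactly $\langle F_{i,S},D(\hat{x}_i^{(2)},\hxbf^{(1)}_S)-D(Qx_i^*,\hxbf^{(1)}_S)\rangle$, which is handled by optimality and the noise bound as you did. In short: carry the approximation error through $f$ (which is Lipschitz), not through the distance profile, exploiting the regular-grid structure; your route discards the sign structure and cannot recover the multiplicative factor $d$ in front of $n\mu$.
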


\begin{lem}\label{claim:LLowerB}
  We have $$L_i  \geq C n d(\hat{x}_i^{(2)}, Qx^*_i) \Big{(}c_l d(\hat{x}_i^{(2)}, Qx^*_i) - \ep_n\Big{)} -\frac{c_e^3}{\pi c_l^{2}} \sqrt{\frac{\log^3(n)}{n}}- 2\frac{c_e}{c_l}\sqrt{\frac{\log(n)}{n}}\enspace ,$$ for some numerical constant $C>0$ and all $n$ larger than quantity $C_{l,e}$. 
\end{lem}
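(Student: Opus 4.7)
The plan is to rewrite $L_i$ as a sum over reflection-symmetric pairs of grid points and exploit the bi-Lipschitz condition pair by pair. Setting $x := x_i^*$ and $y := Q^{-1}\hat{x}_i^{(2)}$---both in $\Ccal_{n_0}$ by the WLOG reduction of the preamble---and $\delta := d(x, y)$, I will use the reflection $\rho$ of $\Ccal$ through the perpendicular bisector of the chord $\overline{xy}$. This reflection preserves $\Ccal_{n_0}$, has at most two fixed points on $\Ccal_{n_0}$, swaps $x$ and $y$, and satisfies $d(x, \rho(z)) = d(y, z)$ for every $z \in \Ccal$. Since $\mathbf{x}^*_S \in \bPi_{n_0}$ makes $\{x_j^* : j \in S\} = \Ccal_{n_0}$, I can group the sum \eqref{eq:demo:Li} defining $L_i$ by the orbits $\{z, \rho(z)\}$; using $d(y, z) = d(x, \rho(z))$ and $\rho^2 = \mathrm{id}$, a direct cancellation yields
\[
L_i \;=\; \sum_{\{z, \bar z\}} \bigl(f(x, z) - f(x, \bar z)\bigr)\,\Delta_{\bar z, z}, \qquad \Delta_{\bar z, z} := d(x, \bar z) - d(x, z) \geq 0,
\]
where in each pair $\bar z := \rho(z)$ is chosen so that $d(x, z) \leq d(x, \bar z)$, and fixed points contribute zero.

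The lower-Lipschitz property \eqref{cond:lipschLower} then gives $f(x, z) - f(x, \bar z) \geq c_l \Delta_{\bar z, z} - \ep_n$, so every pair contributes at least $c_l \Delta^2 - \ep_n \Delta$ to $L_i$. I will split the pairs into \emph{good} ones (with $\Delta \geq 2\ep_n/c_l$) and \emph{bad} ones (with $\Delta < 2\ep_n/c_l$): good pairs absorb the slack to give $\geq c_l \Delta^2 / 2$, while bad ones are only guaranteed to give $\geq -\ep_n \Delta \geq -2\ep_n^2/c_l$. A short geometric computation shows that $z \mapsto |d(x, z) - d(y, z)|$ vanishes at the two antipodal midpoints of the arcs $xy$ on $\Ccal$ and has slope $2$ in arc length there; the bad $z$ therefore lie in two arcs of total angular width $4\ep_n/c_l$, so the number of bad pairs is at most $n_0\ep_n/(\pi c_l) + O(1)$. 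Summing good and bad contributions (using also $\sum_{\mathrm{bad}} \Delta^2 \leq (\text{bad count})(2\ep_n/c_l)^2$) leads to
\[
L_i \;\geq\; \frac{c_l}{4}\, T_1 \;-\; \frac{c_e^3}{\pi c_l^2}\sqrt{\frac{\log^3 n}{n}} \;-\; 2\,\frac{c_e}{c_l}\sqrt{\frac{\log n}{n}},
\]
where $T_1 := \sum_{z \in \Ccal_{n_0}} (d(x, z) - d(y, z))^2$.

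To finish, I will establish the geometric inequality $T_1 \geq c\, n_0 \delta^2$ for an absolute constant $c > 0$ by direct summation against the piecewise linear ``tent'' function $z \mapsto d(x, z)$ on $\Ccal_{n_0}$: on a large arc of the circle, $|d(x, z) - d(y, z)|$ is equal to $\delta$ exactly, so the sum scales like $n_0 \delta^2$. Substituting and using $n_0 = n/4$ gives $L_i \geq (cc_l/16)\, n\delta^2$ minus the two stated small terms; the trivial inequality $(cc_l/16)\, n\delta^2 \geq (c/16)\, n\delta(c_l\delta - \ep_n)$ (valid since $\ep_n \geq 0$) then concludes the proof with $C = c/16$.

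The main obstacle will be the careful geometric counting of bad pairs: the $O(1)$ slack in the count of grid points lying near the perpendicular bisector, together with the per-pair penalty of order $\ep_n^2/c_l$, is what ultimately produces both small subtracted terms of the lemma. A naive triangle-inequality bound $|d(x, z) - d(y, z)| \leq \delta$ would instead yield an error of order $n\ep_n\delta$, far exceeding the $O(\sqrt{\log n/n})$ budget allowed by the small terms.
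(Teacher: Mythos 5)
Your pairing identity is sound and, in fact, very close to the paper's own mechanism: the reflection $\rho$ through the perpendicular bisector of the chord $\overline{xy}$ is exactly the reflection $\phi$ used in the paper's proof, the per-pair use of \eqref{cond:lipschLower} is the same, and your ``bad pairs'' near the fixed points of $\rho$ play the role of the paper's set $I_1^{(1/2)-}$, producing the same two small error terms. The structural difference is how the main term is extracted: the paper splits the circle into four arcs and exploits that on two of them the difference $d(x,\cdot)-d(y,\cdot)$ is constantly $\pm\delta$, which yields a term proportional to $\delta(c_l\delta-\ep_n)$ directly, whereas you funnel everything through $T_1=\sum_{z}(d(x,z)-d(y,z))^2$ with a uniform good/bad threshold $\Delta\geq 2\ep_n/c_l$.

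That choice creates a genuine gap. Your count of bad pairs (``two arcs of total width $4\ep_n/c_l$'') is only valid when $\delta:=d(\hat{x}_i^{(2)},Qx_i^*)\geq 2\ep_n/c_l$: the map $z\mapsto |d(x,z)-d(y,z)|$ has slope $2$ only on the two arcs between $x,y$ and between their antipodes, while on the two complementary arcs it equals $\delta$ identically. If $\delta<2\ep_n/c_l$, every pair in those complementary arcs is also bad, the bad count is of order $n_0$, there are no good pairs at all, and your per-pair bound $\geq -2\ep_n^2/c_l$ only gives $L_i\gtrsim -n\ep_n^2\asymp-\log n$. This regime is not vacuous: for $\ep_n/c_l<\delta<2\ep_n/c_l$ the right-hand side of the lemma is positive, of order up to $\log n$, so neither your intermediate claim $L_i\geq \tfrac{c_l}{4}T_1-O(\sqrt{\log^3 n/n})-O(\sqrt{\log n/n})$ nor the final bound follows from your estimates there. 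The repair stays inside your framework: for the pairs lying in the two constant arcs keep the exact per-pair bound $(c_l\delta-\ep_n)\delta$ (these $\approx n_0(\pi-\delta)/(2\pi)$ pairs alone already produce the main term $Cn\delta(c_l\delta-\ep_n)$, whatever its sign), and reserve the good/bad dichotomy for the two slope-$2$ arcs only --- which is exactly what the paper's decomposition into $L_i^{(2)}+L_i^{(4)}$ and $L_i^{(1)}+L_i^{(3)}$ accomplishes. A second, minor point: your claim $T_1\gtrsim n_0\delta^2$ via ``$|d(x,z)-d(y,z)|=\delta$ on a large arc'' needs a separate case when $\delta$ is close to $\pi$, where the constant arcs shrink and one instead uses the slope-$2$ arcs to get $T_1\gtrsim n_0\delta^3\gtrsim n_0\delta^2$.
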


 These two lemmas imply that, for $n$ large enough and $$d(\hat{x}_i^{(2)}, Qx^*_i) \geq 2C'_{l,e}\sqrt{\log(n)/n}\enspace,$$ with $C'_{l,e}$ large enough, we have 
 $$C'_{l,e} nd^2(\hat{x}_i^{(2)}, Qx^*_i) \leq L_i \leq C_{L,e}   d(\hat{x}_i^{(2)}, Qx^*_i) \left(n\mucal + \sqrt{n \log(n)}\right)\enspace . $$
We conclude that the error bound $d(\hat{x}_i^{(2)}, Q x^*_i)\leq C_{l,L,e} [\mucal + \sqrt{\log(n)/n}] $ holds with probability at least $1-1/n^3$.

\subsubsection{Proof of Lemma \ref{claimLUpBound}} 

Since $|S|=n_0$, we can assume that $S=[n_0]$ for the ease of exposition. Let $i\in\overline{S}$.  First, we decompose $L_i$ as follows 
\begin{equation*} 
    L_i = \sum_{j=1}^{n_0} f(x^*_i, x^*_j) \Big{(} d(Q^{-1}\hat{x}_i^{(2)}, x^*_j) - d(x^*_i, x^*_j)\Big{)}.
\end{equation*}
The regular grid $\Ccal_{n_0}$ is invariant by $Q$, and ${\bf x}^*_S$ belongs to $\bPi_{n_0}$. Besides, $\hat{\bf x}_S^{(1)}$ belongs to $\bPi_{n_0}$. As a consequence, we have   $\{x_j^*; \, j \in [n_0]\} = \Ccal_{n_0} =$ $\{Q^{-1}\hat{x}^{(1)}_j; \, j \in [n_0]\}$.
Hence, we can reorder the sum  in $L_i$ as follows
\begin{equation*} 
    L_i = \sum_{j=1}^{n_0} f(x^*_i, Q^{-1}\hat{x}^{(1)}_j) \Big{(} d(Q^{-1}\hat{x}_i^{(2)}, Q^{-1}\hat{x}^{(1)}_j) - d(x^*_i, Q^{-1}\hat{x}^{(1)}_j)\Big{)}\enspace .
\end{equation*}
To alleviate the notation, we write $\hat{z}_i := Q^{-1}\hat{x}_i^{(2)}$ and $z^{(1)}_j := Q^{-1}\hat{x}^{(1)}_j$
so that 
\begin{equation*} 
    L_i = \sum_{j=1}^{n_0} f(x^*_i, z_j^{(1)}) \Big{(} d(\hat{z}_i, z_j^{(1)}) - d(x^*_i, z_j^{(1)})\Big{)}\enspace .
\end{equation*}

\begin{lem}\label{ineq:demo:intermLem1}We have
\begin{align*}
\sum_{j=1}^{n_0} \Big{(}f(x^*_i, z_j^{(1)})-f(x^*_i, x^*_j)\Big{)} &\Big{(} d(\hat{z}_i, z_j^{(1)}) - d(x^*_i, z_j^{(1)})\Big{)}\\
&\leq  C_{L,e} d(\hat{x}_i^{(2)}, Qx^*_i)  \left[ n\mucal + \sqrt{n \log(n)}\right]\ .  
\end{align*}
\end{lem}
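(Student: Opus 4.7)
The plan is to control the sum by bounding each factor separately, using an isometry/triangle-inequality argument on the distance differences and the upper Lipschitz condition \eqref{cond:lipsch} on the $f$-differences. Since the claim is a pointwise upper bound on the sum, there is no loss in passing to absolute values term by term.

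First I would note that $Q\in\Ocal$ acts as an isometry on $(\Ccal,d)$, so $d(z_j^{(1)},x_j^*)=d(Q^{-1}\hat{x}_j^{(1)},x_j^*)=d(\hat{x}_j^{(1)},Qx_j^*)$ and likewise $d(\hat z_i,x_i^*)=d(\hat{x}_i^{(2)},Qx_i^*)$. By the triangle inequality for the geodesic metric $d$ on $\Ccal$, for every $j$,
\[
\bigl|d(\hat z_i,z_j^{(1)}) - d(x_i^*,z_j^{(1)})\bigr| \;\leq\; d(\hat z_i, x_i^*) \;=\; d(\hat{x}_i^{(2)}, Q x_i^*).
\]
This factor does not depend on $j$, so it can be pulled out of the sum. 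The upper Lipschitz bound in Definition~\ref{defi:BL}, applied to the pair $(z_j^{(1)},x_j^*)$ with anchor $x_i^*$, yields
\[
\bigl|f(x_i^*,z_j^{(1)}) - f(x_i^*,x_j^*)\bigr| \;\leq\; c_L\, d(z_j^{(1)},x_j^*) + \varepsilon_n \;=\; c_L\, d(\hat{x}_j^{(1)}, Qx_j^*) + \varepsilon_n.
\]

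Combining the two inequalities and summing over $j\in S=[n_0]$, the left-hand side of the claim is bounded by
\[
d(\hat{x}_i^{(2)}, Q x_i^*)\,\Bigl(c_L\sum_{j=1}^{n_0} d(\hat{x}_j^{(1)}, Qx_j^*) + n_0\,\varepsilon_n\Bigr)
= d(\hat{x}_i^{(2)}, Q x_i^*)\,\Bigl(c_L\, d_1(\hat{\bf x}_S^{(1)},Q{\bf x}_S^*) + n_0 c_e\sqrt{\tfrac{\log n}{n}}\Bigr).
\]
By the definition of $\mucal$ one has $d_1(\hat{\bf x}_S^{(1)},Q{\bf x}_S^*)=n_0\,\mucal$, and since $n_0=n/4$ the second term is of order $\sqrt{n\log n}$. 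Collecting constants into a single $C_{L,e}$ depending only on $c_L$ and $c_e$ yields the announced bound
\[
\sum_{j=1}^{n_0}\bigl(f(x_i^*,z_j^{(1)})-f(x_i^*,x_j^*)\bigr)\bigl(d(\hat z_i,z_j^{(1)})-d(x_i^*,z_j^{(1)})\bigr) \;\leq\; C_{L,e}\, d(\hat{x}_i^{(2)}, Q x_i^*)\bigl(n\,\mucal + \sqrt{n\log n}\bigr).
\]

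There is no real obstacle: the proof is purely deterministic and uses only (i) the fact that $Q$ is a $d$-isometry, (ii) the triangle inequality in $(\Ccal,d)$, and (iii) the upper Lipschitz inequality \eqref{cond:lipsch}. The bound is not tight in the sense that it could be improved using that the signs of $d(\hat z_i,z_j^{(1)})-d(x_i^*,z_j^{(1)})$ are not arbitrary, but such a refinement is unnecessary here because this lemma is used in conjunction with the noise-term lemma where the $\sqrt{n\log n}$ scale is already dominant.
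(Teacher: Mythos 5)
Your proof is correct and follows essentially the same route as the paper's: bound the $f$-differences by the upper Lipschitz condition \eqref{cond:lipsch}, bound the distance differences uniformly by $d(\hat z_i,x_i^*)$ via the triangle inequality, use that $Q$ is an isometry to rewrite $d(z_j^{(1)},x_j^*)=d(\hat x_j^{(1)},Qx_j^*)$, and sum, with $n_0\varepsilon_n\lesssim c_e\sqrt{n\log n}$ and $d_1(\hat{\bf x}_S^{(1)},Q{\bf x}_S^*)=n_0\mucal\leq n\mucal$. No gaps.
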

Gathering this lemma with the definition of $L_i$ leads us to
\begin{align}\label{ineq:demo:Li}
    L_i\leq  C_{L,e} d(\hat{x}_i^{(2)}, Qx^*_i)  &\left[n \mucal + \sqrt{n \log(n)})\right]\\ \nonumber
    &+ \sum_{j=1}^{n_0} f(x^*_i, x^*_j) \left( d(\hat{z}_i, z_j^{(1)}) - d(x^*_i, z_j^{(1)})\right)\enspace . \end{align} The orthogonal transformation $Q$ preserves the distances, hence the last term of \eqref{ineq:demo:Li} is equal to 
    \begin{equation}\label{proof:lem:lhoPerf:smallEq}
\sum_{j=1}^{n_0} f(x^*_i, x^*_j) \Big{(}d(\hat{z}_i, z_j^{(1)}) - d(x^*_i, z_j^{(1)})\Big{)} = \langle F_{i , S}, \, D(\hat{x}_i^{(2)}, \hat{\bf x}^{(1)}_S) - D(Qx_i^*, \hat{\bf x}^{(1)}_S) \rangle\enspace . 
\end{equation} 
To handle this term, we come back to the definition~\eqref{def:estim:Linfini} of $\hat{x}_i^{(2)}$. Since $Qx_i^*\in \Ccal_{n_0}$, we have 
$$\langle A_{i , S}, D(\hat{x}_i^{(2)}, \hat{\bf x}^{(1)}_S) \rangle \leq \langle A_{i ,S},  D(Qx_i^*, \hat{\bf x}^{(1)}_S) \rangle \enspace .$$ 
This yields
\begin{equation*}
 \langle F_{i , S},  D(\hat{x}_i^{(2)}, \hat{\bf x}^{(1)}_S) - D(Qx_i^*, \hat{\bf x}^{(1)}_S) \rangle \leq \langle E_{i , S}, D(Qx_i^*, \hat{\bf x}^{(1)}_S) - D(\hat{x}_i^{(2)}, \hat{\bf x}^{(1)}_S) \rangle \enspace .   
\end{equation*}
The right hand-side $\langle E_{i , S}, D(Qx_i^*, \hat{\bf x}^{(1)}_S) - D(\hat{x}_i^{(2)}, \hat{\bf x}^{(1)}_S) \rangle$ depends on $\hat{x}_i^{(2)}$ which belongs to $\Ccal_{n_0}$. This is why we simultaneously control the expression $ \langle E_{i , S},$ $D(Qx_i^*, \hat{\bf x}^{(1)}_S) - D(z, \hat{\bf x}^{(1)}_S) \rangle$ for all $z\in \Ccal_{n_0}$. This expression is distributed as a mean zero sub-Gaussian random variable with norm at most $C\|D(Qx_i^*, \hat{\bf x}^{(1)}_S) - D(z, \hat{\bf x}^{(1)}_S)\|_2$. Applying a union bound over all $z \in \Ccal_{n_0}$ leads us to 
\[
 \langle E_{i , S}, D(Qx_i^*, \hat{\bf x}^{(1)}_S) - D(\hat{x}_i^{(2)}, \hat{\bf x}^{(1)}_S) \rangle  \leq C\sqrt{ \log(n_0)}\|D(Qx_i^*, \hat{\bf x}^{(1)}_S) - D(\hat{x}_i^{(2)}, \hat{\bf x}^{(1)}_S)\|_2
\]
with probability higher than $1-1/n^3$.  Invoking the triangular inequality for the distance $d$,
we deduce that 
$\|D(Qx_i^*, \hat{\bf x}^{(1)}_S) - D(\hat{x}_i^{(2)}, \hat{\bf x}^{(1)}_S)\|_2 \leq  d(\hat{x}_i^{(2)}, Qx^*_i) \sqrt{n_0}$.
 It follows that, with probability at least $1-1/n^3$,  $$\langle F_{i ,S}, \, D(\hat{x}_i^{(2)}, \hat{\bf x}^{(1)}_S) - D(Qx_i^*, \hat{\bf x}^{(1)}_S) \rangle \leq C d(\hat{x}_i^{(2)}, Qx^*_i)\sqrt{n_0\log(n_0)}\enspace .$$
Gathering this bound with 
 \eqref{ineq:demo:Li} and \eqref{proof:lem:lhoPerf:smallEq} concludes the proof. \hfill $\square$

\bigskip

\noindent 
\begin{proof}[Proof of Lemma \ref{ineq:demo:intermLem1}] The first bi-Lipschitz condition \eqref{cond:lipsch} ensures that $$|f(x^*_i, z_j^{(1)})- f(x^*_i, x^*_j) |\leq c_L d(z_j^{(1)},x^*_j) + \ep_n \enspace.$$ By triangular inequality, we also have  $|d(\hat{z}_i, z_j^{(1)}) - d(x^*_i, z_j^{(1)}) |\leq d(\hat{z}_i, x^*_i)$ so that
\begin{align*}
    \sum_{j=1}^{n_0} \Big{(}f(x^*_i, z_j^{(1)})-f(x^*_i, x^*_j)\Big{)} &\Big{(} d(\hat{z}_i, z_j^{(1)}) - d(x^*_i, z_j^{(1)})\Big{)}\\
    &\leq  d(\hat{z}_i, x^*_i)  \sum_{j=1}^{n_0} (c_L d(z_j^{(1)},x^*_j) + \ep_n)\enspace .
\end{align*}
We have $d(z_j^{(1)},x^*_j) = d(\hat{x}^{(1)}_j,Qx^*_j)$ since $Q$ is an orthogonal transformation. Hence,  we obtain
\begin{align*}
   \sum_{j=1}^{n_0} \Big{(}f(x^*_i, z_j^{(1)})-f(x^*_i, x^*_j)\Big{)}& \Big{(} d(\hat{z}_i, z_j^{(1)}) - d(x^*_i, z_j^{(1)})\Big{)}\\
   &\leq  d(\hat{x}_i^{(2)}, Qx^*_i)  \left( c_L n_0\mucal + c_e \sqrt{n \log(n)} \right)\enspace .
\end{align*}
\end{proof}

    
\subsubsection{Proof of Lemma \ref{claim:LLowerB}.}

An interval $I=[a,b]$ denotes the set of points lying between $a$ and $b$ in the one-dimensional torus $\mathbb{R}/(2\pi)$, when following the trigonometric direction from $a$ to $b$. The length of $I$ is denoted by $|I|$. For any point $x$ in the sphere $\Ccal$, its argument in $[0,2\pi)$ is denoted by $\underline{x}$.

Since $|S|=n_0$, we can assume that $S=[n_0]$ for the ease of exposition. Let $i\in \overline{S}$  and denote $\hat{z}_i:= Q^{-1}\hat{x}_i^{(2)}.$ Since $d(x_i^*, \hat{z}_i) \leq \pi$, we can assume without loss of generality that the arguments $\underline{x}_i^*=0$ and $\underline{\hat{z}}_i \in (0,\pi]$, so that we have the equality $d(x_i^*, \hat{z}_i)= |\underline{x}_i^*- \underline{\hat{z}}_i|$. If $\hat{z}_i = x^*_i$, Lemma \ref{claim:LLowerB} is trivial. We therefore assume in the following that $\underline{\hat{z}}_i \in (0,\pi]$. Below, we introduce a partition of $[n_0]$ according to the relative positions of $x_j^*$, $x_i^*$ and $\hat{z}_i$. This partition is depicted in Figure~\ref{fig:chapSeriation:dessin}. 
\begin{align*}
I_1 &= \{ j \in [n_0]: \underline{x}^*_j \in [\underline{x}^*_i, \underline{\hat{z}}_i)\}\enspace ; \quad \quad 
I_2 = \{ j \in [n_0]: \underline{x}^*_j \in [\underline{\hat{z}}_i, \underline{x}_i^*+ \pi)\}\enspace ; \\ I_3 &= \{ j \in [n_0]: \underline{x}^*_j \in [\underline{x}_i^*+ \pi, \underline{\hat{z}}_i + \pi)\}\ ; \quad\quad
I_4 = \{ j \in [n_0]: \underline{x}^*_j \in [\underline{\hat{z}}_i + \pi, \underline{x}_i^*)\}\enspace .    
\end{align*}
\begin{figure}\centering \includegraphics[width=5cm]{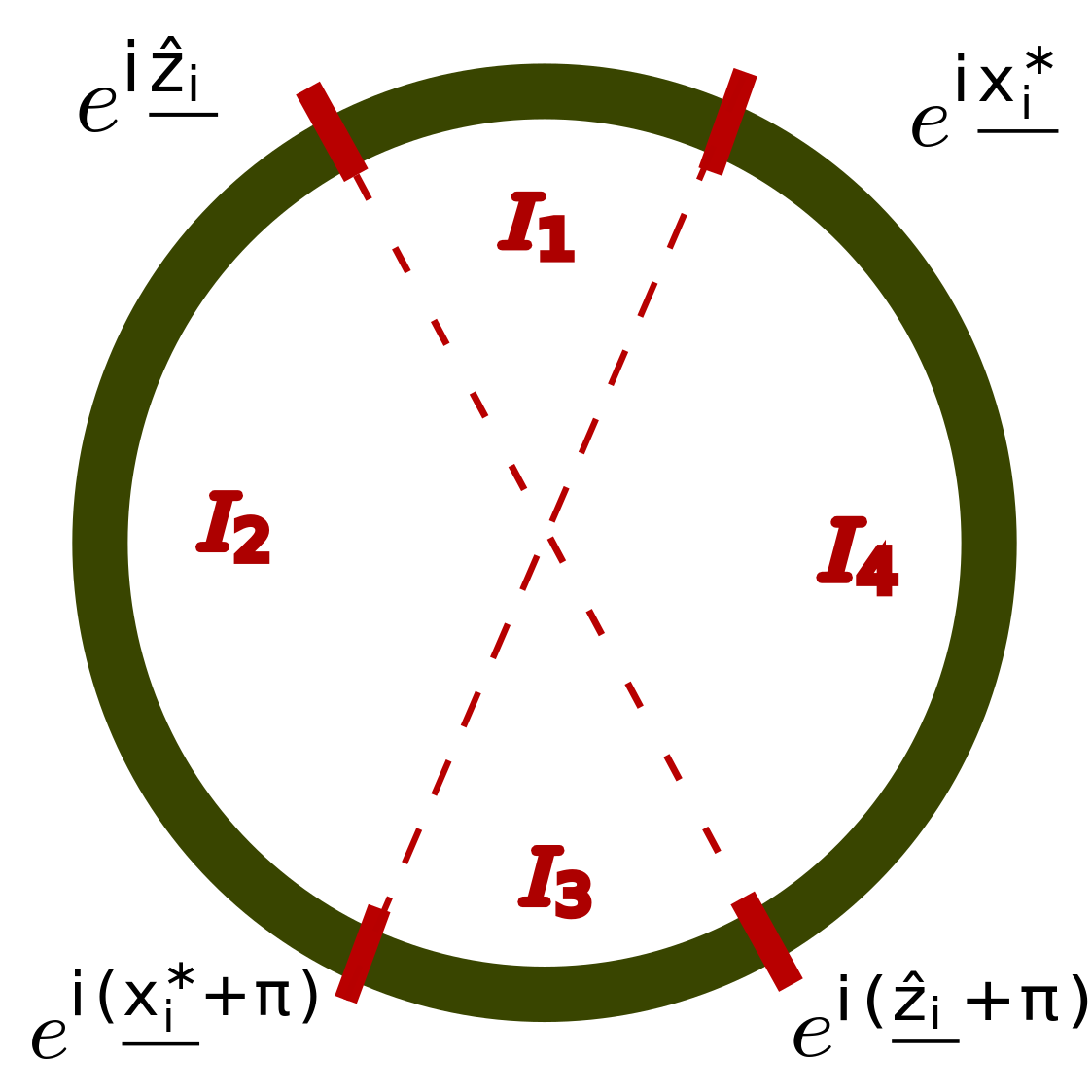}
\caption{Partition of $[n_0]$ in $I_1$--$I_4$}
\label{fig:chapSeriation:dessin}
\end{figure}
Although $I_s$ stands for a subset of indices, with a slight abuse of notation,  we still write $|I_s|$  for the length of the corresponding interval in $\mathbb{R}/(2\pi)$. For instance, $|I_1|:= |\underline{x}^*_i - \underline{\hat{z}}_i|$.

We decompose $L$ according to this partition of indices 
$ L_i = L_i^{(1)}+L_i^{(2)}+L_i^{(3)}+L_i^{(4)}$, where $L_i^{(s)}$ is the restriction of $L_i$ to the set $I_s$. In particular, if $\underline{\hat{z}}_i = \pi$, then the intervals $I_2$ and $I_4$ are empty, and $L_i^{(2)} = L_i^{(4)} = 0$. 

Next, we heavily rely   on the fact that the elements of ${\bf x}^*_{S}$ are evenly spaced on the sphere, that is $\{x_i^*:\, i\in[n_0]\} = \Ccal_{n_0}$ which holds true since we have assumed  ${\bf x}^*_{S} \in \bPi_{n_0}$. Using the symmetry of the set $\Ccal_{n_0}$, we establish below that the sums $L_i^{(2)}$ and $L_i^{(4)}$ nearly compensate so that $L_i^{(2)}+L_i^{(4)}$ admits a positive lower bound.

\begin{lem}\label{subCl:L2L4lowerB}We have 
$$ L_i^{(2)}+L_i^{(4)} \geq \frac{n_0|I_4|}{2\pi}\, c_ld^2(\hat{x}_i^{(2)}, Qx^*_i)  -  n_0d(\hat{x}_i^{(2)}, Qx^*_i) \ep_n  \enspace .$$
\end{lem}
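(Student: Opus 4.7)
The plan is to exploit the reflection symmetry of $\Ccal_{n_0}$ that exchanges $x_i^*$ and $\hat{z}_i$, in order to pair each index in $I_2$ with an index in $I_4$. Throughout, I assume $\underline{x}_i^*=0$ and $\underline{\hat z}_i\in(0,\pi]$ as in the statement, and note that if $\underline{\hat z}_i=\pi$ the claim is trivial since $I_2=I_4=\emptyset$.

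First I would compute the elementary signed distance differences. For $j\in I_2$ (so $\underline{x}_j^*\in[\underline{\hat z}_i,\pi)$), we have $d(x_i^*,x_j^*)=\underline{x}_j^*$ and $d(\hat z_i,x_j^*)=\underline{x}_j^*-\underline{\hat z}_i$, hence $d(\hat z_i,x_j^*)-d(x_i^*,x_j^*)=-\underline{\hat z}_i$. For $j\in I_4$ (so $\underline{x}_j^*\in[\underline{\hat z}_i+\pi,2\pi)$), a symmetric calculation gives $d(\hat z_i,x_j^*)-d(x_i^*,x_j^*)=+\underline{\hat z}_i$. Therefore
\begin{equation*}
L_i^{(2)}+L_i^{(4)} \;=\; \underline{\hat z}_i\Big[\sum_{j\in I_4}f(x_i^*,x_j^*)-\sum_{j\in I_2}f(x_i^*,x_j^*)\Big].
\end{equation*}

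Next I would introduce the reflection $\rho:\Ccal\to\Ccal$ with axis through $e^{\iota \underline{\hat z}_i/2}$, equivalently $\rho(e^{\iota\alpha})=e^{\iota(\underline{\hat z}_i-\alpha)}$. This $\rho$ satisfies $\rho(x_i^*)=\hat z_i$ and is an isometry. The key observation, which I would verify, is that $\rho$ permutes $\Ccal_{n_0}$: indeed, both $x_i^*$ and $\hat z_i$ lie in $\Ccal_{n_0}$ (because ${\bf x}_S^*\in\bPi_{n_0}$ and $\hat z_i=Q^{-1}\hat x_i^{(2)}\in Q^{-1}\Ccal_{n_0}=\Ccal_{n_0}$, using that we can restrict to $Q$ preserving $\Ccal_{n_0}$), so the reflection axis passes through a vertex or edge-midpoint of the regular $n_0$-gon. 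Moreover $\rho(I_2)=I_4$, so $\rho$ induces a bijection between the latent points in $I_2$ and those in $I_4$.

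Now for each $j\in I_2$, let $\phi(j)\in I_4$ be the (unique) index with $x_{\phi(j)}^*=\rho(x_j^*)$. Using $\rho(x_i^*)=\hat z_i$ and the isometry property,
\begin{equation*}
d(x_i^*,x_{\phi(j)}^*) \;=\; d(x_i^*,\rho(x_j^*)) \;=\; d(\hat z_i,x_j^*) \;=\; d(x_i^*,x_j^*)-\underline{\hat z}_i,
\end{equation*}
so the bi-Lipschitz lower bound~\eqref{cond:lipschLower} applied to the pair $(x_{\phi(j)}^*,x_j^*)$ viewed from $x_i^*$ yields $f(x_i^*,x_{\phi(j)}^*)-f(x_i^*,x_j^*)\geq c_l\underline{\hat z}_i-\ep_n$. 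Summing this pairwise inequality over $j\in I_2$ and plugging into the displayed identity above gives
\begin{equation*}
L_i^{(2)}+L_i^{(4)} \;\geq\; \underline{\hat z}_i\,|I_2\cap\{x_j^*\}|\,(c_l\underline{\hat z}_i-\ep_n).
\end{equation*}

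The last step is the counting. Since the $n_0$ latent positions are equispaced with gap $2\pi/n_0$, a short case analysis on the parity of $n_0$ shows $|I_2\cap\{x_j^*\}|=|I_4\cap\{x_j^*\}|\geq n_0|I_4|/(2\pi)$ (the reflection $\rho$ already gives the equality of the two cardinalities; the lower bound by $n_0|I_4|/(2\pi)$ follows because the interval $[\underline{\hat z}_i+\pi,2\pi)$ of length $|I_4|=\pi-\underline{\hat z}_i$ contains at least $\lfloor n_0|I_4|/(2\pi)\rfloor$ of the $n_0$ equispaced arguments, and both endpoints $\underline{\hat z}_i+\pi$ and $\pi$ are grid-compatible). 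Combining with $d(\hat x_i^{(2)},Qx_i^*)=\underline{\hat z}_i$ yields the bound. The main (minor) obstacle is verifying the counting inequality $|I_4\cap\{x_j^*\}|\geq n_0|I_4|/(2\pi)$ cleanly in all parity cases — the reflection argument makes the rest of the proof essentially mechanical.
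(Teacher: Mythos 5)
Your proposal is correct and follows essentially the same route as the paper's proof: the same identity $L_i^{(2)}+L_i^{(4)}=d(\hat z_i,x_i^*)\bigl[\sum_{j\in I_4}f(x_i^*,x_j^*)-\sum_{j\in I_2}f(x_i^*,x_j^*)\bigr]$, the same reflection exchanging $x_i^*$ and $\hat z_i$ to pair the two arcs, the same use of the lower-Lipschitz condition \eqref{cond:lipschLower} giving $c_l\,\underline{\hat z}_i-\ep_n$ per pair, and the same equispacing count (you pair $I_2$ into $I_4$ instead of $I_4$ into $I_2$, which is immaterial). The only implicit point, shared with the paper, is splitting the final product so that the count is bounded below by $n_0|I_4|/(2\pi)$ on the $c_l$-term and above by $n_0$ on the $\ep_n$-term.
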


As for $L_i^{(1)}$ (resp. $L_i^{(3)}$), we rely on the symmetry of $I_1$ (resp. $I_3$) around the point of $\Ccal$ whose argument is $(\underline{x}^*_i+\underline{\hat{z}}_i)/2$ (resp. $((\underline{x}^*_i+\underline{\hat{z}}_i)/2)+\pi$). 
\begin{lem}\label{subCl:L1L3lowerB}For some numerical constant $C>0$, we have
\begin{align*}L_i^{(1)}+L_i^{(3)} \geq C n \big{(}\frac{|I_1|}{4}-c_l^{-1}\ep_n\big{)}&\frac{d(\hat{x}_i^{(2)}, Qx^*_i)}{2}( c_l\frac{d(\hat{x}_i^{(2)}, Qx^*_i)}{2}-\ep_n)\\
&-(\pi c_l^{2})^{-1}c_e^3  \sqrt{\log^3(n)/n}- 2\frac{c_e}{c_l}\sqrt{\frac{\log(n)}{n}}\ .
\end{align*}
\end{lem}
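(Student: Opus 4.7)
The plan is to exploit the mirror symmetry of the regular grid $\{x_j^*:j\in S\}=\Ccal_{n_0}$ around the midpoints of the arcs $I_1$ and $I_3$. As in the proof of Lemma~\ref{subCl:L2L4lowerB}, set up coordinates so that $\underline{x}_i^*=0$ and $\underline{\hat{z}}_i=u\in(0,\pi]$, where $u=d(\hat{z}_i,x_i^*)=d(\hat{x}_i^{(2)},Qx_i^*)$. A direct computation with the torus formula $d(\cdot,\cdot)$ on the arguments gives the explicit increments
\begin{align*}
d(\hat{z}_i,x_j^*)-d(x_i^*,x_j^*) &= u-2\underline{x}_j^*\quad \text{for } j\in I_1,\\
d(\hat{z}_i,x_j^*)-d(x_i^*,x_j^*) &= 2\underline{x}_j^*-2\pi-u\quad \text{for } j\in I_3,
\end{align*}
so that $L_i^{(1)}=\sum_{j\in I_1}f(x_i^*,x_j^*)(u-2\underline{x}_j^*)$ and similarly for $L_i^{(3)}$.

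The core idea is to pair each $j\in I_1$ with a ``mirror'' index $j'\in I_1$ satisfying $\underline{x}_{j'}^*\approx u-\underline{x}_j^*$ (reflection across the midpoint $u/2$ of the arc $I_1$); for $I_3$ the reflection is across $\pi+u/2$. Since $\mathbf{x}_S^*\in\bPi_{n_0}$, this pairing is nearly exact, up to a possible offset of order $\pi/n_0$ caused by the fact that $u/2$ need not lie on the grid. For an exact pair $(j,j')\in I_1^2$, the two signed increments are exact opposites, so the paired contribution telescopes to
\[
(u-2\underline{x}_j^*)\bigl[f(x_i^*,x_j^*)-f(x_i^*,x_{j'}^*)\bigr].
\]
For $\underline{x}_j^*<u/2$ we have $d(x_i^*,x_j^*)=\underline{x}_j^*<d(x_i^*,x_{j'}^*)=u-\underline{x}_j^*$, so the lower-Lipschitz condition~\eqref{cond:lipschLower} gives $f(x_i^*,x_j^*)-f(x_i^*,x_{j'}^*)\geq c_l(u-2\underline{x}_j^*)-\ep_n$, yielding a per-pair lower bound
\[
(u-2\underline{x}_j^*)\bigl(c_l(u-2\underline{x}_j^*)-\ep_n\bigr).
\]
The argument for paired indices in $I_3$ is identical after the change of variables $\underline{x}_j^*\mapsto \underline{x}_j^*-\pi$.

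To extract the announced bound, I would restrict the sum to the ``safe'' pairs with $\underline{x}_j^*\leq u/4-\ep_n/(2c_l)$, which ensures $(u-2\underline{x}_j^*)\geq u/2+\ep_n/c_l$ and hence a clean per-pair contribution of at least $(u/2)(c_l\, u/2-\ep_n)$. The number of grid points in an arc of length $u/4-c_l^{-1}\ep_n/2$ is proportional to $n(|I_1|/4-c_l^{-1}\ep_n)$ (doubled by counting both $I_1$ and $I_3$), which produces the principal term in the statement. Pairs for which $\underline{x}_j^*$ lies in the small ``bad'' band near $u/2$ contribute a negative amount of at most $\ep_n^2/c_l$ each; multiplied by an $\mathcal{O}(n\ep_n/c_l)$ count of such pairs, this produces the remainder $(\pi c_l^2)^{-1}c_e^3\sqrt{\log^3(n)/n}$ (recall $\ep_n=c_e\sqrt{\log(n)/n}$).

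The main obstacle is the bookkeeping for the imperfect pairing: when $u$ is not a multiple of $2\pi/n_0$, the midpoint $u/2$ is not an axis of symmetry of $\Ccal_{n_0}$, so the reflected image $u-\underline{x}_j^*$ is off the grid by at most $\pi/n_0$. I would handle this by defining $j'$ as the grid index nearest to this reflected argument, controlling the resulting mismatch by applying the \emph{upper} Lipschitz condition~\eqref{cond:lipsch} to $f$ — each substitution loses at most $c_L\cdot\pi/n_0+\ep_n$, which after summation produces the harmless residual $2(c_e/c_l)\sqrt{\log(n)/n}$. Edge cases (the unpaired point $\underline{x}_j^*=0$ in $I_1$ and its $I_3$ analogue) are absorbed into the same boundary correction. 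Once these error terms are collected, the two quadratic lower bounds on $L_i^{(1)}$ and $L_i^{(3)}$ add up to the asserted expression.
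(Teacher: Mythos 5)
Your skeleton is the paper's own proof: reflect across the axis through the midpoints of $I_1$ and $I_3$, telescope the paired increments into differences of $f$, invoke the lower-Lipschitz condition \eqref{cond:lipschLower}, keep the ``safe'' first quarter of $I_1$ (of length at least $|I_1|/4-c_l^{-1}\ep_n$) to produce the main term, and charge the band of width $O(\ep_n/c_l)$ around the reflection axis at most $\ep_n^2/c_l$ per grid point to obtain the $c_e^3\sqrt{\log^3(n)/n}$ remainder. All of these estimates are sound and correspond to Claims~\ref{subsubLBNeg} and~\ref{subsubLBPos}.

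The step that does not hold up is your bookkeeping for the ``imperfect pairing''. First, the imperfection does not occur here: the lemma sits inside the proof of Lemma~\ref{LHO:perf}, where ${\bf x}^*\in\Ccal_{n_0}^{n}$ and $Q$ has already been reduced to transformations leaving $\Ccal_{n_0}$ invariant, so $x_i^*$ and $\hat z_i=Q^{-1}\hat x_i^{(2)}$ are both grid points and $u=\underline{\hat z}_i$ is a multiple of $2\pi/n_0$. The reflection $\alpha\mapsto u-\alpha$ then maps $\Ccal_{n_0}$ onto itself exactly -- the relevant requirement is that $u$, not $u/2$, be a grid multiple -- and this exactness is precisely what makes the argument work (the only residue is the unpaired point at $\underline x_j^*=0$, whose contribution is nonnegative since $f\geq 0$). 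Second, your quantitative claim for the correction is wrong: a loss of $c_L\pi/n_0+\ep_n$ per substitution is multiplied by an increment of size up to $\pi$ and summed over $\Theta(n)$ pairs, giving an error of order $1+c_e\sqrt{n\log n}$, not $2(c_e/c_l)\sqrt{\log(n)/n}$. Since at the critical scale $d(\hat x_i^{(2)},Qx_i^*)\asymp\sqrt{\log(n)/n}$ the main positive term is only of order $\log n$, an error of order $\sqrt{n\log n}$ would swamp it and neither this lemma nor Lemma~\ref{claim:LLowerB} would follow. So you must either invoke the exactness of the pairing, or find an error accounting that does not scale like $n\ep_n$; note that the genuinely off-grid situation is handled in the paper not here but in Lemma~\ref{claim:LLowerBVariante}, by comparing $\widetilde L_i$ with an auxiliary quantity built entirely from grid points. (The term $-2(c_e/c_l)\sqrt{\log(n)/n}$ in the statement actually absorbs the discreteness-of-count corrections in the bad bands of $I_1$ and $I_3$, not any pairing mismatch.)
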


By definition, $|I_1|+|I_4| = \pi,$ which yields the desired bound 
$$L_i  \geq C n d(\hat{x}_i^{(2)}, Qx^*_i) \Big{(}c_l d(\hat{x}_i^{(2)}, Qx^*_i) - \ep_n\Big{)} -\frac{c_e^3}{\pi c_l^{2}} \sqrt{\frac{\log^3(n)}{n}}- 2\frac{c_e}{c_l}\sqrt{\frac{\log(n)}{n}}\enspace .$$ 
\hfill $\square$

\begin{proof}[Proof of Lemma \ref{subCl:L2L4lowerB}]
 In Figure~\ref{fig:chapSeriation:dessin}, we can see that the difference $d(\hat{z}_i, x^*_j) -d(x^*_i , x^*_j)$ is equal to $-d(\hat{z}_i, x^*_i)$ for all $j \in I_2$, whereas it is equal to $d(\hat{z}_i, x^*_i)$ for $j\in I_4.$ Thus, we obtain 
\begin{eqnarray*}
L_i^{(2)} &= &\sum_{j\in I_2} f(x^*_i, x^*_j) \Big{(} d(\hat{z}_i, x^*_j) - d(x^*_i, x^*_j)\Big{)}= - d(\hat{z}_i, x^*_i) \sum_{j\in I_2} f(x^*_i, x^*_j) \enspace ; \\
L_i^{(4)}& = &\sum_{j\in I_4} f(x^*_i, x^*_j) \Big{(} d(\hat{z}_i, x^*_j) - d(x^*_i, x^*_j)\Big{)}= d(\hat{z}_i, x^*_i) \sum_{j\in I_4} f(x^*_i, x^*_j)\enspace .
\end{eqnarray*}
Let $\phi$ denote the reflection with respect to the line going through the two points of $\Ccal$ of arguments $$a = \frac{\underline{x}_i^* + \underline{\hat{z}}_i}{2} \quad \textup{  and  } \quad  b = \frac{(\underline{x}_i^*+\pi) + (\underline{\hat{z}}_i+\pi)}{2}.$$ As can be checked in Figure~\ref{fig:chapSeriation:dessin}, for any $l\in I_2$,  we have $\phi(x_j^*)= x_l^*$ for some $j$ in $I_4$. Hence, 
$$L_i^{(2)}+L_i^{(4)} = d(\hat{z}_i, x^*_i)  \sum_{j\in I_4} \big{(} f(x^*_i, x^*_j) - f(x^*_i, \phi(x^*_{j})) \big{)}\enspace .$$
To lower bound the difference in the sum , we invoke the bi-Lipschitz condition \eqref{cond:lipschLower}, which gives $$f(x^*_i, x^*_j) - f(x^*_i, \phi(x^*_{j})) \geq c_l (d(x^*_i, \phi(x^*_{j})) - d(x^*_i, x^*_j)) - \ep_n\enspace ,$$ 
since $x_j^*$ is closer to $x_i^*$ than $\phi(x_j^*)$ -- see again Figure~\ref{fig:chapSeriation:dessin}.
Also, we can check from Figure~\ref{fig:chapSeriation:dessin} that $d(x^*_i, \phi(x^*_{j})) - d(x^*_i, x^*_j) = d(\hat{z}_i, x^*_i)$ for all $j\in I_4$. Since $\Ccal_{n_0}$ is evenly spaced, the number of indices $j$ in $I_4$ is larger than $n_0|I_4|/(2\pi)$. This leads us to  
$$ L_i^{(2)}+L_i^{(4)} \geq  \frac{n_0|I_4|}{2\pi}\, d(\hat{z}_i, x^*_i)  c_l d(\hat{z}_i, x^*_i) -  n_0d(\hat{z}_i, x^*_i) \ep_n\enspace .$$
Since $d(\hat{z}_i, x^*_i) = d(\hat{x}_i^{(2)}, Qx^*_i)$, this concludes the proof. 
 
\end{proof}

\begin{proof}[Proof of Lemma \ref{subCl:L1L3lowerB}]
 From Figure~\ref{fig:chapSeriation:dessin}, we see that, for all $j \in I_{1}$, 
\[
d(\hat{z}_i, x^*_j) - d(x^*_i, x^*_j) = |\underline{\hat{z}}_i - \underline{x}^*_j| - |\underline{x}^*_i - \underline{x}^*_j| = \underline{\hat{z}}_i + \underline{x}^*_i - 2 \underline{x}^*_j \enspace .
\]
For $\alpha\in (0,1)$, write $I_{1}^{(\alpha)}$ the sub-interval of $I_1$ defined as $$I_{1}^{(\alpha)}=\left\{j \in [n_0]: \underline{x}^{*}_j \in \left[\underline{x}_i^*, (1-\alpha)\underline{x}_i^*+ \alpha \underline{\hat{z}}_i\right)\right\}\enspace .$$ In particular, for all $j\in I^{(1/2)}_1$, the above expression leads us to
$$d(\hat{z}_i, \phi(x^*_j)) - d(x^*_i, \phi(x^*_j)) = - \big{(} \underline{\hat{z}}_i + \underline{x}^*_i - 2 \underline{x^*_j}\big{)}\enspace ,$$
where $\phi$ is the symmetry introduced in the proof of Lemma~\ref{subCl:L2L4lowerB}. Hence, the terms with $j\in I^{(1/2)}_1$ partially compensate with the terms with $j$ outside $ I^{(1/2)}_1$.
\begin{align*}
    L_i^{(1)}& = \sum_{j\in I_1} f(x^*_i, x^*_j)\big[ d(\hat{z}_i, x^*_j) - d(x^*_i, x^*_j)\big]\\ &=  \sum_{j\in I_{1}^{(1/2)}} \big[f(x^*_i, x^*_j)-f(x^*_i, \phi(x^*_j))\big] \big[ \underline{\hat{z}}_i + \underline{x}^*_i - 2 \underline{x}^*_j\big]\enspace .
\end{align*}
For any $j\in I_1^{(1/2)}$, we have $d(x^*_i, \phi(x^*_j))\geq d(x^*_i, x^*_j)$. As a consequence, it follows from the  bi-Lipschitz condition \eqref{cond:lipschLower} that 
$$f(x^*_i, x^*_j)-f(x^*_i, \phi(x^*_j)) \geq c_l \left[d(x^*_i, \phi(x^*_j)) - d(x^*_i, x^*_j)\right] - \ep_n\enspace .$$
Since $d(x^*_i, \phi(x^*_j)) - d(x^*_i, x^*_j) = |\underline{\phi(x^*_j)}-\underline{x}^*_j|$ for all $j \in I_1^{(1/2)}$, we get
\begin{equation}\label{proof:subClaim:LowerB:L1}
L_i^{(1)} \geq  \sum_{j\in I_{1}^{(1/2)}} (c_l |\underline{\phi(x^*_j)}-\underline{x}^*_j|-\ep_n) (\underline{\hat{z}}_i + \underline{x}^*_i - 2 \underline{x}^*_j)\enspace .
\end{equation}
To control \eqref{proof:subClaim:LowerB:L1}, we split the interval $I_{1}^{(1/2)}$ according to the sign of the term $(c_l |\underline{\phi(x^*_j)}-\underline{x}^*_j|-\ep_n)$. That is, we write $I_{1}^{(1/2)} = I_{1}^{(1/2)-} \cup I_{1}^{(1/2)+}$ where $I_{1}^{(1/2)-}$ is the set of indices $j$ such that $c_l |\underline{\phi(x^*_j)}-\underline{x}^*_j| <\ep_n$.

\begin{cl}\label{subsubLBNeg}We have
\begin{align*}
    \sum_{j\in I_{1}^{(1/2)-}} (c_l |\underline{\phi(x^*_j)}-\underline{x}^*_j|-\ep_n) &(\underline{\hat{z}}_i + \underline{x}^*_i - 2 \underline{x}^*_j)\\
    &\geq - (c_l^{2}8\pi)^{-1} c_e^3 \sqrt{\log^3(n)/n}- \frac{c_e}{c_l}\sqrt{\frac{\log(n)}{n}}\enspace .
\end{align*}
\end{cl}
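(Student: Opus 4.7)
The plan is to exploit a coincidence that reduces each summand to a simple quadratic in a single variable, after which a pointwise minimum combined with a cardinality count finishes the argument.

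Working in the coordinates of the previous proof, with $\underline{x}^*_i = 0$ and $\underline{\hat{z}}_i \in (0,\pi]$, the reflection $\phi$ has its axis at argument $a = \underline{\hat{z}}_i/2$, so $\underline{\phi(x^*_j)} = \underline{\hat{z}}_i - \underline{x}^*_j$. The key observation I would make is that for every $j \in I_1^{(1/2)}$, one has $\underline{x}^*_j \in [0,a)$, hence
\[
|\underline{\phi(x^*_j)} - \underline{x}^*_j| \;=\; \underline{\hat{z}}_i - 2\underline{x}^*_j \;=\; \underline{\hat{z}}_i + \underline{x}^*_i - 2\underline{x}^*_j \;\geq\; 0.
\]
Thus the two factors appearing in each summand are actually equal; writing $u_j \geq 0$ for this common value, the $j$-th term equals $h(u_j) := c_l u_j^2 - \ep_n u_j$.

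On $I_1^{(1/2)-}$, by definition $u_j \in [0, \ep_n/c_l)$, and elementary calculus shows that $h$ is minimised over this interval at $u = \ep_n/(2c_l)$ with value $-\ep_n^2/(4c_l)$, which provides a universal pointwise lower bound. Independently, I would bound $|I_1^{(1/2)-}|$ from above: the condition $u_j < \ep_n/c_l$ amounts to $\underline{x}^*_j \in (a - \ep_n/(2c_l),\, a)$, an interval of length at most $\ep_n/(2c_l)$. Because ${\bf x}^*_S \in \bPi_{n_0}$, the arguments $\{\underline{x}^*_j : j \in S\}$ form a regular grid of step $2\pi/n_0$, so at most $n_0 \ep_n/(4\pi c_l) + 1$ indices fall into such an interval.

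Multiplying the pointwise minimum by this cardinality bound and substituting $\ep_n = c_e\sqrt{\log(n)/n}$ and $n_0 = n/4$ produces a dominant contribution of order $-c_e^3\sqrt{\log^3(n)/n}/c_l^{2}$, with a numerical constant of $1/(64\pi) \leq 1/(8\pi)$, which matches the first error term stated in the claim. The ``$+1$'' in the cardinality bound contributes a residual $-\ep_n^2/(4c_l) = -c_e^2\log(n)/(4 c_l n)$, which is dominated by $\ep_n/c_l = (c_e/c_l)\sqrt{\log(n)/n}$ as soon as $\ep_n \leq 4$, and in particular for $n$ large enough. I do not expect any real obstacle beyond bookkeeping of constants: once the coincidence of the two factors on $I_1^{(1/2)}$ is recognised, the remaining steps are routine, and the only point requiring care is that the evenness of ${\bf x}^*_S$ is precisely what upgrades the cardinality estimate from an order-of-magnitude bound to the sharp linear one needed here.
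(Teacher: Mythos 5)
Your proof is correct and follows essentially the same route as the paper's: the key identity $\underline{\hat{z}}_i + \underline{x}^*_i - 2\underline{x}^*_j = \underline{\phi(x^*_j)}-\underline{x}^*_j$ on $I_1^{(1/2)}$, a pointwise lower bound on the resulting quadratic $(c_l u_j-\ep_n)u_j$, and a cardinality count for $I_1^{(1/2)-}$ using the regular spacing of $\bPi_{n_0}$. The only differences are cosmetic (you minimise $c_l u^2-\ep_n u$ exactly and get slightly sharper constants), and your caveat $\ep_n\leq 4$ for the residual term corresponds to the "$n$ large enough" qualification already present in the enclosing lemma.
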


\begin{cl}\label{subsubLBPos}For some numerical constant $C>0$, we have
\begin{align*}
    \sum_{j\in I_{1}^{(1/2)+}} (c_l |\underline{\phi(x^*_j)}-\underline{x}^*_j|-\ep_n) &(\underline{\hat{z}}_i + \underline{x}^*_i - 2 \underline{x}^*_j) \\
    &\geq C n \big{(}\frac{|I_1|}{4}-c_l^{-1}\ep_n\big{)}\frac{|\underline{\hat{z}}_i - \underline{x}^*_i|}{2}( c_l\frac{|\underline{\hat{z}}_i - \underline{x}^*_i|}{2}-\ep_n)\enspace .
\end{align*}
\end{cl}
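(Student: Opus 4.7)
The strategy is to lower bound the sum by restricting attention to a subset of $I_{1}^{(1/2)+}$ on which the signed arc-length $\delta_j := \underline{\hat{z}}_i + \underline{x}^*_i - 2 \underline{x}^*_j$ admits a good uniform lower bound. For $j \in I_1^{(1/2)}$, the reflection $\phi$ maps $x^*_j$ to the point of argument $2a - \underline{x}^*_j$ where $a = (\underline{x}^*_i+\underline{\hat{z}}_i)/2$, so $|\underline{\phi(x^*_j)}-\underline{x}^*_j| = 2(a - \underline{x}^*_j) = \delta_j$, as noted in the surrounding text. In particular $\delta_j$ decreases monotonically from $|I_1|$ to $0$ as $\underline{x}^*_j$ sweeps $I_1^{(1/2)}$ from $\underline{x}^*_i$ toward $a$.

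The main step isolates the first quarter $J := \{j \in [n_0] : \underline{x}^*_j \in [\underline{x}^*_i,\ \underline{x}^*_i + |I_1|/4]\}$ of $I_1$. For $j \in J$ I would use the bound $\delta_j \geq |I_1|/2 = |\underline{\hat z}_i - \underline{x}^*_i|/2$, which in the regime $c_l|I_1|/2 \geq \ep_n$ immediately gives $c_l \delta_j \geq \ep_n$ and therefore $J \subset I_1^{(1/2)+}$. Each summand is then at least $(c_l|I_1|/2 - \ep_n)\cdot |I_1|/2$. Since $\{x^*_j:j\in[n_0]\} = \Ccal_{n_0}$ is a regular grid with spacing $2\pi/n_0$, the number of indices in an interval of length $|I_1|/4$ satisfies $|J| \geq n_0|I_1|/(8\pi) - 1 = n|I_1|/(32\pi) - 1$. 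For $n$ large enough that $n\ep_n \geq 8\pi c_l$ (which holds since $n\ep_n = c_e\sqrt{n\log n}$ tends to infinity), the constant $-1$ is absorbed into $-n\ep_n/(8\pi c_l)$, yielding $|J| \geq (n/(8\pi))(|I_1|/4 - \ep_n/c_l)$. Multiplying the per-index bound by the cardinality estimate produces the claim with $C = 1/(8\pi)$.

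The only obstacle lies in the degenerate regime $c_l|I_1|/2 < \ep_n$, where both $c_l|I_1|/2 -\ep_n$ and $|I_1|/4 - \ep_n/c_l$ become negative while their product remains positive. In that range, however, $|I_1| = O(\ep_n/c_l)$ and the whole right-hand side has magnitude at most $Cn \cdot \ep_n^3/c_l^2 = O(c_e^3 c_l^{-2}\sqrt{\log^3(n)/n})$. Since $\sum_{j \in I_1^{(1/2)+}}(c_l\delta_j -\ep_n)\delta_j \geq 0$ automatically, this residual term is of exactly the order absorbed by the correction $-(\pi c_l^{2})^{-1} c_e^3\sqrt{\log^3(n)/n}$ in Lemma~\ref{subCl:L1L3lowerB}; thus, up to an inflation of the numerical constant, the stated inequality remains valid in all regimes and fits seamlessly into the proof of Lemma~\ref{subCl:L1L3lowerB}.
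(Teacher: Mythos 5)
Your argument is, in substance, the paper's own: the paper also restricts the sum to the first quarter of $I_1$ (its $I_1^{(1/4)}$, your $J$), uses exactly the same per-term bound $\underline{\hat{z}}_i+\underline{x}^*_i-2\underline{x}^*_j\geq |I_1|/2$ there, counts grid points of $\Ccal_{n_0}$ in that arc, and reaches the factor $|I_1|/4-c_l^{-1}\ep_n$ (the paper via the arc length of $I_1^{(1/2)+}\cap I_1^{(1/4)}$, you via $J\subset I_1^{(1/2)+}$ in the regime $c_l|I_1|/2\geq \ep_n$). Your explicit two-regime split is in fact the more careful reading: in the degenerate regime $c_l|I_1|/2<\ep_n$ both bracketed factors are negative, the right-hand side of the Claim is strictly positive, and the left-hand side can vanish (e.g.\ when $I_1^{(1/2)+}$ is empty), so the Claim cannot hold verbatim there; the paper's own proof has the same blind spot, since it multiplies a possibly negative per-term factor by a \emph{lower} bound on the cardinality of the intersection. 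Two small corrections to your write-up. First, the sentence ``up to an inflation of the numerical constant, the stated inequality remains valid in all regimes'' is not literally true — no choice of $C>0$ turns $0\geq(\text{positive})$ into a valid inequality; what is true, and what you actually need, is that in that regime the stated right-hand side is $O(n\ep_n^3/c_l^{2})=O(c_e^3c_l^{-2}\sqrt{\log^3(n)/n})$ while the left-hand side is nonnegative, so the Claim holds up to an additive error already absorbed by the correction term in Lemma~\ref{subCl:L1L3lowerB}. Second, your absorption of the $-1$ in the grid count requires $n\ep_n\to\infty$ and thus silently assumes $c_e>0$; it is cleaner to note that in the setting of Lemma~\ref{LHO:perf} one has ${\bf x}^*\in\Ccal_{n_0}^{n}$ and ${\bf x}^*_S\in\bPi_{n_0}$, so the left endpoint $\underline{x}^*_i$ is itself occupied by a point of ${\bf x}^*_S$, whence $|J|\geq 1$ and therefore $|J|\gtrsim n|I_1|$ in all cases, with no condition on $c_e$ or $n$.
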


Gathering these two claims leads us to
\begin{align*}L_i^{(1)}\geq C n \big{(}\frac{|I_1|}{4}-c_l^{-1}\ep_n\big{)}&\frac{|\underline{\hat{z}}_i - \underline{x}^*_i|}{2}( c_l\frac{|\underline{\hat{z}}_i - \underline{x}^*_i|}{2}-\ep_n)\\
&-(c_l^{2}8\pi)^{-1}  c_e^3\sqrt{\log^3(n)/n}- \frac{c_e}{c_l}\sqrt{\frac{\log(n)}{n}}\ ,
\end{align*}which is the  desired bound since $|\underline{\hat{z}}_i - \underline{x}^*_i| = d(\hat{z}_i, x^*_i)= d(\hat{x}_i^{(2)}, Qx^*_i)$. By symmetry, the term $L_i^{(3)}$ is handled as $L_i^{(1)}$ and admits the same lower bound. 
 \end{proof}

\begin{proof}[Proof of Claim \ref{subsubLBNeg}]
For simplicity, the notation $\underline{x}$ is dropped out in the proof of Claim \ref{subsubLBNeg}, and $\underline{x}$ is simply denoted by $x$. By definition of $\phi$, we know that $(\hat{z}_i + x^*_i)/2 = (\phi(x^*_j)+ x^*_j)/2$ for all $j\in I_{1}^{(1/2)},$ which gives the equality $ \hat{z}_i + x^*_i - 2 x^*_j = \phi(x^*_j)- x^*_j$. Since $0 \leq \phi(x^*_j)-x^*_j < c_l^{-1} \ep_n$ for all $j\in I_{1}^{(1/2)-}$, we have
$$0 \leq \hat{z}_i + x^*_i - 2 x^*_j \leq c_l^{-1} \ep_n\enspace .$$
Since $c_l |\phi(x^*_j)-x^*_j|-\ep_n< 0$ for $j$ in $I_1^{(1/2)-}$, we obtain
$$(c_l |\phi(x^*_j)-x^*_j|-\ep_n) (\hat{z}_i + x^*_i - 2 x^*_j)\geq (c_l |\phi(x^*_j)-x^*_j|-\ep_n) c_l^{-1} \ep_n \geq -c_l^{-1} \ep_n^2\enspace. $$
Since the number of indices in $I_{1}^{(1/2)-}$ is at most $1+ n_0/(2\pi)|I_{1}^{(1/2)-}|$ (where $|I_{1}^{(1/2)-}|$ is the arc length), and the length of this arc is at most  $c_l^{-1}\, \ep_n$, we conclude that 
\begin{align*}
  \sum_{j\in I_{1}^{(1/2)-}} (c_l |\phi(x^*_j)-x^*_j|-\ep_n) (\hat{z}_i + x^*_i - 2 x^*_j) &\geq - \frac{n_0}{2\pi} c_l^{-2}  \ep_n^3\\
  &= -  \frac{c_e^3 }{8\pi c_l^{2}} \sqrt{\frac{\log^3(n)}{n}}- c_l^{-1}\epsilon_n\ .  
\end{align*}

 \end{proof}

\begin{proof}[Proof of Claim \ref{subsubLBPos}] 
Again, for convenience the notation $\underline{x}$ is dropped out here. Since all the terms in the sum are nonnegative, we can simply consider indices $j$ in $ I_{1}^{(1/2)+}\cap I_{1}^{(1/4)}$. Using $ \phi(x^*_j)- x^*_j= \hat{z}_i + x^*_i - 2 x^*_j$ for all $j \in I_{1}^{(1/2)}$ and $x_i^*=0$, we obtain that, for  $j \in I_{1}^{(1/4)}$, $\phi(x^*_j)- x^*_j\geq \hat{z}_i/2$.  This gives
$$(c_l |\phi(x^*_j)-x^*_j|-\ep_n) (\hat{z}_i + x^*_i - 2 x^*_j) \geq (c_l \frac{\hat{z}_i}{2}-\ep_n) \frac{\hat{z}_i}{2}\enspace ,$$ and, for some numerical constant $C > 0$, 
\begin{equation} \label{ineq:proof:subsubclaim:cpx}\sum_{j\in I_{1}^{(1/2)+}} (c_l |\phi(x^*_{j})-x^*_j|-\ep_n) (\hat{z}_i + x^*_i - 2 x^*_j) \geq  C n \big{|}I_{1}^{(1/2)+} \cap I_{1}^{(1/4)}\big{|}\frac{\hat{z}_i}{2}(c_l \frac{\hat{z}_i}{2}-\ep_n)\enspace .\end{equation}
Since either $I_{1}^{(1/2)+} \subset I_{1}^{(1/4)}$ or $ I_{1}^{(1/4)} \subset I_{1}^{(1/2)+}$ and $|I_{1}^{(1/4)} |= |I_{1} |/4$ and $|I_{1}^{(1/2)+}|= |I_{1}^{(1/2)}|-|I_{1}^{(1/2)-}| \geq |I_{1}^{(1/2)}|-c_l^{-1}\ep_n = \frac{|I_1|}{2}-c_l^{-1}\ep_n$, we deduce that 
\[
 |I_{1}^{(1/2)+} \cap I_{1}^{(1/4)}|\geq \frac{|I_1|}{4}-c_l^{-1}\ep_n\enspace .
\]
Thus, we have
\begin{equation*}  
\sum_{j\in I_{1}^{(1/2)+}} (c_l |\phi(x^*_j)-x^*_j|-\ep_n) (\hat{z}_i + x^*_i - 2 x^*_j) \geq C n \big{(}\frac{|I_1|}{4}-c_l^{-1}\ep_n\big{)}\frac{\hat{z}_i}{2}( c_l\frac{\hat{z}_i}{2}-\ep_n)\enspace .     
\end{equation*} Since $\hat{z}_i=| \hat{z}_i-x_i^*|$ (recall that $x_i^*=0$), this concludes the proof. 
\end{proof}

    
\subsection{Proof of Proposition~\ref{prop:LHO:perf}}\label{sect:proof:prop:LHO}

Let ${\bf x}^{**}_S$ be a best approximation of ${\bf x}^{*}_{S}$ in $\bPi_{n_0},$ that is, such that $d_{\infty}({\bf x}^{*}_{S},{\bf x}^{**}_S) = d_{\infty}({\bf x}^{*}_{S},\bPi_{n_0})$. As in the proof of Lemma~\ref{LHO:perf}, we restrict our attention to orthogonal transformations $Q\in \mathcal{O}$ that let $\mathcal{C}_{n_0}$ invariant. 
Fix $i$ in $\overline{S}$. To prove Proposition~\ref{prop:LHO:perf}, it suffices to establish variants of Lemmas \ref{claimLUpBound} and
\ref{claim:LLowerB} with 
\begin{equation}\label{eq:demo:Li:new}
\widetilde{L}_i := \langle F_{i , S}, \, D(Q^{-1}\hat{x}_i^{(2)}, {\bf x}^{**}_S) - D(x_i^*, {\bf x}^{**}_S) \rangle,\end{equation}
instead of $L_i$. In the definition of $\widetilde{L}_i$, ${\bf x}^*_S$ has been replaced by  ${\bf x}^{**}_{S}$.

\begin{lem}\label{claimLUpBoundVariante}
With probability at least $1-
1/n^3$, we have 
\[
\widetilde{L}_i\leq  C_{L,e} \left[1+   d(\hat{x}_i^{(2)}, Qx^*_i)\big{(}nd_{\infty}({\bf x}^{*}_{S},\bPi_{n_0}) + n\mucal+  \sqrt{n \log(n)}\,\big{)}\right]\enspace .
\]
\end{lem}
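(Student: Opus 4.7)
The strategy is to adapt the proof of Lemma~\ref{claimLUpBound} to the new setting where ${\bf x}^*_S$ is not assumed to belong to $\bPi_{n_0}$, its best approximation ${\bf x}^{**}_S$ in $\bPi_{n_0}$ being used instead. As in the proof of Lemma~\ref{LHO:perf}, I would first reduce without loss of generality to the case where $Q\in \Ocal$ preserves the grid $\Ccal_{n_0}$, the approximation error being of order $1/n_0 = O(\sqrt{\log(n)/n})$ and hence harmless.

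Writing $z^{(1)}_j := Q^{-1}\hat{x}^{(1)}_j$, the assumptions $\hat{\bf x}^{(1)}_S, {\bf x}^{**}_S \in \bPi_{n_0}$ combined with the invariance of $\Ccal_{n_0}$ under $Q$ yield $\{x^{**}_j:j\in S\} = \Ccal_{n_0} = \{z^{(1)}_j:j\in S\}$. Let $\sigma$ be the permutation of $S$ matching these two sets, so that $x^{**}_j = z^{(1)}_{\sigma(j)}$. Substituting this reordering into the definition of $\widetilde{L}_i$, I obtain
\begin{equation*}
\widetilde{L}_i = \sum_k f(x^*_i, x^*_{\sigma^{-1}(k)})\, a_k, \qquad a_k := d(Q^{-1}\hat{x}^{(2)}_i, z^{(1)}_k) - d(x^*_i, z^{(1)}_k),
\end{equation*}
with $|a_k|\leq d(\hat{x}^{(2)}_i, Q x^*_i)$ by the triangle inequality. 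I would then decompose $\widetilde{L}_i = T_1 + T_2 + T_3$ by successively comparing $x^*_{\sigma^{-1}(k)}$ to $x^{**}_{\sigma^{-1}(k)}=z^{(1)}_k$, then $z^{(1)}_k$ to $x^*_k$, leaving $T_3 = \sum_k f(x^*_i,x^*_k) a_k$. The term $T_1$ is controlled via the upper Lipschitz condition~\eqref{cond:lipsch}, using $d(x^*_{\sigma^{-1}(k)}, x^{**}_{\sigma^{-1}(k)}) \leq d_{\infty}({\bf x}^*_S, \bPi_{n_0})$, yielding a bound of order $d(\hat{x}^{(2)}_i, Qx^*_i)\bigl[n\, d_\infty({\bf x}^*_S,\bPi_{n_0}) + \sqrt{n\log n}\,\bigr]$. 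The term $T_2$ is bounded exactly as in Lemma~\ref{ineq:demo:intermLem1}, giving $C_{L,e}\, d(\hat{x}^{(2)}_i, Q x^*_i)\bigl[n\mucal + \sqrt{n\log n}\,\bigr]$.

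The main new difficulty lies in controlling $T_3$, which by the isometry of $Q$ equals $\langle F_{i,S}, D(\hat{x}^{(2)}_i, \hat{\bf x}^{(1)}_S) - D(Q x^*_i, \hat{\bf x}^{(1)}_S)\rangle$. In Lemma~\ref{claimLUpBound}, the optimality of $\hat{x}^{(2)}_i$ at the grid point $Q x^*_i \in \Ccal_{n_0}$ directly yielded the bound; here, since $x^*_i$ need not lie in $\Ccal_{n_0}$, neither does $Q x^*_i$, and the comparison inequality no longer applies. To circumvent this, I would pick $\bar{z}_i \in \Ccal_{n_0}$ closest to $Qx^*_i$, so that $d(\bar{z}_i, Qx^*_i) \leq \pi/n_0$, and split $T_3$ accordingly. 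The $\bar{z}_i$-part can then be handled as in Lemma~\ref{claimLUpBound} via the optimality of $\hat{x}^{(2)}_i$ combined with a sub-Gaussian union bound over $\Ccal_{n_0}$, giving $C\sqrt{n\log n}\,\bigl(d(\hat{x}^{(2)}_i, Q x^*_i) + 1/n_0\bigr)$ with probability at least $1-1/n^3$. The remainder $\langle F_{i,S}, D(\bar z_i, \hat{\bf x}^{(1)}_S) - D(Q x^*_i, \hat{\bf x}^{(1)}_S)\rangle$ is bounded by $n_0 \cdot \pi/n_0 = \pi$ using $|F_{ij}|\leq 1$ and the triangle inequality for $d$. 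Summing the three contributions gives the claimed bound; the additive constant $1$ in the statement absorbs the $\pi$ from the grid approximation, the $\sqrt{\log n/n}$ from the noise at $\bar z_i$, and the preliminary reduction to $Q$ preserving $\Ccal_{n_0}$. The main obstacle is thus the loss of the clean optimality argument, which I handle by replacing $Qx^*_i$ by a nearby grid point at the price of an $O(1)$ additive constant.
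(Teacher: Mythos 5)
Your proposal is correct and follows essentially the same route as the paper: peel off the bias term via the Lipschitz condition and $d_{\infty}({\bf x}^*_S,\bPi_{n_0})$, reuse the Lemma~\ref{ineq:demo:intermLem1} argument for the $n\mucal$ term, and fix the loss of the optimality argument by replacing $Qx^*_i$ with its nearest grid point at an $O(1)$ cost. The only (harmless) difference is that you absorb the grid-substitution error directly in the signal term using $|F_{ij}|\leq 1$, whereas the paper substitutes inside the $A$-inner product and bounds the extra noise contribution by a separate sub-Gaussian event; both yield the same bound.
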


\begin{lem}\label{claim:LLowerBVariante}
 For $n$ large enough, one has
 \begin{eqnarray*}
  \widetilde{L}_i&\geq& C'_{l,e} n d(\hat{x}_i^{(2)}, Qx^*_i) \Big{(} d(\hat{x}_i^{(2)}, Qx^*_i) - \frac{\sqrt{\log(n)}}{n}\Big{)} -C''_{l,e} \sqrt{\frac{\log^3(n)}{n}} \\ && - C_{L,e} \left\{1+ n\left[d_{\infty}({\bf x}^{*}_{S},\bPi_{n_0}) +\sqrt{\frac{\log(n)}{n}}\right] d(\hat{x}_i^{(2)}, Qx^*_i)\right\}\enspace .
  \end{eqnarray*}
\end{lem}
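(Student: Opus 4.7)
The plan is to reduce to Lemma~\ref{claim:LLowerB} by introducing the auxiliary quantity
\[
\widetilde{L}^{**}_i := \sum_{j \in S} f(x^*_i, x^{**}_j) \Big(d(Q^{-1}\hat{x}^{(2)}_i, x^{**}_j) - d(x^*_i, x^{**}_j)\Big),
\]
in which both the function values and the distances are evaluated at the well-spread positions ${\bf x}^{**}_S \in \bPi_{n_0}$, and then write $\widetilde{L}_i = \widetilde{L}^{**}_i + (\widetilde{L}_i - \widetilde{L}^{**}_i)$. The first summand will be amenable to the arguments of Lemma~\ref{claim:LLowerB}, while the second is a discrepancy term coming from the gap between ${\bf x}^*_S$ and ${\bf x}^{**}_S$ inside the values of $f$.

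First I would control this discrepancy. The upper Lipschitz condition~\eqref{cond:lipsch} gives $|f(x^*_i, x^*_j) - f(x^*_i, x^{**}_j)| \leq c_L d(x^*_j, x^{**}_j) + \ep_n \leq c_L d_\infty({\bf x}^*_S, \bPi_{n_0}) + \ep_n$, and the triangle inequality for $d$ yields $|d(Q^{-1}\hat{x}^{(2)}_i, x^{**}_j) - d(x^*_i, x^{**}_j)| \leq d(\hat{x}^{(2)}_i, Q x^*_i)$. Summing over the $n_0$ indices in $S$,
\[
|\widetilde{L}_i - \widetilde{L}^{**}_i| \leq C_{L,e}\, n\, \Big[d_\infty({\bf x}^*_S, \bPi_{n_0}) + \sqrt{\log(n)/n}\Big]\, d(\hat{x}^{(2)}_i, Q x^*_i),
\]
which is precisely the mixed correction term appearing in the statement.

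Second, I would lower bound $\widetilde{L}^{**}_i$ by rerunning the proof of Lemma~\ref{claim:LLowerB}. Since ${\bf x}^{**}_S \in \bPi_{n_0}$ is a rotated copy of $\Ccal_{n_0}$, one can partition $S$ into four arcs $I_1,\ldots,I_4$ defined by the relative positions of the $x^{**}_j$'s around $x^*_i$ and $\hat z_i := Q^{-1}\hat{x}^{(2)}_i$, and invoke the bi-Lipschitz inequality~\eqref{cond:lipschLower} in the second argument of $f$ (where one plugs in elements of ${\bf x}^{**}_S$). The main obstacle is that the reflection $\phi$ used in the proofs of Lemmas~\ref{subCl:L2L4lowerB} and~\ref{subCl:L1L3lowerB} must preserve the positions entering the sum. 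In the original proof, the axis bisecting the arc $[x^*_i, \hat z_i]$ is automatically a symmetry of $\Ccal_{n_0}$ because both endpoints lie on the grid, whereas here $x^*_i$ is generally not a point of ${\bf x}^{**}_S$, so this alignment is lost. The remedy is to take $\phi$ to be the reflection through the nearest symmetry axis of the rotated grid ${\bf x}^{**}_S$, which sits within $\pi/(2 n_0)$ of the ideal bisecting axis. Identities such as $d(x^*_i, \phi(x^{**}_j)) - d(x^*_i, x^{**}_j) = d(\hat z_i, x^*_i)$ then hold only up to an $O(1/n_0)$ perturbation per index, producing a total additive loss of $O(1)$ that is absorbed in the additive constant $C_{L,e}$ of the target bound. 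Combining the resulting lower bound for $\widetilde{L}^{**}_i$ with the discrepancy estimate via $\widetilde{L}_i \geq \widetilde{L}^{**}_i - |\widetilde{L}_i - \widetilde{L}^{**}_i|$ yields the claim.
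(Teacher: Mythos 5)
Your decomposition is sound, and your discrepancy estimate $|\widetilde{L}_i-\widetilde{L}^{**}_i|\leq C_{L,e}\,n\,[d_{\infty}({\bf x}^{*}_{S},\bPi_{n_0})+\sqrt{\log(n)/n}\,]\,d(\hat{x}_i^{(2)},Qx^*_i)$ is exactly the mixed term of the statement, so the plan does prove the lemma (in the form with a $\sqrt{\log(n)/n}$ subtraction in the quadratic term, which is what the paper's own proof derives and what is used downstream; the $\sqrt{\log(n)}/n$ in the displayed statement appears to be a typo). Where you genuinely differ from the paper is in how the main term is reduced to the regular case of Lemma~\ref{claim:LLowerB}. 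The paper does not keep the off-grid base point $x^*_i$: it snaps it to its nearest grid neighbour $y^*_i\in\Ccal_{n_0}$ simultaneously in the first argument of $f$ and in the distances, forming $L_i'=\sum_{j}f(y^*_i,x^{**}_j)[d(Q^{-1}\hat{x}_i^{(2)},x^{**}_j)-d(y^*_i,x^{**}_j)]$, so that Lemma~\ref{claim:LLowerB} applies verbatim as a black box (every point involved lies in $\Ccal_{n_0}$ and the configuration is in $\bPi_{n_0}$); the passage from $d(\hat{x}_i^{(2)},Qy^*_i)$ back to $d(\hat{x}_i^{(2)},Qx^*_i)$ costs only a $2\pi/n_0$ triangle-inequality correction, and the comparison $|\widetilde{L}_i-L_i'|$ is a Lipschitz/boundedness argument (Lemma~\ref{cl:proof:prop:hlp}). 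You instead keep $x^*_i$ and re-run the reflection argument after snapping the reflection axis to the nearest symmetry axis of the grid (note ${\bf x}^{**}_S\in\bPi_{n_0}$ takes its values exactly in $\Ccal_{n_0}$, so these are the fixed grid axes, within $\pi/(2n_0)$ of the ideal bisector). This is workable and your loss accounting is right: the exact identities in Lemmas~\ref{subCl:L2L4lowerB} and~\ref{subCl:L1L3lowerB} are perturbed by $O(1/n_0)$ per index, the pairing between $I_2$ and $I_4$ can mismatch at $O(1)$ boundary indices, and all of this sums to an additive $O(1)$ absorbed by the $C_{L,e}\{1+\cdots\}$ term, while the $O(1/n_0)$ shifts inside the positive term of Claim~\ref{subsubLBPos} merely inflate the $\ep_n$ subtraction. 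The trade-off is clear: your route avoids the intermediate quantities $L_i'$ and $L_i''$ but obliges you to actually re-derive perturbed versions of those two lemmas and their sub-claims (sign classification in $I_1^{(1/2)\pm}$, boundary counts, etc.) rather than cite them, whereas the paper's trick of moving the base point instead of the axis keeps the entire symmetry analysis untouched. Both routes land on the same bound.
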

These two lemmas enforce that, with probability higher than $1-1/n^3$, 
\begin{equation*}
 d(\hat{x}_i^{(2)}, Qx^*_i)\leq C_{l,L,e} \left[d_{\infty}({\bf x}^{*}_{S},\bPi_{n_0}) + \mucal+\sqrt{\log(n)/n}\right]\enspace . 
\end{equation*}
Indeed, assume that $d(\hat{x}_i^{(2)}, Qx^*_i)\geq C'_{l,L,e} \big{(}d_{\infty}({\bf x}^{*}_{S},\bPi_{n_0}) +\sqrt{\log(n)/n}\big{)} $ where  $C'_{l,L,e}$ is large enough. Then, Lemma~\ref{claim:LLowerBVariante} implies that  $\widetilde{L}_i \gtrsim_{c_l,c_L,c_e}  n d^2(\hat{x}_i^{(2)}, Qx^*_i)$. Together with Lemma~\ref{claimLUpBoundVariante}, we deduce that 
\[
 d(\hat{x}_i^{(2)}, Qx^*_i)\leq C_{l,L,e} \left[d_{\infty}({\bf x}^{*}_{S},\bPi_{n_0}) + \mucal  +\sqrt{\log(n)/n}\right]\ .
\]

In any case, we conclude that 
\[
 d(\hat{x}_i^{(2)}, Qx^*_i)\leq (C_{l,L,e}\vee  C'_{l,L,e})\left[ d_{\infty}({\bf x}^{*}_{S},\bPi_{n_0}) + \mucal  + \sqrt{\log(n)/n}\right]\ , 
\]
with probability higher than $1-1/n^3$. Taking the minimum over all $Q \in \Ocal$ that let $\mathcal{C}_{n_0}$ invariant and a union bound over all $i\in \overline{S}$, leads to Proposition~\ref{prop:LHO:perf}.

\subsubsection{Proof of Lemma \ref{claimLUpBoundVariante}}

To ease the exposition, we assume that $S=[n_0]$. Fix $i\in \overline{S}$. We start from 
\begin{equation} \label{def:li:proof:prp}
    \widetilde{L}_i = \sum_{j=1}^{n_0} f(x^*_i, x^*_j) \Big{(} d(Q^{-1}\hat{x}_i^{(2)}, x^{**}_j) - d(x^*_i, x^{**}_j)\Big{)}\enspace .
\end{equation}
In order to come back to the setting of Lemma~\ref{LHO:perf},  we replace $f(x^*_i, x^*_j)$ by $f(x^*_i, x^{**}_j)$, using the bi-Lipschitz condition~\eqref{cond:lipsch} so that 
$$f(x^*_i, x^*_j)- f(x^*_i, x^{**}_j)\leq  (c_L\vee c_e) \left[ d_{\infty}({\bf x}^{*}_{S},\bPi_{n_0}) + \sqrt{\frac{\log(n)}{n}}\right]\enspace .$$ 
By  triangular inequality, we have $d(Q^{-1}\hat{x}_i^{(2)}, x^{**}_j) - d(x^*_i, x^{**}_j)\leq d(Q^{-1}\hat{x}_i^{(2)},x^{*}
_i)$ which implies
  \begin{align*}
    \sum_{j=1}^{n_0} \Big{(}f(x^*_i, x^*_j)-f(x^*_i, x^{**}_j)\Big{)} & \Big{(} d(Q^{-1}\hat{x}_i^{(2)}, x^{**}_j) - d(x^*_i, x^{**}_j)\Big{)} \\
    &\leq (c_L\vee c_e) d(\hat{x}_i^{(2)},Qx^{*}_i) r_n \enspace ,
\end{align*}
where  we define $r_n =  n d_{\infty}({\bf x}^{*}_{S},\bPi_{n_0}) + \sqrt{n \log(n)}$. This leads us to  
\begin{equation*}
    \widetilde{L}_i\leq  (c_L\vee c_e)d(\hat{x}^{(2)}_i,Qx^{*}_i) r_n+ \sum_{j=1}^{n_0} f(x^*_i, x^{**}_j) \Big{(} d(Q^{-1}\hat{x}_i^{(2)}, x^{**}_j) - d(x^*_i, x^{**}_j)\Big{)}   \enspace .
\end{equation*}
Since $x^{**}_j$ now runs over $\bPi_{n_0}$, we can replace as in the proof of Lemma~\ref{claimLUpBound} the sum over  $x_j^{**}$ by a sum over $Q^{-1}\hat{x}^{(1)}_j$ using a suitable permutation:
\begin{align*}
      \widetilde{L}_i\leq  (c_L\vee c_e)&d(\hat{x}_i^{(2)},Qx^{*}_i) r_n \\
      &+ \sum_{j=1}^{n_0} f(x^*_i, Q^{-1}\hat{x}^{(1)}_j) \Big{(} d(Q^{-1}\hat{x}_i^{(2)}, Q^{-1}\hat{x}^{(1)}_j) - d(x^*_i, Q^{-1}\hat{x}^{(1)}_j)\Big{)}   \enspace.   
\end{align*}

The remainder of the proof follows the same lines as for Lemma \ref{claimLUpBound}, except for small differences. Still, we provide some details for the sake of completeness. as in that proof we write $\hat{z}_i= Q^{-1}\hat{x}_i^{(2)}$ and $z^{(1)}_i= Q^{-1}\hat{x}_i^{(1)}$. We first apply Lemma~\ref{ineq:demo:intermLem1} to obtain
\begin{align*}
    \widetilde{L}_i\leq  C_{L,e}d(\hat{x}^{(2)}_i,Qx^{*}_i) &\left[ n d_{\infty}({\bf x}^{*}_{S},\bPi_{n_0})+ n\mucal + \sqrt{n \log(n)} \right]\\
    &+ \sum_{j=1}^{n_0} f(x^*_i, x_j^*) \Big{(} d(\hat{z}_i, z_j^{(1)}) - d(x^*_i,z^{(1)}_j)\Big{)}   \enspace .    
\end{align*}
The last expression simplifies in 
\[
    \sum_{j=1}^{n_0} f(x^*_i, x_j^*) \Big{(} d(\hat{z}_i, z_j^{(1)}) - d(x^*_i,z^{(1)}_j)\Big{)}=\langle F_{i , S}, \, D(\hat{x}_i^{(2)}, \hat{\bf x}^{(1)}_S) - D(Qx_i^*, \hat{\bf x}^{(1)}_S) \rangle\enspace . 
\]
In Lemma \ref{claimLUpBound}, we had $Qx^*_i \in \Ccal_{n_0}$ so that we could use the definition of $\hat{x}_i^{(2)}$ to 
deduce that $\langle A_{i , S}, D(\hat{x}_i^{(2)}, \hat{\bf x}^{(1)}_S) \rangle \leq \langle A_{i ,S},  D(Qx_i^*, \hat{\bf x}^{(1)}_S) \rangle$.
 Unfortunately, $Qx^*_i$ does not necessarily belong to $\Ccal_{n_0}$ anymore. To handle this minor issue,  we replace $x^*_i$ by the closest element $y^*_i$ in $\Ccal_{n_0}$. It satisfies $d(x^*_i,y^*_i)\leq 2\pi/n_0$ and $Qy^*_i \in \Ccal_{n_0}$. This leads us to
\begin{align*}\langle A_{i ,S}, D(\hat{x}_i^{(2)}, \hat{\bf x}^{(1)}_S) \rangle &\leq \langle A_{i ,S},  D(Qy_i^*, \hat{\bf x}^{(1)}_S) \rangle \\ 
&\leq \langle A_{i ,S},  D(Qx_i^*, \hat{\bf x}^{(1)}_S) \rangle + \langle A_{i ,S}, D(Qy_i^*, \hat{\bf x}^{(1)}_S)-D(Qx_i^*, \hat{\bf x}^{(1)}_S)\rangle   \enspace .
\end{align*}
Since $|d(Qy_i^*,x_i)-d(Qx_i^*,x_i)| \leq 2\pi/n_0$ and $|f(x,y)|\leq 1$,  the above additional error term satisfies
\begin{align*}\langle A_{i ,S}, D(Qy_i^*, \hat{\bf x}^{(1)}_S)-D(Qx_i^*, \hat{\bf x}^{(1)}_S)\rangle &\leq \langle F_{i ,S}, D(Qy_i^*, \hat{\bf x}^{(1)}_S)-D(Qx_i^*, \hat{\bf x}^{(1)}_S)\rangle\\ &  + \langle E_{i ,S}, D(Qy_i^*, \hat{\bf x}^{(1)}_S)-D(Qx_i^*, \hat{\bf x}^{(1)}_S)\rangle \\
&\leq 2\pi +\\
&C' \sqrt{ \log(n)}\|D(Qy_i^*, \hat{\bf x}^{(1)}_S) - D(Qx_i^*, \hat{\bf x}^{(1)}_S)\|_2\\
&\lesssim 1\enspace ,
\end{align*}with probability higher than $1-1/(2n^3)$. Putting everything together, we have shown that 
\begin{align*}
    \widetilde{L}_i\leq  C_{L,e}d(\hat{x}^{(2)}_i,Qx^{*}_i)& \left[ n d_{\infty}({\bf x}^{*}_{S},\bPi_{n_0})+ n\mucal + \sqrt{n \log(n)} \right]\\
    &+ C + \langle E_{i , S}, \, D(\hat{x}_i^{(2)}, \hat{\bf x}^{(1)}_S) - D(Qx_i^*, \hat{\bf x}^{(1)}_S) \rangle\enspace ,
\end{align*}
with probability higher than $1-1/(2n^3)$. To conclude, it suffices to the rhs in the above expression. We do it exactly as in the end of the proof of Lemma~\ref{claimLUpBound} except that we now consider a probability $1-1/(2n^3)$.

\subsubsection{Proof of Lemma \ref{claim:LLowerBVariante}.}

Fix $i\in \overline{S}$ and  define $y^*_i \in \Ccal_{n_0}$ as a closest point to $x_i^*$ in $\Ccal_{n_0}$.
We introduce the quantity
\begin{equation*} 
    L'_i = \sum_{j=1}^{n_0} f(y^{*}_i, x^{**}_j)\big{(}d(Q^{-1}\hat{x}_i^{(2)} , x^{**}_j) - d(y^*_i, x^{**}_j)\big{)}\enspace ,
\end{equation*}
which has the same properties as the $L_i$ used in Lemma \ref{LHO:perf}, since each point involved in the expression of $L_i'$ is an element of $\Ccal_{n_0}$, and the sum runs over a vector ${\bf x}^{**}_S$ in $\bPi_{n_0}$.  This allows us to invoke Lemma \ref{claim:LLowerB} --from the proof of Lemma \ref{LHO:perf} -- to get
  $$L'_i \geq C n d(\hat{x}_i^{(2)} , Qy^*_i) \left[c_l d(\hat{x}_i^{(2)} , Qy^*_i) - \ep_n\right] -\frac{c_e^3}{\pi c_l^{2}} \sqrt{\frac{\log^3(n)}{n}}\enspace ,$$ for $n$ large enough. 

By definition of $y_i^*$, we know that 
$d(y^*_i, x_i^*) \leq 2\pi/n_0$. Hence, by triangular inequality, $d(\hat{x}_i^{(2)} , Qy^*_i) \geq  d(\hat{x}_i^{(2)}, Qx^*_i) - 2 \pi/n_0$ and  we derive that 
$$L_i' \geq C'_{l,e} n d(\hat{x}_i^{(2)}, Qx^*_i) \left[d(\hat{x}_i^{(2)}, Qx^*_i) - \sqrt{\frac{\log(n)}{n}}\right] -C''_{l,e} \sqrt{\frac{\log^3(n)}{n}}\enspace  .$$ 
Next, we rely on the following lemma to replace $L_i'$ by $\widetilde{L}_i$.
 \begin{lem}\label{cl:proof:prop:hlp}We have 
 \[
 |\widetilde{L}_i - L_i'|\leq  C_{L,e} \left[1+ \big{(}nd_{\infty}({\bf x}^{*}_{S},\bPi_{n_0}) +\sqrt{n\log(n)}\big{)} d(\hat{x}_i^{(2)}, Qx^*_i)\right] \ . 
 \]
\end{lem}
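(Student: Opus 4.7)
The plan is to expand the difference $\widetilde{L}_i - L_i'$ by the classic add-and-subtract trick, separating it into a piece controlled by $d(\hat{x}_i^{(2)}, Qx_i^*)$ and a piece coming purely from the substitution $x_i^* \leftrightarrow y_i^*$ in the anchor point. Writing both sums over $j \in S$ against the common reference $x^{**}_j$ and using $d_j^{(2)} := d(Q^{-1}\hat{x}_i^{(2)}, x^{**}_j)$, I would decompose
\[
\widetilde{L}_i - L_i' = \underbrace{\sum_{j\in S}\bigl[f(x_i^*,x_j^*) - f(y_i^*,x^{**}_j)\bigr]\bigl(d_j^{(2)} - d(x_i^*,x^{**}_j)\bigr)}_{=:T_1} + \underbrace{\sum_{j\in S} f(y_i^*,x^{**}_j)\bigl(d(y_i^*,x^{**}_j) - d(x_i^*,x^{**}_j)\bigr)}_{=:T_2}.
\]

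For $T_1$, I would apply the triangular inequality $|d_j^{(2)} - d(x_i^*,x^{**}_j)| \leq d(Q^{-1}\hat{x}_i^{(2)}, x_i^*) = d(\hat{x}_i^{(2)}, Qx_i^*)$ to factor the distance term out. The coefficient $|f(x_i^*,x_j^*) - f(y_i^*,x^{**}_j)|$ I would split by inserting $f(y_i^*,x_j^*)$: two applications of the upper bi-Lipschitz condition~\eqref{cond:lipsch} (using the symmetry of $f$ for the first slot) yield a bound $c_L\bigl(d(x_i^*,y_i^*) + d(x_j^*,x^{**}_j)\bigr) + 2\varepsilon_n$. Since $y_i^*$ is the closest element of $\mathcal{C}_{n_0}$ to $x_i^*$ we have $d(x_i^*,y_i^*) \leq 2\pi/n_0$, and by definition $d(x_j^*,x^{**}_j) \leq d_\infty({\bf x}^*_S, \boldsymbol{\Pi}_{n_0})$. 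Summing $n_0$ such terms and recalling $n_0 \varepsilon_n \lesssim \sqrt{n\log n}$ and $n_0 \cdot (1/n_0) = 1$ gives
\[
|T_1| \leq C_{L,e}\,\Bigl[1 + \bigl(n\,d_\infty({\bf x}^*_S,\boldsymbol{\Pi}_{n_0}) + \sqrt{n\log n}\bigr)\,d(\hat{x}_i^{(2)}, Qx_i^*)\Bigr].
\]

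For $T_2$, this is the term that has no $d(\hat{x}_i^{(2)},Qx_i^*)$ factor, and I expect it to only contribute a harmless additive constant. Using $|f|\leq 1$ and another triangle inequality $|d(y_i^*,x^{**}_j) - d(x_i^*,x^{**}_j)| \leq d(x_i^*,y_i^*) \leq 2\pi/n_0$, each of the $n_0$ summands is at most $2\pi/n_0$, so $|T_2| \leq 2\pi$. Combining with the bound on $T_1$ yields the claimed estimate.

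No single step looks hard; the main thing to be careful about is not losing a factor of $n$ when bounding $T_2$. That is exactly why one must peel off the $x_i^* \to y_i^*$ shift \emph{outside} the bi-Lipschitz coefficient (so the distance coefficient telescopes against $d(x_i^*,y_i^*)\lesssim 1/n$ instead of against $d(\hat{x}_i^{(2)},Qx_i^*)$, which has no a priori smallness). The decomposition above is designed precisely to route the $1/n_0$ shift into a sum that is multiplied by $|f|\leq 1$ rather than by the estimation error.
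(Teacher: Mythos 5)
Your proposal is correct and takes essentially the same route as the paper: the paper likewise adds and subtracts an intermediate sum (denoted $L_i''$ there) to split $\widetilde{L}_i - L_i'$ into a bi-Lipschitz-controlled piece of size $\lesssim \big(n\,d_{\infty}({\bf x}^{*}_{S},\bPi_{n_0}) + \sqrt{n\log n}\big)\,d(\hat{x}_i^{(2)},Qx_i^*)$ and an $O(1)$ additive piece coming from the shift $x_i^*\to y_i^*$, bounded via $|f|\le 1$ and $d(x_i^*,y_i^*)\lesssim 1/n_0$. The only difference is the order of the swap (you change the $f$-arguments first, the paper changes the distance anchor first), which is immaterial; your variant even lands directly on the factor $d(\hat{x}_i^{(2)},Qx_i^*)$ instead of the paper's $d(Q^{-1}\hat{x}_i^{(2)},y_i^*)$.
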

Gathering these two bounds completes the proof of Lemma \ref{claim:LLowerBVariante}. 
\hfill $\square$
 
 \bigskip
 
 \begin{proof}[Proof of Lemma \ref{cl:proof:prop:hlp}]
 From the definition of $y_i^*$ and the triangular inequality, we have $$ \Big{|}d(x^*_i, x^{**}_j)- d(y^*_i, x^{**}_j) \Big{|} \leq 2 \pi /n_0\enspace .$$  This allows us to deduce that the quantity 
\begin{equation} 
    L''_i = \sum_{j=1}^{n_0} f(x^*_i, x^*_j) \big{(}d(Q^{-1}\hat{x}_i^{(2)} , x^{**}_j) - d(y^*_i, x^{**}_j)\big{)} 
\end{equation} satisfies $|\widetilde{L}_i - L''_i| \leq 2\pi$.
Besides, we deduce from the bi-Lipschitz condition \eqref{cond:lipsch} and the triangular inequality that
\begin{align*}
    |f(x^*_i, x^*_j)-f(y^{*}_i, x^{**}_j)| &\leq c_L \left( d(x^{*}_i, y^{*}_i) +d(x^{*}_j, x^{**}_j)\right) +  2\ep_n\\
    &\leq  2c_L d_{\infty}({\bf x}^{*}_{S},\bPi_{n_0}) + 2\ep_n\enspace .
\end{align*}
Then, we deduce that 
$$|L'_i - L_i''|\leq 2n d(Q^{-1}\hat{x}_i^{(2)} , y^*_i) \big{(}c_Ld_{\infty}({\bf x}^{*}_{S},\bPi_{n_0}) + \ep_n\big{)} \enspace .$$
All in all, we have
\begin{align*}|\widetilde{L}_i - L_i'|&\leq |\widetilde{L}_i - L_i''| + |L_i'' - L_i'|\\
&\leq C_{L,e}\left[ 1 + d(Q^{-1}\hat{x}_i^{(2)} , y^*_i) \big{(}nd_{\infty}({\bf x}^{*}_{S},\bPi_{n_0})+\sqrt{n\log(n)}\big{)}\right]\enspace ,
\end{align*}
and the result follows.
\end{proof}

\subsection{Proof of Proposition \ref{l1bound}}\label{sec:proof:prop:l1bound:noconstr}
\subsubsection{Main Arguments}

Assume that $S=[n_0]$ for the ease of presentation. In this proof, we both interpret $\hat{\bf x}^{(1)}_S$ and ${\bf x}^{*}_{S}$ as vectors in $\mathcal{C}_n$ and matrices of size $2\times n_0$. We recall that $\|.\|_q$ refers to the entry-wise $l_q$ norm for matrices. 
We shall establish that the estimator $\hat{\bf x}^{(1)}_S$ is such that the matrix $\hat{\bf x}^{(1)T}_S\hat{\bf x}^{(1)}_S$ is close to ${\bf x}^{*T}_{S}{\bf x}^*_{S}$. In other words, the  distances between $\hat{ x}_1^{(1)},\ldots, \hat{ x}_{n_0}^{(1)}$ are close to the respective distances between the $x^*_1,\ldots,x^*_{n_0}$. Then, relying on a recent matrix perturbation result from~\cite{arias2020perturbation}, we deduce that, up to an orthogonal transformation,  $\hat{\bf x}^{(1)}_S$ and ${\bf x}^*_{S}$ are close. Let us  
first state this perturbation result.  Given any $p\times 2$ matrix  $M$ with real coefficients, we denote its transpose by $M^T$, and the Moore-Penrose pseudo-inverse by $M^{\dagger}$, and the usual operator norm by $\|M\|_{op}$. In this proof, the transformations $Q \in \mathcal{O}$ are interpreted as orthogonal matrices of  size $2 \times 2$. 
\begin{prop}[Theorem 1 in \cite{arias2020perturbation}]\label{thm:citationEry}
For any positive integer $p$ and any $p\times 2$ matrices $M$ and $N$, with $N$ having full rank, let $\nu = \|MM^T - NN^T\|_2$. Then, we have
$$\underset{Q \in \mathcal{O}}{\textup{min}} \, \|M - N Q\|_2 \lesssim \nu \|N^{\dagger}\|_{op}\enspace ,$$ as soon as  $2 \nu \|N^{\dagger}\|_{op}^2 \leq 1$.
\end{prop}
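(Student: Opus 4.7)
The plan is to build an explicit candidate $Q\in\Ocal$ via a polar-type decomposition and then analyze the error $M-NQ$ by splitting it into its projection onto the column span of $N$ and its orthogonal complement. Set $A := N^{\dagger}M \in \R^{2\times 2}$ and take its polar decomposition $A = QH$, with $Q\in\Ocal$ and $H$ symmetric positive semidefinite; this $Q$ will be the near-minimizer. Writing $P_N := NN^{\dagger}$ for the orthogonal projector onto the column span of $N$,
\[
M - NQ \;=\; (I - P_N)\,M \;+\; N(A - Q) \;=\; R \;+\; NQ(H - I),
\]
where $R := (I - P_N)M$ is orthogonal to $\mathrm{col}(N)$ and the second summand lives in $\mathrm{col}(N)$. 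By the triangle inequality,
\[
\|M - NQ\|_2 \;\le\; \|R\|_2 \;+\; \|N\|_{op}\,\|H - I\|_2,
\]
and the goal becomes showing each term is of order $\nu\,\|N^{\dagger}\|_{op}$.

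For the in-span piece, I would use the identities $N^{\dagger}N = I_2$ and $N^T(N^{\dagger})^T = I_2$ to write
\[
AA^T - I_2 \;=\; N^{\dagger}\bigl(MM^T - NN^T\bigr)(N^{\dagger})^T,
\]
which gives $\|AA^T - I_2\|_2 \le \|N^{\dagger}\|_{op}^2\,\nu$. Since $AA^T = Q H^2 Q^T$, this is exactly $\|H^2 - I_2\|_2$. The assumption $2\nu\,\|N^{\dagger}\|_{op}^2 \le 1$ forces $H^2$ to lie within $\tfrac12 I_2$ of $I_2$ in operator norm, so $H$ is well defined and $\lambda_{\min}(H + I_2) \ge 1$; the factorization $H^2 - I_2 = (H - I_2)(H + I_2)$ then yields $\|H - I_2\|_2 \le \|H^2 - I_2\|_2 \le \nu\,\|N^{\dagger}\|_{op}^2$, as required.

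For the orthogonal piece, the starting point is
\[
RR^T \;=\; (I - P_N)\,MM^T\,(I - P_N) \;=\; (I - P_N)\bigl(MM^T - NN^T\bigr)(I - P_N),
\]
using $(I - P_N)NN^T = 0$. A naive bound combining $\mathrm{rank}(MM^T - NN^T)\le 4$ with Cauchy--Schwarz only gives $\|R\|_2 \le \sqrt{2\nu}$, which is too weak in the regime $\nu \le \sigma_2(N)^2/2$ we are handed. The sharp bound instead invokes a Davis--Kahan / $\sin\Theta$ argument on the pair $(MM^T,\,NN^T)$: Weyl's inequality together with the standing assumption gives $\sigma_2(M)^2 \ge \sigma_2(N)^2/2$, so the gap separating the rank-$2$ column eigenspaces from the null eigenspace is of order $\sigma_2(N)^2$, and $\sin\Theta$ yields $\|(I - P_N)P_M\|_2 \lesssim \nu/\sigma_2(N)^2$. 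Writing $R = (I - P_N)P_M\,M$ and bounding $\|M\|_{op} \le \sigma_1(N) + O(\sqrt{\nu})$ then closes the loop up to constants.

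The main obstacle is precisely this sharpness in the orthogonal-complement bound. The naive trace-with-nuclear-norm estimate is loose by roughly a factor $\sqrt{\nu}\,\|N^{\dagger}\|_{op}$, and recovering the advertised rate $\nu\,\|N^{\dagger}\|_{op}$ really demands a perturbation result for invariant subspaces, combined with careful bookkeeping to avoid picking up spurious condition-number factors from $\|N\|_{op}$ when merging the in-span and orthogonal pieces. The hypothesis $2\nu\,\|N^{\dagger}\|_{op}^2 \le 1$ is tailor-made so that both the polar decomposition is well defined and the spectral gap between the two rank-$2$ eigenspaces and $0$ is uniformly controlled at the same time.
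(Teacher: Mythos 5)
First, a remark on the comparison itself: the paper does not prove this proposition — it is imported verbatim as Theorem 1 of \cite{arias2020perturbation} — so your argument can only be judged on its own terms. Your decomposition $M-NQ=(I-P_N)M+NQ(H-I)$, with $P_N=NN^{\dagger}$ and $A=N^{\dagger}M=QH$ the polar factorization, is a sound starting point, but as executed both halves fall short of the stated bound by a factor $\|N\|_{op}\|N^{\dagger}\|_{op}$ (the condition number of $N$), which can be arbitrarily large. For the in-span half you bound $\|NQ(H-I)\|_2\le\|N\|_{op}\|H-I\|_2\le\|N\|_{op}\|N^{\dagger}\|_{op}^2\,\nu$, which is $\nu\|N^{\dagger}\|_{op}$ multiplied by $\|N\|_{op}\|N^{\dagger}\|_{op}$, not $\nu\|N^{\dagger}\|_{op}$. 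For the orthogonal half, the Davis--Kahan route gives $\|(I-P_N)P_M\|_2\lesssim \nu\|N^{\dagger}\|_{op}^2$, and multiplying by $\|M\|_{op}\le \|N\|_{op}+O(\sqrt{\nu})$ again produces a term of order $\nu\|N^{\dagger}\|_{op}^2\|N\|_{op}$; the loop does not close. You flag the danger of ``spurious condition-number factors'' yourself, but removing them is exactly the content of the theorem, and no mechanism for doing so is supplied. As written, the proposal establishes only the weaker bound $\min_{Q\in\Ocal}\|M-NQ\|_2\lesssim \|N\|_{op}\|N^{\dagger}\|_{op}^2\,\nu$.

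The gap is a missing manipulation rather than a wrong route, and it can be repaired inside your own decomposition. For the in-span piece, keep $N$ inside the norm: under the assumption $\lambda_{\min}(H+I)\ge 1$, so $\|NQ(H-I)\|_2\le \|NQ(H^2-I)\|_2\,\|(H+I)^{-1}\|_{op}\le \|N(AA^T-I)\|_2=\|P_N(MM^T-NN^T)(N^{\dagger})^T\|_2\le \nu\|N^{\dagger}\|_{op}$, using $NQ(H^2-I)=N(AA^T-I)Q$. For the orthogonal piece no eigenspace perturbation bound is needed: writing $R=(I-P_N)M$ and using $(I-P_N)NN^T=0$, one has $(I-P_N)(MM^T-NN^T)P_N=R\,(P_NM)^T$, hence $\nu\ge \|R(P_NM)^T\|_2\ge \|R\|_2\,\sigma_2(P_NM)$, where $\sigma_2(\cdot)$ is the smallest singular value, so that $\sigma_2(N)=1/\|N^{\dagger}\|_{op}$; Weyl's inequality together with $\nu\le \sigma_2(N)^2/2$ gives $\sigma_2(P_NM)^2\ge \sigma_2(N)^2-\nu\ge \sigma_2(N)^2/2$, whence $\|R\|_2\le \sqrt{2}\,\nu\|N^{\dagger}\|_{op}$. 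With these two replacements your plan yields $\min_{Q\in\Ocal}\|M-NQ\|_2\le (1+\sqrt{2})\,\nu\|N^{\dagger}\|_{op}$, which is the claimed statement; without them, it does not.
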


Let ${\bf x}^{**}_S\in \bPi_{n_0}$ denote a best approximation of ${\bf x}^{*}_{S}$ in $\bPi_{n_0},$  so that $d_{\infty}({\bf x}^{*}_{S},{\bf x}^{**}_S) = d_{\infty}({\bf x}^{*}_{S},\bPi_{n_0})$. In order to invoke the above proposition for $M= (\hat{\bf x}^{(1)}_S)^T$ and $N= ({\bf x}^{**}_S)^{T}$ in $\mathbb{R}^{n_0 \times 2}$, we need to check that the condition $2 \nu \|N^{\dagger}\|_{op}^2 \leq 1$ is fulfilled. First, to bound the term $\|N^{\dagger}\|_{op}^2$, we work out
\begin{align*}N^{\dagger} = (N^T N)^{-1} N^T =  \left({\bf x}^{**}_S {\bf x}_S^{**T}\right)^{-1}  {\bf x}^{**}_S &= \begin{pmatrix}
\sum_{i=1}^{n_0} ({\bf x}^{**}_{1i})^2 & 0 \\
0 & \sum_{i=1}^{n_0}({\bf x}^{**}_{2i})^2 
\end{pmatrix}^{-1} {\bf x}^{**}_S \\
&= \frac{2}{n_0} {\bf x}^{**}_S\enspace ,
\end{align*}
since $\sum_{i=1}^{n_0}({\bf x}^{**}_{1i})^2 = \sum_{i=1}^{n_0}({\bf x}^{**}_{2i})^2 =n_0/2$ -- see e.g.~\eqref{cosSum} for a proof. 
As a consequence, 
$$ \|N^{\dagger}  \|_{op}^2 \leq\|N^{\dagger}  \|_{2}^2 = \frac{4}{n_0}\ . $$
The following lemma bounds $\nu= \|\hat{\bf x}^{(1)T}_S\hat{\bf x}^{(1)}_S - {\bf x}^{**T} {\bf x}^{**}_S\|_2$.
\begin{lem}\label{cl:bound:nu:ery}
With probability at least $1-1/n^2$, we have
\[
\nu  \leq C_{l,L,e} \left(n  d_{\infty}({\bf x}^{*}_{S},\bPi_{n_0}) + \sqrt{n \log(n)}\right)\enspace .  
\]
\end{lem}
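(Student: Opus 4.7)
The plan is to first reduce the Gram matrix difference $\nu$ to a distance matrix difference that can be handled via the optimality of $\hat{{\bf x}}^{(1)}_S$. For unit vectors $u,v \in \Ccal$, one has $\langle u,v\rangle = \cos(d(u,v))$, so $\hxbf^{(1)T}_S\hxbf^{(1)}_S$ and ${\bf x}^{**T}_{S}{\bf x}^{**}_S$ are obtained by applying $\cos$ entrywise to $D(\hxbf^{(1)}_S)$ and $D({\bf x}^{**}_S)$. Since $\cos$ is $1$-Lipschitz, this yields
\[
\nu \;\leq\; \|D(\hxbf^{(1)}_S) - D({\bf x}^{**}_S)\|_2 \enspace .
\]
It therefore suffices to bound the right-hand side by $C_{lLe}(nd_{\infty}(\xbf^*_S, \bPi_{n_0}) + \sqrt{n\log n})$.

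Next, I would exploit the optimality of $\hxbf^{(1)}_S$. Since ${\bf x}^{**}_S \in \bPi_{n_0}$, writing $A=F+E$ gives the \emph{basic inequality}
\[
\langle F_{SS},\, D(\hxbf^{(1)}_S) - D({\bf x}^{**}_S)\rangle \;\leq\; \langle E_{SS},\, D({\bf x}^{**}_S) - D(\hxbf^{(1)}_S)\rangle \enspace .
\]
The noise term on the right admits a uniform subGaussian bound: the set $\{D({\bf x}_S) : {\bf x}_S \in \bPi_{n_0}\}$ has cardinality at most $n_0!$, and for each fixed ${\bf x}_S$, the scalar $\langle E_{SS}, D({\bf x}^{**}_S) - D({\bf x}_S)\rangle$ is subGaussian with parameter $\|D({\bf x}^{**}_S) - D({\bf x}_S)\|_2$. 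A union bound over $\bPi_{n_0}$ (which contributes $\log(n_0!) \lesssim n\log n$) yields, with probability at least $1-1/n^2$,
\[
|\langle E_{SS},\, D({\bf x}^{**}_S) - D(\hxbf^{(1)}_S)\rangle| \;\lesssim\; \|D(\hxbf^{(1)}_S) - D({\bf x}^{**}_S)\|_2\, \sqrt{n\log n} \enspace .
\]

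For the main step, I would lower-bound the signal term $\langle F_{SS}, D(\hxbf^{(1)}_S) - D({\bf x}^{**}_S)\rangle$ by $\tfrac{c_l}{2}\|D(\hxbf^{(1)}_S) - D({\bf x}^{**}_S)\|_2^2$ up to a controlled error, row by row via the cornerstone Lemma~\ref{lem:conerstone:variante}. The key observation is that both $\hxbf^{(1)}_S$ and ${\bf x}^{**}_S$ belong to $\bPi_{n_0}$, so for each fixed $i$, the multisets $\{D(\hxbf^{(1)}_S)_{ij} : j\in S\}$ and $\{D({\bf x}^{**}_S)_{ij} : j\in S\}$ are identical (both equal the multiset of distances from a point of $\Ccal_{n_0}$ to the other points). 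Hence there exists a permutation $\pi_i$ with $D(\hxbf^{(1)}_S)_{i,j} = D({\bf x}^{**}_S)_{i,\pi_i(j)}$, putting us in the framework of Lemma~\ref{lem:conerstone:variante}: sorting the indices by $d(x_i^*, x^{**}_j)$ makes $F_{ij}$ into a decreasing sequence (up to the slack $\epsilon_n$) with the required lower-Lipschitz gaps. Applying the cornerstone lemma and summing over $i$ delivers
\[
\langle F_{SS},\, D(\hxbf^{(1)}_S) - D({\bf x}^{**}_S)\rangle \;\geq\; \tfrac{c_l}{2}\|D(\hxbf^{(1)}_S) - D({\bf x}^{**}_S)\|_2^2 \;-\; C_{lLe}\, n\, \bigl(d_{\infty}({\bf x}^*_S, \bPi_{n_0}) + \sqrt{\log n / n}\bigr)\, \|D(\hxbf^{(1)}_S) - D({\bf x}^{**}_S)\|_2 \enspace ,
\]
where the error term absorbs both the $\epsilon_n$ slack and the bias due to ${\bf x}^*_S \neq {\bf x}^{**}_S$ (handled by the upper-Lipschitz condition~\eqref{cond:lipsch}). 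Combining with the noise bound and solving the resulting quadratic inequality in $\|D(\hxbf^{(1)}_S) - D({\bf x}^{**}_S)\|_2$ yields the announced estimate.

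The main obstacle is the row-by-row application of Lemma~\ref{lem:conerstone:variante}: the monotonicity in the cornerstone lemma is with respect to the distances $d(x_i^*, x_j^*)$, whereas the quadratic quantity we want to produce involves $d(x_i^{**}, x_j^{**})$. Bridging this gap requires careful use of the upper Lipschitz condition~\eqref{cond:lipsch} to replace $F_{ij} = f(x_i^*, x_j^*)$ with $f(x_i^{**}, x_j^{**})$ at the cost of a controlled bias $C_{lLe} \cdot n\cdot d_\infty({\bf x}^*_S, \bPi_{n_0})\cdot \|D(\hxbf^{(1)}_S) - D({\bf x}^{**}_S)\|_2$, and a careful handling of the $\epsilon_n$-correction term $\epsilon\sum_j j(d_j - d_{\sigma(j)})$ produced by Lemma~\ref{lem:conerstone:variante}, which by Cauchy--Schwarz contributes only a lower-order $\sqrt{n\log n}\,\|D(\hxbf^{(1)}_S) - D({\bf x}^{**}_S)\|_2$ term.
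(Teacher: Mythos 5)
Your overall architecture matches the paper's: reduce $\nu$ to $\|D(\hxbf^{(1)}_S)-D({\bf x}^{**}_S)\|_2$, use the basic inequality plus a union bound over $\bPi_{n_0}$ for the noise term, pay a bias of order $n\big(d_{\infty}(\xbf^*_S,\bPi_{n_0})+\sqrt{\log n/n}\big)\|D(\hxbf^{(1)}_S)-D({\bf x}^{**}_S)\|_2$ for replacing $\xbf^*_S$ by $\xbf^{**}_S$ in $F$, lower bound the remaining signal term by $\tfrac{c_l}{2}\|D(\hxbf^{(1)}_S)-D({\bf x}^{**}_S)\|_2^2$ minus errors, and solve the resulting quadratic inequality. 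Your opening reduction via $\langle u,v\rangle=\cos d(u,v)$ and the $1$-Lipschitzness of $\cos$ is correct and even a bit cleaner than the paper's route through the double-centering identity of Lemma~\ref{cl:distAuPosi}, squared and chordal distances.

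The gap is in the key signal lower bound. You propose to obtain it row by row from Lemma~\ref{lem:conerstone:variante} and assert that its correction term $\ep_n\sum_j j(d_j-d_{\sigma(j)})$ contributes, after Cauchy--Schwarz, only a lower-order $\sqrt{n\log n}\,\|D(\hxbf^{(1)}_S)-D({\bf x}^{**}_S)\|_2$ term. That is quantitatively false. Since ${\bf x}^{**}_S\in\bPi_{n_0}$, the sorted distances satisfy $j=\tfrac{n_0}{\pi}d_j+O(1)$, and because $\{d_{\sigma(j)}\}_j$ is the same multiset as $\{d_j\}_j$ one has the identity $\sum_j d_j(d_j-d_{\sigma(j)})=\tfrac12\sum_j(d_j-d_{\sigma(j)})^2$; hence $\sum_j j(d_j-d_{\sigma(j)})=\tfrac{n_0}{2\pi}\sum_j(d_j-d_{\sigma(j)})^2+O\big(\sum_j|d_j-d_{\sigma(j)}|\big)$ exactly. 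Multiplying by $\ep_n=c_e\sqrt{\log n/n}$ and summing over rows, the correction is of order $\sqrt{n\log n}\,\|D(\hxbf^{(1)}_S)-D({\bf x}^{**}_S)\|_2^2$, which swamps the main term $\tfrac{c_l}{2}\|D(\hxbf^{(1)}_S)-D({\bf x}^{**}_S)\|_2^2$ whenever $c_e>0$; the cruder bound $j\leq n_0$ with Cauchy--Schwarz gives instead $n^{3/2}\sqrt{\log n}\,\|D(\hxbf^{(1)}_S)-D({\bf x}^{**}_S)\|_2$, still too large by a factor $n$ and yielding only $\nu\lesssim n^{3/2}\sqrt{\log n}$. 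This is precisely why the paper does not invoke Lemma~\ref{lem:conerstone:variante} at this point (it says so explicitly before Lemma~\ref{lem:regret}) and instead proves Lemma~\ref{lem:regret} by a compensation argument: the nonnegative weights $b_{j,k,t}$ transport positive onto negative displacements so that $\ep_n$ ends up multiplying the total displacement $\sum_j|\overline{\sigma}(j)-j|$, i.e. an error $c_e\sqrt{\log n/n}\,\|D(\hxbf^{(1)}_S)-D({\bf x}^{**}_S)\|_1\leq \tfrac{c_e}{4}\sqrt{n\log n}\,\|D(\hxbf^{(1)}_S)-D({\bf x}^{**}_S)\|_2$, rather than the rank $j$. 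Without this refinement (or an equivalent argument), your proof establishes the lemma only in the pure bi-Lipschitz case $c_e=0$.
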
 
Hence, with probability higher than $1-1/n^2$, we obtain 
$$2 \nu \|N^{\dagger}\|_{op}^2 \leq C'_{l,L,e}\left( d_{\infty}({\bf x}^{*}_{S},\bPi_{n_0}) + \sqrt{\frac{\log(n)}{n}}\right)\enspace .$$ 
If $d_{\infty}({\bf x}^{*}_{S},\bPi_{n_0}) + \sqrt{\log(n)/n}\geq 1/C'_{l,L,e}$, the conclusion of Proposition \ref{l1bound} obviously holds since $\min_{Q\in \mathcal{O}}d_{1}(\hat{\bf x}^{(1)}_S,Q{\bf x}^*_{S})\leq n_0 \leq n$. Hence, it suffices to consider the case where $d_{\infty}({\bf x}^{*}_{S},\bPi_{n_0}) + \sqrt{\log(n)/n}\leq 1/C'_{l,L,e}$ so that the  condition of Proposition~\ref{thm:citationEry} is fulfilled. This implies  $$\underset{Q \in \mathcal{O}}{\textup{min}} \, \|\hat{\bf x}^{(1)T}_S - {\bf x}^{**T} Q\|_2 \lesssim C \left( \sqrt{n}d_{\infty}({\bf x}^{*}_{S},\bPi_{n_0}) + \sqrt{\log(n)}\right)\enspace ,$$
and so $$\underset{Q \in \mathcal{O}}{\textup{min}} \, \|\hat{\bf x}^{(1)}_S - Q{\bf x}^{**}_S \|_1 \lesssim \sqrt{2} C\left( nd_{\infty}({\bf x}^{*}_{S},\bPi_{n_0}) + \sqrt{n\log(n)}\right)\enspace .$$

Since the distances in $\mathbb{R}^2$ and $\Ccal$ are equivalent, we have $\|Q{\bf x}^{**}_S - Q{\bf x}^{*}_{S}\|_1 \lesssim d_1(Q{\bf x}^{**}_S ,Q {\bf x}^{*}_{S}) = d_1({\bf x}^{**}_S , {\bf x}^{*}_{S}) $. Then, using the definition of $d_{\infty}({\bf x}^{*}_{S},\bPi_{n_0})$, we get $\|Q{\bf x}^{**}_S - Q{\bf x}^{*}_{S}\|_1 \lesssim nd_{\infty}({\bf x}^{*}_{S},\bPi_{n_0})$. Together with the triangular inequality, this leads us to
$$\underset{Q \in \mathcal{O}}{\textup{min}} \, \|\hat{\bf x}^{(1)}_S - Q{\bf x}^{*}_{S} \|_1 \leq C_{l,L,e}\left[ nd_{\infty}({\bf x}^{*}_{S},\bPi_{n_0}) + \sqrt{n\log(n)}\right]\enspace .$$
Using again the equivalence between the distances, that is $d(x,y) \lesssim \|x-y\|_1$ for all $x, y\in \Ccal$, we conclude that 
$$\underset{Q \in \mathcal{O}}{\textup{min}}d_{1}(\hat{\bf x}^{(1)}_S,Q{\bf x}_S^*) \leq C \underset{Q \in \mathcal{O}}{\textup{min}} \, \|\hat{\bf x}^{(1)}_S - Q{\bf x}^{*} \|_1 \leq C_{l,L,e}\left[ nd_{\infty}({\bf x}^{*}_{S},\bPi_{n_0}) + \sqrt{n\log(n)}\right]\enspace ,$$
and the proof of Proposition \ref{l1bound} is complete.

\subsubsection{Proof of Lemma \ref{cl:bound:nu:ery}}

Both $\hat{\bf x}^{(1)}_S$ and ${\bf x}^{**}_S$ are elements of $\bPi_{n_0}\subset \mathbb{R}^{2\times n_0}$, hence they both satisfy  $\hat{\bf x}^{(1)}_S 1 = 0$ and ${\bf x}^{**}_S1 =0$ where $1$ denotes the vector of ones. Indeed, since $\sum_{k=0}^{n_0-1}e^{\iota 2\pi k/n_0}=0$, we have $\sum_{k=0}^{n_0-1} \cos(2\pi k/n_0)= 0$ and $\sum_{k=0}^{n_0-1} \sin(2\pi k/n_0)= 0$. We can then invoke the next lemma  to bound $\nu= \|\hat{\bf x}^{(1)T}_S\hat{\bf x}^{(1)}_S - {\bf x}^{**T} {\bf x}^{**}_S\|_2.$

\begin{lem}\label{cl:distAuPosi} For any $Z = (z_1,\ldots,z_{n_0})$ and $Z' = (z'_1,\ldots,z'_{n_0})$ in $\mathbb{R}^{2 \times n_0}$ with $Z1=Z'1=0$, let $D = (D_{ij})$ and $D'=(D_{ij}')$ be their (squared) distance matrices, that is $D_{ij} =\|z_i - z_j\|_2^2$ and $D_{ij}' = \|z'_i - z'_j\|_2^2$ for all $i,j\in[n_0].$ Then we have 
$$\|Z^T Z - Z'^{T}Z'\|_2 \leq \| D- D' \|_2\enspace .$$
\end{lem}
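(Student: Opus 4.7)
The plan is to invoke the classical multidimensional scaling identity that relates a centered Gram matrix to its squared distance matrix, and then control the difference in operator terms.

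First I would recall the elementary identity $D_{ij} = \|z_i\|_2^2 + \|z_j\|_2^2 - 2\langle z_i, z_j\rangle = G_{ii}+G_{jj}-2 G_{ij}$, where $G:=Z^T Z$ is the Gram matrix. Letting $v = \operatorname{diag}(G)$, this rewrites as $D = v \mathbf{1}^T + \mathbf{1} v^T - 2G$. The key step is to pre- and post-multiply by the centering projection $H := I - \tfrac{1}{n_0}\mathbf{1}\mathbf{1}^T$. Since $H\mathbf{1}=0$, the rank-one contributions $v\mathbf{1}^T$ and $\mathbf{1} v^T$ are annihilated. Moreover, the assumption $Z\mathbf{1}=0$ (interpreted as the columns of $Z$ summing to zero) implies $G\mathbf{1}=Z^T(Z\mathbf{1})=0$ and $\mathbf{1}^T G = 0$, so that $HGH = G$. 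Combining these facts yields the double-centering identity $G = -\tfrac{1}{2} HDH$, and the same relation for $G':=Z'^T Z'$ and $D'$.

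Subtracting gives $G - G' = -\tfrac{1}{2}\, H(D-D')H$. The lemma then follows by a norm bound: $H$ is an orthogonal projection, so $\|H\|_{\textup{op}}\leq 1$, and for any matrix $M$ one has $\|HMH\|_2 \leq \|H\|_{\textup{op}}^2 \|M\|_2 \leq \|M\|_2$ (where $\|\cdot\|_2$ denotes the Frobenius norm). Applying this to $M = D-D'$ gives $\|G - G'\|_2 \leq \tfrac{1}{2}\|D-D'\|_2$, which is in fact stronger than the stated bound.

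There is no real obstacle here; the entire argument is essentially the classical MDS reconstruction formula. The only delicate point is to check carefully that the centering condition $Z\mathbf{1}=0$ (which is indeed used when applying the lemma, since $\hat{\bf x}^{(1)}_S$ and ${\bf x}^{**}_S$ both lie in $\bPi_{n_0}$ and thus have coordinate-wise sums equal to zero) makes $\mathbf{1}$ lie in the kernel of $G$, so that the conjugation by $H$ is lossless on $G$ while it annihilates the diagonal-rank-one contributions from $v\mathbf{1}^T + \mathbf{1} v^T$.
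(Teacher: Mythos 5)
Your proof is correct and follows essentially the same route as the paper's: the double-centering (classical MDS) identity $Z^TZ=-\tfrac12 HDH$ via the projection $H=I-\tfrac{1}{n_0}\mathbf{1}\mathbf{1}^T$, followed by the bound $\|H(D-D')H\|_2\leq\|D-D'\|_2$ using $\|H\|_{\mathrm{op}}=1$. Like the paper, you in fact obtain the stronger bound with the factor $\tfrac12$; your write-up merely spells out the annihilation of the rank-one terms more explicitly.
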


For $Z={\bf x}^{**}_S$ and $Z'=\hat{\bf x}^{(1)}_S$, and accordingly $D_{ij} =\|x^{**}_i - x^{**}_j\|_2^2$ and $D_{ij}' = \|\hat{x}_i^{(1)} - \hat{x}_j^{(1)}\|_2^2$, it follows from Lemma \ref{cl:distAuPosi} that 
$\nu  \leq \| D-D'  \|_2$. Since all square distances $D_{ij}$ and $D'_{ij}$ are at most equal to $4$, we get $$\nu  \leq 4 \| \sqrt{D}-\sqrt{D'}  \|_2\enspace , $$ where $\sqrt{D}$ and $\sqrt{D'}$ denote the matrices of coefficients $\sqrt{D_{ij}} =\|x^{**}_i - x^{**}_j\|_2$ and $\sqrt{D_{ij}'} = \|\hat{x}_i^{(1)} - \hat{x}_j^{(1)}\|_2$.

For any $x,y\in \Ccal$, elementary geometry gives $\|x-y\|_2 = 2 \sin(d(x,y)/2)$. Since the sinus  function is $1$-Lipschitz,  we have 
\[
\big{|}\, \|x-y\|_2-\|x'-y'\|_2 \big{|}\leq |d(x,y) - d(x',y')|\enspace , 
\]
for any $x,y,x',y'\in \Ccal$. Hence, we deduce that  $\nu  \leq 4 \| D({\bf x}^{**}_S) - D(\hat{\bf x}^{(1)}_S)  \|_2$, 
where $D({\bf x}^{**}_S)$ and  $D(\hat{\bf x}^{(1)}_S)$ respectively denote the matrices of coefficients $d(x^{**}_i,x^{**}_j)$ and $d(\hat{x}_i^{(1)},\hat{x}_j^{(1)})$. As a consequence, we mainly have to control with high probability $\| D({\bf x}^{**}_S) - D(\hat{\bf x}^{(1)}_S) \|_2$.

\begin{lem}\label{cl:estimDist}With probability at least $1-1/n^2$, we have
$$\| D({\bf x}^{**}_S) - D(\hat{\bf x}^{(1)}_S) \|_2 \leq C'_{l,L,e}  \left[n  d_{\infty}({\bf x}^{*}_{S},\bPi_{n_0}) + \sqrt{n \log(n)}\right]\enspace .$$
\end{lem}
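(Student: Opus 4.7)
The plan is to argue through the minimizing property of $\hat{\bf x}^{(1)}_S$. Since $\hat{\bf x}^{(1)}_S$ minimizes $\langle A_{SS}, D(\cdot)\rangle$ over $\bPi_{n_0}$ and since ${\bf x}^{**}_S$ is itself an element of $\bPi_{n_0}$, the basic inequality
$$\langle A_{SS}, D(\hat{\bf x}^{(1)}_S) - D({\bf x}^{**}_S)\rangle \leq 0$$
holds. Splitting $A_{SS} = F_{SS} + E_{SS}$ rearranges this into
$$\langle F_{SS}, D(\hat{\bf x}^{(1)}_S) - D({\bf x}^{**}_S)\rangle \leq \langle E_{SS}, D({\bf x}^{**}_S) - D(\hat{\bf x}^{(1)}_S)\rangle,$$
which reduces the proof to a signal lower bound and a noise upper bound on these two inner products.

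For the noise, the strategy is to union-bound over the finite set $\{D({\bf y}_S) : {\bf y}_S \in \bPi_{n_0}\}$, whose cardinality is at most $n_0!$, and apply the subGaussian tail estimate~\eqref{defi:subG} to each fixed competitor. Since $\log(n_0!) \lesssim n\log n$, this would yield
$$\langle E_{SS}, D({\bf x}^{**}_S) - D(\hat{\bf x}^{(1)}_S)\rangle \,\leq\, C\sqrt{n\log n}\,\|D({\bf x}^{**}_S) - D(\hat{\bf x}^{(1)}_S)\|_2$$
with probability at least $1-1/n^{2}$.

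The heart of the proof will be a quadratic lower bound on the signal term. The key observation is that, since both ${\bf x}^{**}_S$ and $\hat{\bf x}^{(1)}_S$ lie in $\bPi_{n_0}$, the multisets $\{d(x^{**}_i,x^{**}_j)\}_{j\in S}$ and $\{d(\hat x^{(1)}_i,\hat x^{(1)}_j)\}_{j\in S}$ coincide for every $i$, so they differ only by a row-dependent permutation $\sigma_{i}$. I would then apply Lemma~\ref{lem:conerstone:variante} row by row with $a_k = F_{i,\tau_i(k)}$ ordered by increasing $d(x^{**}_{i},x^{**}_{\tau_i(k)})$, using the lower bi-Lipschitz inequality~\eqref{cond:lipschLower} (invoked at $x^*$ and then transferred to $x^{**}$ via~\eqref{cond:lipsch}, which costs a bias proportional to $d_{\infty}({\bf x}^*_S,\bPi_{n_0})+\varepsilon_n$ per entry). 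Summing the resulting inequalities over $i$ and writing $\Delta:=\|D(\hat{\bf x}^{(1)}_S) - D({\bf x}^{**}_S)\|_2$, the identity $\sum_{i,j}(d(x^{**}_i,x^{**}_{\tau_i(k)})-d(\hat x^{(1)}_i,\hat x^{(1)}_{\tau_i(k)}))^{2}=\Delta^{2}$ should produce
$$\langle F_{SS}, D(\hat{\bf x}^{(1)}_S) - D({\bf x}^{**}_S)\rangle \,\geq\, \tfrac{c_l}{2}\,\Delta^{2} \;-\; C_{l,L,e}\bigl(n\,d_{\infty}({\bf x}^*_S,\bPi_{n_0}) + \sqrt{n\log n}\bigr)\,\Delta.$$
Chained with the two previous displays, this quadratic inequality in $\Delta$ immediately yields the announced bound $\Delta \leq C'_{l,L,e}\,[\,n\,d_{\infty}({\bf x}^*_S,\bPi_{n_0})+\sqrt{n\log n}\,]$.

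The main obstacle will be the signal lower bound, and more precisely the bookkeeping of the $\varepsilon_n$-slack in the row-wise cornerstone argument. The error term $\varepsilon_{n}\sum_{j}j(d_j-d_{\sigma(j)})$ produced by Lemma~\ref{lem:conerstone:variante}, if bounded naively by Cauchy--Schwarz and then summed over rows, inflates to roughly $\varepsilon_{n}\,n_{0}^{2}\,\Delta \sim c_{e}\,n^{3/2}\sqrt{\log n}\,\Delta$, which is too large to deliver the target $\sqrt{n\log n}$ scale. Recovering the correct order requires the same refined dichotomy as in the proof of Lemma~\ref{claim:LLowerB} (splitting indices according to the sign of $c_l|\underline{\phi(x^*_j)}-\underline{x}^*_j|-\varepsilon_n$ and exploiting the cancellations between positive and negative contributions on the evenly spaced grid $\bPi_{n_0}$). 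This refinement is where essentially all of the work will live.
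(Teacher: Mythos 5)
Your overall architecture coincides with the paper's: the basic inequality coming from the minimizing property of $\hat{\bf x}^{(1)}_S$ against the competitor ${\bf x}^{**}_S\in\bPi_{n_0}$, the union bound over the at most $n_0!$ elements of $\bPi_{n_0}$ (with $\log(n_0!)\lesssim n\log n$) giving the noise bound $\lesssim \sqrt{n\log n}\,\|D({\bf x}^{**}_S)-D(\hat{\bf x}^{(1)}_S)\|_2$, the Lipschitz transfer of $f(x_i^*,x_j^*)$ to $f(x_i^{**},x_j^{**})$ costing $C_{L,e}\,(n\,d_{\infty}({\bf x}^*_S,\bPi_{n_0})+\sqrt{n\log n})\|D({\bf x}^{**}_S)-D(\hat{\bf x}^{(1)}_S)\|_2$ by Cauchy--Schwarz, and a row-wise quadratic lower bound on the remaining signal term, all exactly as in the paper. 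You also correctly diagnose that a naive row-wise use of Lemma~\ref{lem:conerstone:variante} lets the $\varepsilon_n$-slack inflate to order $n^{3/2}\sqrt{\log n}$ times $\|D({\bf x}^{**}_S)-D(\hat{\bf x}^{(1)}_S)\|_2$, which destroys the target rate.

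At precisely that point, however, the proposal stops being a proof. Your plan is to recover the right order via ``the same refined dichotomy as in the proof of Lemma~\ref{claim:LLowerB}'', i.e.\ splitting indices according to the sign of $c_l|\underline{\phi(x^*_j)}-\underline{x}^*_j|-\varepsilon_n$ and exploiting the reflection $\phi$. That argument is tailored to the very specific permutation arising in the refinement step, where one compares the distance profiles of two candidate centers ($x_i^*$ versus $\hat z_i$) relative to a single fixed evenly spaced configuration, and the compensations come from an explicit reflection symmetry of $\Ccal_{n_0}$. In the present lemma, the row permutations $\sigma_i$ relating $\{d(x^{**}_i,x^{**}_j)\}_{j}$ to $\{d(\hat x^{(1)}_i,\hat x^{(1)}_j)\}_{j}$ are essentially arbitrary (they come from an arbitrary relabelling within $\bPi_{n_0}$), and no reflection pairs up the positive and negative deviations. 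The paper's resolution, Lemma~\ref{lem:regret}, is a genuinely different combinatorial argument: using that the partial sums of the row deviations $a_j+a'_j$ are nonnegative, it constructs a matching (the nonnegative weights $b_{j,k,t}$) transporting each negative deviation onto earlier positive ones, applies the lower bi-Lipschitz bound~\eqref{cond:lipschLower} only to matched, correctly oriented pairs, and thereby obtains a slack term proportional to $\varepsilon_n\|D({\bf x}^{**}_S)-D(\hat{\bf x}^{(1)}_S)\|_1\leq c_e\sqrt{n\log n}\,\|D({\bf x}^{**}_S)-D(\hat{\bf x}^{(1)}_S)\|_2$ rather than the weighted sum $\varepsilon_n\sum_j j(d_j-d_{\sigma(j)})$. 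Since you yourself locate ``essentially all of the work'' at this step, and the tool you propose to borrow does not transfer to arbitrary permutations, the crucial step of the proof remains unestablished.
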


Hence $\nu \leq C_{l,L,e} [n  d_{\infty}({\bf x}^{*}_{S},\bPi_{n_0}) + \sqrt{n \log(n)}]$ and the proof of Lemma \ref{cl:bound:nu:ery} is complete. \hfill $\square$

\bigskip
\begin{proof}[Proof of Lemma \ref{cl:distAuPosi}] Let $H= I - J/n_0$, where $I$ is the identity and $J$ the matrix of ones. Since $Z1=0$, we have $ZH = Z$, so that $$Z^TZ = HZ^TZH = -\frac{1}{2}HDH,$$since $D$ is the matrix of distances associated with $Z$. Then we have 
$$\|Z^TZ - Z'^TZ'\|_2 = \frac{1}{2}\|H(D-D')H\|_2 \leq \frac{1}{2} \|D-D'\|_2\enspace ,$$ 
where the last inequality derives from the general relation $\|AB\|_2 \leq \|A\|_{op} \|B\|_2$ for any matrices $A,B,$ and the fact that $\|H\|_{op} =1$ -- because $H$ is an orthogonal projection. Lemma \ref{cl:distAuPosi} is proved.
\end{proof}

\begin{proof}[Proof of Lemma \ref{cl:estimDist}]
 First, we come back to the definition of the estimator $\hat{\bf x}^{(1)}_S$ defined in~\eqref{def:estim:l1bound}. 
We have   $\langle A_{S ,S}, D(\hat{\bf x}^{(1)}_S)\rangle \leq \langle A_{S ,S}, D({\bf x}^{**}_S)\rangle$, which implies that
\begin{equation*}
 \langle F_{S ,S},  D(\hat{\bf x}^{(1)}_S) - D({\bf x}^{**}_S) \rangle \leq \langle E_{S ,S}, D({\bf x}^{**}_S) - D(\hat{\bf x}^{(1)}_S) \rangle.   
\end{equation*} 

As in the last lines of the proof of Lemma \ref{claimLUpBound}, we bound the term  $\langle E_{S ,S}, D({\bf x}^{**}_S) - D(\hat{\bf x}^{(1)}_S) \rangle$ by a union bound over all possible vectors $\hat{\bf x}^{(1)}_S$. Hence, we get
\begin{align}\label{eq:union_bound}
\langle F_{S ,S},  D(\hat{\bf x}^{(1)}_S) - D({\bf x}^{**}_S)\rangle &\leq \langle E_{S ,S}, D({\bf x}^{**}_S) - D(\hat{\bf x}^{(1)}_S) \rangle \nonumber \\
&\lesssim \sqrt{n \log(n)} \| D({\bf x}^{**}_S) - D(\hat{\bf x}^{(1)}_S) \|_2\enspace ,
 \end{align}
 with probability at least $1-1/n^2$.
Conversely, we shall lower bound $\langle F_{S,S},  D(\hat{\bf x}^{(1)}_S) - D({\bf x}^{**}_S) \rangle$. 
\[
   \langle F_{S,S},  D(\hat{\bf x}^{(1)}_S) - D({\bf x}^{**}_S) \rangle = \sum_{i,j=1}^{n_0} f(x^*_i, x^*_j) \Big{(} d(\hat{x}_i^{(1)}, \hat{x}_j^{(1)}) - d(x^{**}_i, x^{**}_j)\Big{)} \ . 
\]
Using the bi-Lipschitz property of the function $f$, we deduce that 
\begin{align*}
 \big{|}f(x^*_i, x^*_j) - f(x^{**}_i, x^{**}_j) \big{|} &\leq c_L\left( d(x^{*}_i,x^{**}_i) + d(x^{*}_j,x^{**}_j) \right)+ 2c_e \sqrt{\log(n)/n}\\
 &\leq2 c_L d_{\infty}({\bf x}^{*}_{S},\bPi_{n_0}) + 2c_e \sqrt{\log(n)/n}\enspace ,
\end{align*}
by definition of ${\bf x}^{**}$. Then, we get 
\begin{eqnarray*}    
\lefteqn{
    \sum_{i,j=1}^{n_0}  \big{|}f(x^*_i, x^*_j) - f(x^{**}_i, x^{**}_j) \big{|} \big{|} d(\hat{x}_i^{(1)}, \hat{x}_j^{(1)}) - d(x^{**}_i, x^{**}_j)\big{|}}&&\\ &&\leq C_{L,e}   \left(d_{\infty}({\bf x}^{*}_{S},\bPi_{n_0})+\sqrt{\frac{\log(n)}{n}}\right)\, \| D({\bf x}^{**}_S) - D(\hat{\bf x}^{(1)}_S) \|_1\\
    && \leq C_{L,e}  n_0\left(d_{\infty}({\bf x}^{*}_{S},\bPi_{n_0})+\sqrt{\frac{\log(n)}{n}}\right) \| D({\bf x}^{**}_S) - D(\hat{\bf x}^{(1)}_S) \|_2\enspace \ , 
\end{eqnarray*}
where we applied Cauchy-Schwarz inequality on $\mathbb{R}^{n_0\times n_0}$. 
As a consequence, 
\begin{align}\label{eq:1_l1}
    \langle F_{S,S},  &D(\hat{\bf x}^{(1)}_S) - D({\bf x}^{**}_S) \rangle 
    \geq \sum_{i,j=1}^{n_0} f(x^{**}_i, x^{**}_j) \Big{(} d(\hat{x}_i^{(1)}, \hat{x}_j^{(1)}) - d(x^{**}_i, x^{**}_j)\Big{)} \nonumber \\
    &- C_{L,e}  n\left(d_{\infty}({\bf x}^{*}_{S},\bPi_{n_0})+\sqrt{\frac{\log(n)}{n}}\right)\, \| D({\bf x}^{**}_S) - D(\hat{\bf x}^{(1)}_S) \|_2\enspace . 
\end{align}
The following result bounds $\sum_{i,j=1}^{n_0} f(x^{**}_i, x^{**}_j) [d(\hat{x}_i^{(1)}, \hat{x}_j^{(1)}) - d(x^{**}_i, x^{**}_j)]$ in terms of the Frobenius norm $\|D({\bf x}^{**}_S)-D(\hat{\bf x}^{(1)}_S) \|_2^2$. This is a key step in our proof. Had the slack constant $c_e$ been equal to zero, the following result would have been a consequence of Lemma~\ref{lem:conerstone:variante}. Here the proof is slightly more involved and is provided below.

\begin{lem}\label{lem:regret}
 We have 
 \begin{align*}
  \sum_{i,j=1}^{n_0} f(x^{**}_i, x^{**}_j) \Big{(} d(\hat{x}_i^{(1)}, \hat{x}_j^{(1)}) - d(x^{**}_i, x^{**}_j)&\Big{)}\geq \frac{c_l}{2}\|D({\bf x}^{**}_S)-D(\hat{\bf x}^{(1)}_S) \|_2^2 \\
  &- c_e\sqrt{n\log(n)}\|D({\bf x}^{**}_S)-D(\hat{\bf x}^{(1)}_S) \|_2 . 
 \end{align*}

\end{lem}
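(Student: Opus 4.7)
My plan leverages the constraint that both ${\bf x}^{**}_S$ and $\hat{\bf x}^{(1)}_S$ lie in $\bPi_{n_0}$. Since every element of $\bPi_{n_0}$ is a permutation of the $n_0$-th roots of unity, for every $i$ the multiset $\{d(x^{**}_i, x^{**}_j):j\in S\}$ coincides with $\{d(\hat{x}^{(1)}_i, \hat{x}^{(1)}_j):j\in S\}$. Therefore, for each $i$, there is a permutation $\tau_i$ of $S$ such that $d(\hat{x}^{(1)}_i, \hat{x}^{(1)}_j) = d(x^{**}_i, x^{**}_{\tau_i(j)})$ for all $j$; the $i$-th row of $D(\hat{\bf x}^{(1)}_S)$ is thus a permutation of the $i$-th row of $D({\bf x}^{**}_S)$. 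This is the key structural fact that makes Lemma~\ref{lem:conerstone:variante} applicable.

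I then fix $i$, let $\sigma_i$ order the distances so that $d_k := d(x^{**}_i, x^{**}_{\sigma_i(k)})$ is non-decreasing, and set $a_k = f(x^{**}_i, x^{**}_{\sigma_i(k)})$. The lower bi-Lipschitz bound~\eqref{cond:lipschLower} gives $a_k - a_{k+1} \geq c_l(d_{k+1} - d_k) - \varepsilon_n$, which is precisely the hypothesis of Lemma~\ref{lem:conerstone:variante} with $\alpha = c_l$ and $\eps = \varepsilon_n$. The permutation $\sigma^*$ of $[n_0]$ defined by $d_{\sigma^*(k)} = d(\hat{x}^{(1)}_i, \hat{x}^{(1)}_{\sigma_i(k)})$ is well defined by the multiset equality above, so Lemma~\ref{lem:conerstone:variante} yields a row-wise lower bound whose main term, after reindexing back to $j$ and summing over $i$, produces the desired quadratic piece $\tfrac{c_l}{2}\|D({\bf x}^{**}_S)-D(\hat{\bf x}^{(1)}_S)\|_2^2$. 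The residual slack term is $\varepsilon_n\sum_{i}\sum_{k}k(d_k-d_{\sigma^*(k)})$.

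The main obstacle is to bound this slack by $c_e\sqrt{n\log(n)}\|D({\bf x}^{**}_S)-D(\hat{\bf x}^{(1)}_S)\|_2$ rather than by the naive $\varepsilon_n n_0^2\|\cdot\|_2$ (which is a factor $n$ too large and arises from applying Cauchy--Schwarz first within each row on $\sqrt{\sum_k k^2}\sim n_0^{3/2}$ and then across rows on $\sqrt{n_0}$). To sharpen it, I would reroute through the pairwise form of the bi-Lipschitz inequality $(F_{ij}-F_{i,\tau_i(j)})(d(x^{**}_i,x^{**}_{\tau_i(j)})-d(x^{**}_i,x^{**}_{j}))\geq c_l(d(x^{**}_i,x^{**}_{\tau_i(j)})-d(x^{**}_i,x^{**}_{j}))^{2}-\varepsilon_n|d(x^{**}_i,x^{**}_{\tau_i(j)})-d(x^{**}_i,x^{**}_{j})|$ applied at $y=x^{**}_j$, $y'=x^{**}_{\tau_i(j)}$. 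Summing over $j$ for each $i$ gives a symmetrized identity of the form $T+T^{\dagger}\geq c_l\|\delta\|_2^{2}-\varepsilon_n\|\delta\|_1$, where $T$ denotes the LHS of the lemma, $\|\delta\|_2=\|D({\bf x}^{**}_S)-D(\hat{\bf x}^{(1)}_S)\|_2$, and $T^{\dagger}$ is the companion quantity obtained by replacing $\tau_i$ with $\tau_i^{-1}$.

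The delicate final step is to show that $T=T^{\dagger}$. This is where the $\bPi_{n_0}$-symmetry is essential: writing $\hat{x}^{(1)}_j=Qx^{**}_{\rho(j)}$ for some $Q\in\Ocal$ and $\rho\in\Sigma_{n_0}$, one has $D(\hat{\bf x}^{(1)}_S)=P_\rho^{T}D({\bf x}^{**}_S)P_\rho$, and the pairing between indices at equal distance from a given $x^{**}_i$ allows one to reorganise $T^{\dagger}$ into $T$ (modulo a term that is controlled by the bi-Lipschitz upper bound on symmetrically placed pairs). Once this identity is in hand, I conclude $T\geq\tfrac{c_l}{2}\|\delta\|_2^{2}-\tfrac{\varepsilon_n}{2}\|\delta\|_1$; the trivial bound $\|\delta\|_1\leq n_0\|\delta\|_2$ together with $\varepsilon_n n_0=(c_e/4)\sqrt{n\log(n)}$ gives the stated conclusion (with an admissible absolute constant in place of $c_e$).
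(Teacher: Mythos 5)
You are right that a one-shot application of Lemma~\ref{lem:conerstone:variante} leaves a slack that is too large, and your symmetrized inequality $T+T^{\dagger}\geq c_l\|\delta\|_2^2-\eps_n\|\delta\|_1$ is correct as far as it goes. The gap is the final step: the identity $T=T^{\dagger}$ is false in general, and your proof must handle an arbitrary $\hat{\bf x}^{(1)}_S\in\bPi_{n_0}$ since no other property of the estimator is used. In the exactly geometric case $f=\phi\circ d$ (so one may even take $c_e=0$, with $\phi'\in[-c_L,-c_l]$), a change of variables shows that, independently of the choice of the matchings $\tau_i$, one has $T-T^{\dagger}=\sum_{i,j}\bigl[\phi(d^{**}_{ij})\,\hat d_{ij}-\phi(\hat d_{ij})\,d^{**}_{ij}\bigr]$, where $d^{**}_{ij}=d(x^{**}_i,x^{**}_j)$ and $\hat d_{ij}=d(\hat x^{(1)}_i,\hat x^{(1)}_j)$; equivalently, with ${\bf x}^{**}_S$ sorted so that $D({\bf x}^{**}_S)$ is circulant and $D(\hat{\bf x}^{(1)}_S)=P_\rho^{T}D({\bf x}^{**}_S)P_\rho$, it equals $\langle \phi(D),P_\rho^{T}DP_\rho-P_\rho DP_\rho^{T}\rangle$, which has no reason to vanish. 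Concretely, take $n_0=7$, $x^{**}_j=e^{\iota 2\pi j/7}$, and let $\hat{\bf x}^{(1)}$ be the relabeling $\rho(0,\dots,6)=(0,2,3,6,1,4,5)$, which sends, for every $i$, the two grid-neighbours at distance $2\pi/7$ to distance $4\pi/7$, those at $4\pi/7$ to $6\pi/7$, and those at $6\pi/7$ to $2\pi/7$; a direct computation gives $T-T^{\dagger}=-\tfrac{12\pi}{7}\bigl[\phi(\tfrac{2\pi}{7})-2\phi(\tfrac{4\pi}{7})+\phi(\tfrac{6\pi}{7})\bigr]$, which is strictly negative (the harmful sign) whenever $\phi$ is strictly convex. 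So the discrepancy $T^{\dagger}-T$ is governed by the curvature of $f$ along distances, which the bi-Lipschitz assumptions do not make small: the best generic control is of order $\|D({\bf x}^{**}_S)-D(\hat{\bf x}^{(1)}_S)\|_1$ with a constant depending on $c_L$, which can be as large as $n\|D({\bf x}^{**}_S)-D(\hat{\bf x}^{(1)}_S)\|_2$, i.e.\ bigger than the allowed slack $c_e\sqrt{n\log n}\,\|\cdot\|_2$ by a factor of order $\sqrt{n/\log n}$ — and this is precisely fatal in the relevant regime $\|\cdot\|_2\asymp\sqrt{n\log n}$.

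The paper avoids this by never symmetrizing over $\tau_i$ versus $\tau_i^{-1}$. Working row by row in the coordinates of $\bPi_{n_0}$, it uses the counting fact that all partial sums of the signed index-discrepancies are nonnegative to build a \emph{monotone} matching between positive and negative discrepancies (the coefficients $b_{j,k,t}$, nonzero only when $k\geq j$), so that the lower bi-Lipschitz bound \eqref{cond:lipschLower} is only ever applied to correctly oriented pairs; each unit of transported mass then contributes its full $c_l$-gain and costs a single $\eps_n$, yielding $\gamma_i\geq \tfrac{c_l}{2}\|\delta_i\|_2^2-\eps_n\|\delta_i\|_1$ per row with no companion term. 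To repair your argument you would have to construct the pairing so that the inequality is used only in its favorable orientation (rather than pairing $j$ with $\tau_i(j)$ and then trying to identify the by-product $T^{\dagger}$ with $T$), which is essentially the paper's transport argument.
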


We conclude from~\eqref{eq:union_bound} and the above lemma that 
\begin{align*} 
   C \sqrt{n \log(n)} \| D({\bf x}^{**}_S) &-  D(\hat{\bf x}^{(1)}_S) \|_2   \geq 
    \frac{c_l}{2}\| D({\bf x}^{**}_S) - D(\hat{\bf x}^{(1)}_S) \|^2_2 \\
   &- C_{L,e}   \left(n d_{\infty}({\bf x}^{*}_{S},\bPi_{n_0})+\sqrt{n\log(n)}\right) \| D({\bf x}^{**}_S) - D(\hat{\bf x}^{(1)}_S) \|_2 
\end{align*}
which in turn implies that 
$$
   \| D({\bf x}^{**}_S) - D(\hat{\bf x}^{(1)}_S) \|_2 \leq C'_{l,L,e}\left[  n  d_{\infty}({\bf x}^{*}_{S},\bPi_{n_0}) + \sqrt{n \log(n)}\right]\enspace .$$ Lemma \ref{cl:estimDist} is proved. 
 
\end{proof}

\begin{proof}[Proof of Lemma~\ref{lem:regret}]

To alleviate the notation, we introduce\\ $\gamma_i= \sum_{j=1}^{n_0} f(x^{**}_i, x^{**}_j) [ d(\hat{x}_i^{(1)}, \hat{x}_j^{(1)}) - d(x^{**}_i, x^{**}_j)]$  
so that we aim at establishing a lower bound for each $\gamma_i$ and in turn for $\gamma=\sum_{i=1}^{n_0} \gamma_i$. To simplify the arguments, we only consider the case where $n_0$ is odd, the case of $n_0$ even being almost similar.

Both ${\bf x}^{**}_S$ and $\hat{\bf x}^{(1)}_S$ belongs to $\bPi_{n_0}$ and we shall  heavily rely  on the symmetries of $\bPi_{n_0}$. Assume without loss of generality that $i=1$ and $x^{**}_j=e^{\iota 2\pi (j-1)/n_0}$ for all $j=1,\ldots, n_0$. Then, $d(x^{**}_i, x^{**}_j)= \tfrac{2\pi}{n_0}[|j-1|\wedge |n_0-j+1|]$. Since  $\hat{\bf x}^{(1)}_S$ also belongs to $\bPi_{n_0}$,  there exists a permutation $\sigma$ of $[n_0]$ such that $\sigma(1)=1$ and $d(\hat{x}_i^{(1)}, \hat{x}_j^{(1)})= \frac{2\pi}{n_0} [|\sigma(j)- 1| \wedge |n_0+1- \sigma(j)|]$.
Recall that we consider the case where $n_0$ is odd. Besides, we can focus on $n_0$ larger than $3$ since Lemma~\ref{lem:regret} is trivial for $n_0=1$. Thus, there exists a surjective map $\overline{\sigma}: [n_0-1] \mapsto [\lfloor n_0/2\rfloor ]$ such that $|\overline{\sigma}^{-1}(\{z\})|=2$ for any $z\in [\lfloor n_0/2\rfloor ]$ and $d(\hat{x}_i^{(1)}, \hat{x}_j^{(1)}) = \frac{2\pi }{n_0}\overline{\sigma}(j-1)$ for any $j=2,\ldots n_0$.
Finally, we write $\psi_j= f(1, e^{\iota 2\pi j/n_0})$ and $\psi'_j= f(1, e^{-\iota 2\pi j/n_0})$ for $j=1,\ldots \lfloor n_0/2\rfloor$. 
Equipped with this new notation, we arrive at
\[
 \gamma_i= \frac{2\pi}{n_0} \sum_{j=1}^{\lfloor n_0/2\rfloor } \psi_j[\overline{\sigma}(j) - j]+ \psi'_j[\overline{\sigma}(n_0-j)-j]\enspace .
\]
Finally, we denote
$a_j= \overline{\sigma}(j) - j$ and $a'_j=\overline{\sigma}(n_0-j)-j$ for $j= 1,\ldots, \lfloor n_0/2\rfloor$. Obviously, we have $\sum_{j=1}^{\lfloor n_0/2\rfloor } a_j+a'_j= 0$. More generally, one easily checks that, for any positive integer $s\leq \lfloor n_0/2\rfloor$,
the sum $\sum_{j=1}^s(a_j+a'_j)$ is nonnegative. Starting from
\[
 \gamma_i= \frac{2\pi}{n_0} \sum_{j=1}^{\lfloor n_0/2\rfloor } \psi_ja_j+ \psi'_ja'_j
\]
we partition the indices according to the signs of $a_j$ and $a'_j$. 
Define $A_+=\{j\in [\lfloor n_0/2\rfloor]:  a_j\geq 0\}$, $A_-=\{j\in [\lfloor n_0/2\rfloor]:  a_j<0\}$,  $A'_+=\{j\in [\lfloor n_0/2\rfloor]:  a'_j\geq 0\}$, and  $A'_-=\{j\in [\lfloor n_0/2\rfloor]:  a'_j<0\}$. Intuitively, we want to group indices $j$ such that $a_j>0$ with indices $k$ such that $a_k<0$. This can be done by recursion. First, consider the smallest index $k\in A_{-}\cup A'_{-}$. By symmetry, suppose that $a_k <0$. Since $\sum_{j=1}^k (a_j+a'_j)\geq 0$, this implies that $\sum_{j=1}^{k} \mathbf{1}_{j\in A_+}a_j +  \mathbf{1}_{j\in A'_+}a_j'\geq |a_k| + \mathbf{1}_{k\in A'_-}|a_k'|$. Hence, it is possible to build nonnegative numbers $b_{j,k,1}\leq a_{j}$ for $j\in A_+\cap [k]$ and $b'_{j,k,1}\leq a'_j$ for $j\in A'_+\cap [k]$ such that $\sum_{j=1}^{k} \mathbf{1}_{j\in A_+}b_{j,k,1} +  \mathbf{1}_{j\in A'_+}b'_{j,k,1}= |a_k|$.  Iterating the construction we obtain the following decomposition 
\begin{eqnarray*}
\frac{n_0}{2\pi} \gamma_i&= & \sum_{j\in A_{+}}\left(\sum_{k\in A_-} (\psi_j- \psi_k) b_{j,k,1}+\sum_{k\in A'_-} (\psi_j- \psi'_k) b_{j,k,2} \right) \\ && +  \sum_{j\in A'_{+}}\left(\sum_{k\in A_-} (\psi'_j- \psi_k) b'_{j,k,1}+\sum_{k\in A'_-} (\psi'_j- \psi'_k) b'_{j,k,2} \right)\enspace ,
\end{eqnarray*}
where all $b_{j,k,t}$'s are nonnegative, $b_{j,k,t}=0$ for $k< j$,  and
\[
 \left\{
 \begin{array}{cc}
\sum_{k\in A_{-}}b_{j,k,1}+ \sum_{k\in A'_{-}}b_{j,k,2}= a_{j}\text{ for }j\in A_+\ ;  \\
\sum_{k\in A_{-}}b'_{j,k,1}+ \sum_{k\in A'_{-}}b'_{j,k,2}= a'_{j} \text{ for }j\in A'_+\ ;  \\
\sum_{j\in A_+}b_{j,k,1}+ \sum_{j\in A'_+}b'_{j,k,1}= -a_{k}\text{ for } k\in A_{-}\ ; \\
\sum_{j\in A_+}b_{j,k,2}+ \sum_{j\in A'_+}b'_{j,k,2}= -a'_{k}\text{ for } k\in A'_{-} \ . 
 \end{array}
 \right.
\]

In the above decomposition all the  terms $b_{j,k,1}$, $b_{j,k,2}$, $b'_{j,k,1}$, and $b'_{j,k,2}$ are nonnegative. Besides, they are positive only when $k\geq j$, so that we can use the bi-Lipschitz condition~\eqref{cond:lipschLower}
\[
 (\psi_j- \psi'_k)= f(1,e^{\iota 2\pi (j-1)/n_0})- f(1,e^{-\iota 2\pi (k-1)/n_0})\geq c_l\frac{2\pi(k-j)}{n_0}- c_{e}\sqrt{\frac{\log(n)}{n}}\ .
\]
We obtain similarly the same lower bound for $\psi_j- \psi_k$, $\psi'_j- \psi_k$, and $\psi'_j- \psi'_k$. Coming back to the expression $\gamma_{j}$ and the definition of the $b_{i,j,t}$ with $t=1,2$ yields
\begin{eqnarray*}
 \frac{n_0}{2\pi}\gamma_i&\geq  & c_l\frac{2\pi}{n_0}  \sum_{j=1}^{\lfloor n_0/2\rfloor } -j \left[a_j+ a'_j\right] - c_e\sqrt{\frac{\log(n)}{n}}\sum_{j=1}^{\lfloor n_0/2\rfloor }|a_j|+|a'_j|\\
 & \geq & -c_l \frac{2\pi}{n_0} \sum_{j=1}^{\lfloor n_0/2\rfloor } \left[j(\overline{\sigma}(j)-j) + j(\overline{\sigma}(n_0-j) - j) \right]\\
& & - c_e\sqrt{\frac{\log(n)}{n}}\sum_{j=1}^{\lfloor n_0/2\rfloor }|\overline{\sigma}(j)-j| + |\overline{\sigma}(n_0-j) - j|\ . 
\end{eqnarray*}
Let us work out these two expressions in the rhs. By symmetry and definition of $\overline{\sigma}$ and $\sigma$ we get
\begin{align*}
\sum_{j=1}^{\lfloor n_0/2\rfloor } -&j(\overline{\sigma}(j)-j) - j(\overline{\sigma}(n_0-j) - j)\\
&= \frac{1}{2}\sum_{j=1}^{\lfloor n_0/2\rfloor } (\overline{\sigma}(j)-j)^2 + (\overline{\sigma}(n_0-j) - j)^2 \\
&= \frac{n_0^2}{8\pi^2}\sum_{j=1}^{n_0} \left[d(\hat{x}_i^{(1)}, \hat{x}_j^{(1)}) - d(x^{**}_i, x^{**}_j)\right]^2\enspace . 
\end{align*}
Similarly, we get 
\[
\sum_{j=1}^{\lfloor n_0/2\rfloor }|\sigma(j)-j| + |\overline{\sigma}(n_0-j) - j|= \frac{n_0}{2\pi}\sum_{j=1}^{n_0} |d(\hat{x}_i^{(1)}, \hat{x}_j^{(1)}) - d(x^{**}_i, x^{**}_j)|\enspace . 
\]
Putting everything together yields
\begin{align*}
  \gamma_i\geq &\frac{c_l}{2} \sum_{j=1}^{n_0} \left[d(\hat{x}_i^{(1)}, \hat{x}_j^{(1)}) - d(x^{**}_i, x^{**}_j)\right]^2 \\
  &-  c_e\sqrt{\frac{\log(n)}{n}}\sum_{j=1}^{n_0}  |d(\hat{x}_i^{(1)}, \hat{x}_j^{(1)}) - d(x^{**}_i, x^{**}_j)|\enspace ,
\end{align*}
which in turn allows us to conclude 
\begin{eqnarray*}
 \gamma &\geq& \frac{c_l}{2}\|D({\bf x}^{**}_S)-D(\hat{\bf x}^{(1)}_S) \|_2^2 - c_e\sqrt{\log(n)/n}\|D({\bf x}^{**}_S)-D(\hat{\bf x}^{(1)}_S) \|_1 \\
 &\geq & \frac{c_l}{2}\|D({\bf x}^{**}_S)-D(\hat{\bf x}^{(1)}_S) \|_2^2 - c_e\sqrt{n\log(n)}\|D({\bf x}^{**}_S)-D(\hat{\bf x}^{(1)}_S) \|_2 \ . 
\end{eqnarray*}
Lemma~\ref{lem:regret} is proved.  
\end{proof}

\subsection{Proof of Theorem \ref{thm:principal} and \ref{cor:thm:bis}} \label{subsec:proof:gluing}


In this section, we prove  Theorem~\ref{cor:thm:bis}. Theorem~\ref{thm:principal} then  follows directly from this result.

\subsubsection{Main arguments}

Recall that $n = 4 n_0$. For $n_0\leq 3$, the bound of Theorem~\ref{cor:thm:bis} is trivially true. Assume that $n_0 \geq 4$ in the following. In Step 1 of the main procedure, it follows from Propositions~\ref{prop:LHO:perf} and \ref{l1bound} that  the output $\hat{{\bf x}}_{\overline{S}}^{(2)}$ satisfies the following uniform bound 
\begin{equation}\label{roadmap:localerror}
    \min_{Q\in \mathcal{O}}d_{\infty}(\hat{{\bf x}}^{(2)}_{\overline{S}},Q{\bf x}^*_{\overline{S}}) \leq C_{l,L,e} \left[ d_{\infty}({\bf x}^{*}_{S},\bPi_{n_0})+  \sqrt{\frac{\log(n)}{n}}\right] \enspace,
\end{equation} with probability higher than $1-2/n^2$. Similarly, for the output $\hat{{\bf x}}^{(2')}_{\overline{S}'}$ in Step 2, we have
\begin{equation}\label{roadmap:localerror:bis}
    \min_{Q\in \mathcal{O}}d_{\infty}(\hat{{\bf x}}^{(2')}_{\overline{S}'},Q{\bf x}^*_{\overline{S}'}) \leq C_{l,L,e} \left[d_{\infty}({\bf x}^{*}_{S'},\bPi_{n_0})+  \sqrt{\frac{\log(n)}{n}}\right]\enspace ,
\end{equation}with probability higher than $1-2/n^2$. In \eqref{roadmap:localerror} and \eqref{roadmap:localerror:bis}, we shall prove that the bias terms $d_{\infty}({\bf x}^{*}_{S},\bPi_{n_0})$ and $d_{\infty}({\bf x}^{*}_{S'},\bPi_{n_0})$  are of the same order as $d_{\infty}({\bf x}^*,\bPi_{n})$  up to an additional error of the order of $\sqrt{\log(n)/n}$ -- see Lemma \ref{cl:equivBias:bis} below.

\begin{lem}\label{cl:equivBias:bis} Assume that $n_0\geq 4$ and fix ${\bf x}^* \in \Ccal^{ n}$. There exists an event of probability higher than $1-1/n^2$ such that 
\begin{align*}
     &d_{\infty}({\bf x}^{*}_{S},\bPi_{n_0}) \leq  C\left[d_{\infty}({\bf x}^*,\bPi_{n}) +  \sqrt{\frac{\log(n)}{n}}\right]\ ;\\
     & d_{\infty}({\bf x}^{*}_{S'},\bPi_{n_0}) \leq C\left[ d_{\infty}({\bf x}^*,\bPi_{n}) + \sqrt{\frac{\log(n)}{n}}\right]\enspace ;\\
    & d_{\infty}({\bf x}^{*}_{\overline{S}\cap \overline{S}'},\bPi_{2n_0})\leq C\left[ d_{\infty}({\bf x}^*,\bPi_{n}) + \sqrt{\frac{\log(n)}{n}}\right] \enspace .
\end{align*}
\end{lem}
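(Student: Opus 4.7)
The three bounds are structurally identical, so my plan focuses on the bound for ${\bf x}^*_S$; the cases of $S'$ and $\overline{S}\cap \overline{S}'$ will follow from the same argument together with an exchangeability remark. I would fix ${\bf y}\in \bPi_n$ achieving $d_{\infty}({\bf x}^*,\bPi_n) =: \alpha$, writing $y_i = e^{\iota 2\pi\sigma(i)/n}$ for some $\sigma\in \Sigma_n$. By the triangle inequality for the geodesic $d$, it is enough to exhibit ${\bf z}_S\in \bPi_{n_0}$ with $\max_{i\in S} d(y_i,z_i) \leq C\sqrt{\log(n)/n}$, since then $d_{\infty}({\bf x}^*_S,{\bf z}_S)\leq \alpha + C\sqrt{\log(n)/n}$. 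Because $\sigma$ is a bijection and $S$ is uniform among size-$n_0$ subsets of $[n]$, the image $T:=\sigma(S)$ is itself uniformly distributed on size-$n_0$ subsets of $[n]$. Sorting $T=\{j_1 < \cdots < j_{n_0}\}$ and setting $z_i := e^{\iota (2\pi k /n_0 + \phi)}$ whenever $\sigma(i)=j_k$, for a rotation $\phi\in\R$ to be chosen, one has $d(y_i,z_i) = |2\pi(j_k/n - k/n_0) - \phi|$ as soon as this quantity is at most $\pi$. Taking $\phi$ to be the midpoint of the values $\{2\pi(j_k/n - k/n_0) : k\in [n_0]\}$ reduces the problem to controlling
\begin{equation*}
\max_{i\in S} d(y_i,z_i) \;\leq\; 2\pi \, \max_{k\in [n_0]} \left| \frac{j_k}{n} - \frac{k}{n_0}\right|.
\end{equation*}

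To bound the right-hand side I would couple $T$ with an independent Bernoulli model. Let $X_1,\dots,X_n$ be i.i.d.\ Bernoulli with parameter $p=n_0/n = 1/4$; then, conditional on the event $\{\sum_i X_i = n_0\}$ (which has probability at least $c/\sqrt{n}$ by Stirling), the random set $\{i: X_i = 1\}$ has the same law as $T$. Applying Bernstein's inequality to the partial sums $Z_k=\sum_{i=1}^k (X_i - p)$ and a union bound over $k\in [n]$,
\begin{equation*}
\max_{1\leq k \leq n}|Z_k| \;\leq\; C\sqrt{n\log n}
\end{equation*}
holds with unconditional probability at least $1 - 2/n^{3}$, and therefore with conditional probability at least $1-C'/n^{5/2}$. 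Evaluating $Z_k$ at $k=j_l$ and using $\sum_{i=1}^{j_l} X_i = l$ gives $|p\,j_l - l|\leq C\sqrt{n\log n}$ for every $l\in [n_0]$, hence $|j_l/n - l/n_0|\leq 4C\sqrt{\log(n)/n}$. Combining this with the first paragraph proves the first of the three bounds on the event just constructed.

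For the other two cases I would use that $(S,S')$ is uniform over ordered pairs of disjoint size-$n_0$ subsets of $[n]$, so a direct counting argument shows that $S'$ is marginally uniform on size-$n_0$ subsets of $[n]$, and the previous argument goes through unchanged. Likewise $\overline{S}\cap \overline{S}'=[n]\setminus (S\cup S')$ is marginally uniform on size-$2n_0$ subsets of $[n]$; repeating the argument with $n_0$ replaced by $2n_0$ (hence $p=1/2$) yields the third bound with $\bPi_{2n_0}$ in place of $\bPi_{n_0}$. A final union bound over the three concentration events gives overall probability at least $1-1/n^2$. The only mildly delicate point in this plan is verifying that the conditioning on $\{\sum_i X_i = n_0\}$ in the Bernoulli coupling costs at most a polynomial factor in the tail, which is immediate from Stirling; the rotation optimization and the marginal-uniformity computations are both elementary.
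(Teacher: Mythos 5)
Your proof is correct, but it takes a genuinely different route from the paper's. The paper first converts $d_{\infty}(\cdot,\bPi_k)$ into an interval-discrepancy functional $\sup_{I}|V_I(\cdot)|$ (Lemma~\ref{equivAlphaVi}) and then proves a uniform concentration of $V_I$ under sampling without replacement, via Hoeffding's inequality for hypergeometric variables combined with a net over intervals whose endpoints are data points or grid points (Lemma~\ref{VI:hyperGeo}). You instead work directly with a best regular approximation $\mathbf{y}\in\bPi_n$ of $\xbf^*$, push the sampled set through the permutation defining $\mathbf{y}$, and build an explicit competitor in $\bPi_{n_0}$ by ranking; the lemma then reduces to the Kolmogorov--Smirnov-type bound $\max_{l}|j_l/n-l/n_0|\lesssim \sqrt{\log(n)/n}$ for the order statistics of a uniform random subset, which you get by conditioning an i.i.d.\ Bernoulli sequence on its sum (paying only a $\sqrt{n}$ Stirling factor; one could equally invoke Hoeffding for sampling without replacement, as the paper does). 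Your argument is more elementary and self-contained for this particular statement, at the price of being tailored to it, whereas the paper's $V_I$ machinery is reused elsewhere (e.g.\ in the proof of Corollary~\ref{coro:iid:perf} via DKW). Your marginal-uniformity claims for $S'$ and $\overline{S}\cap\overline{S}'$ are correct and are exactly the facts the paper also uses.

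Two small repairs are needed. First, if you rotate by an arbitrary midpoint $\phi$, the resulting $\mathbf{z}_S$ is a rotated grid and need not belong to $\bPi_{n_0}$, whose entries must be $n_0$-th roots of unity; either take $\phi=0$, which already yields your stated bound $2\pi\max_k|j_k/n-k/n_0|$, or round $\phi$ to the nearest multiple of $2\pi/n_0$ at an extra cost $O(1/n)$. Second, three events each failing with probability $O(n^{-5/2})$ give overall probability $1-1/n^2$ only once $n$ exceeds a universal constant; for the finitely many smaller values of $n$ the claimed inequalities hold deterministically after enlarging $C$, since $d_{\infty}\leq \pi$ while $\sqrt{\log(n)/n}$ is bounded away from $0$ there. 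Neither point affects the substance of your argument.
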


Thus, by a union bound, the following inequalities hold together with probability at least $1-5/n^2$:
\begin{equation}\label{eq:garantie:x1}
\exists \,  Q_1 \in \Ocal: \qquad      d_{\infty}(\hat{{\bf x}}^{(2)}_{\overline{S}},Q_1{\bf x}^*_{\overline{S}}) \leq C_{l,L,e}\,  \left[ d_{\infty}(\xbf^*,\bPi_n) +  \sqrt{\frac{\log(n)}{n}}\right]\enspace ,
\end{equation} 
\begin{equation}\label{eq:garantie:X2}
   \exists \,  Q_2 \in \Ocal: \qquad    d_{\infty}(\hat{{\bf x}}^{(2')}_{\overline{S}'},Q_2{\bf x}^*_{\overline{S}'}) \leq C_{l,L,e}\left[  d_{\infty}(\xbf^*,\bPi_n) +  \sqrt{\frac{\log(n)}{n}}\right]\enspace .
\end{equation} 
Since the final estimator ${\bf \hat{x}}:= ({\bf \hat{x}}^{(2)}_{\overline{S}},\widehat{Q}{\bf \hat{x}}^{(2')}_{\overline {S}})$ satisfies $\hat{{\bf x}}_{\overline{S}} = \hat{{\bf x}}^{(2)}_{\overline{S}}$, we deduce from \eqref{eq:garantie:x1} 
that 
 \begin{equation*}
    d_{\infty}(\hat{{\bf x}}_{\overline{S}},Q_1{\bf x}^*_{\overline{S}}) \leq C_{l,L,e}\left[ d_{\infty}(\xbf^*,\bPi_n)+  \sqrt{\frac{\log(n)}{n}}\right]\enspace .
\end{equation*}
To prove Theorem~\ref{cor:thm:bis}, it suffices to show the counterpart of this bound on $S$:
\begin{equation}\label{target:proof:gluage}
    d_{\infty}(\hat{{\bf x}}_{S},Q_1{\bf x}^*_{S}) \leq C_{l,L,e}\left[ d_{\infty}(\xbf^*,\bPi_n) +  \sqrt{\frac{\log(n)}{n}}\right]\enspace .
\end{equation}
By the triangle inequality, we have 
\begin{align}\label{triang:ineq:gluag}
    d_{\infty}(\hat{{\bf x}}_{S},Q_1{\bf x}^*_{S}) &\leq d_{\infty}(\hat{{\bf x}}_{S},\widehat{Q}Q_2{\bf x}^*_{S}) + d_{\infty}(Q_1{\bf x}^*_{S},\widehat{Q}Q_2{\bf x}^*_{S}) \nonumber \\ 
    &= d_{\infty}(\hat{{\bf x}}^{(2')}_{S},Q_2{\bf x}^*_{S}) +\max_{y\in \Ccal}d(Q_1y,\widehat{Q}Q_2y) \enspace ,
\end{align}
since $\hat{{\bf x}}_{S} = \widehat{Q}\hat{{\bf x}}^{(2')}_{S}$ by definition of $\hat{{\bf x}}$. By~\eqref{eq:garantie:X2} and since $S\subset \overline{S}'$, we have 
\[
d_{\infty}(\hat{{\bf x}}_{S},\widehat{Q}Q_2{\bf x}^*_{S})\leq C_{l,L,e}\left[  d_{\infty}(\xbf^*,\bPi_n) +  \sqrt{\frac{\log(n)}{n}}\right]\enspace .
\]
In view of~\eqref{target:proof:gluage} and~\eqref{triang:ineq:gluag}, it remains to prove that 
\begin{equation}\label{eq:objective:gluage}
    \max_{y\in \Ccal}d(Q_1y,\widehat{Q}Q_2y)\leq C_{l,L,e}\left[  d_{\infty}(\xbf^*,\bPi_n) +  \sqrt{\frac{\log(n)}{n}}\right]\ . 
\end{equation}
Before consider this maximum, we control the quantity $d_{\infty}(Q_1{\bf x}^*_{\overline{S} \cap \overline{S}'}, \widehat{Q}Q_2{\bf x}^*_{\overline{S} \cap \overline{S}'})$ that will turn out to be instrumental. By the triangular inequality, 
\begin{eqnarray*}
d_{\infty}(Q_1{\bf x}^*_{\overline{S} \cap \overline{S}'}, \widehat{Q}Q_2{\bf x}^*_{\overline{S} \cap \overline{S}'})
&\leq&  d_{\infty}(Q_1{\bf x}^*_{\overline{S} \cap \overline{S}'},  \hat{{\bf x}}^{(2)}_{\overline{S} \cap \overline{S}'})+ d_{\infty}(\hat{{\bf x}}^{(2)}_{\overline{S} \cap \overline{S}'}, \widehat{Q}\hat{{\bf x}}^{(2')}_{\overline{S} \cap \overline{S}'})\\
&+&  d_{\infty}(\widehat{Q}\hat{{\bf x}}^{(2')}_{\overline{S} \cap \overline{S}'}, \widehat{Q}Q_2{\bf x}^*_{\overline{S} \cap \overline{S}'}) \enspace.
\end{eqnarray*}
By definition of $\widehat{Q}$, the second term of the right hand-side is bounded by $d_{\infty}(\hat{{\bf x}}^{(2)}_{\overline{S} \cap \overline{S}'}, Q_1Q^{-1}_2\hat{{\bf x}}^{(2')}_{\overline{S} \cap \overline{S}'})$, which, in turn, is bounded as follows
\begin{align*}
 d_{\infty}(\hat{{\bf x}}^{(2)}_{\overline{S} \cap \overline{S}'}, Q_1Q^{-1}_2\hat{{\bf x}}^{(2')}_{\overline{S} \cap \overline{S}'})
 &\leq d_{\infty}(\hat{{\bf x}}^{(2)}_{\overline{S} \cap \overline{S}'}, Q_1{\bf x}^*_{\overline{S} \cap \overline{S}'}) + d_{\infty}(Q_1{\bf x}^*_{\overline{S} \cap \overline{S}'}, Q_1Q^{-1}_2\hat{{\bf x}}^{(2')}_{\overline{S} \cap \overline{S}'})\enspace.
\end{align*}
Together with \eqref{eq:garantie:x1} and \eqref{eq:garantie:X2}, this leads us to 
\begin{eqnarray} 
    d_{\infty}(Q_1{\bf x}^*_{\overline{S} \cap \overline{S}'}, \widehat{Q}Q_2{\bf x}^*_{\overline{S} \cap \overline{S}'})\nonumber 
&\leq& 2    d_{\infty}(\hat{\bf x}^{(2)}_{\overline{S}},Q_1{\bf x}^*_{\overline{S}} ) +  2d_{\infty}(\hat{\bf x}^{(2')}_{\overline{S}'},Q_2{\bf x}^*_{\overline{S}'}) \\
&\leq & C_{lLe}\left[ d_{\infty}(\xbf^*,\bPi_n) +  \sqrt{\frac{\log(n)}{n}}\right] \enspace . \label{eq:upper_gluage}
\end{eqnarray}
Let us now come back to proving~\eqref{eq:objective:gluage}. Since the symmetric group on the plane is only made of rotations and reflections, we consider two cases. 

\noindent 
{\bf Case 1: $Q_1^{-1}\widehat{Q}Q_2$ is a rotation.} Then, $d(Q_1y,\widehat{Q}Q_2y)$ does not depend on $y$. In particular, $\max_{y\in \Ccal}d(Q_1y,\widehat{Q}Q_2y)= d_{\infty}(Q_1{\bf x}^*_{\overline{S} \cap \overline{S}'}, \widehat{Q}Q_2{\bf x}^*_{\overline{S} \cap \overline{S}'})$ and \eqref{eq:objective:gluage} is a consequence of~\eqref{eq:upper_gluage}.

 \noindent 
{\bf 
Case 2: $Q_1^{-1}\widehat{Q}Q_2$ is a reflection}. Then,  $\max_{y\in \Ccal}d(Q_1y,\widehat{Q}Q_2y)=\pi$. \\
If $d_{\infty}({\bf x}^*_{\overline{S} \cap \overline{S}'}, Q_1^{-1}\widehat{Q}Q_2{\bf x}^*_{\overline{S} \cap \overline{S}'})\geq \pi/4$, then 
\[
    \max_{y\in \Ccal}d(Q_1y,\widehat{Q}Q_2y)\leq 4     d_{\infty}(Q_1{\bf x}^*_{\overline{S} \cap \overline{S}'}, \widehat{Q}Q_2{\bf x}^*_{\overline{S} \cap \overline{S}'})\enspace ,
\]
and \eqref{eq:objective:gluage} is again a consequence of~\eqref{eq:upper_gluage}. If $d_{\infty}({\bf x}^*_{\overline{S} \cap \overline{S}'}, Q_1^{-1}\widehat{Q}Q_2{\bf x}^*_{\overline{S} \cap \overline{S}'})\leq \pi/4$, this implies that the points in ${\bf x}^*_{\overline{S} \cap \overline{S}'}$ belong to two arcs of length $\pi/4$ that are (individually) symmetric around the axis of the reflection $Q_1^{-1}\widehat{Q}Q_2$. It follows that $d_{\infty}({\bf x}^*_{\overline{S} \cap \overline{S}'},\bPi_{2n_0})\geq \pi/8$ as soon as $2n_0\geq 4$, that is $n\geq 8$. Indeed, if $d_{\infty}({\bf x}^*_{\overline{S} \cap \overline{S}'},\bPi_{2n_0})< \pi/8$ and $2n_0\geq 4$, this would imply that, any point on $\Ccal$ is at distance less than $3\pi/8$ from ${\bf x}^*_{\overline{S} \cap \overline{S}'}$ which is impossible because those points in ${\bf x}^*_{\overline{S} \cap \overline{S}'}$ belong to these two arcs of length $\pi/4$. 
Since Lemma~\ref{cl:equivBias:bis} ensures that $d_{\infty}({\bf x}^*_{\overline{S} \cap \overline{S}'},\bPi_{2n_0})$ is of the same order as $d_{\infty}({\bf x}^*,\bPi_{n})$, this implies that the latter is of the order of a constant and~\eqref{eq:objective:gluage} is obviously valid.

\subsubsection{Proof of Lemma \ref{cl:equivBias:bis}}

We claim that it suffices to restrict our attention to the case where ${\bf x}^* = (x_{1}^*,\ldots,x_{ n}^*)$ are $n$ distinct points. Indeed, for general points $x_{1}^*,\ldots,x_{n}^*$ in $\Ccal$, there exist points $y_{1},\ldots,y_{n}$ that are all distinct and satisfy $d(y_{j}, x_{j}^*) \leq 1/n$ for all $j \in[ n]$. Replacing $x_{1}^*,\ldots,x_{n}^*$ by $y_{1},\ldots,y_{n}$ in the statement of Lemma \ref{cl:equivBias:bis} only entails an additional term $1/n$ which is negligible compared to the term $\sqrt{\log(n)/n}$.

For any $k\in[n]$ and any vector $\textbf{x} \in \Ccal^{ k}$, we introduce a new quantity that is equivalent to $d_{\infty}(\textbf{x},\bPi_k)$, but more easy to handle. For any interval $I\subset \mathbb{R}/(2\pi)$, we write $N_I(\textbf{x})$ the number of coordinates of $\textbf{x}$ that lie in the interval $I$, i.e. the number of $i\in [k]$ such that $\underline{x}_i\in I$. We then define the quantity $V_I(\textbf{x})$ as
\begin{equation}\label{def:Vi}
    V_I(\textbf{x}) = N_I(\textbf{x}) -k\frac{|I|}{2\pi}\enspace .
\end{equation}
Remark that, for a uniform $k$-sample of $\Ccal$, the fraction $k|I|/(2\pi)$ would be the expected number of points in $I$. The next lemma shows that the supremum $\textup{sup}_{I} V_I(\textbf{x})$ is equivalent to  $k\, d_{\infty}({\bf x},\bPi_{k})$. We note $\mathcal{I}$ the set of all closed intervals $I\subset \mathbb{R}/(2\pi)$.

\begin{lem}\label{equivAlphaVi} For any integer $k\in [n]$ and any vector ${\bf x}=(x_1,\ldots,x_k)$ of $k$ distinct points of $\Ccal$, we have
\begin{equation*}
    \underset{I\in \mathcal{I}}{\textup{sup}}\, |V_I({\bf x})|- 4\leq \frac{k}{\pi}d_{\infty}({\bf x},\bPi_{k}) \leq 2 \, \underset{I\in \mathcal{I}}{\textup{sup}}\, |V_I({\bf x})|+ 4\enspace .
\end{equation*}
\end{lem}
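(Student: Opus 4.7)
I will prove the two inequalities separately.

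For the \emph{lower direction} $\sup_I |V_I({\bf x})| - 4 \leq (k/\pi)\, d_\infty({\bf x},\bPi_k)$, set $\epsilon := d_\infty({\bf x},\bPi_k)$ and fix a matching ${\bf y} \in \bPi_k$ with $d(x_j, y_j) \leq \epsilon$ for all $j$. The coordinates of ${\bf y}$ are exactly the regular grid $\{e^{\iota 2\pi \ell/k}\}_{\ell=0}^{k-1}$. For any closed interval $I$, decompose
\[
V_I({\bf x}) = V_I({\bf y}) + (N_I({\bf x}) - N_I({\bf y})),
\]
with $|V_I({\bf y})| \leq 1$ because the regular grid has discrepancy at most one. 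When $|I| \leq 2\epsilon$, both $N_I({\bf x})$ and $N_I({\bf y})$ are bounded by the count of grid points in the $\epsilon$-thickening $I^\epsilon$ (of length $\leq |I| + 2\epsilon \leq 4\epsilon$), directly giving $|V_I({\bf x})| \leq k\epsilon/\pi + 1$. When $|I| > 2\epsilon$, the two $\epsilon$-neighborhoods of $\partial I$ are disjoint; at each endpoint, indices with $x_j$ and $y_j$ on opposite sides of $\partial I$ split into an in-flow and an out-flow contributing with opposite signs to the count, so the net change per endpoint has magnitude at most $\max(\textup{in-flow},\textup{out-flow}) \leq k\epsilon/(2\pi) + 1$. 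Summing across the two endpoints yields $|N_I({\bf x}) - N_I({\bf y})| \leq k\epsilon/\pi + 2$, hence $|V_I({\bf x})| \leq k\epsilon/\pi + 3$. Taking the supremum over $I$ concludes this direction.

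For the \emph{upper direction} $(k/\pi)\, d_\infty({\bf x},\bPi_k) \leq 2 \sup_I |V_I({\bf x})| + 4$, set $V := \sup_I |V_I({\bf x})|$ and relabel the (distinct) points in cyclic order $0 \leq \underline{x}_1 < \cdots < \underline{x}_k < 2\pi$. Applying $|V_I| \leq V$ to the closed intervals $I_j^\eta := [\underline{x}_1 + \eta, \underline{x}_j]$, each containing exactly $j-1$ points of ${\bf x}$ for small $\eta > 0$, and letting $\eta \to 0^+$, yields
\[
\bigl|(\underline{x}_j - \underline{x}_1) - 2\pi(j-1)/k\bigr| \leq 2\pi V/k \qquad \forall\, j \in [k].
\]
Thus ${\bf x}$ is at $d_\infty$-distance at most $2\pi V/k$ from the rotated regular tuple ${\bf y}$ with $y_j := e^{\iota(\underline{x}_1 + 2\pi(j-1)/k)}$. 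Because $\bPi_k$ only consists of permutations of a fixed grid, a final snap-to-grid step replaces the arbitrary phase $\underline{x}_1$ by the closest multiple of $2\pi/k$: picking $m \in \{0,\ldots,k-1\}$ with $|\underline{x}_1 - 2\pi m/k| \leq \pi/k$ and setting $y'_j := e^{\iota 2\pi(m+j-1)/k}$ gives ${\bf y}' \in \bPi_k$ with $d(y_j, y'_j) \leq \pi/k$. The triangle inequality then yields $d_\infty({\bf x},\bPi_k) \leq 2\pi V/k + \pi/k$, which is stronger than the stated bound.

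The main technical point is the per-endpoint accounting in the lower direction: the crude bound $|N_I({\bf x}) - N_I({\bf y})| \leq |A| + |B|$ over the full $\epsilon$-thickening of $\partial I$ yields a suboptimal factor $2k\epsilon/\pi + O(1)$, whereas matching the coefficient $1$ on $k\epsilon/\pi$ in the stated bound requires the refinement $|a - b| \leq \max(a,b)$ for nonnegative $a,b$ applied at each endpoint separately. A secondary subtlety is the snap-to-grid adjustment in the upper direction, which is unavoidable since $\bPi_k$ is the orbit of a fixed grid under the discrete symmetry group rather than under the continuous rotations of $\Ccal$.
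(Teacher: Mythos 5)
Your proof is correct, and while your upper direction essentially mirrors the paper's argument, your lower direction takes a genuinely different route. For the bound $\frac{k}{\pi}d_{\infty}({\bf x},\bPi_k)\leq 2\sup_I|V_I({\bf x})|+4$, both you and the paper anchor intervals at the first point in cyclic order, read off the position of the $j$-th point from the discrepancy bound, and pay an extra $O(1/k)$ to snap the resulting rotated grid into $\bPi_k$; the only difference is cosmetic (you snap the phase at the end with intervals $[\underline{x}_1+\eta,\underline{x}_j]$, the paper snaps $x_1$ first and compares arc lengths), and your constant $2V+1$ is indeed slightly better than required. For the bound $\sup_I|V_I({\bf x})|-4\leq \frac{k}{\pi}d_{\infty}({\bf x},\bPi_k)$, however, the paper argues by contradiction via a packing/pigeonhole step: for an arbitrary ${\bf x}^{**}\in\bPi_k$, if all matched grid points of the indices in $I$ were too close to their partners, they would be confined to an interval too short to hold $N_I({\bf x})$ equispaced points (with the opposite sign handled through the complementary interval). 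You instead fix an optimal matching ${\bf y}$, write $V_I({\bf x})=V_I({\bf y})+(N_I({\bf x})-N_I({\bf y}))$, and bound the count difference by a boundary-crossing (in-flow/out-flow) argument, using $|a-b|\leq\max(a,b)$ at each endpoint so that each endpoint contributes at most the number of grid points in a window of length $\varepsilon$, namely $k\varepsilon/(2\pi)+1$; this matches the constant $1$ in front of $k\varepsilon/\pi$ just as the paper's packing argument does. Both arguments ultimately exploit the same fact (the grid has density $k/(2\pi)$, so an $\varepsilon$-window holds about $k\varepsilon/(2\pi)$ points); yours is a direct count, the paper's is a contradiction that avoids committing to a matching. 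One small imprecision on your side: when $2\pi-|I|\leq 2\varepsilon$ the two \emph{outer} $\varepsilon$-neighbourhoods of $\partial I$ are not disjoint, so your stated disjointness claim fails in that regime; the bound survives because every crossing index with $y_j\notin I$ then lies in the complementary arc of length at most $2\varepsilon$ and can be attributed to the nearer endpoint (or one simply passes to the complementary interval, using $V_I({\bf x})=-V_{\overline{I}}({\bf x})$), but you should state this case explicitly.
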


Thus, to prove Lemma \ref{cl:equivBias:bis}, it is enough to show that,  for $T=S$, $S'$, and $\overline{S}\cap \overline{S}'$, one has
\begin{align}\label{goal:equiv:VI}
\mathbb{P}\left[\sup_{I\in\mathcal{I}}\left||V_I({\bf x}^*_{T})| - \frac{|T|}{n}|V_I({\bf x}^*)|\right|> C \sqrt{ n \log(n)} \right]\leq \frac{1}{n^3}\ . 
\end{align}
The next Lemma states a uniform concentration bound for $V_I$.  

\begin{lem}\label{VI:hyperGeo}
    Consider any integer $n> 4$ and any integer $k< n$.  Fix any ${\bf x} \in \Ccal^{ n}$. Sampling uniformly at random $k$ coordinates of ${\bf x}$ without replacement, we write ${\bf x}^{(k)} \in \Ccal^{ k}$ the resulting vector. Then, with probability higher than $1-1/n^3$, one has 
\begin{equation}
    \sup_{I \in \mathcal{I}}\left|\left|V_{I}({\bf x}^{(k)}) \right| -\frac{k}{n}  \left| V_{I}({\bf x})\right|\right|  \leq  6\sqrt{n \log(n)} \enspace .
\end{equation}
\end{lem}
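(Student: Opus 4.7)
The starting point is the algebraic identity
\begin{equation*}
\big||V_I({\bf x}^{(k)})| - \tfrac{k}{n}|V_I({\bf x})|\big| \;\leq\; \big|V_I({\bf x}^{(k)}) - \tfrac{k}{n}V_I({\bf x})\big| \;=\; \big|N_I({\bf x}^{(k)}) - \tfrac{k}{n}N_I({\bf x})\big|,
\end{equation*}
where the last equality holds because the terms $k|I|/(2\pi)$ coming from the definition~\eqref{def:Vi} of $V_I$ cancel. Hence it suffices to establish a uniform concentration for $N_I({\bf x}^{(k)})$ around its conditional mean $(k/n)N_I({\bf x})$, viewed as a random variable in ${\bf x}^{(k)}$ (with ${\bf x}$ fixed).

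For any fixed interval $I$, $N_I({\bf x}^{(k)})$ is the number of elements falling in $I$ when one draws $k$ items uniformly without replacement out of the $n$-population ${\bf x}$, of which exactly $N_I({\bf x})$ lie in $I$. It therefore follows a hypergeometric distribution with mean $(k/n)N_I({\bf x})$, and Hoeffding's inequality for sampling without replacement (Hoeffding, 1963) gives
\begin{equation*}
\mathbb{P}\!\left[\,\big|N_I({\bf x}^{(k)})-\tfrac{k}{n}N_I({\bf x})\big|>t\,\right]\;\leq\;2\exp(-2t^2/k).
\end{equation*}

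To lift this pointwise bound to a supremum over $I\in\mathcal{I}$, I would reduce to a finite (polynomial-size) collection of representative intervals. The map $I\mapsto N_I({\bf x})$ is piecewise constant and its value changes only when an endpoint of $I$ crosses one of the $n$ points of ${\bf x}$. Since ${\bf x}^{(k)}$ is a subset of ${\bf x}$, the map $I\mapsto N_I({\bf x}^{(k)})$ is constant on the same cells. Picking one representative per cell yields a set $\mathcal{I}_0\subset\mathcal{I}$ with $|\mathcal{I}_0|\leq C n^2$ on which both functions, and hence the whole quantity to control, attain all their possible values. A union bound applied with $t=6\sqrt{n\log n}$, using $k\leq n$ so $2t^2/k\geq 72\log n$, gives
\begin{equation*}
\mathbb{P}\!\left[\sup_{I\in\mathcal{I}}\big|N_I({\bf x}^{(k)})-\tfrac{k}{n}N_I({\bf x})\big|>6\sqrt{n\log n}\right] \;\leq\; 2|\mathcal{I}_0|\,n^{-72} \;\leq\; n^{-3}
\end{equation*}
for $n>4$, which together with the initial algebraic step yields the claim.

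The routine calculation is the combination of the Hoeffding tail and the union bound; no real obstacle arises. The only subtlety to make precise is the definition and cardinality of the discretisation $\mathcal{I}_0$ (which requires handling the torus topology and the possibility that endpoints of $I$ coincide with points of ${\bf x}$); this can be bypassed by placing each representative endpoint at one of $2n$ fixed offsets slightly inside or outside each $\underline{x}_i$, giving $|\mathcal{I}_0|\leq 4n^2$.
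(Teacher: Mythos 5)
Your proof is correct and follows essentially the same route as the paper: Hoeffding's inequality for the hypergeometric count $N_I({\bf x}^{(k)})$ at a fixed interval, then a union bound over an $O(n^2)$-size family of intervals anchored at the data points, extended to all of $\mathcal{I}$. Your preliminary cancellation of the $k|I|/(2\pi)$ terms makes the extension to arbitrary intervals exact (the paper instead approximates a general $I$ by a net interval $I_n\subset I$ and absorbs an additive error of $4$), a minor streamlining rather than a different argument.
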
 
Since the marginal distributions of $S$, $S'$, and $\overline{S}\cap \overline{S}'$ are uniform, we can apply  Lemma~\ref{VI:hyperGeo} to ${\bf x}^*_S$, ${\bf x}^*_{S'}$, and ${\bf x}^*_{\overline{S}\cap \overline{S}'}$ and the conclusion of the Lemma holds with probability higher than $1-3/n^3$, which is higher than $1-1/n^2$. 
\hfill $\square$

\begin{proof}[Proof of Lemma \ref{VI:hyperGeo}]
 We start with a fixed interval $I \in \mathcal{I}$. Since $N_I(\textbf{x}^{(k)})$ is a hypergeometric random variable with parameters $(k,\frac{N_I(\textbf{x})}{n},n)$, we can invoke Hoeffding inequality \eqref{Hoeffding}  for hypergeometric distributions and get
\begin{equation}\label{Hoefdin:HyperGep:appli}
\P \left( \left|N_I(\textbf{x}^{(k)}) - k\frac{N_I(\textbf{x})}{ n}\right| \geq \sqrt{\frac{7 k \log(n)}{2}} \right) \leq \frac{2}{n^7} \leq \frac{1}{n^6} \enspace .
\end{equation}
We combine \eqref{Hoefdin:HyperGep:appli} with 
$$ N_I(\textbf{x}^{(k)}) - k\frac{|I|}{2\pi}  = \left( N_I(\textbf{x}^{(k)}) - k\frac{N_I(\textbf{x})}{n} \right) +  \frac{k}{n} \left( N_I(\textbf{x}) -  n\frac{|I|}{2\pi} \right) \enspace,$$
to conclude that 
\begin{equation}\label{resultForOneInt}
 \P\left[\left||V_I(\textbf{x}^{(k)}) |  -   \frac{k}{ n}  | V_I(\textbf{x})| \right| \leq  \sqrt{\frac{7 n \log(n)}{2}} \right]\leq \frac{1}{n^6}  \enspace .
\end{equation}

In order to extend \eqref{resultForOneInt} to all intervals $I \in \mathcal{I}$, we use an $\ep$-net approach with a  subcollection $\mathcal{I}_n(\textbf{x})$ of $\mathcal{I}$. Let $\mathcal{I}_n(\textbf{x})$ be the collection of all intervals $I_n=[a_n, b_n]$ where $a_n, b_n \in \{\underline{x}_1\ldots,\underline{x}_{n}\} \cup  \Ccal_{ n} $, i.e., $a_n, b_n$ are either coordinates of $\textbf{x}$ or elements of the $ n$-regular grid $\{ 2\pi i/ n ; \ \, i\in [ n]\}$. We then apply~\eqref{resultForOneInt} together with a union bound over all intervals $I \in \mathcal{I}_n(\textbf{\textup{x}})$. Since  $|\mathcal{I}_n(\textbf{\textup{x}})|\leq   (2 n)^2 \leq n^3$, we obtain
\begin{equation}\label{nbrpt:inter:lem:proof:bis:new}
\sup_{I \in \mathcal{I}_n(\textbf{\textup{x}})}\left|\left|V_{I}(\textbf{x}^{(k)}) \right| -\frac{k}{n}  \left| V_{I}(\textbf{x})\right|\right|  \leq \sqrt{ \frac{7n \log(n)}{2}} \enspace , \end{equation}
with probability higher than $1-1/n^3$. 

To obtain \eqref{nbrpt:inter:lem:proof:bis:new} for all  $I\in \mathcal{I}$, we observe that, for any $I\in \mathcal{I}$, there exists $I_n \in \mathcal{I}_n(\textbf{x})$ such that
\begin{equation}\label{proof:claim:approx:decomp:bis}
I = I^{(l)} \cup I_n \cup I^{(r)},    
\end{equation}
where $I^{(l)}$ and $I^{(r)}$ are two closed intervals of $I\setminus{I_n}$ whose  lengths are smaller than $2\pi/n$  and that satisfy $N_{I^{(l)}}(\textbf{x}) = N_{I^{(r)}}(\textbf{x}) = 0$. In particular, we have $N_{I^{(l)}}(\textbf{x}^{(r)}) = N_{I^{(l)}}(\textbf{x}^{(k)}) = 0$. We then deduce that 
\begin{align*}
V_I(\textbf{x}^{(k)}) &= V_{I^{(l)}}(\textbf{x}^{(k)}) + V_{I_n}(\textbf{x}^{(k)}) + V_{I^{(r)}}(\textbf{x}^{(k)})\\
& = -k\frac{|I^{(l)}|}{2\pi} + V_{I_n}(\textbf{x}^{(k)}) - k\frac{|I^{(r)}|}{2\pi}\enspace .
\end{align*} 
Since the same decomposition holds for $V_I(\textbf{x})$, we get
\[
    \left||V_I(\textbf{x}^{(k)})| - \frac{k}{n}  |V_{I}(\textbf{x})|\right|\leq 4 + \left||V_{I_n}(\textbf{x}^{(k)})| - \frac{k}{n}  |V_{I_n}(\textbf{x})|\right|
\]
Together with~\eqref{nbrpt:inter:lem:proof:bis:new}, we obtain 
\[
    \sup_{I \in \mathcal{I}}\left|\left|V_{I}(\textbf{x}^{(k)}) \right| -\frac{k}{n}  \left| V_{I}(\textbf{x})\right|\right|  \leq  4+ \sqrt{\frac{7n \log(n)}{2}} \enspace ,
\]
with probability higher than $1-1/n^3$. Lemma \ref{VI:hyperGeo} is proved
\end{proof}

\bigskip

\begin{proof}[Proof of Lemma \ref{equivAlphaVi}]
We first prove  the upper bound $$k d_{\infty}({\bf x},\bPi_{k}) \leq 2\pi [\underset{I\in \mathcal{I}}{\textup{sup}}\, |V_I({\bf x})| + 2]\enspace.$$ 
Recall that for a vector ${\bf x}= (x_1, x_2, \ldots,x_k) \in \Ccal^{ k}$, we say that ${\bf x}$ is ordered, if these points are consecutive when one walks on the sphere with the trigonometric direction. Without loss of generality and for ease of exposition, we assume that the identity permutation is a latent order, that is $x_{1},\ldots,x_{k}$ is ordered.

We define ${\bf x}^{**}_S = (x_1^{**},\ldots,x_{k}^{**})$ a vector of $\bPi_{k}$ as follows. The first point $x_1^{**}\in \Ccal_{k}$ is a closest point to $x_1^{*}$ with respect to $d$ and the other points $x_{j+1}^{**}$ are elements of $\Ccal_k$ with  arguments 
\[ 
\underline{x}_{j+1}^{**}= \underline{x}_1^{**}+ j\frac{2\pi }{k} \ \, (\mathrm{mod}\, 2\pi)\, ,\quad \quad \text{ for }j=1,\ldots,k-1\enspace .
\]
Fix any $i \in \{2,\ldots, k\}$ and consider the intervals $I_i=[\underline{x}_1, \underline{x}_i]$ and $I'_i=[\underline{x}_1, \underline{x}_i^{**}]$. We have 
 \begin{equation}\label{eq:nul:pro:lem:truPos:bis}
d(x_i,x_i^{**}) \leq \big{|}|I_i| - |I'_i|\big{|}\enspace .  
\end{equation}
Observe that $N_{I_i}(\textbf{x}) = i$ since $x_{1}, \ldots, x_{k}$ are ordered and all distinct. Hence, 
\begin{equation}\label{eq1:proof:lem:app:truPos:bis}
\left|\frac{2 \pi i}{k} -|I_i| \right| = \left|\frac{2 \pi N_{I_i}}{k} -|I_i| \right| = 2\pi  \frac{|V_{I_i}(\textbf{x})|}{k} \leq 2\pi\,  \underset{I\in \mathcal{I}}{\textup{sup}}\, \frac{|V_I(\textbf{x})|}{k}\enspace .   
\end{equation}
Besides, we know that the length of $I'_i$ is equal to $\big{|}[\underline{x}_1^{**}, \underline{x}_i^{**}]\big{|}$ up to an additional term $d(x_1,x_1^{**})$, that is
$$\left||I'_i|- \big{|}[\underline{x}_1^{**}, \underline{x}_i^{**}]\big{|} \right|  \leq  d(x_1,x_1^{**})\enspace .$$ By construction of the $x^{**}_j$'s,  we have $d(x_1,x_1^{**}) \leq 2\pi /k$  and $\big{|}[\underline{x}_1^{**}, \underline{x}_i^{**}]\big{|} = 2\pi (i-1) /k$. Hence, 
we obtain $  k||I'_i|-2\pi i  |\leq  4\pi$. 
We then deduce from \eqref{eq1:proof:lem:app:truPos:bis} and the triangular inequality  that    $ k\big{|}|I_i| - |I'_i|\big{|} \leq2\pi \,  \underset{I\in \mathcal{I}}{\textup{sup}}\, |V_I| +   4\pi$. Coming back to \eqref{eq:nul:pro:lem:truPos:bis}, taking the supremum over all $i\in \{2,\ldots, k\}$, and noting that $d(x_1,x_1^{**})\leq 2\pi/k$  leads us to 
$$d_{\infty}({\bf x}, {\bf x}^{**}) \leq \frac{2\pi}{k}  \left[\underset{I\in \mathcal{I}}{\textup{sup}}\, |V_i|+2\right]\enspace ,$$
where ${\bf x}^{**}\in \bPi_k$. Finally, we take the minimum over $\bPi_k$ to get the desired bound.

\bigskip

We now turn to the lower bound  $ k d_{\infty}({\bf x},\bPi_{k}) \geq \pi  [\underset{I\in \mathcal{I}}{\textup{sup}}\, |V_I({\bf x})|-4] $. 
Consider any such interval $I$ and $\xbf^{**}\in \bPi_k$. Since the entries of $\xbf^{**}$ are regularly spaced on $\Ccal$, it follows that 
$|N_I(\xbf^{**})-k|I|/(2\pi)|\leq 1$ so that  $|V_I(\xbf)|\leq |N_I(\xbf) - N_I(\xbf^{**})|+1$. 
Now, assume that $N_I(\xbf)> N_{I}(\xbf^{**})$. We claim that $\sup_{j:\, x_j\in I}|x_j-x_j^{**}|\geq \frac{\pi}{k}[|N_I(\xbf) - N_I(\xbf^{**})|-3]$. Otherwise, the set of $x_j^{**}$ with $j$ satisfying $x_j\in I$ is included in an interval of size 
\[
    |I|+ \frac{2\pi}{k}[N_I(\xbf) - N_I(\xbf^{**})-3]\leq \frac{2\pi}{k}[N_I(\xbf) -2]\enspace .
\]
This contradicts the fact that  this set of equi-spaced points has size $N_I(\xbf)$. If $N_I(\xbf)< N_{I}(\xbf^{**})$, we simply consider the complement\footnote{Although $\overline{I}$ is an open interval, the arguments are still valid.} interval $\overline{I}$ that satisfies $N_{\overline{I}}(\xbf)= k -N_I(\xbf)$ and  $N_{\overline{I}}(\xbf^{**})= k- N_{I}(\xbf^{**})$ to conclude that  
\[
    \sup_{j:\, x_j\in \overline{I}}|x_j-x_j^{**}|\geq \frac{\pi}{k}\left[|N_{\overline{I}}(\xbf) - N_{\overline{I}}(\xbf^{**})|-3\right]=  \frac{\pi}{k}\left[|N_I(\xbf) - N_I(\xbf^{**})|-3 \right]\enspace .
\]
Putting everything together, we have shown that 
\[
     d_{\infty}(\xbf,\xbf^{**})\geq  \frac{\pi}{k}\left[|N_I(\xbf) - N_I(\xbf^{**})|-3 \right]\geq \frac{\pi}{k}\left[|V_I(\xbf) |-4 \right]\ . 
\]
Taking the infimum over $\xbf^{**}$ and the supremum over $I$ leads to the desired result.

\end{proof}

\subsection{Proof of Corollary \ref{coro:iid:perf}}\label{append:coro}

Theorem~\ref{cor:thm:bis} ensures that, conditionally to ${\bf x}^*$, 
 \begin{equation*}
    \min_{Q\in \mathcal{O}}d_{\infty}(\hat{{\bf x}},Q{\bf x}^*) \leq C'_{lLe} \left(  d_{\infty}({\bf x}^*,\bPi_n) +  \sqrt{\frac{\log(n)}{n}}\right) \enspace ,
\end{equation*} with probability at least $1-5/n^2$. Thus,  it suffices to show that, with probability at least $1-2/n^2$, one has  
 \begin{equation*}
   d_{\infty}({\bf x}^*,\bPi_n)  \leq C  \sqrt{\frac{\log(n)}{n}} \enspace,
\end{equation*} for some $C>0$. 
We shall rely on  Dvoretzky–Kiefer–Wolfowitz (DKW) inequality. Indeed, the arguments $\underline{x}_1^*,\ldots, \underline{x}_n^*$ are independent and uniformly distributed on $[0,2\pi)$. Besides, any interval $I$ of the torus $\mathbb{R}/(2\pi)$ can be represented as a union of at most two intervals of $[0,2\pi)$. For any interval $I$, we denote $|I|$ its length and $N_I({\bf x}^*)$ the number of points $x^*_i$ whose argument lies in $I$. Then, we deduce from  DKW inequality that, for any $t >0$,
\begin{equation*}
   \P\left( \underset{I \subset \,  \mathbb{R}/(2\pi)}{\textup{sup}} \left|\frac{N_I({\bf x}^*)}{n} -\frac{|I|}{2\pi}\right| > 4t \right) \leq 2 e^{-2 nt^2} \enspace.
\end{equation*}
We then choose $t = \sqrt{\log(n)/n}$ to obtain 
\begin{equation*}
   \P\left( \underset{I \subset \,  \mathbb{R}/(2\pi)}{\textup{sup}} \left|\frac{N_I({\bf x}^*)}{ n} -\frac{|I|}{2\pi}\right| > 4 \sqrt{\frac{\log(n)}{n}} \right) \leq \frac{2}{n^2} \enspace,
\end{equation*}
 Besides, by Lemma \ref{equivAlphaVi}, we know that the quantity $V_I({\bf x}^*) = N_I({\bf x}^*) -\frac{ n|I|}{2\pi}$ introduced in \eqref{def:Vi} satisfies 
$$ d_{\infty}({\bf x}^*,\bPi_{n}) \leq C\left( \underset{I\subset \,  \mathcal{I}}{\textup{sup}}\, \frac{|V_I({\bf x}^*)|}{ n} + \frac{1}{ n}\right) \enspace ,$$
where $\mathcal{I}$ stands for the set of interval on the torus $\mathbb{R}/(2\pi)$. The last two displays lead to the desired result.
 \hfill $\square$

\section{Proof of the identifiability results and minimax lower bound }\label{proof:lowerbound}

\subsection{Proof of Proposition~\ref{prop_identif}}
For simplicity, we assume that $n/8$ is an integer in the rest of the example and we write $n= 8 n_1$. The construction of $f'$ mainly amounts to contracting the function $f$ in some regions and dilating it in other regions which allows to contracting and dilating the positions ${\bf x}$.

 Consider a partition of the latent space $\Ccal = \Ccal_1 \cup \Ccal_2 \cup \Ccal_3$ in three arcs $\Ccal_1=(\underline{x}_{n},\underline{x}_{ n_1}] = (0,\pi/4]$, $\Ccal_2=(\underline{x}_{n_1},\underline{x}_{4n_1}]=(\pi/4, \pi]$ and $\Ccal_3=(\underline{x}_{4n_1}, \underline{x}_{ n}]=(\pi, 2\pi]$. For $x$ and $y$ belonging $\Ccal_1$, define $f'_1(x,y)$ by  $f'_1(x,y)=  1 - d(x,y)/\pi$. For $k=1,\ldots, 2n_1$, define $x'_k= e^{ \iota k \pi/n}$ and let $x'_{n}=x_{ n}=1$. In other words, we contract the positions $x_k$ for $k=1,\ldots, 2n_1$. Although we have not yet completely defined ${\bf x}'$, we already can certify that  $\min_{Q\in \mathcal{O}}d_{\infty}({\bf x},Q{\bf x}') \geq \pi /8$. Besides, we have $f'_1(x'_i,x'_j)=f(x_i,x_j)$  for all $i,j \in [2n_1]\cup \{n\}$.

For $x$ and $y$ in $\Ccal_2$, we define  $f'_2(x,y)$ by $f'_2(x,y)=  1 - d(x,y)/(3\pi)$. For $k=1,\ldots, 2n_1$, we set  $x'_{k+ 2n_1}= e^{ \iota \pi/4} e^{ \iota k 3\pi/n}$. Again, observe that $f'_2(x'_i,x'_j)=f(x_i,x_j)$ for all integers $i,j$  in  $[2n_1+1,4n_1]$. Finally, for $x$ and $y$ in $\Ccal_3$, set $f'_3(x,y)= f(x,y)$, and let $x'_k= x_k$ for all integers $k=4n_1+1,\ldots, n-1$. Obviously, we have $f'_3(x'_i,x'_j)=f(x_i,x_j)$ for all integers $i,j \in [4n_1,n]$. 

It remains to deal with the situations where the pairs of points lie in different parts of the partition $\Ccal_1 \cup \Ccal_2 \cup \Ccal_3$. In the case where $x \in \Ccal_1$ and $y \in \Ccal_2$, define $f'_{1-2}(x,y)=  f'_1(x,e^{\iota\pi/4}) + f'_2(e^{\iota\pi/4},y)-1.$ For all integers $i \in [0,2n_1]$ and $j \in [2n_1,4n_1]$, we have already seen that $f'_1(x'_i,e^{\iota\pi/4}) = f(x_i,e^{\iota\pi/2})$ and $f'_1(e^{\iota\pi/4},x'_j) = f(e^{\iota\pi/2},x_j)$. Hence $f'_{1-2}(x'_i,x'_j) = f(x_i,e^{\iota\pi/2}) + f(e^{\iota\pi/2},x_j)-1 = f(x_i,x_j)$.

In the case where $x \in \Ccal_1$ and $y \in \Ccal_3$, define $f'_{1-3}(x,y)=  f'_1(x,e^{\iota0}) + f'_3(e^{\iota0},y)-1$ if the length of the arc $[x,e^{\iota0}]\cup (e^{\iota0},y]$ is less than $\pi$; otherwise, set $f'_{1-3}(x,y)=  f'_1(x,e^{\iota\pi/4}) + f'_2(e^{\iota\pi/4},e^{\iota\pi})+ f'_3(e^{\iota\pi},y)-2.$ Since $f$ admits similar decompositions, one can deduce from the above that $f'_{1-3}(x'_i,x'_j)  = f(x_i,x_j)$ for all $i\in [1, 2n_1]$ and $j \in [4n_1+1,n]$.

 The remaining cases can be handled in the same manner. Finally, we define the symmetric function $f'$ on $\Ccal\times \Ccal$ relying on $f'_1$, $f'_2,$ $f'_3$, $f'_{1-2}$, $f'_{1-3}$, and $f'_{2-3}$. Then, we can readily check that  $f \in \mathcal{BL}[(3\pi)^{-1},\pi^{-1},0]$ and that $f(x_i,x_j)= f'(x'_i,x'_j)$ for all $i,j\in[n]$. As a consequence, $({\bf x}',f')$ belongs to $\Rcal[F,(3\pi)^{-1},\pi^{-1},0]$. One easily check that ${\bf x}'\in \mathcal{S}_{ev}$ and the result follows. 
 \hfill $\square$

\subsection{Proof of~\eqref{eq:identif:Q} in Proposition~\ref{prop:representative}}\label{sec:prop_representative:deuxieme_preuve}


We show in the paragraph below that~\eqref{eq:identif:Q}  is a consequence of the proof of Theorem~\ref{cor:thm:bis} in the noiseless case ($E=0$), after application of the triangular inequality. Indeed, since the noise is equal to zero, the conclusion of Theorem~\ref{cor:thm:bis} is deterministic (and not with high probability anymore), so it can be used to prove  deterministic inequalities such as \eqref{eq:identif:Q}. By doing so, we establish \eqref{eq:identif:Q} via our localization algorithm (Theorem~\ref{cor:thm:bis}), though \eqref{eq:identif:Q} is an approximation result (independent of any algorithm) which could be proved directly.

Consider any two representations $(\xbf, f)$ and $(\xbf', f')$ in $R[F,c_l,c_L,c_e]$ and apply our Localize-and-Refine procedure to noiseless observations $A=F$. The conclusion of Theorem~\ref{cor:thm:bis} applies to both $\xbf$ and $\xbf'$, so that we have 
\begin{eqnarray*}
    \min_{Q\in\Ocal}d_{\infty}(\hat \xbf,Q\xbf) &\leq &C'_{lLe} \left(d_{\infty}(\xbf,\bPi_{n})+ \sqrt{\log(n)\over n}\right) \\
    \min_{Q\in\Ocal}d_{\infty}(\hat \xbf,Q\xbf') & \leq &C'_{lLe} \left(d_{\infty}(\xbf',\bPi_{n})+ \sqrt{\log(n)\over n}\right)\enspace .    
\end{eqnarray*}
Hence, it follows from the triangular inequality that 
\begin{eqnarray*}
    \min_{Q\in \Ocal}d_{\infty}(\xbf,Q\xbf')&=& \min_{Q_1,\ Q_2\in\Ocal}d_{\infty}(Q_1\xbf,Q_2\xbf')\\
    &\leq& C'_{lLe} \left(d_{\infty}(\xbf,\bPi_{n})+d_{\infty}(\xbf',\bPi_{n})+ 2\sqrt{\log(n)\over n}\right)\enspace . 
\end{eqnarray*}

\subsection{Proof of Theorem \ref{thm:lowerBound}}
\label{append:LB}

We  establish the lower bound $\sqrt{\log(n)/n}$ in the particular setting where the observations $A_{ij}$ are independent Bernoulli random variables of parameters $F_{ij}=f_0(x_i,x_j)$, for the specific function \begin{equation}\label{def:f:lowerbound}
    f_0(x_i,x_j) = (3/4)- d(x_i,x_j)/(4\pi),
\end{equation}with $\textup{{\bf x}}=(x_1,\ldots,x_{n}) \in \bPi_n.$ The corresponding probability distribution is denoted by $\P_{(\textup{{\bf x}},f_0)}$. 

This minimax lower bound is based on Fano's method as stated below. For two configuration ${\bf x}$ and ${\bf x}'$ in $\bPi_n$, we denote the Kullback-Leibler divergence of $\P_{(\textup{{\bf x}},f_0)} $ and $\P_{(\textup{{\bf x}}',f_0)}$ by $KL(\P_{(\textup{{\bf x}},f_0)} \| \P_{(\textup{{\bf x}}',f_0)})$.  Besides, we quantify the quasi-metric $\rho({\bf x},{\bf x}') = \min_{Q\in \mathcal{O}}d_{\infty}({\bf x},Q{\bf x}')$.  Given a radius $\delta>0$ and a subset $\Scal\subset \bPi_n$, the packing number $\mathcal{M}(\delta, \mathcal{S}', \rho)$ is defined as the largest number of points in $\mathcal{S}'$ that are at quasi-distance  $\rho$ at  least $\delta$ away from each other. Below, we state a specific version of Fano's lemma. 

\begin{lem}[from \cite{yu1997assouad}]\label{fano:prop}
Consider any subset $\mathcal{S}'\subset \bPi_n$. Define the Kullback-Leibler diameter of $\mathcal{S}'$ by 
\begin{equation*}
    d_{KL}(\mathcal{S}')= \underset{{\textup{{\bf x}}},{\textup{{\bf x}}}'\in \mathcal{S}'}{\textup{sup}} KL(\P_{(\textup{{\bf x}},f_0)} \| \P_{({\textup{{\bf x}}}',f_0)})\enspace .
\end{equation*}Then, for any estimator $\hat{\textup{{\bf x}}}$ and for any $\delta>0$, we have
\begin{equation*}
    \underset{\textup{{\bf x}} \in \mathcal{S}'}{\textup{sup}} \quad  \P_{(\textup{{\bf x}},f_0)}\cro{\rho(\hat{{\textup{{\bf x}}}},{\textup{{\bf x}}}) \geq \frac{\delta}{2}} \geq 1 - \frac{d_{KL}(\mathcal{S}') + \log(2)}{\log \mathcal{M}(\delta, \mathcal{S}', \rho)}\enspace  .
\end{equation*}
\end{lem}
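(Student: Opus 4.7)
The plan is to derive this Fano-type lemma from the standard Fano inequality by the classical packing-reduction argument. The key observation is that although $\rho$ is defined as a quasi-distance via $\rho(\xbf,\xbf')=\min_{Q\in\Ocal}d_{\infty}(\xbf,Q\xbf')$, it actually satisfies the triangle inequality: given any $Q_2,Q_3\in\Ocal$, one has $d_{\infty}(Q_2\xbf', Q_2Q_3\xbf'')=d_{\infty}(\xbf', Q_3\xbf'')$ because $Q_2$ preserves the geodesic distance on $\Ccal$. Taking $Q_2$ and $Q_3$ to be the minimisers in the definitions of $\rho(\xbf,\xbf')$ and $\rho(\xbf',\xbf'')$, the triangle inequality $d_{\infty}(\xbf,Q_2Q_3\xbf'')\leq d_{\infty}(\xbf,Q_2\xbf')+d_{\infty}(Q_2\xbf',Q_2Q_3\xbf'')$ then yields $\rho(\xbf,\xbf'')\leq \rho(\xbf,\xbf')+\rho(\xbf',\xbf'')$, so $\rho$ behaves as a pseudo-metric for all our purposes.

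Next, I would introduce a maximal $\delta$-packing $\ac{\xbf^{(1)},\ldots,\xbf^{(M)}}\subset \Scal'$ with $M=\mathcal{M}(\delta,\Scal',\rho)$, and reduce the estimation problem to a multiple testing problem with uniform prior $\theta\sim \mathrm{Unif}([M])$ and, conditionally on $\theta$, $A\sim \P_{(\xbf^{(\theta)},f_0)}$. To any estimator $\hxbf$ I associate the test $\hat\theta\in\argmin_{i\in[M]}\rho(\hxbf,\xbf^{(i)})$. The triangle inequality for $\rho$ combined with the packing property $\rho(\xbf^{(i)},\xbf^{(j)})\geq \delta$ for $i\neq j$ ensures that on the event $\ac{\rho(\hxbf,\xbf^{(\theta)})<\delta/2}$ one has $\hat\theta=\theta$. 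Hence
\[
\sup_{\xbf\in\Scal'}\P_{(\xbf,f_0)}\!\cro{\rho(\hxbf,\xbf)\geq \delta/2}\geq \frac{1}{M}\sum_{i=1}^{M}\P_{(\xbf^{(i)},f_0)}\!\cro{\hat\theta\neq i}=\P\cro{\hat\theta\neq \theta}.
\]

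I then apply Fano's inequality in its classical form $\P[\hat\theta\neq \theta]\geq 1-(I(\theta;A)+\log 2)/\log M$. The mutual information is controlled by the KL-diameter via the standard convexity bound
\[
I(\theta;A)=\frac{1}{M}\sum_{i=1}^{M}KL\!\pa{\P_{(\xbf^{(i)},f_0)}\ \Big\|\ \frac{1}{M}\sum_{j=1}^{M}\P_{(\xbf^{(j)},f_0)}}\leq \frac{1}{M^2}\sum_{i,j=1}^{M}KL\!\pa{\P_{(\xbf^{(i)},f_0)}\|\P_{(\xbf^{(j)},f_0)}}\leq d_{KL}(\Scal'),
\]
where the first inequality follows from the joint convexity of the KL divergence applied to $\bar\P=\tfrac{1}{M}\sum_j\P_{(\xbf^{(j)},f_0)}$. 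Combining these three ingredients yields exactly the conclusion of the lemma.

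The argument is essentially standard and contains no real obstacle; the only subtle point is the triangle inequality for the quasi-metric $\rho$ induced by the orthogonal group action, which must be checked before the testing reduction can be carried out.
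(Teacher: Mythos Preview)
The paper does not prove this lemma; it is stated with attribution to \cite{yu1997assouad} and used as a black box. Your proof is correct and is precisely the standard Fano--packing reduction one finds in Yu's survey or in Tsybakov's book: reduce to an $M$-ary test via a maximal $\delta$-packing, use the triangle inequality for $\rho$ to show that $\rho(\hxbf,\xbf^{(\theta)})<\delta/2$ forces $\hat\theta=\theta$, apply Fano, and bound the mutual information by the KL-diameter via convexity. The one point you leave implicit is the \emph{symmetry} of $\rho$, which you need alongside the triangle inequality in the testing step; it follows from the same isometry observation you used (since $d_\infty(\xbf,Q\xbf')=d_\infty(Q^{-1}\xbf,\xbf')$ and $Q\mapsto Q^{-1}$ is a bijection of $\Ocal$).
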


In view of the above proposition, we mainly have to choose a suitable subset $\mathcal{S}'$, control its Kullback diameter, and get a sharp lower bound of its packing number. The main difficulty stems from the fact that the loss function $\rho({\bf x},{\bf y}) = \min_{Q\in \mathcal{O}}d_{\infty}({\bf x},Q{\bf y})$ is a minimum over a collection of orthogonal transformations. It is therefore challenging to  derive  a tight lower bound for this loss.

Let $k :=  C' \sqrt{n\log(n)}$, for a small enough constant $C' \in (0,1]$ that will be set later. Define $n/2$ vectors ${\bf x}^{(s)} \in \bPi_n$, $s=1,\ldots,n/2$, as follows. For each $s\in [n/2]$, we define $x_j^{(s)}$ by its argument $\underline{x}^{(s)}_j$
 \begin{align*}
    \underline{x}^{(s)}_j &= \frac{2\pi j}{ n} \ ,  \qquad \forall j \in [n]\setminus{\{ s, s+k\}}\ ,
    \underline{x}^{(s)}_s = \frac{2\pi (s+k)}{ n} ,\quad 
    \underline{x}^{(s)}_{s+k} = \frac{2\pi s}{ n}\enspace .
\end{align*}
Each vector of arguments $\underline{\bf x}^{(s)}$ is therefore equal to the vector $(2\pi j /n)_{j\in[n]}$ up to an exchange of the positions $2\pi s / n$ and $2\pi (s+k)/n$. This collection of $n/2$ vectors is denoted by $\mathcal{S}':=\{{\bf x}^{(1)},\ldots,{\bf x}^{(n/2)}\}$. Obviously $\mathcal{S}'\subset \bPi_n$, and one can readily checks that 
\begin{equation}\label{separation}
 \rho({\bf x}^{(t)},{\bf x}^{(s)}) \geq \frac{\pi k}{n} \enspace, \qquad \forall s,t \in \left[\frac{n}{2}\right], \, s\neq t \enspace,
\end{equation} which in turn ensures that the packing number $\mathcal{M}(\de_n,\mathcal{S}',\rho)$ of radius $\de_n := \pi k /n$ satisfies  
$\mathcal{M}(\de_n,\mathcal{S}',\rho) \geq n/2$.
To upper bound the KL diameter of $\mathcal{S}'$, we use the following claim whose proof is postponed to the end of the section.
\begin{cl}\label{claim:divergenceBernouilli}For any ${\bf x}, {\bf x}' \in \Ccal^{  n},$ we have 
$$KL(\P_{({\bf x},f_0)} \| \P_{({\bf x}',f_0)}) \leq 8 \sum_{i,j} (f_0(x_i,x_j)-f_0(x_i',x_j'))^2\enspace. $$
\end{cl}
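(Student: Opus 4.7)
The plan is to use the independence of the entries of $A$ under both measures to turn the KL divergence into a sum of Bernoulli KL divergences, then apply a standard quadratic bound on $KL(\mathrm{Ber}(p)\|\mathrm{Ber}(q))$, exploiting the fact that the specific choice of $f_0$ in~\eqref{def:f:lowerbound} is bounded away from $0$ and $1$.

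First, under both $\P_{({\bf x},f_0)}$ and $\P_{({\bf x}',f_0)}$, the entries $\{A_{ij}\}_{1\leq i<j\leq n}$ are mutually independent Bernoulli variables, with success probabilities $f_0(x_i,x_j)$ and $f_0(x'_i,x'_j)$ respectively. The chain rule for relative entropy therefore gives
\begin{equation*}
KL(\P_{({\bf x},f_0)} \,\|\, \P_{({\bf x}',f_0)}) \;=\; \sum_{i<j} KL\bigl(\mathrm{Ber}(f_0(x_i,x_j)) \,\big\|\, \mathrm{Ber}(f_0(x'_i,x'_j))\bigr).
\end{equation*}

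Second, for any $p,q\in(0,1)$ a direct calculation (applying $\log(1+u)\leq u$ to both of the two terms defining $KL(\mathrm{Ber}(p)\|\mathrm{Ber}(q))$) yields the well-known quadratic bound
\begin{equation*}
KL(\mathrm{Ber}(p) \,\|\, \mathrm{Ber}(q)) \;\leq\; \frac{(p-q)^2}{q(1-q)}.
\end{equation*}
Third, since $d(x,y)\in[0,\pi]$, the function $f_0(x,y)=3/4-d(x,y)/(4\pi)$ takes values in $[1/2,3/4]$, so $f_0(1-f_0)\geq (1/2)(1/4)=1/8$. Plugging this into the previous display gives $KL(\mathrm{Ber}(f_0(x_i,x_j))\|\mathrm{Ber}(f_0(x'_i,x'_j)))\leq 8(f_0(x_i,x_j)-f_0(x'_i,x'_j))^2$, and summing over $i<j$ (then trivially enlarging to a sum over all pairs $(i,j)$) yields the stated bound.

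There is no real obstacle: the argument is routine. The only substantive step is the uniform lower bound $f_0(1-f_0)\geq 1/8$, which is exactly why the particular affine function~\eqref{def:f:lowerbound} was chosen with range confined to the interior $[1/2,3/4]\subset(0,1)$; any other Bi-Lipschitz $f$ bounded away from $0$ and $1$ would give an analogous claim with a different constant.
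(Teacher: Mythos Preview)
Your proof is correct and essentially identical to the paper's: both factor the KL divergence into independent Bernoulli terms, apply the standard bound $KL(\mathrm{Ber}(p)\|\mathrm{Ber}(q))\leq (p-q)^2/(q(1-q))$ via $\log t\leq t-1$, and then use that $f_0$ is bounded away from $0$ and $1$ to get the constant $8$. The only cosmetic difference is that you record the (tighter) range $f_0\in[1/2,3/4]$ while the paper uses $[1/4,3/4]$; both yield $q(1-q)\geq 1/8$.
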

Together with the definition \eqref{def:f:lowerbound} of $f_0$, we get 
$$KL(\P_{({\bf x}^{(t)},f_0)} \| \P_{({\bf x}^{(s)},f_0)})\leq C n \de_n^2 \leq C(C')^2 \log(n) \enspace ,$$ for some numerical constant $C$. Then, choosing the constant $C'$ in the definition of $k$ such that  $C' = (2\sqrt{C} )^{-1}$  leads to $d_{KL}(\mathcal{S}') \leq \log(n)/4.$

Applying Lemma \ref{fano:prop} to this set $\mathcal{S}'$, we arrive at
 \begin{equation*}
      \inf_{\hat{\textup{{\bf x}}}} \underset{{\bf x} \in \mathcal{S}'}{\textup{sup}} \quad  \P_{({\bf x},f_0)}\cro{\rho(\hat{\textup{{\bf x}}},\textup{{\bf x}}) \geq \frac{\de_n}{2}} \geq 1 - \frac{\log(n)/4 + \log(2)}{\log(n/2)} \geq \frac{1}{2} \enspace ,
\end{equation*}as soon as $n$ is large enough. Theorem \ref{thm:lowerBound} is proved. \hfill $\square$

\begin{proof}[Proof of Claim \ref{claim:divergenceBernouilli}]
 By definition of the Kullback-Leibler divergence, and $F_{ij}= f_{0}(x_i,x_j)$ and $F'_{ij}$ $=$ $f_{0}(x_i',x_j')$, we have $$KL(\P_{({\bf x},f_0)} \| \P_{({\bf x}',f_0)}) = \sum_{i < j} F_{ij} \log \frac{F_{ij}}{F'_{ij}} +  (1-F_{ij}) \log \frac{1-F_{ij}}{1-F'_{ij}}\enspace ,$$
and since $\log(t) \leq t-1$ for all $t >0$, it follows that 
$$KL(\P_{({\bf x},f_0)} \| \P_{({\bf x}',f_0)}) \leq \sum_{ij} \frac{(F_{ij}-F'_{ij})^2}{F'_{ij}(1-F'_{ij})} \leq 8 \sum_{i,j} (F_{ij}-F'_{ij})^2 \enspace ,$$ where the second inequality follows from the fact that $1/4\leq F'_{ij}\leq 3/4$.
 \end{proof}


\section{Proof for the spectral method}\label{sec:appendix:spectral}

\subsection{Proof of Theorem~\ref{thm:graphgeo:new} }

Recall that the Spectral Localization (LS) algorithm is applied to the data matrix $A_{SS}$, where $S$ is a subset  of indices of $[n]$, with a cardinal number $|S|=n_0= n/4$. We can assume that $S=\{1,\ldots,n_0\}$ for the ease of exposition. Vanilla Spectral Localization in LS algorithm returns $\hxsp_S :=(\hat{x}_1^{\textup{VSA}},\ldots,\hat{x}_{n_0}^{\textup{VSA}})$ with  $\hat{x}_i^{\textup{VSA}} :=(\hat{x}_{ij}^{\textup{VSA}})_{j\in [2]} = \sqrt{\frac{n_0}{2}}(\hat{u}_i, \hat{v}_i) \in \mathbb{R}^2$ , $i=1,\ldots, n_0$.  
We denote by $\hat{\lambda}_{0}^{(S)}\geq \ldots \geq \hat{\lambda}_{n_0-1}^{(S)}$ the eigenvalues of the adjacency matrix $A_{SS}$.

Note that the position estimates $\hat{x}_i^{\textup{VSA}}$ do not lie on the unit sphere $\mathcal{C}$. As a consequence, the quantity $d(\hat{x}_i^{\textup{VSA}},x^*_i)$ is not defined, and we will use the distance $\|\hat{\bf x}^{\textup{VSA}}_S- {\bf x}^*_{S}\|_1= \sum_{i=1}^{n_0} \sum_{j=1}^2 |\hat{x}_{ij}^{\textup{VSA}}-x^*_{ij}|$  where   ${\bf x}^*_{S}$ is interpreted as a $2\times n_0$ matrix. Besides, since ${\bf x}^*_{S}$ can only be recovered up to orthogonal transformations, we consider the loss $\underset{Q \in \mathcal{O}}{\textup{min}}\|\hxsp_S - Q{\bf x}^*_{S}\|_{1}$ where $Q$ is interpreted as $2\times 2$ orthogonal matrix.

Let $\lambda_{0}^{*(S)}\geq \ldots \geq \lambda_{n_0-1}^{*(S)}$ denote the eigenvalues of the signal matrix $F_{SS}:= [f(x_i^{*},x_j^{*}))]_{i,j\in S}$. We denote by $\Delta_1^{(S)} := \lambda_{0}^{*(S)} -\lambda_{1}^{*(S)}$ and $\Delta_2^{(S)}:= \lambda_{2}^{*(S)} - \lambda_{3}^{*(S)}$ the two relevant spectral gaps.

\begin{prop}\label{conj:graphgeo:l1:new} 
Let $n_0\geq 4$, and  $f$ be a geometric function as defined in \eqref{def:geometricSetting}, such that $f$ belongs to $\mathcal{BL}[c_l,c_L,c_e]$. Let $c_a>0$ be any positive constant. Assume that the  latent positions ${\bf x}^*_{S}$ fulfill the following inequality
\begin{equation}\label{GraphGeo:assump:latentPosition:S}
 d_{\infty}({\bf x}^*_{S},\bPi_{n_0}) \leq c_a  \sqrt{\frac{\log(n)}{n}}\enspace .
\end{equation}
Then, with probability higher than $1-1/n^2$, the spectral estimator $\hxsp_S$ satisfies 
\begin{equation*}
    \underset{Q \in \mathcal{O}}{\textup{min}}\|\hxsp_S -Q{\bf x}^*_{S}\|_{1} \leq C_{lLea} \, \frac{n \sqrt{n \log(n)}}{\left(\Delta_1^{(S)} \wedge \Delta_2^{(S)}\right) \vee 1} \enspace .
\end{equation*}
\end{prop}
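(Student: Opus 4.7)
The plan is to follow the same blueprint as the proof of Proposition~\ref{conj:graphgeo:l1}, simply restricted to the submatrix indexed by $S$. Since $|S|=n_{0}=n/4$ and since \eqref{GraphGeo:assump:latentPosition:S} is the same order as $\sqrt{\log(n_{0})/n_{0}}$, the structure of $F_{SS}$ is essentially identical to that of $F$, and the conclusion will follow from a Davis--Kahan argument applied to $A_{SS}=F_{SS}+E_{SS}$.

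First I would introduce an ``idealized'' circulant companion of $F_{SS}$. Let $\tilde{\mathbf{x}}_{S}\in\bPi_{n_{0}}$ be a best approximation of $\mathbf{x}^{*}_{S}$, and set $\tilde{F}:=[g(d(\tilde{x}_{i},\tilde{x}_{j}))]_{i,j\in S}$. Because $\tilde{\mathbf{x}}_{S}$ is evenly spaced and $f=g\circ d$ is geometric, $\tilde{F}$ is (after a relabelling) a circulant circular-$R$ matrix, whose eigenvalues are the Fourier coefficients $\mathcal{F}_{k,n_{0}}(g)$ of \eqref{spectre:fourier:paper} and whose eigenvectors are the discrete Fourier modes. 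In particular, the two-dimensional eigenspace associated with $\mathcal{F}_{1,n_{0}}(g)$ is spanned precisely (up to the $\sqrt{n_{0}/2}$ scaling of \eqref{eq:VSA}) by the two coordinate functions $(\cos\underline{\tilde{x}}_{i})_{i\in S}$ and $(\sin\underline{\tilde{x}}_{i})_{i\in S}$. Using the Lipschitz bound \eqref{cond:lipsch}, the spread hypothesis \eqref{GraphGeo:assump:latentPosition:S} yields entrywise $|F_{ij}-\tilde{F}_{ij}|\lesssim c_{L}\sqrt{\log(n)/n}+\varepsilon_{n}$, hence $\|F_{SS}-\tilde{F}\|_{\mathrm{op}}\leq\|F_{SS}-\tilde{F}\|_{2}\lesssim_{lLea}\sqrt{n\log(n)}$. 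By standard sub-Gaussian matrix concentration, $\|E_{SS}\|_{\mathrm{op}}\lesssim\sqrt{n\log(n)}$ with probability at least $1-1/n^{2}$, so that the total perturbation satisfies $\|A_{SS}-\tilde{F}\|_{\mathrm{op}}\lesssim_{lLea}\sqrt{n\log(n)}$.

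Next I would apply the Davis--Kahan $\sin\theta$ theorem to the two-dimensional eigenspace of $\tilde{F}$ attached to $\mathcal{F}_{1,n_{0}}(g)$. The two relevant separations are exactly $\Phi_{1}$ and $\Phi_{2}$ for $\tilde{F}$, and Weyl's inequality combined with the bound $\|F_{SS}-\tilde{F}\|_{\mathrm{op}}\lesssim\sqrt{n\log(n)}$ lets us replace them by $\Delta_{1}^{(S)}$ and $\Delta_{2}^{(S)}$ up to the same order of error. Davis--Kahan then produces a $2\times 2$ orthogonal matrix $O$ and an $\ell^{2}$-bound of order $\sqrt{n\log(n)}/((\Delta_{1}^{(S)}\wedge\Delta_{2}^{(S)})\vee 1)$ between the two leading sub-eigenvectors of $A_{SS}$ and those of $\tilde{F}$. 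After multiplication by the scaling $\sqrt{n_{0}/2}$, this gives $\|\hxsp_{S}-O\tilde{\mathbf{x}}_{S}\|_{2}\lesssim n\sqrt{\log(n)}/((\Delta_{1}^{(S)}\wedge\Delta_{2}^{(S)})\vee 1)$. Cauchy--Schwarz yields an $\ell^{1}$-bound with an extra factor $\sqrt{n_{0}}$, and the triangular inequality together with the trivial control $\|O\tilde{\mathbf{x}}_{S}-O\mathbf{x}^{*}_{S}\|_{1}\lesssim n\sqrt{\log(n)/n}\lesssim \sqrt{n\log(n)}$ (from \eqref{GraphGeo:assump:latentPosition:S}) gives the announced rate.

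The main obstacle is the handling of the double eigenvalue $\mathcal{F}_{1,n_{0}}(g)$: because the two eigenvectors are only defined up to a rotation in the corresponding $2$-dimensional subspace, the output of Davis--Kahan is intrinsically an element of $\mathcal{O}$ acting on $\mathbb{R}^{2}$, which is precisely the ambiguity absorbed by the $\min_{Q\in\mathcal{O}}$ in the statement. The secondary technical point is to check that the two spectral gaps $\Delta_{1}^{(S)}$ and $\Delta_{2}^{(S)}$, which a priori refer to $F_{SS}$ (not to $\tilde{F}$), can be used in the Davis--Kahan bound; this is a Weyl-type argument using the $\sqrt{n\log(n)}$ bound on $\|F_{SS}-\tilde{F}\|_{\mathrm{op}}$, which does not cost more than the already-tolerated noise scale.
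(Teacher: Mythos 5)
Your overall route is the same as the paper's: approximate $F_{SS}$ by a circulant matrix built from evenly spaced positions, read off its spectrum via discrete Fourier coefficients, control $\|A_{SS}-(\text{circulant reference})\|_{op}$ by the bi-Lipschitz approximation plus sub-Gaussian operator-norm concentration, apply Weyl and a Davis--Kahan $\sin\theta$ bound to the two-dimensional eigenspace, and finish with Cauchy--Schwarz and the triangle inequality, the $\min_{Q\in\Ocal}$ absorbing the rotation ambiguity of the double eigenvalue. However, there is one genuine gap. You assert that $\tilde F$ is ``(after a relabelling) a circulant circular-$R$ matrix'' and that ``the two relevant separations are exactly $\Phi_1$ and $\Phi_2$ for $\tilde F$.'' Circulant, yes; circular-$R$, no: under the relaxed bi-Lipschitz condition with slack $\ep_n$ (the case $c_e>0$), the profile $a_j=g(2\pi j/n_0)$ need not be non-increasing, so it is not automatic that the Fourier eigenvalue $\Fcal_{1,n_0}(g)$ — whose eigenvectors are the cosine/sine coordinates you want — occupies the second and third positions in the ordered spectrum of $\tilde F$. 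This identification is exactly what licenses comparing, via Davis--Kahan, the VSA eigenvectors of $A_{SS}$ at ranks $2$--$3$ with the first non-trivial Fourier modes; if some $\alpha_m$ with $m\geq 2$ were second largest, the VSA would be tracking a higher-frequency mode and the $\ell^1$ bound to ${\bf x}^*_S$ would fail. The paper closes this step by monotonizing the profile into a genuine circulant circular-$R$ matrix $R$ (at entrywise cost $C\sqrt{\log(n)/n}$) and invoking the ordering result for such matrices (Lemma~\ref{lem:strictEigenvaluesOptim}); alternatively one can argue from the gap hypothesis itself, but some argument is needed and your write-up supplies none.

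A secondary, related omission: the replacement of the reference-matrix gaps by $\Delta_1^{(S)}\wedge\Delta_2^{(S)}$ via Weyl is only meaningful when these gaps exceed a constant multiple of $\sqrt{n\log(n)}$; in the complementary regime the Davis--Kahan denominator can degenerate, and one must fall back on the trivial bound $\min_{Q}\|\hxsp_S-Q{\bf x}^*_S\|_1\leq 2n_0$, which is how the paper starts its proof. With the case split added and the eigenvalue-ordering step justified (by monotonization or by a Weyl-type argument exploiting the large-gap regime), your argument matches the paper's proof.
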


Proposition \ref{conj:graphgeo:l1:new}  is based on the fact that the signal matrix $F_{SS}$ is well approximated by a circulant and circular-R matrix, which benefits from nice spectral properties. See Appendix \ref{appendix:spectral} for a proof.

Assumption \eqref{GraphGeo:assump:latentPosition} of the theorem states that $d_{\infty}(\xbf^*,\bPi_{n}) \leq c_a  \sqrt{\log(n)/n}$. Since Lemma \ref{cl:equivBias:bis} ensures that $d_{\infty}(\xbf^*_S,\bPi_{n_0})\leq C[d_{\infty}(\xbf^*,\bPi_{n})  + \sqrt{\log(n)/n}]$ with probability higher than $1-1/n^2$, we get the bound
\begin{equation}\label{ineq:assup:repart:S}
    d_{\infty}(\xbf^*_S,\bPi_{n_0})  \leq C_a \enspace \sqrt{\log(n)/n} \enspace,
\end{equation}which holds with probability higher than $1-1/n^2$.

The $\ell^1$-type localization bound in~\eqref{conj:graphgeo:l1:new} depends on the spectral gap $\Delta_1^{(S)}\wedge \Delta_2^{(S)}$ of the signal matrix $F_{SS}$. We combine the next lemma with the assumption $\Delta_1 \wedge \Delta_2\geq c_b n$ of the theorem to get the following lower bound
\begin{equation}\label{hyp:chech:1:spec}
    \Delta_1^{(S)} \wedge \Delta_2^{(S)}\geq c_b n/4 \enspace,
\end{equation}which holds with probability higher than $1-1/n^2$ , as soon as $n\geq C_{abL}$.

\begin{lem}\label{lem:implication:specgaps:S} If $\Delta_1 \wedge \Delta_2\geq c_b n$ for some constant $c_b>0$, then  with probability higher than $1-1/n^2$ we have $\Delta_1^{(S)} \wedge \Delta_2^{(S)}\geq c_b n/4$ for all $n\geq C_{ab}$ where $C_{abL}$ is a positive quantity depending only on $c_a$, $c_b$, and $c_L$.
\end{lem}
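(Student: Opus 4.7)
The plan is to bridge the spectral gaps of $F_{SS}$ (size $n_0 = n/4$) and those of $F$ (size $n$) via their Fourier representation, exploiting Lemma~\ref{lem:fourier}, which is precisely designed to reduce spectral-gap questions on bi-Lipschitz geometric models to questions on discrete Fourier coefficients of the kernel $g$.

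First, I would check that Lemma~\ref{lem:fourier} applies both to $F$ and to $F_{SS}$. For $F$ this is direct from the hypothesis $d_{\infty}(\xbf^*,\bPi_n) \leq c_a\sqrt{\log(n)/n}$. For $F_{SS}$, one invokes Lemma~\ref{cl:equivBias:bis}, which guarantees that with probability at least $1-1/n^2$ the subsampled configuration $\xbf^*_S$ satisfies $d_{\infty}(\xbf^*_S,\bPi_{n_0}) \lesssim_{c_a} \sqrt{\log(n)/n}$; one can then apply Lemma~\ref{lem:fourier} to $F_{SS}$ with the grid $\Ccal_{n_0}$ in place of $\Ccal_n$. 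This yields, for $k=1,2$,
\[
|\Delta_k - \Phi_k|\vee|\Delta_k^{(S)} - \Phi_k^{(S)}| \leq C_{lLea}\sqrt{n\log n},
\]
where $\Phi_k^{(S)}$ is the analog of $\Phi_k$ built from the discrete Fourier coefficients of $g$ on the $n_0$-grid.

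The key step is then to compare $\Phi_k^{(S)}$ and $\Phi_k$. Writing $\psi_k(t) = g(t)[1-\cos(kt)]$ (which is Lipschitz with constant $\lesssim c_L$ in view of the bi-Lipschitz assumption on $f=g\circ d$), both $\Phi_k$ and $\Phi_k^{(S)}$ are trapezoidal Riemann sums of $\psi_k$ over $[0,2\pi)$. Standard Lipschitz Riemann-sum bounds give
\[
\Bigl|\tfrac{1}{n}\Phi_k - \tfrac{1}{n_0}\Phi_k^{(S)}\Bigr| \leq \frac{C_L}{n_0},\qquad\text{hence}\qquad \Phi_k^{(S)} \geq \tfrac{n_0}{n}\Phi_k - C_L = \tfrac{1}{4}\Phi_k - C_L.
\]
Combining with the two Lemma~\ref{lem:fourier} bounds and the hypothesis $\Delta_1\wedge\Delta_2 \geq c_b n$ gives
\[
\Delta_k^{(S)} \geq \Phi_k^{(S)} - C\sqrt{n\log n} \geq \tfrac{1}{4}\Delta_k - C'\sqrt{n\log n} \geq \tfrac{c_b n}{4} - C''_{lLea}\sqrt{n\log n}.
\]
Choosing $C_{abL}$ large enough (depending on $c_a$, $c_b$, $c_L$, $c_e$) so that the $\sqrt{n\log n}$ correction is absorbed into the leading term for $n \geq C_{abL}$ then yields the claim $\Delta_1^{(S)}\wedge\Delta_2^{(S)} \geq c_b n/4$.

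The main obstacle is the quantitative comparison of the two Fourier sums: one has to verify that the Riemann-sum error for the Lipschitz function $\psi_k$ remains of constant order (independently of $n$) so that the inequality $\Phi_k^{(S)} \geq \Phi_k/4 - O(1)$ holds with a universal constant. Once this is established, chaining the two Lemma~\ref{lem:fourier} estimates with the (dominant) $\Delta_k \geq c_b n$ lower bound is routine; the $1/n^2$ failure probability comes entirely from the event of Lemma~\ref{cl:equivBias:bis}.
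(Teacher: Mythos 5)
Your reduction to Fourier gaps is sound for $\Phi_1$, but the key comparison fails for $\Phi_2$, and this is a genuine gap rather than a routine verification. By definition, $\Phi_2^{(S)}=\min_{j=2,\ldots,\lfloor n_0/2\rfloor}\big[\Fcal_{1,n_0}(g)-\Fcal_{j,n_0}(g)\big]$, so for each $j$ in the minimum the relevant integrand is $t\mapsto g(t)\left[\cos t-\cos(jt)\right]$, whose Lipschitz constant is of order $c_L+j$ (the oscillation of $\cos(j\cdot)$, not of $g$, dominates). The ``standard Lipschitz Riemann-sum bound'' you invoke therefore gives an error of order $(c_L+j)/n_0$ for the normalized quantity, i.e.\ of order $c_L+j$ after multiplying back by $n_0$; for $j$ of order $n_0=n/4$ this error is of order $n$, the same order as the gap $c_bn$ you are trying to preserve, and its absolute constant cannot be assumed smaller than $c_b/4$ since $c_b$ is arbitrary. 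Hence the claimed inequality $\big|\tfrac1n\Phi_2-\tfrac1{n_0}\Phi_2^{(S)}\big|\leq C_L/n_0$, and with it $\Phi_2^{(S)}\geq \tfrac14\Phi_2-C_L$, is not established; this is precisely the ``main obstacle'' you flag, and it does not go through as stated. An exact aliasing computation (using that the $n_0$-grid is a subgrid of the $n$-grid, $n=4n_0$) expresses $\Fcal_{j,n_0}(g)$ as an average of the four coefficients $\Fcal_{j+rn/4,n}(g)$, $r=0,\ldots,3$, but the resulting cross terms are differences of two high-frequency coefficients, which the hypothesis $\Delta_1\wedge\Delta_2\geq c_bn$ (equivalently $\Phi_1\wedge\Phi_2\gtrsim n$) does not control, so this repair is not immediate either.

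For contrast, the paper's proof of Lemma~\ref{lem:implication:specgaps:S} avoids any Fourier comparison. It forms the $n$-tuple $\xbf^{*(4)}$ by replicating each coordinate of $\xbf^*_S$ four times; the matrix $F^{(4)}=[f(x^{*(4)}_i,x^{*(4)}_j)]_{i,j\in[n]}$ then has, up to an explicit multiplicative factor, the same nonzero spectrum as $F_{SS}$ (block replication of eigenvectors). On the event of Lemma~\ref{cl:equivBias:bis}, both $\xbf^{*(4)}$ and $\xbf^*$ are within $C\sqrt{\log(n)/n}$ of $\bPi_n$, hence within $C\sqrt{\log(n)/n}$ of each other after a permutation $\sigma$; the upper Lipschitz condition~\eqref{cond:lipsch} gives $\|F^{(4)}-F_\sigma\|_2\lesssim\sqrt{n\log n}$, and Weyl's inequality transfers \emph{all} eigenvalue gaps simultaneously with only a $\sqrt{n\log n}$ loss, which is absorbed for $n\geq C_{abL}$. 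If you wish to keep the Fourier route, you would need a substantially finer estimate on $\Fcal_{1,m}(g)-\Fcal_{j,m}(g)$ for large $j$ (e.g.\ summation by parts exploiting the near-monotonicity of $g$), which is more work than the replication-plus-Weyl argument.
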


Hence, By (\ref{hyp:chech:1:spec}) and (\ref{ineq:assup:repart:S}) the conditions of Proposition \ref{conj:graphgeo:l1:new} are satisfied. In summary, there exists an event of probability higher than $1-3/n^2$ such that 
\begin{equation}\label{eq:borne:nlogn:spec:l1}
    \underset{Q \in \mathcal{O}}{\textup{min}}\|\hxsp_S -Q{\bf x}^*_{S}\|_{1} \leq C_{lLeab}  \sqrt{n \log(n)} \enspace ,
\end{equation} as soon as $n\geq C_{abL}$. For $n\leq C_{abL}$,  the bound \eqref{eq:borne:nlogn:spec:l1} trivially holds provided that we  adjust the constant $C_{lLeab}$ if necessary.

Since $\hxsp_S$ does not lie in $\bPi_{n_0} \subset \mathcal{C}^{ n_0}$, we cannot directly plug it into the local refinement step defined by~\eqref{def:estim:Linfini}.  Accordingly, the Uniform Approximation (UA) in LS algorithm,  projects $\hxsp_S$ onto $\bPi_{n_0} \subset \mathcal{C}^{{n_0}}$.  The UA outputs a vector $\tilde{\bf x}_S^{(1)}$ in $\bPi_{n_0}$ that is close to the input $\hxsp_S$ -- see Lemma~\ref{lem:approx:estimPos} below. 

Lemma~\ref{lem:approx:estimPos} actually gives a more general result that holds for any input in $\mathbb{R}^{ 2 \times n_0}$ given to UA. For clarity, we write below the UA procedure in full generality.

\medskip

\noindent { \def\arraystretch{1.3}
\begin{tabular}{|l|}
\hline
{\bf   Uniform Approximation (UA) in $\bPi_{n_0}$} \label{alg:LHO:geo:UA} \\
\hline
\begin{minipage}{0.95\textwidth} \centering
\begin{minipage}{0.9\textwidth}

\medskip

 \underline{Input:} $\xbf_{S}=(x_1,\ldots,x_{n_0}) \in \mathbb{R}^{2 \times n_0}$.

\begin{enumerate}
\item Set $z_i=x_i/\|x_i\|_2$, for $i=1,\ldots,n_0$.  

\item Pick any permutation $\si$ such that $z_{\sigma(1)},\ldots,z_{\sigma(n_{0})}$ is in trigonometric order.

\item Set $\displaystyle{\tilde x_{\sigma(i)}=e^{\iota {2\pi (\widehat k+i)\over n_{0}}},\quad \textrm{where}\quad 
\widehat k\in\argmin_{k\in [n_{0}]}\sum_{i=1}^{n_{0}} \left\|e^{\iota {2\pi (k+i)\over n_{0}}}-z_{\si(i)}\right\|_1}.$

\end{enumerate}

\noindent \underline{Output:} $\tilde\xbf_{S} \in \bPi_{n_0}$. ~\\
\end{minipage}%
\end{minipage} \\
\hline
\end{tabular} }

\begin{lem}\label{lem:approx:estimPos}Let ${\bf x}^*_{S}\in \Ccal^{n_0}$. For any input ${\bf x}_S\in \mathbb{R}^{ 2 \times n_0}$, UA returns a vector $\widetilde{{\bf x}}_S\in \bPi_{n_0}$ such that  
\begin{equation*}
    \underset{Q \in \mathcal{O}}{\textup{min}} \|\widetilde{{\bf x}}_S - Q{\bf x}^*_{S}\|_{1} \lesssim  \underset{Q \in \mathcal{O}}{\textup{min}}\|{\bf x}_S - Q{\bf x}^*_{S}\|_{1} + n d_{\infty}({\bf x}^*_S,\bPi_{n_0}) + 1\enspace .
\end{equation*}
\end{lem}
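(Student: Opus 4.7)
I plan to show $\|\widetilde{\bf x}_S - Q_0 {\bf x}^*_S\|_1 \lesssim \epsilon + n\alpha + 1$ for a single $Q_0 \in \Ocal$ realizing $\epsilon := \min_{Q \in \Ocal} \|{\bf x}_S - Q{\bf x}^*_S\|_1$, where $\alpha := d_\infty({\bf x}^*_S, \bPi_{n_0})$; taking the minimum over $Q$ on the left-hand side of the lemma is then automatic. Picking ${\bf x}^{**}_S \in \bPi_{n_0}$ attaining the $d_\infty$-minimum, I introduce the reference configuration $y^* := Q_0 {\bf x}^{**}_S$. Since $Q_0 \in \Ocal$ and ${\bf x}^{**}_S \in \bPi_{n_0}$, the tuple $y^*$ is an orthogonal image of the regular grid, so its trigonometric sort has the form $\{e^{\iota(\theta_0^* + 2\pi(\ell-1)/n_0)}\}_{\ell=1}^{n_0}$ for some angle $\theta_0^*$. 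The overall strategy is to trace the intermediate variables produced by UA and compare each to $y^*$.

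The radial projection $z_i = x_i / \|x_i\|_2$ onto $\Ccal$ is easy to control: since $Q_0 x^*_i \in \Ccal$ has unit norm, the triangle and reverse-triangle inequalities yield $\|z_i - Q_0 x^*_i\|_2 \le 2\|x_i - Q_0 x^*_i\|_2$ for every $i$ (the degenerate case $x_i = 0$ is handled by any default point of $\Ccal$), so summing gives $\|z_S - Q_0 {\bf x}^*_S\|_1 \lesssim \epsilon$. Using $\|u - v\|_1 \lesssim d(u,v)$ on $\Ccal$ to convert the pointwise $d_\infty$-bound on ${\bf x}^*_S - {\bf x}^{**}_S$ into an $\ell^1$-bound of order $n\alpha$, together with the isometry of $Q_0$, I deduce $\|z_S - y^*\|_1 \lesssim \epsilon + n\alpha$, hence also $\sum_{i=1}^{n_0} d(z_i, y^*_i) \lesssim \epsilon + n\alpha$.

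The crux is to bound $\|\widetilde{\bf x}_S - z_S\|_1$. Set $\alpha_\ell := \underline{z}_{\sigma(\ell)} - 2\pi\ell/n_0 \pmod{2\pi}$ and $F(\theta) := \sum_{\ell=1}^{n_0} d(e^{\iota\alpha_\ell}, e^{\iota\theta})$. The optimality of $\widehat k$, combined with the equivalence of $\|\cdot\|_1$ and $d$ on $\Ccal$, yields $\|\widetilde{\bf x}_S - z_S\|_1 \lesssim \min_{k \in [n_0]} F(2\pi k / n_0)$. The main obstacle is precisely here: the candidate set optimized over by UA is tied to the sort $\sigma$ of $z_S$, while the sort of $y^*$ may differ, so the obvious target -- the snap of $y^*$ to $\bPi_{n_0}$ -- need not lie in the candidate set. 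I handle this with two observations: \textbf{(a)} each summand of $F$ is $1$-Lipschitz in $\theta$, hence $F$ is $n_0$-Lipschitz, and snapping $\theta$ onto the grid $\Ccal_{n_0}$ (mesh size $2\pi/n_0$) costs at most $\pi$, i.e.\ $\min_k F(2\pi k/n_0) \le \inf_{\theta \in \R/2\pi\mathbb{Z}} F(\theta) + \pi$; \textbf{(b)} for any integer $c$, the value $F(\theta_c)$ at $\theta_c := \theta_0^* - 2\pi(c+1)/n_0$ rewrites as $\sum_\ell d(z_{\sigma(\ell)}, y^*_{\tau^{-1}(\ell - c)})$, i.e.\ the cost of matching the sorted $z_S$ to a $c$-cyclic shift of the sorted $y^*$. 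By the classical representation of the $1$-Wasserstein distance on $\Ccal$ between equal-mass empirical measures as the minimum over cyclic shifts of sort-matchings (cf.\ Rabin--Delon--Gousseau), this minimum equals $W_1(z_S, y^*)$, which is dominated by the identity matching $\sum_i d(z_i, y^*_i) \lesssim \epsilon + n\alpha$. Hence $\inf_\theta F(\theta) \lesssim \epsilon + n\alpha$.

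Combining \textbf{(a)} and \textbf{(b)} gives $\|\widetilde{\bf x}_S - z_S\|_1 \lesssim \epsilon + n\alpha + 1$, and a final triangle inequality with the projection estimate $\|z_S - Q_0 {\bf x}^*_S\|_1 \lesssim \epsilon$ delivers the desired bound $\|\widetilde{\bf x}_S - Q_0 {\bf x}^*_S\|_1 \lesssim \epsilon + n\alpha + 1$. The main technical obstacle is the circular optimal-transport step \textbf{(b)}, which is what bypasses the potential mismatch between the sorts of $z_S$ and $y^*$; once this representation is invoked, the remainder of the proof reduces to routine Lipschitz estimates and triangle inequalities.
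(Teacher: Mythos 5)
Your argument is correct, and at the top level it follows the same skeleton as the paper (project onto $\Ccal$, compare to a rotated regular grid built from a $d_\infty$-optimal ${\bf x}^{**}_S$, then show that sorting plus an optimal cyclic alignment is near-optimal), but the key technical ingredient is genuinely different. The paper proves its own rearrangement statement from scratch: Lemma~\ref{goal:proof:lem:approx:esti:ComplJ} (via the good/bad-index split and the insertion argument of Claim~\ref{claim:permutation}) shows that restricting to order-compatible grid configurations loses only a constant factor (about $65$), and Lemma~\ref{goal:proof:lem:approx:esti:new} supplies a nearby element of $\bPi_{n_0}$, the ``$+1$'' coming from replacing $Q$ by a $Q'$ that preserves $\bPi_{n_0}$. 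You instead (i) treat the cyclic alignment as a continuous phase $\theta$, note that the UA objective $F(\theta)$ is $n_0$-Lipschitz so snapping to the candidate grid $\Ccal_{n_0}$ costs at most $\pi$ (this is where your ``$+1$'' arises), and (ii) invoke the classical circular optimal-transport fact that the optimal bijective matching of two $n_0$-point configurations on the circle, for the geodesic-distance cost, is attained by a cyclic shift of the sorted matching, so $\min_c F(\theta_c)$ is dominated by the identity matching cost $\sum_i d(z_i,y^*_i)\lesssim \epsilon+n\alpha$. This buys exact constants and sidesteps the mismatch between the sort of $z_S$ and that of $y^*$ (and automatically handles a reflective $Q_0$, since only the sorted multiset of $y^*$ matters), at the price of importing a nontrivial external result: the uncrossing/cyclic-shift optimality on the circle is precisely the combinatorial content the paper re-proves by hand, so for a self-contained write-up you would need either a precise citation (Werman et al.; Cabrelli--Molter; Delon--Salomon--Sobolevskii; Rabin--Delon--Gousseau, for convex costs including the linear one) or a short uncrossing argument. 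All surrounding steps (the projection bound $\|z_S-Q_0{\bf x}^*_S\|_1\lesssim\epsilon$, the metric equivalences on $\Ccal$, and the final triangle inequalities) are routine and correct.
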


By  (\ref{ineq:assup:repart:S}- \ref{eq:borne:nlogn:spec:l1}) and Lemma \ref{lem:approx:estimPos}, the projection $\widetilde{{\bf x}}_S^{(1)}$  satisfies
\begin{equation}\label{xtilde:proof:spec:L1}
    \underset{Q \in \mathcal{O}}{\textup{min}}\|\widetilde{{\bf x}}_S^{(1)} - Q{\bf x}^*_{S}\|_{1} \lesssim  \underset{Q \in \mathcal{O}}{\textup{min}}\|\hxsp_S - Q{\bf x}^*_{S}\|_{1} + n d_{\infty}({\bf x}^*_S,\bPi_{n_0}) + 1\leq C_{lLeab} \,  \sqrt{n \log(n)} 
\end{equation} with probability higher than $1-3/n^2$.

Finally, we plug  $\widetilde{\bf x}_S^{(1)}$  in the criterion \eqref{def:estim:Linfini}  to localize the remaining points. In other words, we compute, for $i\in \overline{S}$,
\begin{equation}\label{def:estim:Linfini:bis}
    \hat{x}_i^{(2)} = \underset{z \in \Ccal_{n_0}}{\textup{argmin}} \,  \langle A_{i , S}, D(z, \tilde{{\bf x}}_S^{(1)})\rangle\enspace ,
\end{equation}and get the position estimates $\hat{{\bf x}}_{\overline{S}}^{(2)}=[\hat{x}_i^{(2)}]_{i\in S}$.
As a direct consequence of \eqref{xtilde:proof:spec:L1}   and Propositions~\ref{prop:LHO:perf}    (and the equivalence between the $\ell^1$ norm in $\mathbb{R}^2$ and the distance $d_1$ in the sphere $\Ccal$), we arrive at the following uniform bound 
\begin{equation*}
    \min_{Q\in \mathcal{O}}d_{\infty}(\hat{{\bf x}}_{\overline{S}}^{(2)},Q{\bf x}^*_{\overline{S}}) \leq C_{lLeab} \,  \sqrt{\frac{\log(n)}{n}} \enspace ,
\end{equation*}
which holds with probability higher than $1-4/n^2$.

As in section~\ref{subsection:LTSalgo}, we finally rely on a cross-validation scheme to estimate and realign all the positions. This straightforwardly allows us to uniformly localize, with probability higher than $1-9/n^2$, all positions ${\bf x}^*_i$  within an error of the order of $\sqrt{\log(n)/n}$. This concludes the proof of Theorem~\ref{thm:graphgeo:new}. \hfill $\square$

\subsection{Proof of Lemma \ref{lem:implication:specgaps:S} }

Recall that $S=[n_0]$ for the ease of exposition. In order too show that the spectrums of $F$ and  $F_{SS}:= [f(x_i^{*},x_j^{*})]_{i,j\in S}$ are linked together, we introduce an intermediate matrix $F^{(4)}:=[f(x_i^{*(4)},x_j^{*(4)})]_{i,j\in [n]}$ based on the vector ${\bf x}^{*(4)}_S \in \mathcal{C}^{n}$ with coordinates $x_i^{*(4)} = x_{\lceil i/4 \rceil}^{*}$ for $i\in [n_0]$ (where $\lceil  \cdot \rceil$ denotes the ceiling function). 
In other words, we replicate $4$-times each coordinate of the vector ${\bf x}^*_S$ to get the vector ${\bf x}^{*(4)}_S$ of size $n$ which is close to ${\bf x}^*$. 

Let us show first that the spectrums of $F_{SS}$ and $F^{(4)}$ are almost the same. By construction of $F^{(4)}$, each of the $n_0$ eigenvectors of $F_{SS}$ can be transformed into an eigenvector of $F^{(4)}$, by replicating $4$-times the coordinates of these vectors.  Besides, the rank of $F^{(4)}$ is the same as that of $F_{SS}$.  We deduce that all non-zero eigenvalues of $F^{(4)}$ are eigenvalues of $2F_{SS}$. Formally, denoting the eigenvalues of $F^{(4)}$ by $\lambda_{0}'\geq \ldots \geq \lambda_{n-1}'$, and recalling that the eigenvalues of $F_{SS}$ are denoted by $\lambda_{0}^{*(S)}\geq \ldots \geq \lambda_{n_0-1}^{*(S)}$, we have
\begin{equation}\label{prf:linkSpectral:new:eq1}
    2\lambda_{i}^{*(S)}= \lambda_{i}'\ , \quad \quad \text{ for }i=1,\ldots, n_0\enspace .
\end{equation}

We then show that the spectrums of $F^{(4)}$ and $F$ are close. By  \eqref{ineq:assup:repart:S} there is a probability higher than $1-1/n^2$  that $d_{\infty}({\bf x}^*_S,\bPi_{n_0}) \leq C_a \sqrt{\log(n)/n}$. Hence, one can readily check that  
\[
d_{\infty}({\bf x}^{*(4)},\bPi_n)\leq  d_{\infty}({\bf x}^{*(4)},\bPi_{n_0})+ 2\pi/n_0 \leq C'_a \sqrt{\log(n)/n}\enspace .
\] 
Furthermore,  Assumption \eqref{GraphGeo:assump:latentPosition} of the theorem ensures that $$d_{\infty}({\bf x}^{*},\bPi_n) \leq c_a \sqrt{\log(n)/n}\enspace.$$ Therefore, both ${\bf x}^{*(4)}$ and ${\bf x}^{*}$ are close to $\bPi_n$. Since (any) two elements of $\bPi_n$ are equal up to a permutation of their indices $[n]$, we deduce that there exists a permutation $\si$ of $[n]$ satisfying $d_{\infty}({\bf x}^{*(4)},{\bf x}_{\si}^*) \leq C_a \sqrt{\log(n)/n}$. Combining this with the bi-Lipschitz condition \eqref{cond:lipsch} we deduce  that  
\[
    \|F^{(4)} - F_{\si} \|_2 \leq C_a c_L \sqrt{n \log(n)} \enspace .
\]

We are now ready to control the difference between the spectrums of $F$ and $F^{(4)}$. Recalling that $\lambda_{0}^{*}\geq \ldots \geq \lambda_{n-1}^{*}$ denote the eigenvalues of $F$, and since  $F_{\si}$ has the same eigenvalues as $F$, it follows from   Weyl's inequality (see e.g. \cite[page 45]{tao2012topics}) that 
\begin{equation}\label{prf:linkSpectral:new:eq2}
|\lambda_i'-\lambda_i^* | \leq \| F^{(4)}- F_{\si}\|_{op} \leq \| F^{(4)}-F_{\si}\|_2 \leq C_{aL} \sqrt{n \log(n)} \ , 
\end{equation}
for all $i=0,\ldots,n-1$, and some constant $C_{aL}$ depending only on $c_a$ and $c_L$.

Gathering (\ref{prf:linkSpectral:new:eq1}-\ref{prf:linkSpectral:new:eq2}), we conclude that the following implication holds. If $\Delta_1:=\lambda_0^* -\lambda_1^*$ and $\Delta_2:=\lambda_2^* -\lambda_3^*$ satisfies $\Delta \wedge \Delta_2 \geq c_b n$, then $\Delta_1^{(S)}:=\lambda_0^{*(S)} -\lambda_1^{*(S)}$ and $\Delta_2^{(S)}:=\lambda_2^{*(S)} -\lambda_3^{*(S)}$ fulfills $\Delta_1^{(S)} \wedge \Delta_2^{(S)} \geq \frac{c_b}{2} n - C \sqrt{n \log(n)}$ which is larger than $c_b n/4$ as soon as $n \geq n_{ab}$ for $n_{ab}$ the smallest integer satisfying $c_b n \geq 4C_{aL}  \sqrt{n\log(n)}$. \hfill  $\square$


\subsection{Proof of Lemma \ref{lem:approx:estimPos} (Uniform approximation)}

Recall that for a vector ${\bf x}= (x_1, x_2, \ldots,x_{n_0}) \in \Ccal^{ n_0}$, we say that ${\bf x}$ is ordered, if these points are consecutive when one walks on the circle following the trigonometric direction.

We introduce some notation. For any vector ${\bf v}=(v_{1},\ldots,  v_{n_0}) \in \Ccal^{n_0}$, denote $\bPi_{n_0}({\bf v})$ all elements  ${\bf u}=(u_{1},\ldots,  u_{n_0})$ of $\bPi_{n_0}$  such that,  for all permutations $\si$ making $v_{\si(1)},\ldots,  v_{\si(n_0)}$ ordered, the sequence $u_{\si(1)}, \ldots , u_{\si(n_0)}$ is ordered. For any vector ${\bf v}\in \Ccal^{ n_0}$ with distinct values, the set $\bPi_{n_0}({\bf v})$ can be described by a single element ${\bf u} \in \bPi_{n_0}({\bf v})$ and all circular permutations of ${\bf u}$.

The bound of Lemma \ref{lem:approx:estimPos} trivially holds for $n_0\leq 3$. Henceforth, we assume that $n_0\geq 4$. The next lemma is a key element in the proof; it states that re-ordering two vectors is almost optimal for minimizing their $d_1$ distance. This result is fairly classical for real vectors. Here, as the vectors ${\bf v}$ and ${\bf u}$ take their values on $\mathcal{C}$ the proof is slightly more complicated.

\begin{lem}\label{goal:proof:lem:approx:esti:ComplJ} Consider any ${\bf v}\in \Ccal^{n_0}$. Provided that $n_0\geq 4$, we have 
$$\min_{{\bf u}\in \bPi_{n_0}({\bf v})}d_{1}({\bf v}, {\bf u}_{\si}) \lesssim \min_{{\bf u}\in \bPi_{n_0}}d_{1}({\bf v}, {\bf u})\enspace .$$ 
\end{lem}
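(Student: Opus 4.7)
The plan is to reduce the lemma to a circular rearrangement inequality and then prove the latter by cutting the circle at a well-chosen point. By the invariance of the $d_1$ distance under a common reindexing of $\mathbf{v}$ and candidate vectors in $\bPi_{n_0}$, I may assume without loss of generality that $\mathbf{v}=(v_1,\ldots,v_{n_0})$ is already in trigonometric order. Under this assumption, $\bPi_{n_0}(\mathbf{v})$ consists precisely of the $n_0$ trigonometrically-sorted cyclic shifts $\tilde{\mathbf{u}}^{(k)}:=(e^{\iota 2\pi(i+k)/n_0})_{i=1,\ldots,n_0}$, $k\in[n_0]$, of the canonical enumeration of $\mathcal{C}_{n_0}$. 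Letting $\mathbf{u}^*$ be any minimizer of the right-hand side of the lemma, the statement reduces to exhibiting some $k^*\in[n_0]$ with $d_1(\mathbf{v},\tilde{\mathbf{u}}^{(k^*)})\lesssim d_1(\mathbf{v},\mathbf{u}^*)$.

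For each $k\in[n_0]$, I would cut $\mathcal{C}$ at the midpoint $z_0^{(k)}=e^{\iota 2\pi(k+1/2)/n_0}$ of the $k$-th arc of $\mathcal{C}_{n_0}$, identifying $\mathcal{C}\setminus\{z_0^{(k)}\}$ with the interval $I_k=(0,2\pi)\subset\mathbb{R}$. Under this identification, both $\mathbf{v}$ and $\tilde{\mathbf{u}}^{(k)}$ become sorted real sequences on $I_k$, and the sorted matching $v_i\leftrightarrow \tilde{u}_i^{(k)}$ does not wrap around $z_0^{(k)}$, so that $d_1(\mathbf{v},\tilde{\mathbf{u}}^{(k)})=\sum_i|v_i-\tilde{u}_i^{(k)}|_{I_k}$. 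The classical real-line rearrangement inequality then yields $d_1(\mathbf{v},\tilde{\mathbf{u}}^{(k)})\leq\sum_i|v_i-u^*_i|_{I_k}$, since sorting is optimal on $\mathbb{R}$ and the target multiset is the same $\mathcal{C}_{n_0}$ in both expressions. The real-line cost $|v_i-u^*_i|_{I_k}$ coincides with the circle cost $d(v_i,u^*_i)$ unless the short geodesic of the pair passes through $z_0^{(k)}$, in which case it overshoots by $2\pi-2d(v_i,u^*_i)$.

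The main obstacle is controlling the total overshoot, which is what allows the passage from the real-line bound back to the circle. My plan is an averaging over the $n_0$ choices of $k$: since the cuts $z_0^{(1)},\ldots,z_0^{(n_0)}$ are evenly spaced on $\mathcal{C}$ with gap $2\pi/n_0$, a given pair's short geodesic of length $d(v_i,u^*_i)$ passes through at most $W_i\leq n_0 d(v_i,u^*_i)/(2\pi)+1$ of them, and summing over pairs and cuts gives
\[ \frac{1}{n_0}\sum_{k=1}^{n_0}\sum_i |v_i-u^*_i|_{I_k} \ =\ d_1(\mathbf{v},\mathbf{u}^*) + \frac{2}{n_0}\sum_i W_i\bigl(\pi-d(v_i,u^*_i)\bigr)\ \lesssim\ d_1(\mathbf{v},\mathbf{u}^*), \]
where the last $\lesssim$ requires a careful case analysis to absorb the additive constant arising from the ``$+1$'' in the bound on $W_i$: in the regime where $d_1(\mathbf{v},\mathbf{u}^*)$ is so small that each $u^*_i$ is within $\pi/n_0$ of $v_i$, one verifies that $\mathbf{u}^*$ itself must be a (trigonometric) sorted permutation, so one can take $\tilde{\mathbf{u}}^{(k^*)}=\mathbf{u}^*$ directly. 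In the complementary regime, the averaging bound provides some $k^*$ realizing $\sum_i |v_i-u^*_i|_{I_{k^*}}\lesssim d_1(\mathbf{v},\mathbf{u}^*)$, and together with the line rearrangement inequality this proves the lemma with an absolute constant. The hypothesis $n_0\geq 4$ ensures that the cuts $z_0^{(k)}$ and the intervening $\mathcal{C}_{n_0}$-arcs are geometrically non-degenerate, which is needed both for the cut-and-unroll to make sense and for the averaging bound to have a constant independent of $n_0$.
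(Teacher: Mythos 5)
Your route — cut the circle at the cell midpoints $z_0^{(k)}$, unroll, invoke the real-line rearrangement inequality, and average over the $n_0$ cuts — is genuinely different from the paper's proof (which splits indices into good ones, $d(u_i,v_i)<\pi/16$, and bad ones, sorts the good part exactly via a three-arc decomposition, then reinserts each bad index at cost at most $4\pi$, absorbed by Markov's inequality). But as written your argument has a genuine gap at the final absorption step. Your averaging identity is correct, and with $W_i\le n_0 d(v_i,u^*_i)/(2\pi)+1$ applied to every pair it yields only $2\,d_1(\mathbf{v},\mathbf{u}^*)+2\pi$: an additive constant of order one. Your case analysis does not remove it. In the complementary regime you only know that some coordinate exceeds $\pi/n_0$, hence $d_1(\mathbf{v},\mathbf{u}^*)>\pi/n_0$; that cannot absorb an additive $2\pi$ (you would need $d_1\gtrsim 1$), so the asserted ``$\lesssim$'' in your display is not established — for instance, if a single pair is off by a small constant and all others match exactly, you are in your complementary regime with $d_1$ small, yet the chain you wrote only gives $2d_1+2\pi$. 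The dichotomy ``each $u^*_i$ within $\pi/n_0$ of $v_i$'' versus its complement simply does not match the dichotomy ``the additive $2\pi$ is negligible versus not''. The missing observation that does rescue your scheme is local rather than global: $u^*_i$ is a grid point and every cut $z_0^{(k)}$ is at distance at least $\pi/n_0$ from every grid point, so $W_i\ge 1$ forces $d(v_i,u^*_i)\ge\pi/n_0$; the ``$+1$'' can then be charged per crossing pair as $2\pi/n_0\le 2d(v_i,u^*_i)$, giving an averaged line cost at most $4\,d_1(\mathbf{v},\mathbf{u}^*)$ with no additive term.

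A secondary, repairable misstatement: after cutting at $z_0^{(k)}$, the unrolled $\mathbf{v}$ is increasing only up to a cyclic shift of its indices, so the index pairing $v_i\leftrightarrow\tilde u^{(k)}_i$ is in general not the monotone matching, and $d_1(\mathbf{v},\tilde{\mathbf{u}}^{(k)})$ need not equal $\sum_i|v_i-\tilde u^{(k)}_i|_{I_k}$. What you need instead is that the monotone matching of the unrolled $\mathbf{v}$ to the unrolled grid defines some element of $\bPi_{n_0}(\mathbf{v})$ (a cyclic shift whose offset depends on both $k$ and $\mathbf{v}$), whose circular cost is at most its line cost, which by rearrangement is at most $\sum_i|v_i-u^*_i|_{I_k}$; this corrected chain is all the averaging argument requires. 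With these two repairs your proof closes (and with a better constant than the paper's $65$), but as submitted the key inequality is not proved.
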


Recall that $S=\{1,\ldots,n_0\}$ for the ease of exposition. We shall prove the following statement which implies Lemma~\ref{lem:approx:estimPos}. For any ${\bf x}^*_{S}\in \Ccal^{ n_0}$, any input ${\bf x}_S\in \mathbb{R}^{2\times n_0}$,  and any $Q\in \Ocal$, the vector $\widetilde{{\bf x}}_S$ of $\bPi_{n_0}$ fulfills
\begin{equation}\label{target:graphGeo:lemma}
    \|\widetilde{{\bf x}}_S - Q{\bf x}^*_{S}\|_1 \lesssim  \|{\bf x}_S - Q{\bf x}^*_{S}\|_1 + n d_{\infty}({\bf x}^*_S,\bPi_{n_0}) + 1.
\end{equation}

UA computes in step 1  the projection ${\bf z}_S= [x_{i}/\|x_{i}\|_2]_{i\in S}$ of the input ${\bf x}_S$ onto $\Ccal^{ n_0}$.  Given the projection ${\bf z}_S$, UA  picks in step 3 a vector $\tilde{\bf x}_S\in \bPi_{n_0}({\bf z}_{S})$ that has the smallest $\ell^1$-error: 
\begin{equation*}
    \tilde{\bf x}_S\in \underset{{\bf u}\in \bPi_{n_0}({\bf z}_{S})}{\textup{argmin}}\|{\bf z}_{S}-{\bf u}\|_1.
\end{equation*}
 It follows from these definitions and the equivalence between the distance $d$ on $\Ccal$ and $\ell^1$-norm in $\mathbb{R}^2$ that 
\[
\|{\bf z}_S- \tilde{{\bf x}}_S\|_1 = \underset{{\bf v}\in \bPi_{n_0}({\bf z}_S)}{\textup{min}}\|{\bf z}_S-{\bf v}\|_1\lesssim \underset{{\bf v}\in \bPi_{n_0}({\bf z}_S)}{\textup{min}}d_1({\bf z}_S,{\bf v})\enspace .  
\]
Gathering this bound with Lemma~\ref{goal:proof:lem:approx:esti:ComplJ}, we derive that 
\[
 \|{\bf z}_S- \tilde{{\bf x}}_S\|_1 \lesssim \underset{{\bf u}\in \bPi_{n_0}}{\textup{min}}d_1({\bf z}_S,{\bf u})\lesssim  \underset{{\bf u}\in \bPi_{n_0}}{\textup{min}}\|{\bf z}_S-{\bf u}\|_1 \enspace .  
\]
As a consequence, it suffices to exhibit some $u\in \boldsymbol{\Pi}_{n_0}$ such that its $\ell^1$ distance to the projection  ${\bf z}_S$ is small. This is precisely the purpose of the next lemma. 
\begin{lem} \label{goal:proof:lem:approx:esti:new} Consider any matrix $Q\in \mathcal{O}$.  There exists ${\bf y} \in \bPi_{n_0}$ such that 
$$\|{\bf z}_S-{\bf y}\|_1 \lesssim  \|{\bf x}_S - Q{\bf x}^*_{S}\|_1 + n d_{\infty}({\bf x}^*_S,\bPi_{n_0}) + 1\enspace .$$ 
\end{lem}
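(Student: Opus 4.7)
The plan is to build $\mathbf{y}$ in three stages: first track $Q\mathbf{x}^*_S$ from $\mathbf{z}_S$ (radial projection step), then move to a nearby regular configuration $Q\mathbf{x}^{**}_S$ where $\mathbf{x}^{**}_S\in\bPi_{n_0}$ is a best approximation of $\mathbf{x}^*_S$ in $\bPi_{n_0}$, and finally snap to the grid by a small rotation to get $\mathbf{y}\in\bPi_{n_0}$. A triangle inequality then gives the claimed bound.

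First I would control $\|\mathbf{z}_S-Q\mathbf{x}^*_S\|_1$. Fix $i\in S$. If $\|x_i\|_2\le 1/2$, then since $\|Qx^*_i\|_2=1$ we have both $\|x_i-Qx^*_i\|_2\ge 1/2$ and $\|z_i-Qx^*_i\|_2\le 2$, so $\|z_i-Qx^*_i\|_2\le 4\|x_i-Qx^*_i\|_2$. If $\|x_i\|_2\ge 1/2$, write
\[
z_i-Qx^*_i \;=\;\frac{1}{\|x_i\|_2}\bigl[(x_i-Qx^*_i)+(1-\|x_i\|_2)Qx^*_i\bigr],
\]
and use $|1-\|x_i\|_2|=|\|Qx^*_i\|_2-\|x_i\|_2|\le \|x_i-Qx^*_i\|_2$ to get $\|z_i-Qx^*_i\|_2\le 4\|x_i-Qx^*_i\|_2$. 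Summing and using the equivalence of $\ell^1$ and $\ell^2$ in $\mathbb{R}^2$ yields $\|\mathbf{z}_S-Q\mathbf{x}^*_S\|_1\lesssim \|\mathbf{x}_S-Q\mathbf{x}^*_S\|_1$.

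Next, I pick $\mathbf{x}^{**}_S\in\bPi_{n_0}$ with $d_\infty(\mathbf{x}^*_S,\mathbf{x}^{**}_S)=d_\infty(\mathbf{x}^*_S,\bPi_{n_0})$. Since $Q\in\mathcal{O}$ is an $\ell^2$-isometry on $\mathbb{R}^2$ and $\|\cdot\|_1$ is equivalent to $\|\cdot\|_2$ there, together with the equivalence of $d$ and $\|\cdot\|_1$ on $\Ccal$, we obtain
\[
\|Q\mathbf{x}^*_S-Q\mathbf{x}^{**}_S\|_1 \;\lesssim\; \|\mathbf{x}^*_S-\mathbf{x}^{**}_S\|_1 \;\lesssim\; d_1(\mathbf{x}^*_S,\mathbf{x}^{**}_S) \;\le\; n_0\, d_\infty(\mathbf{x}^*_S,\bPi_{n_0}) \;\le\; n\, d_\infty(\mathbf{x}^*_S,\bPi_{n_0}).
\]
The last step is the snap-to-grid. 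Since $Q\mathbf{x}^{**}_S$ consists of $n_0$ evenly spaced points on $\Ccal$, writing $\mathbf{x}^{**}_S=(e^{\iota 2\pi\sigma(j)/n_0})_{j}$ one sees that $Q e^{\iota 2\pi\sigma(j)/n_0}=e^{\iota(\pm 2\pi\sigma(j)/n_0+\theta)}$ for some $\theta\in\mathbb{R}$. Choosing $\theta'$ a multiple of $2\pi/n_0$ with $|\theta-\theta'|\le\pi/n_0$, the vector $\mathbf{y}\in\bPi_{n_0}$ with coordinates $e^{\iota(\pm 2\pi\sigma(j)/n_0+\theta')}$ satisfies $d_\infty(Q\mathbf{x}^{**}_S,\mathbf{y})\le \pi/n_0$, so $\|Q\mathbf{x}^{**}_S-\mathbf{y}\|_1\lesssim d_1(Q\mathbf{x}^{**}_S,\mathbf{y})\le \pi$. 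Combining the three bounds via triangle inequality gives the desired estimate.

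The main obstacle is a presentational one: the radial projection of step 1 is not globally Lipschitz, so one must split on the size of $\|x_i\|_2$ as above to absorb the ill-conditioned regime where $x_i$ is close to the origin. The rest amounts to careful book-keeping using elementary properties of $\mathcal{O}$ and of the regular grid $\bPi_{n_0}$.
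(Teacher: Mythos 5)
Your proof is correct, and it follows essentially the same route as the paper's: pass through a best regular approximation $\mathbf{x}^{**}_S\in\bPi_{n_0}$ of $\mathbf{x}^*_S$, use the isometry of $Q$ together with the equivalence of $d$, $\|\cdot\|_1$ and $\|\cdot\|_2$ on $\Ccal$, and pay a cost $\lesssim 1$ to replace $Q\mathbf{x}^{**}_S$ (which need not lie in $\bPi_{n_0}$) by a genuine element $\mathbf{y}\in\bPi_{n_0}$ via a rotation by at most $\pi/n_0$. The only real difference is in how the normalization step is handled: the paper simply uses that $z_i$ is the Euclidean projection of $x_i$ onto $\Ccal$, so $\|x_i-z_i\|_2\le\|x_i-Qx^{**}_i\|_2$, and then runs a triangle inequality; you instead prove a pointwise quasi-Lipschitz bound $\|z_i-Qx^*_i\|_2\le 4\|x_i-Qx^*_i\|_2$ with a case split on $\|x_i\|_2$, which is slightly longer but equally valid and has the minor advantage of comparing $\mathbf{z}_S$ directly to $Q\mathbf{x}^*_S$ without invoking the minimality of the projection. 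Your explicit construction of the snapped vector $\mathbf{y}$ (choosing $\theta'$ a multiple of $2\pi/n_0$ with $|\theta-\theta'|\le\pi/n_0$) spells out what the paper leaves as ``easy to check,'' which is a welcome addition; just note in passing the degenerate case $x_i=0$, where $z_i$ can be defined arbitrarily on $\Ccal$ and your first case ($\|x_i\|_2\le 1/2$) still gives the bound.
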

We conclude that 
$$\|{\bf z}_S- \tilde{{\bf x}}_S \|_1 \lesssim \|{\bf z}_S-{\bf y}\|_1 \lesssim  \|{\bf x}_S - Q{\bf x}^*_{S}\|_1 + n d_{\infty}({\bf x}^*_S,\bPi_{n_0})+1\enspace .$$ 
By triangular inequality, we have  
\begin{equation*}
\| \tilde{{\bf x}}_S-{\bf x}_S \|_1 \leq   \|\tilde{{\bf x}}_S - {\bf z}_S\|_1 +  \|{\bf z}_S-{\bf x}_S\|_1\enspace .    
\end{equation*}The definition of a projection --and the equivalence between the $\ell^1$-norm and the euclidean norm in $\mathbb{R}^2$-- ensure that $$\|{\bf z}_S-{\bf x}_S\|_1 \lesssim \|Q{\bf x}^*_{S} - {\bf x}_S\|_1\enspace ,$$ since ${\bf z}_S$ is the projection of ${\bf x}_S$ on $\Ccal^{ n_0}$ and $Q{\bf x}^*_{S}$ is an element of $\Ccal^{ n_0}$. 
The last three displays allow us  to conclude that $$ \| \tilde{{\bf x}}_S-{\bf x}_S \|_1 \lesssim  \|{\bf x}_S - Q{\bf x}^*_{S}\|_1 + n d_{\infty}({\bf x}^*_S,\bPi_{n_0})+1\enspace ,$$ which gives \eqref{target:graphGeo:lemma} using the triangle inequality again.\hfill $\square$

\subsubsection{Proofs of Lemma \ref{goal:proof:lem:approx:esti:new}}

Let ${\bf x}^{**}_S\in \bPi_{n_0}$ be a closest approximation of ${\bf x}^{*}_{S}$ in $\bPi_{n_0}$, that is, such that $d_{\infty}({\bf x}^{*}_{S},{\bf x}^{**}_S) = d_{\infty}(\xbf^*_S,\bPi_{n_0})$.
The triangular inequality gives
$$\|Q{\bf x}^{**}_S-{\bf z}_S\|_1 \leq  \|Q{\bf x}^{**}_S -{\bf x}_S \|_1 + \|{\bf x}_S -{\bf z}_S\|_1 \lesssim \|Q{\bf x}^{**}_S -{\bf x}_S \|_1\enspace ,$$
where the last inequality comes from the definition of a projection and the equivalence between the $\ell^1$-norm and the euclidean norm in $\mathbb{R}^2$.
By the triangular inequality again, we get 
$$ \|Q{\bf x}^{**}_S -{\bf x}_S \|_1 \leq \|Q{\bf x}^{**}_S -Q{\bf x}^{*}_{S} \|_1 +  \|Q{\bf x}^{*}_{S} -{\bf x}_S \|_1\enspace .$$ An orthogonal transformation preserves the distances. 
$$ \|Q{\bf x}^{**}_S -Q{\bf x}^{*}_{S} \|_1 = \|{\bf x}^{**}_S -{\bf x}^{*}_{S} \|_1 \lesssim d_1({\bf x}^{**}_S,{\bf x}^{*}_{S}) \leq n d_{\infty}({\bf x}^*_S,\bPi_{n_0})\enspace ,$$where we use again the equivalence between the distance $d$ in $\Ccal$ and the $\ell^1$-norm in $\mathbb{R}^2$. Putting everything together, we conclude that 
$$\|Q{\bf x}^{**}_S-{\bf z}_S\|_1 \lesssim  \|{\bf x}_S - Q{\bf x}^*_{S}\|_1 + n d_{\infty}({\bf x}^*_S,\bPi_{n_0})\enspace .$$ 
Although ${\bf x}^{**}_S$ belongs to $\bPi_{n_0}$, this is not necessarily the case for $Q{\bf x}^{**}_S$. Nevertheless, it is easy to check that there exists some $Q'\in \Ocal$ such that $Q'{\bf x}^{**}_S \in \bPi_{n_0}$ and $\|Q'{\bf x}^{**}_S- Q{\bf x}^{**}_S\|_1 \lesssim 1$. Setting 
${\bf y} := Q'{\bf x}^{**}_S\in \bPi_{n_0}$, then we see that  
\begin{equation}\label{eq:claim:order}
\|{\bf y}-{\bf z}_S\|_1 \lesssim  \|{\bf x}_S - Q{\bf x}^*_{S}\|_1 + n d_{\infty}({\bf x}^*_S,\bPi_{n_0}) + 1\enspace ,
\end{equation}
which concludes the proof. 
\hfill $\square$

\subsubsection{Proof of Lemma \ref{goal:proof:lem:approx:esti:ComplJ}}
Fix any vector ${\bf v}\in \mathcal{C}^{n_0}$ and any ${\bf u}\in \bPi_{n_0}$. We shall prove that  $$\min_{{\bf w}\in \bPi_{n_0}({\bf v})}d_1({\bf v},{\bf w})\lesssim  d_1({\bf v},{\bf u})\enspace.$$ Let $\tau$ be a permutation ordering the coordinates of ${\bf v}$ on the unit sphere, meaning that $v_{\tau(1)},\ldots v_{\tau(n_0)}$ is ordered. For simplicity and without loss of generality, assume that $\tau$ is the identity.  Since ${\bf u}\in \bPi_{n_0}$, it there suffices to prove the existence of a permutation $\sigma$ of $[n_0]$ such that $u_{\sigma}$ is ordered and 
\[
d_1({\bf v},{\bf u}_{\sigma})\lesssim  d_1({\bf v},{\bf u})   
\]

Define the set of 'bad' indices $\mathcal{B}=\{i: d(u_i,v_i)\geq \pi/16\}$. If the cardinal of $\mathcal{B}$ is larger than $n_0/2$, then $d_1({\bf u}, {\bf v})\geq n_0\pi/32$ and any vector $ {\bf u}_{\sigma}$ satisfies
$d_1( {\bf v},{\bf u}_{\sigma})\leq n_0\pi\leq  32d_1({\bf v},{\bf u})$. Hence, we assume henceforth that $|\mathcal{B}|\leq n_0/2$. 
First, we focus on the set of 'good' indices $\mathcal{G}= [n_0]\setminus \mathcal{B}$.  We establish the following claim at the end of the proof. 
\begin{cl}\label{claim:permutation}
 There exists  a permutation $\sigma$ of $\mathcal{G}$ such that the sequence $(u_{\sigma(j)})$ with $j\in \mathcal{G}$ is ordered and
 \[
  \sum_{i\in \mathcal{G}}d(u_{\sigma(i)}, v_i)\leq \sum_{i\in \mathcal{G}}d(u_i, v_i)
  \]
\end{cl}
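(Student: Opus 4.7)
The plan is to construct $\sigma$ by iteratively fixing local inversions in the identity permutation, exploiting the closeness $d(u_i,v_i)<\pi/16$ for $i\in\mathcal{G}$ to apply a rearrangement inequality locally. Write $\mathcal{G}=\{i_1<\ldots<i_m\}$ so that $v_{i_1},\ldots,v_{i_m}$ is in trigonometric order on $\Ccal$. We wish to exhibit a permutation $\sigma$ of $\mathcal{G}$ such that $(u_{\sigma(i_k)})_{k=1,\ldots,m}$ is in trigonometric order and $\sum_k d(v_{i_k},u_{\sigma(i_k)})\leq \sum_k d(v_{i_k},u_{i_k})$.

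First I would initialize $\sigma$ as the identity and then run a swap procedure. For each pair of cyclically consecutive indices $i_k,i_{k+1}$ in $\mathcal{G}$ (indices modulo $m$), one checks whether $u_{\sigma(i_k)}$ and $u_{\sigma(i_{k+1})}$ appear in the same cyclic order along the short arc from $v_{i_k}$ to $v_{i_{k+1}}$ as the $v$'s themselves. If not, swap the values $\sigma(i_k)$ and $\sigma(i_{k+1})$. The key point is that each such swap strictly decreases the cost $\sum_k d(v_{i_k},u_{\sigma(i_k)})$. Since the set of permutations of $\mathcal{G}$ is finite and cost is strictly decreasing, the procedure terminates at a $\sigma$ with no adjacent inversions, which forces $(u_{\sigma(i_k)})_k$ to be in global trigonometric order.

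The main technical step is verifying the cost decrease at each swap. Fix consecutive $i_k,i_{k+1}$ and abbreviate $a=v_{i_k}$, $b=v_{i_{k+1}}$, $c=u_{\sigma(i_k)}$, $d=u_{\sigma(i_{k+1})}$. By the current matching, $d(a,c),d(b,d)\leq \pi/16$ (using that each $u_{\sigma(i)}$ inherits closeness to some $v_j$ with $j\in\mathcal{G}$; a short induction shows that throughout the swap process the current matching keeps each $u_{\sigma(i)}$ within $\pi/16$ of $v_i$, because only inverted pairs with both $v$'s close enough can get swapped). When the arc from $a$ to $b$ not containing any other $v_{i_j}$ has length $<\pi/2$, all four points $a,b,c,d$ lie in a common arc of length less than $\pi$, which is isometric to an interval of $\mathbb{R}$; the classical rearrangement inequality applied in this interval then gives $d(a,d)+d(b,c)\leq d(a,c)+d(b,d)$ exactly in the "crossed" configuration, justifying the swap. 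Conversely, when the arc between consecutive $v$'s is long, the proximity constraint $d(u_{\sigma(i)},v_i)<\pi/16$ already forces $u_{\sigma(i_k)}$ and $u_{\sigma(i_{k+1})}$ to lie in the correct cyclic order, so no swap is required.

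The principal obstacle will be making the above swap argument rigorous on the circle, where "order" is cyclic and the rearrangement inequality does not hold globally. I expect the cleanest way is to fix the reference direction induced by the ordered subsequence $v_{i_1},\ldots,v_{i_m}$, work locally on arcs between consecutive $v_{i_k}$'s (using the proximity bound to linearize), and argue separately for the two regimes (short versus long gaps between consecutive $v$'s). A secondary check is that the iteration has a monotone decreasing cost; this follows because each legal swap strictly reduces the sum of distances by the strict form of the rearrangement inequality, so the procedure halts in at most $\binom{m}{2}$ steps with a $\sigma$ meeting both requirements of the claim.
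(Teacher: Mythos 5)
Your swap-based plan is a genuinely different route from the paper's (the paper partitions the circle into three arcs of length $2\pi/3$, notes that the closeness $d_\infty(\mathbf u,\mathbf v)\le \pi/16$ keeps the matched $u$'s inside slightly enlarged arcs of diameter $<\pi$, sorts inside each arc via the monotone rearrangement lemma of Claim~\ref{claim:monotnew}, and then re-sorts the small boundary sets), but as written your argument has real gaps rather than just missing polish. The most serious one is the step "no adjacent inversions $\Rightarrow$ $(u_{\sigma(i_k)})_k$ is in global trigonometric order": on the circle the relative order of two points is not intrinsically defined, and being "in cyclic order" is a global condition (e.g.\ the forward arcs from $u_{\sigma(i_k)}$ to $u_{\sigma(i_{k+1})}$ must sum to $2\pi$, not to a multiple of it), so a pairwise local criterion does not obviously certify it. You flag this yourself as the "principal obstacle", but it is precisely the cyclic difficulty that the whole claim is about, and the paper's three-arc decomposition exists exactly to reduce it to the linear rearrangement inequality; deferring it leaves the proof incomplete.

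Two further steps are asserted with incorrect or insufficient justification. First, termination: the rearrangement inequality for distances is not strict (on a line, $a=0$, $b=1$, $c=2$, $d=3$ gives $|a-c|+|b-d|=|a-d|+|b-c|=4$), so "cost strictly decreases" is false and your bound of $\binom{m}{2}$ steps needs a different monotone quantity, e.g.\ the number of inversions; this is fixable but not what you wrote. Second, the invariant that "throughout the swap process each $u_{\sigma(i)}$ stays within $\pi/16$ of $v_i$" is not proved by the reason you give ("only inverted pairs with both $v$'s close enough can get swapped"): naively, after swapping a pair whose $v$'s are at distance up to $\pi/8$, the new matched distance is only bounded by $\pi/16+\pi/8$. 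A correct version of this invariant does hold, but it requires using the inversion structure itself (if $a\le b$, $|c-a|\le\delta$, $|d-b|\le\delta$ and $c>d$ on a line, then after the swap $|d-a|\le\delta$ and $|c-b|\le\delta$), and you would then have to transport this argument to the circle on arcs of length $<\pi$, which again runs into the cyclic-order issue above. So the proposal needs both the local swap lemma proved correctly and, crucially, a mechanism (like the paper's arc decomposition) to convert local ordering into global cyclic ordering.
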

Hence, it is possible to order the restriction of $u$ to $\mathcal{G}$ without increasing the sum of the distances. It remains to transform $\sigma$ into a permutation of  $[n_0]$. We iteratively add elements of $\mathcal{B}$ into $\sigma$. Consider any $i \in \mathcal{B}$. Let $k$ and $l$ be the two consecutive (modulo $n_0$) elements of $\mathcal{G}$ such that $\underline{u}_i$ belongs to the interval $[\underline{u}_{\sigma(k)}, \underline{u}_{\sigma(l)})$ of the torus $\mathbb{R}/(2\pi)$. Let $r$ and $s$ be the two consecutive elements of $\mathcal{G}$ such that $i\in (r,s)$ (where we work modulo $n_0$). Then, we define the permutation $\sigma'$ of $(\mathcal{G}\cup \{i\})$ as follows. 

If $(r,s)= (k,l)$, then we take $\sigma'(j)= \sigma(j)$ if $j\in \mathcal{G}$ and $\sigma'(i)=i$. One readily checks that the sequence $(u_{\sigma'(j)})$ with $j\in \mathcal{G}\cup\{i\}$ is ordered and that $\sum_{j\in \mathcal{G}\cup \{i\}}d(u_{\sigma'(j)},v_j)\leq \sum_{j\in \mathcal{G}\cup \{i\}}d(u_{j},v_j)$. 

Otherwise, we set $\sigma'(i)=\sigma(s)$ and $\sigma'(k)=i$. For $j\in \mathcal{G}$, let $\text{succ}_{\mathcal{G}}(j)$ be the successor of $j\in \mathcal{G}$, that is the smallest index $j'\in \mathcal{G}$ which is larger than $j$ (modulo $n_0$). 
For any $j\in \mathcal{G}$ in the segment $[s,k)$, we set $\sigma'(j)= \sigma(\text{succ}_{\mathcal{G}}(j))$. Besides, we set  $\sigma'(j)=\sigma(j)$ for all $j\in \mathcal{G}$ in the segment $[l, r]$. In other words, we have shifted all elements in the segment $[s,k]$ to successfully include $i$ in the permutation $\sigma'$. It follows from this definition that the sequence $u_{\sigma'(j)}$ with $j\in \mathcal{G}\cup \{i\}$ is ordered. 
By the triangular inequality, we have 
\begin{eqnarray*}
\sum_{j\in \mathcal{G}\cup \{i\}}d(u_{\sigma'(j)},v_j)&= & \sum_{j\in \mathcal{G}\cap [l,r]}d(u_{\sigma(j)},v_j)+  d(u_{\sigma(s)},v_i)+d(u_{i},v_k)\\
&+&\sum_{j\in \mathcal{G}\cap  [s,k)} d(u_{\sigma(\text{succ}_{\mathcal{G}}(j))} , v_j) \\
&\leq & 2\pi + \sum_{j\in \mathcal{G}}d(u_{\sigma(j)} , v_j)+ \sum_{j\in \mathcal{G}\cap  [s,k)} d(u_{\sigma(j)},u_{\sigma(\text{succ}_{\mathcal{G}}(j))} )\\
&\leq & 4\pi + \sum_{j\in \mathcal{G}}d(u_{\sigma(j)} , v_j)\leq 4\pi + \sum_{j\in \mathcal{G}\cup \{i\}}d(u_j , v_j)\enspace , 
\end{eqnarray*}
where we used in the third line that $\sum_{j\in \mathcal{G}\cap  [s,k)} d(u_{\sigma(j)},u_{\sigma(\text{succ}_{\mathcal{G}}(j))})\leq 2\pi$. Indeed, the sequence $(u_{\sigma(j)})_{j\in \mathcal{G}\cap  [s,k)}$ is ordered on the sphere and this sum is therefore equal to the length of the arc $[\underline{u}_{\sigma(s)}, \underline{u}_{\sigma(k)}]$. 

By a straightforward induction, we manage to build a permutation $\overline{\sigma}$ on $[n_0]$ such that $(u_{\overline{\sigma}(j)})_{j\in[n_0]}$ is ordered and 
\begin{eqnarray*}
\sum_{j\in [n_0]}d(u_{\overline{\sigma}(j)},v_j)& \leq &   4\pi\big|\big\{i\in[n_0]\,: \, d(u_i,v_i)\geq \frac{\pi}{16}\big\}\big| + \sum_{j\in [n_0]}d(u_j , v_j)\\
&\leq & 65 \sum_{j\in [n_0]}d(u_j , v_j)\ , 
\end{eqnarray*}
where we used Markov's inequality in the last line. We have shown the desired result. \hfill $\square$

\bigskip 
\begin{proof}[Proof of claim \ref{claim:permutation}]
Without loss of generality, we assume in the proof that $\mathcal{B}=\emptyset$ so that we build a permutation $\sigma$ of $[n_0]$.  Since  $\mathcal{B}=\emptyset$, ${\bf u}\in \bPi_{n_0}$ satisfies $d_{\infty}({\bf u},{\bf v})\leq \pi/16$. We shall iteratively build a permutation $\sigma$ such that ${\bf u}_{\sigma}$ is ordered. Let us first partition the one-dimensional torus $\mathbb{R}/(2\pi)$ into three parts $\mathbb{R}/(2\pi) = \Dcal_1 \cup \Dcal_2 \cup \Dcal_3$ where $\Dcal_s= \left[(s-1)\frac{2\pi}{3}, s\frac{2\pi}{3}\right)$ for $s=1,2,3$.

For $s=1,2,3$, define $I_s=\{i: \underline{v}_i \in \Dcal_s\}$. Since $d_{\infty}({\bf u},{\bf v})\leq \pi/16$, it follows that $\{\underline{u}_i: i\in I_s\}\subset \left[(s-1)\frac{2\pi}{3}-\frac{\pi}{16}, s\frac{2\pi}{3}+\frac{\pi}{16}\right)=\Dcal'_s$. Note that the diameter of $\Dcal'_s$ is smaller  $2\pi/3+\pi/8 < \pi$. 
We have the decomposition
\[
 d_1({\bf v},{\bf u})= \sum_{s=1}^3 \sum_{i\in I_s}d(v_i,u_i)\enspace . 
\]
For $s=1,2,3$, let $\sigma_s$ denote the permutation of $I_s$ such that the sequence $u_{\sigma_s(i)}$ is ordered  when $i$ is in $I_s$.  
Since the diameter of $\Dcal'_s$ is at most $\pi$, the sequence $\underline{u}_{\sigma_s(i)}$ in $\Dcal'_s$ is isometric to an increasing sequence of points in $[0,\pi]\subset \mathbb{R}$ endowed with the absolute value distance.  It goes the same for the ordered sequence $\underline{v}_i$ in $\Dcal'_s$. Next, we use the following classical property. 
\begin{cl}\label{claim:monotnew}
Let $l\geq 1$ be an integer and ${\bf a}, {\bf b}$ be two monotonic vectors of $\mathbb{R}^l$, that is, $a_1 \leq  a_2 \leq  \ldots \leq a_l$ and  $b_1 \leq b_2 \leq \ldots \leq b_l.$ Then, for any permutation $\tau$ of the indices $\{1,\ldots, l\}$, we have $$\sum_{j=1}^l |a_i-b_i|  \leq \sum_{j=1}^l |a_i-b_{\tau(i)}|\quad  \text{ and }\quad \max(|a_i-b_{i}|)\leq \max(|a_i-b_{\tau(i)}|) \enspace .$$ \end{cl}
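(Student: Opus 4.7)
The plan is to reduce the problem to a two-element swap lemma and then apply it iteratively. Concretely, I claim that for real numbers $\lambda \leq \mu$ and $\nu \leq \rho$,
\[
|\lambda - \rho| + |\mu - \nu| \geq |\lambda - \nu| + |\mu - \rho| \quad \text{and} \quad \max(|\lambda - \rho|, |\mu - \nu|) \geq \max(|\lambda - \nu|, |\mu - \rho|).
\]
Given this swap lemma, I proceed by induction on the number of inversions of $\tau$. If $\tau$ is the identity, equality holds trivially. Otherwise, pick any pair $i < j$ with $\tau(i) > \tau(j)$ and apply the swap lemma with $\lambda = a_i$, $\mu = a_j$, $\nu = b_{\tau(j)}$, $\rho = b_{\tau(i)}$; this is permitted since $a$ and $b$ are non-decreasing. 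Replacing $\tau$ by the permutation $\tau'$ obtained by transposing the values $\tau(i)$ and $\tau(j)$ does not increase either the $L^1$ or $L^\infty$ objective, and $\tau'$ has strictly fewer inversions. Iterating reaches the identity, proving the Claim.

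For the $L^1$ swap inequality, the cleanest argument uses the function $g(x) := |x - \rho| - |x - \nu|$. A short case analysis (whether $x \leq \nu$, $\nu \leq x \leq \rho$, or $x \geq \rho$) shows that $g$ is non-increasing on $\mathbb{R}$: on the first region $g(x) = \rho - \nu$, on the middle region $g(x) = \rho + \nu - 2x$, and on the last region $g(x) = \nu - \rho$. Since $\lambda \leq \mu$, this yields $g(\lambda) \geq g(\mu)$, which rearranges directly into the desired inequality.

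For the $L^\infty$ swap inequality, I will show separately that each of $|\lambda - \nu|$ and $|\mu - \rho|$ is bounded by $\max(|\lambda - \rho|, |\mu - \nu|)$. For $|\lambda - \nu|$: if $\lambda \leq \nu$ then $|\lambda - \nu| = \nu - \lambda \leq \rho - \lambda = |\lambda - \rho|$ using $\nu \leq \rho$; if $\nu \leq \lambda$ then $\nu \leq \mu$ as well (since $\lambda \leq \mu$), hence $|\lambda - \nu| = \lambda - \nu \leq \mu - \nu = |\mu - \nu|$. The bound on $|\mu - \rho|$ follows by a symmetric case split on whether $\mu \leq \rho$ or $\mu \geq \rho$. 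Taking the maximum gives the claimed $L^\infty$ inequality.

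I do not anticipate a genuine obstacle, as this is the classical statement that sorted pairing is optimal for both the $L^1$ and $L^\infty$ transport costs between sorted $l$-tuples on $\mathbb{R}$. The only delicate point is that the $L^\infty$ bound does not follow from the $L^1$ bound, so it requires its own short argument, handled above by the two elementary case distinctions.
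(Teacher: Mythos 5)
Your proof is correct. Note that the paper itself does not prove Claim~\ref{claim:monotnew}: it is invoked as a ``classical property'' with no argument supplied, so you are filling a gap the authors deliberately left to the reader. Your route is the standard one and it is complete: the two-element swap lemma is verified correctly (the monotonicity of $g(x)=|x-\rho|-|x-\nu|$ gives the $\ell^1$ inequality, and your four-way case split gives the $\ell^\infty$ inequality, which indeed needs its own argument since it does not follow from the $\ell^1$ one), and the iteration terminates because transposing the values $\tau(i)$ and $\tau(j)$ at an inverted pair $i<j$ strictly decreases the number of inversions (the pair $(i,j)$ itself ceases to be an inversion, and no other pair's status can increase the total). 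One cosmetic remark: the statement in the paper has a typo (summation over $j$ but summand indexed by $i$); your reading of it as $\sum_{i=1}^l$ is the intended one.
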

It follows  that, for $s=1,2,3$, we have 
\[
 \sum_{i\in I_s}d(v_i,u_{\sigma_s(i)})\leq \sum_{i\in I_s}d(v_i,u_{i})\enspace .
\]
Let $\sigma$ be the permutation such that $\sigma(i)=\sigma_s(i)$ if $i\in I_s$. Obviously, we have $d_1({\bf v},{\bf u}_{\sigma})\leq d_1({\bf v},{\bf u})$. Besides, $u_{\sigma}$ is ordered except possibly at the indices  $J_s=\{i : \underline{u}_{\sigma(i)}\in [(s-1)\frac{2\pi}{3}-\frac{\pi}{16}; (s-1)\frac{2\pi}{3}+\frac{\pi}{16}]\}$ with $s=1,2,3$. Since $\max_id(u_{\sigma(i)},v_i)\leq \frac{\pi}{16}$ by the second part of the above claim, all $\underline{u}_{\sigma(i)}$ and $\underline{v}_i$ with $i\in J_s$ belong to an interval of length smaller than $\pi$. Besides, 
\[
 d_1({\bf v},{\bf u}_{\sigma})= \sum_{j\notin (\cup_s J_s)}d(v_i,u_{\sigma_s(i)})+ \sum_{s=1}^3\sum_{i\in J_s}d(v_i,u_{\sigma(i)})\ .
\]
Hence, we can build as previously partitions $\sigma'_s$ of $J_s$ that make $u_{\sigma'_s(\sigma(i))}$ ordered on $J_s$ and so that 
\[
 \sum_{i\in J_s}d(v_i,u_{\sigma'_s(\sigma(i))})\leq \sum_{i\in J_s}d(v_i,u_{\sigma(i)})\ . 
\]
Defining $\overline{\sigma}(i)=\sigma_s'(\sigma(i))$ if $i\in J_s$ for $s=1,2,3$ and $\overline{\sigma}(i)= \sigma(i)$ otherwise, we conclude that ${\bf u}_{\overline{\sigma}}$ is ordered and that $d_1({\bf v},{\bf u}_{\overline{\sigma}})\leq d_1({\bf v},{\bf u})$.
\end{proof}


\subsection{Proof of Proposition \ref{conj:graphgeo:l1:new} }\label{appendix:spectral}

Under the extra assumption that $f$ is geometric, i.e. $f$  satisfies \eqref{def:geometricSetting}, we will show that the estimation error of the spectral algorithm is bounded by $\frac{n \sqrt{n \log(n)}}{\left(\Delta_1^{(S)} \wedge \Delta_2^{(S)}\right) \vee 1}$ in $\ell^1$-type norm. The proof consists in approximating the signal $F_{SS}$ by a circulant and circular-R matrix (Definition \ref{defi:circulan}) whose spectrum is known (Lemma \ref{lem:spectrumformula}) and provides information on the latent positions ${\bf x}^{*}_{S}$. The difference between the spectrums of $F_{SS}$ and $A_{SS}$ will be bounded using the Davis-Kahan perturbation bound.

\subsubsection{Preliminaries: general facts on R-matrices}\label{section:def:Rcicular}  Let us start by introducing the notion of circulant matrix (see \cite{gray2006toeplitz,recanati2018reconstructing}). 
\begin{defi}\label{defi:circulan} 
For any integer $n\geq 1$, a symmetric matrix $M\in \mathbb{R}^{n\times n}$ is circulant if there exists a vector ${\bf a }$ of size $n$ such that $M_{ij} = a_{|i-j|}$ and 
\begin{align*}
  \forall k=1,\ldots,n-1, \qquad   a_k = a_{n-k}. 
\end{align*} Moreover, $M$ is a circulant and circular R-matrix if the above holds and the sequence $(a_{j})_{0 \leq j\leq \lfloor n/2 \rfloor}$ is non-increasing.
\end{defi}

The spectrum of circulant matrices is known --see \cite{gray2006toeplitz} and the references therein, which allows to easily deduce the spectrum of symmetric circulant matrices, see Proposition C.4 from \cite{recanati2018reconstructing}. For clarity, we recall this result  below --with a  small correction on the first coordinate of the eigenvector ${\bf v}^{(m)}$.

\begin{lem}[spectrum of symmetric circulant matrices]\label{lem:spectrumformula}
Let $M\in \mathbb{R}^{n \times n}$ be any symmetric circulant matrix associated to a vector ${\bf a}$ (as above). 
\begin{itemize}
 \item  For $n=2p+1,$ the eigenvalues of $M$ are equal to
$$\alpha_m = a_0 + 2 \sum_{j=1}^p a_j \cos\left(j \frac{2\pi m}{n}\right)\enspace , \qquad m=0,\ldots, p\enspace , $$
where each $\alpha_m$, for $m=1,\ldots,p,$ has multiplicity $2$ and is associated with the two following eigenvectors \begin{align}\label{def:eigenvector}
    {\bf u}^{(m)}&= (1,\cos(2\pi m/n),\ldots, \cos((n-1)2\pi m/n))\\
    {\bf v}^{(m)}&= (0,\sin(2\pi m/n),\ldots, \sin((n-1)2\pi m/n)) \enspace .\nonumber
\end{align}
For $m=0$, $\alpha_0$ has multiplicity 1 and is associated to  ${\bf u}^{(0)}=(1,\ldots,1).$
\item  For $n=2p,$
$$\alpha_m = a_0 + 2 \sum_{j=1}^{p-1} a_j \cos(j \frac{2\pi m}{n}) + a_p \cos(\pi m)\enspace , \qquad m=0,\ldots, p\enspace ,$$where each $\alpha_m$, for $m=1,\ldots,p-1,$ is associated with the two eigenvectors in \eqref{def:eigenvector}. The eigenvalue $\alpha_p$ is associated with  ${\bf u}^{(p)}= (1,-1,\ldots,1,-1)$. 
For $m=0$, $\alpha_0$ has multiplicity 1 and is associated to  ${\bf u}^{(0)}=(1,\ldots,1).$
 \end{itemize}

\end{lem}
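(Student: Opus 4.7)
My plan is to leverage the classical Fourier diagonalization of circulant matrices and then use the symmetry condition $a_{k}=a_{n-k}$ to convert the complex eigenbasis into the real cosine/sine basis stated in the lemma.

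First, I would set $\omega_m = e^{\iota 2\pi m/n}$ for $m=0,\ldots,n-1$ and introduce the complex Fourier vectors
\[
 \mathbf{f}^{(m)} = \bigl(1,\omega_m,\omega_m^2,\ldots,\omega_m^{n-1}\bigr)^{T}\in\mathbb{C}^n.
\]
Writing $M_{ij}=a_{|i-j|}$ as $M_{ij}=c_{(j-i)\,\mathrm{mod}\,n}$, where $c_k=a_k$ for $0\leq k\leq \lfloor n/2\rfloor$ and $c_k=a_{n-k}$ otherwise (well-defined thanks to the symmetry assumption $a_k=a_{n-k}$), a direct computation gives $(M\mathbf{f}^{(m)})_i = \omega_m^i \sum_{k=0}^{n-1} c_k\,\omega_m^k$. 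Hence $\mathbf{f}^{(m)}$ is an eigenvector of $M$ with eigenvalue
\[
 \beta_m \;=\; \sum_{k=0}^{n-1}c_k\,\omega_m^{k}\enspace .
\]

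Next, I would exploit the symmetry $c_k=c_{n-k}$. Grouping the terms $k$ and $n-k$ in the sum defining $\beta_m$ and using $\omega_m^{n-k}=\omega_m^{-k}$ together with $\omega_m^{k}+\omega_m^{-k}=2\cos(2\pi mk/n)$, one obtains, for $n=2p+1$,
\[
 \beta_m = a_0 + 2\sum_{j=1}^{p} a_j \cos\!\bigl(j\,2\pi m/n\bigr)\enspace ,
\]
and the corresponding formula for $n=2p$ (with a single $a_p\cos(\pi m)$ term since the index $k=p$ is self-paired). This yields precisely the eigenvalues $\alpha_m$ announced in the lemma, and shows $\beta_m=\beta_{n-m}$, so each $\alpha_m$ with $0<m<n/2$ has multiplicity (at least) two, coming from the two Fourier vectors $\mathbf{f}^{(m)}$ and $\mathbf{f}^{(n-m)}=\overline{\mathbf{f}^{(m)}}$.

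Finally, I would produce the real eigenvectors. For such $m$, the vectors $\tfrac{1}{2}(\mathbf{f}^{(m)}+\mathbf{f}^{(n-m)}) = \mathbf{u}^{(m)}$ and $\tfrac{1}{2\iota}(\mathbf{f}^{(m)}-\mathbf{f}^{(n-m)})=\mathbf{v}^{(m)}$ are real linear combinations of eigenvectors associated with the same eigenvalue $\alpha_m$ and thus span its eigenspace; they are exactly the vectors defined in \eqref{def:eigenvector}. The remaining cases are $m=0$, where $\mathbf{f}^{(0)}=\mathbf{u}^{(0)}=(1,\ldots,1)$ is already real and has multiplicity one, and (only when $n=2p$) $m=p$, where $\mathbf{f}^{(p)}=(1,-1,\ldots,1,-1)=\mathbf{u}^{(p)}$ is again real with multiplicity one. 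Counting multiplicities ($1+2(p-1)+1=n$ for $n=2p$, and $1+2p=n$ for $n=2p+1$) confirms that we have exhausted the spectrum.

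The only real technical point is the consistency of the identification $c_k=a_{n-k}$ for $k>n/2$ with the original definition $M_{ij}=a_{|i-j|}$, which is guaranteed by the symmetry hypothesis $a_k=a_{n-k}$; beyond that, the argument is a direct consequence of Fourier diagonalization. No use of the Robinson (circular-R) structure is needed here, since the spectral formulas only rely on the circulant and symmetric character of $M$.
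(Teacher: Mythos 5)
Your argument is correct and complete: it is the classical Fourier diagonalization of circulant matrices (eigenvectors $\mathbf{f}^{(m)}$, eigenvalues $\sum_k c_k\omega_m^k$), followed by pairing $k$ with $n-k$ via the symmetry $a_k=a_{n-k}$ to get the cosine formulas, and taking real and imaginary parts of $\mathbf{f}^{(m)}$ to obtain $\mathbf{u}^{(m)},\mathbf{v}^{(m)}$. The paper does not prove this lemma itself — it imports it from Recanati et al.\ (their Proposition C.4, itself based on the standard circulant theory in Gray) — and your derivation is exactly the argument underlying that citation, including the corrected first coordinate $v^{(m)}_0=\sin(0)=0$. The only caveat, inherited from the lemma's own phrasing rather than introduced by you, is that ``multiplicity $2$'' and your claim that $\mathbf{u}^{(m)},\mathbf{v}^{(m)}$ span the eigenspace of $\alpha_m$ hold as stated only when the values $\alpha_0,\ldots,\alpha_{\lfloor n/2\rfloor}$ are pairwise distinct; in general one should read this as ``$\mathbf{u}^{(m)},\mathbf{v}^{(m)}$ are two independent eigenvectors associated with $\alpha_m$,'' which is all the paper ever uses.
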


If the vector ${\bf a}$ has nonnegative entries, then $\alpha_0$ is obviously the largest eigenvalue. The next lemma ensures that, for circular R-matrices, $\alpha_1$ is the second largest eigenvalue.  Its proof can be found in \cite[Proposition C.5]{recanati2018reconstructing}. 

\begin{lem}[second largest eigenvalue]\label{lem:strictEigenvaluesOptim}For any symmetric and circulant circular R-matrix, with nonnegative entries and eigenvalues $\{\alpha_m\}$ for $m=0,\ldots,\lfloor n/2 \rfloor$ (as defined in Lemma \ref{lem:spectrumformula}), we have $ \alpha_1 \geq \alpha_j$ for all $j=2,\ldots,\lfloor n/2 \rfloor$.
\end{lem}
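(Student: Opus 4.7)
The plan is to use Abel summation to reduce the claim to a pointwise comparison of Dirichlet kernels, and then to establish that pointwise inequality via a rearrangement argument. Because $(a_k)_{0\leq k\leq p}$ is non-increasing (and we set $a_{p+1}:=0$), the differences $\delta_\ell := a_\ell - a_{\ell+1}$ are all non-negative, and $a_k = \sum_{\ell=k}^{p}\delta_\ell$. Substituting this decomposition into the closed form of $\alpha_m$ from Lemma~\ref{lem:spectrumformula}, interchanging the order of summation (and handling the boundary term $a_p\cos(\pi m)$ that appears for $n=2p$ in the same spirit), I obtain the identity
\[
\alpha_m \;=\; \delta_0 \;+\; \sum_{\ell=1}^{p}\delta_\ell\, D_\ell\!\bigl(\tfrac{2\pi m}{n}\bigr),
\qquad
D_\ell(x) \;:=\; 1 + 2\sum_{k=1}^{\ell}\cos(kx) \;=\; \frac{\sin\!\bigl((2\ell+1)x/2\bigr)}{\sin(x/2)},
\]
where $D_\ell$ is the Dirichlet kernel. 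This simply says that $M$ is a non-negative combination of the ``disk'' circulant $R$-matrices with first rows $\mathbf{1}_{|k|_n\leq \ell}$, whose eigenvalues are exactly $D_\ell(2\pi m/n)$. Subtracting two such identities,
\[
\alpha_1 - \alpha_j \;=\; \sum_{\ell=1}^{p}\delta_\ell\,\bigl[D_\ell(2\pi/n) - D_\ell(2\pi j/n)\bigr],
\]
and since $\delta_\ell\geq 0$ it suffices to establish the pointwise bound $D_\ell(2\pi/n)\geq D_\ell(2\pi j/n)$ for every $\ell\in\{1,\ldots,p\}$ and $j\in\{2,\ldots,p\}$.

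For the pointwise step, I would set $d = \gcd(j,n)$, $j' = j/d$, $n' = n/d$, so that $\gcd(j',n')=1$ and $\cos(2\pi kj/n) = \cos(2\pi kj'/n')$. Since multiplication by $j'$ permutes $\mathbb{Z}/n'\mathbb{Z}$, the sum $D_\ell(2\pi j/n) = \sum_{k=-\ell}^{\ell}\cos(2\pi kj'/n')$ splits into complete residue cycles modulo $n'$ (each of which vanishes, because $\sum_{i=0}^{n'-1}\cos(2\pi i/n') = 0$ for $n'\geq 2$) plus a remainder of size $r := (2\ell+1)\bmod n'$, equal to $\sum_{i\in T}\cos(2\pi i/n')$ for some $T\subset\mathbb{Z}/n'\mathbb{Z}$ with $|T|=r$. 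The rearrangement inequality, applied to the sequence $i\mapsto \cos(2\pi i/n')$ (maximized at $i=0$ and decreasing in $|i|_{n'}$), then bounds this remainder by the value obtained when $T$ is replaced by the symmetric window $T^*$ of the $r$ residues closest to $0$. Finally, since $n\geq n'$, cosine monotonicity on $[0,\pi]$ yields $\cos(2\pi k/n)\geq \cos(2\pi k/n')$ for $k\leq \lfloor n'/2\rfloor$, and this inflation from the coarse grid $\mathbb{Z}/n'\mathbb{Z}$ to the fine grid $\mathbb{Z}/n\mathbb{Z}$ turns the rearranged coarse-grid sum into a quantity dominated by $D_\ell(2\pi/n)$.

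The hard part will be the last comparison when $d>1$ and $2\ell+1>n'$: the window $T^*$ is then a proper, possibly asymmetric (when $r$ is even) subset of $\mathbb{Z}/n'\mathbb{Z}$, so one has to carry out a careful parity analysis on $r$, $n'$ and $\ell$ to pin down the exact form of the truncated Dirichlet sum before invoking cosine monotonicity. In the clean case $\gcd(j,n)=1$ the whole argument collapses to a single application of rearrangement on $\mathbb{Z}/n\mathbb{Z}$: the set $\{kj\bmod n:k=-\ell,\ldots,\ell\}$ consists of $2\ell+1$ distinct residues, and among all $(2\ell+1)$-subsets the cosine sum is maximized on the symmetric window $\{-\ell,\ldots,\ell\}$ associated with $j=1$, which is precisely $D_\ell(2\pi/n)$. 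The general case is this clean case chained with the rearrangement-plus-inflation idea sketched above, modulo the parity bookkeeping.
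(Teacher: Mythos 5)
Your reduction is sound, and it is essentially the standard route (note the paper does not reprove this lemma itself: it cites Proposition C.5 of Recanati et al., which also proceeds by decomposing the R-matrix into band circulants). The Abel summation writes the matrix as a nonnegative combination of symmetric $0/1$ ``disk'' circulants whose eigenvalues are Dirichlet-kernel values $D_\ell(2\pi m/n)$, so everything hinges on the pointwise claim $D_\ell(2\pi/n)\geq D_\ell(2\pi j/n)$; your rearrangement argument does settle this when $\gcd(j,n)=1$, and the even-$n$ boundary term is indeed only bookkeeping.

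The gap is the non-coprime case, and it is not just parity bookkeeping: the ``inflation'' step fails. After rearrangement you are left with $\sum_{i\in T^*}\cos(2\pi i/n')$, $|T^*|=r=(2\ell+1)\bmod n'$, and you propose to bound it by the same sum on the fine grid, $\sum_{i\in T^*}\cos(2\pi i/n)$ (which equals $D_s(2\pi/n)$ when $r=2s+1$), and then to dominate that by $D_\ell(2\pi/n)$. This last domination would require $\sum_{s<|k|\leq\ell}\cos(2\pi k/n)\geq 0$, which is false in general: enlarging the window at the \emph{fine} frequency $2\pi/n$ adds mostly negative cosines once the window passes $n/4$. Concretely, take $n=35$, $j=5$ (so $n'=7$, $j'=1$) and $\ell=16$, i.e.\ $2\ell+1=33$, $r=5$, $s=2$: then $D_2(2\pi/35)=1+2\cos(2\pi/35)+2\cos(4\pi/35)\approx 4.84$, whereas $D_{16}(2\pi/35)=2\cos(\pi/35)\approx 1.99$, so your chain breaks at its second link. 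The inequality you actually need, $\sum_{i\in T^*}\cos(2\pi i/7)=1+2\cos(2\pi/7)+2\cos(4\pi/7)\approx 1.80\leq 1.99$, is true but only by a small margin, and it cannot be reached by termwise cosine monotonicity followed by window enlargement; what is required is a direct comparison of the coarse-grid truncated Dirichlet sum (of order $\sin(r\pi/n')/\sin(\pi/n')$ for odd $r$) with $\sin((2\ell+1)\pi/n)/\sin(\pi/n)$ under the constraints $2\ell+1\equiv r\ (\mathrm{mod}\ n')$ and $2\ell+1\leq n$. That comparison is precisely the delicate core of the lemma, and your sketch does not yet contain an argument for it.
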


\noindent 
{\bf Remark}:  if $a_j=g(j\frac{2\pi}{n})$, then the discrete  Fourier transform $\Fcal_{k,n}(g)$ as defined in \eqref{spectre:fourier:paper} satisfies $\Fcal_{k,n}(g) = \alpha_k$, for all for $k=0,\ldots,p$. In addition,  $\Fcal_{n-k,n}(g) = \Fcal_{k,n}(g)$ for all $k=1,\ldots,p$.

\subsubsection{Main Proof of Proposition \ref{conj:graphgeo:l1:new}} \label{sec:prop:VSA}

Recall that $S\subset \{1,2,\ldots,n\}$ satisfies $|S|=n_0 = n/4$. For the ease of exposition we assume that $S =\{1,2,\ldots,n_0\}$ and we only consider the case where  $n_0$ is odd (the case of even $n_0$ being similar). Thus, we write $n_0=2p+1$ in the following. 
If $\Delta_1^{(S)} \wedge \Delta_2^{(S)} \leq C_{lLea} \sqrt{n \log(n)}$, then the bound in Proposition \ref{conj:graphgeo:l1:new} trivially holds
\begin{equation*}
    \underset{Q \in \mathcal{O}}{\textup{min}}\|\hxsp_S-Q{\bf x}^*_{S}\|_{1}\leq 2 n_0 \leq  C_{lLea} \frac{n \sqrt{n \log(n)}}{\left(\Delta_1^{(S)} \wedge \Delta_2^{(S)}\right) \vee 1} \enspace .
\end{equation*} 
We assume therefore that  $\Delta_1^{(S)} \wedge \Delta_2^{(S)} \geq C_{lLea} \sqrt{n \log(n)}$ for a quantity $C_{lLea}$ that will be set later. By definition of $\Delta_1^{(S)}$ and $\Delta_2^{(S)}$, this means that \begin{equation}\label{bound:spec}
    |\lambda_0^{*(S)}- \lambda_1^{*(S)}| \wedge |\lambda_2^{*(S)} - \lambda_3^{*(S)}|\geq  C_{lLea} \sqrt{n \log(n)}.
\end{equation}

Let ${\bf u}^{(1)}=(u^{(1)}_1,\ldots,u^{(1)}_{n_0})$ and ${\bf v}^{(1)}=(v^{(1)}_1,\ldots,v^{(1)}_{n_0})$ denote the eigenvectors of a circular and circulant $R$-matrix as described in Lemma \ref{lem:spectrumformula}. For any matrix $M=(m_{ij})$, we write $\|M\|_{\infty }$ its entry-wise $l_{\infty}$ norm, that is $\|M\|_{\infty } = \textup{max}_{ij} \,  m_{ij}$. Recall that $F_{SS}:=[f(x_i^*,x_j^*)]_{i,j\in S}$ .

\begin{lem}\label{cl:approxRmatrix}
There exist a permutation $\si$ and a circulant circular R-matrix $R$ with nonnegative entries  such that the following inequality holds $\|F_{SS}-R_{\si}\|_{\infty} \leq C_{lLea} \sqrt{ \log(n)/n}$. Besides, the vector ${\bf x}^{**}_S\in \bPi_{n_0}$ defined by $x^{**}_i := (u^{(1)}_{\si(i)},  v_{\si(i)}^{(1)})$ for $i=1,\ldots, n_0$ satisfies 
\begin{equation}\label{eq:approx}
    \min_{Q\in \mathcal{O}}d_{\infty}({\bf x}^{**}_S, Q{\bf x}^*_{S})\leq C_a \sqrt{\frac{\log(n)}{n}}  \enspace .
\end{equation}
\end{lem}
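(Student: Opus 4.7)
The plan is to first construct the permutation $\sigma$ and the regularized positions ${\bf x}^{**}_S$ from a best $\bPi_{n_0}$-approximant of ${\bf x}^*_S$ (which immediately yields \eqref{eq:approx}), and then to build $R$ explicitly from $g$ evaluated on the regular grid $\{2\pi k/n_0\}$, using a monotone envelope to absorb the $\ep_n$-slack in the bi-Lipschitz lower bound.

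For the construction of $\sigma$ and ${\bf x}^{**}_S$, I would invoke the hypothesis $d_\infty({\bf x}^*_S,\bPi_{n_0}) \leq c_a\sqrt{\log(n)/n}$ to pick ${\bf y}\in \bPi_{n_0}$ attaining the infimum, so that $d_\infty({\bf x}^*_S,{\bf y}) \leq c_a\sqrt{\log(n)/n}$. By definition of $\bPi_{n_0}$, there is a permutation $\tau$ of $[n_0]$ with $y_i = e^{\iota 2\pi\tau(i)/n_0}$; since the eigenvector formulas of Lemma~\ref{lem:spectrumformula} give $(u^{(1)}_k,v^{(1)}_k) = e^{\iota 2\pi(k-1)/n_0}$, choosing $\sigma(i)\equiv \tau(i)+1 \pmod{n_0}$ produces $x^{**}_i = y_i$ directly, and hence \eqref{eq:approx} holds with $Q=\mathrm{Id}$ and constant $c_a$.

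For the approximation bound on $F_{SS}$, I would define $b_k := g(2\pi k/n_0)$ for $0\leq k\leq \lfloor n_0/2\rfloor$, then replace this sequence by its non-increasing envelope $a_k := \min_{0\leq j\leq k} b_j$, extend it symmetrically by $a_{n_0-k}:= a_k$, and set $R_{ij} := a_{d_{\mathrm{tor}}(i,j)}$ where $d_{\mathrm{tor}}(i,j) := \min(|i-j|,n_0-|i-j|)$. By construction $R$ is symmetric, circulant, has nonnegative entries (since $g\in[0,1]$), and is a circular R-matrix. The choice of $\sigma$ in the previous paragraph ensures $d(x^{**}_i,x^{**}_j) = 2\pi d_{\mathrm{tor}}(\sigma(i),\sigma(j))/n_0$, so that $R_{\sigma(i),\sigma(j)} = a_{d_{\mathrm{tor}}(\sigma(i),\sigma(j))}$, while $F_{SS,ij} = g(d(x^*_i,x^*_j))$. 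I would then decompose the pointwise error into (i) $|a_k - b_k|$ and (ii) $|g(d(x^*_i,x^*_j)) - g(d(x^{**}_i,x^{**}_j))|$. Term (i) is at most $\ep_n$: if $a_k = b_{j^*}$ for some $j^*\leq k$, applying \eqref{cond:lipschLower} to any fixed point and two points at distances $2\pi j^*/n_0$ and $2\pi k/n_0$ from it yields $b_k \leq b_{j^*} + \ep_n$. Term (ii) is at most $2c_L d_\infty({\bf x}^*_S,{\bf x}^{**}_S) + 2\ep_n$ via two successive applications of the upper Lipschitz bound \eqref{cond:lipsch}, together with the fact that $f$ is geometric and orthogonal transformations preserve $d$. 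Summing the two contributions, $\|F_{SS} - R_\sigma\|_\infty \leq C_{lLea}\sqrt{\log(n)/n}$.

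The main technical wrinkle is the monotonization step: because the bi-Lipschitz conditions only hold up to the $\ep_n$-slack, the raw sequence $(b_k)$ need not be non-increasing, so one cannot simply set $R_{ij} := g(2\pi d_{\mathrm{tor}}(i,j)/n_0)$ and must verify that the envelope step loses at most $\ep_n$ per entry using the reverse direction of \eqref{cond:lipschLower}. Everything else reduces to bookkeeping with the Lipschitz property and the particular choice of $\sigma$ matching the eigenvector coordinates.
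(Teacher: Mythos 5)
Your proof is correct and follows essentially the same route as the paper's: approximate ${\bf x}^*_S$ by its closest element of $\bPi_{n_0}$, transfer this to $F_{SS}$ via the Lipschitz bound \eqref{cond:lipsch}, and monotonize the grid values of $g$ at a cost of $\ep_n$ per entry (justified by \eqref{cond:lipschLower}) to obtain a genuine circulant circular R-matrix. Your two refinements — the explicit running-minimum envelope and the index shift $\sigma(i)\equiv\tau(i)+1$ making $x^{**}_i=y_i$ so that $Q=\mathrm{Id}$ in \eqref{eq:approx} — are only presentational; the paper keeps an arbitrary ordering permutation and absorbs the residual rotation or reflection into the orthogonal transformation $Q$.
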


Denote $\lambda_0 \geq \ldots \geq \lambda_{n_0-1}$ the eigenvalues of $R$. Lemmas  \ref{lem:spectrumformula} and \ref{lem:strictEigenvaluesOptim}  ensure that 
\begin{equation}\label{order}
    \lambda_0 = \alpha_0 \geq \lambda_1 = \alpha_1 \geq \lambda_2 = \alpha_1 \geq \lambda_j \enspace ,
\end{equation} where $\lambda_j \in \{\alpha_2,\ldots,\alpha_{\lfloor n_0/2 \rfloor}\}$ for all $j=3,\ldots,n_0-1$.

Lemma \ref{cl:approxRmatrix} ensures that there exists a constant $C''_{lLea}$  depending only on $c_l, c_L, c_e, c_a$ such that $\|F_{SS}-R_{\si}\|_2 \leq C''_{lLea} \sqrt{n \log(n)}$. Since  $R_{\si}$ has the same eigenvalues as $R$, it follows from   Weyl's inequality (see e.g. \cite[page 45]{tao2012topics}) that 
\begin{equation}\label{ineq:diff:2spec}
|\lambda_i^{*(S)} - \lambda_i| \leq \| F_{SS}-R_{\si}\|_{op} \leq \| F_{SS}-R_{\si}\|_2 \leq C''_{lLea} \sqrt{n \log(n)} \ , 
\end{equation}
for all $i=0,\ldots,n_0-1$.  

If the constant $C_{lLea}$ in \eqref{bound:spec} is chosen  as $4C''_{lLea}$ where $C''_{lLea}$ is introduced in \eqref{ineq:diff:2spec}, it follows that 
\begin{eqnarray}\nonumber
   \lambda_0- \lambda_1 &\geq& (\lambda_0^{*(S)}- \lambda_1^{*(S)})- (\lambda_0^{*(S)} - \lambda_0)- ( \lambda_1 -\lambda_1^{*(S)} )\\
   &\geq& (C_{lLea}-2C_{lLea}'') \sqrt{n \log(n)}\\ &\geq& \frac{C_{lLea}}{2} \sqrt{n \log(n)}\enspace , \nonumber
\end{eqnarray}
and similarly,
\begin{equation*}
    \lambda_2- \lambda_3 \geq (\lambda_2^{*(S)}- \lambda_3^{*(S)})- (\lambda_2^{*(S)} - \lambda_2)- ( \lambda_3 -\lambda_3^{*(S)} ) \geq \frac{C_{lLea}}{2}\sqrt{n \log(n)}\enspace .
\end{equation*}

Since the eigenvectors $(\sqrt{2/n_0}){\bf u}^{(1)}$ and $(\sqrt{2/n_0}){\bf v}^{(1)}$ of $R$ are orthonormal (see Lemma \ref{cl:orthonormal:vector} below), the vectors $(\sqrt{2/n_0}){\bf u}^{(1)}_{\si}$ and $(\sqrt{2/n_0}){\bf v}^{(1)}_{\si}$ are orthonormal eigenvectors of $R_{\si}$, with the same eigenvalue $\lambda_1=\lambda_2 =\alpha_1$.

\begin{lem}\label{cl:orthonormal:vector}The vectors $(\sqrt{2/n_0}){\bf u}^{(1)}$ and $(\sqrt{2/n_0}){\bf v}^{(1)}$ are orthonormal.
\end{lem}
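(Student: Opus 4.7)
My plan is to prove this directly by computing the three required inner products ($\|{\bf u}^{(1)}\|_2^2$, $\|{\bf v}^{(1)}\|_2^2$, and $\langle {\bf u}^{(1)}, {\bf v}^{(1)}\rangle$) using elementary identities on sums of roots of unity. The scaling factor $\sqrt{2/n_0}$ already suggests the target values $\|{\bf u}^{(1)}\|_2^2 = \|{\bf v}^{(1)}\|_2^2 = n_0/2$ and $\langle {\bf u}^{(1)}, {\bf v}^{(1)}\rangle = 0$, so it is enough to check these.

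The key tool is the identity $\sum_{k=0}^{m-1} e^{\iota 2\pi k/m} = 0$ for any integer $m \geq 2$, obtained from the geometric series formula. First I would set $\omega = e^{\iota 2\pi/n_0}$ and observe that, since $n_0 \geq 3$ (the $n_0=1$ case is trivial and the cases $n_0=2$ do not arise because the lemma is only invoked with $n_0 = 2p+1$ odd, following Section~\ref{appendix:spectral}), $\omega^2$ is also a primitive $n_0$-th root of unity when $n_0$ is odd (and a primitive $p$-th root of unity when $n_0 = 2p$ with $p \geq 2$). In both cases, $\sum_{k=0}^{n_0-1} \omega^{2k} = 0$, so taking the real part yields $\sum_{k=0}^{n_0-1}\cos(4\pi k/n_0) = 0$ and the imaginary part yields $\sum_{k=0}^{n_0-1} \sin(4\pi k/n_0) = 0$.

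Armed with these two identities, the three computations are immediate. Using $\cos^2 \theta = (1+\cos 2\theta)/2$, I would write
\begin{equation*}
\|{\bf u}^{(1)}\|_2^2 = \sum_{k=0}^{n_0-1} \cos^2\!\left(\frac{2\pi k}{n_0}\right) = \frac{n_0}{2} + \frac{1}{2}\sum_{k=0}^{n_0-1} \cos\!\left(\frac{4\pi k}{n_0}\right) = \frac{n_0}{2},
\end{equation*}
and symmetrically $\|{\bf v}^{(1)}\|_2^2 = n_0/2 - \tfrac{1}{2}\sum_k \cos(4\pi k/n_0) = n_0/2$ using $\sin^2 \theta = (1 - \cos 2\theta)/2$. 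For orthogonality, $\cos\theta \sin\theta = \tfrac{1}{2}\sin(2\theta)$ gives
\begin{equation*}
\langle {\bf u}^{(1)}, {\bf v}^{(1)}\rangle = \sum_{k=0}^{n_0-1} \cos\!\left(\frac{2\pi k}{n_0}\right)\sin\!\left(\frac{2\pi k}{n_0}\right) = \frac{1}{2}\sum_{k=0}^{n_0-1}\sin\!\left(\frac{4\pi k}{n_0}\right) = 0.
\end{equation*}
Multiplying through by $2/n_0$ yields the orthonormality claim.

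There is essentially no obstacle here: this is a routine trigonometric identity that the authors likely singled out as a lemma only because it is invoked twice (once to bound $\|N^{\dagger}\|_{op}$ in the proof of Proposition~\ref{l1bound} and once in the preceding paragraph establishing that $(\sqrt{2/n_0}){\bf u}^{(1)}$ and $(\sqrt{2/n_0}){\bf v}^{(1)}$ can serve as the orthonormal eigenvectors fed to the Davis--Kahan argument). The only mild care needed is to handle the even case $n_0 = 2p$ uniformly, which is exactly why one verifies that $\omega^2$ remains a non-trivial $n_0$-th root of unity in both parities.
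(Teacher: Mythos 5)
Your proof is correct and follows essentially the same route as the paper: the vanishing sum $\sum_{k=0}^{n_0-1}e^{\iota 4\pi k/n_0}=0$ combined with the double-angle identities, the only cosmetic difference being that the paper obtains $\|{\bf v}^{(1)}\|_2^2=n_0/2$ from $\|{\bf u}^{(1)}\|_2^2+\|{\bf v}^{(1)}\|_2^2=n_0$ (unit-norm rows) rather than directly from $\sin^2\theta=(1-\cos 2\theta)/2$. Your extra care about $\omega^2$ being a nontrivial root of unity (excluding $n_0=2$) is a harmless refinement of an assumption the paper leaves implicit.
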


Next, we state a variant of  Davis-Kahan perturbation bound \cite[see Theorem 2]{yu2015useful}.

\begin{lem}[Davis-Kahan]\label{DavisKahan}Let $M$, $\hat{M}\in \mathbb{R}^{n_0\times n_0}$ be two symmetric matrices, with eigenvalues $\lambda_0\geq \ldots \geq \lambda_{n_0-1}$ and  $\hat{\lambda}_{0} \geq \ldots \geq \hat{\lambda}_{n_0-1}$ respectively. Fix $0\leq r\leq s \leq n_0-1$ and assume that $(\lambda_{r-1}-\lambda_r) \wedge (\lambda_s-\lambda_{s+1})> 0,$ where $\lambda_{-1}= \infty$ and $\lambda_{n}=-\infty.$ Let $d=s-r+1,$ and let $\textup{{\bf V}}=({\bf v}_r,{\bf v}_{r+1},\ldots,{\bf v}_s)\in \mathbb{R}^{n_0 \times d}$ and $\hat{\textup{{\bf V}}}=(\hat{\bf v}_r,\hat{\bf v}_{r+1},\ldots,\hat{\bf v}_s)\in \mathbb{R}^{n_0 \times d}$ have orthonormal columns satisfying $M{\bf v}_j=\lambda_j{\bf v}_j$ and $\hat{M}\hat{\bf v}_j= \hat{\lambda}_j \hat{\bf v}_j$ for $j=r,r+1,\ldots,s$. Then, there exists an orthogonal matrix $Q\in \mathbb{R}^{d\times d}$ such that $$\|\hat{\textup{{\bf V}}}Q-\textup{{\bf V}}\|_2 \leq \sqrt{8d} \frac{\|\hat{M}-M\|_{op}}{(\lambda_{r-1}-\lambda_r) \wedge (\lambda_{s}-\lambda_{s+1})}\enspace .$$
\end{lem}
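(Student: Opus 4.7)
The plan is to reduce to the classical Davis--Kahan $\sin\Theta$ theorem and then convert a subspace-distance bound into a bound on the closest orthogonal alignment via a polar decomposition. Concretely, let $P = \textbf{V}\textbf{V}^T$ and $\hat P = \hat{\textbf{V}}\hat{\textbf{V}}^T$ be the orthogonal projectors onto the two $d$-dimensional eigenspaces, and let $\Theta$ denote the diagonal matrix of principal angles between them (i.e. $\cos\theta_i$ are the singular values of $\hat{\textbf{V}}^T \textbf{V}$). The first step is the operator-norm version of Davis--Kahan: since the eigenvalues of $M$ indexed by $\{r,\ldots,s\}$ are separated from all other eigenvalues of $M$ by at least $\delta := (\lambda_{r-1}-\lambda_r)\wedge(\lambda_s-\lambda_{s+1})$, one has $\|\sin\Theta\|_{op} \leq \|\hat M - M\|_{op}/\delta$. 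The matrix $\sin\Theta$ has at most $d$ nonzero singular values, so $\|\sin\Theta\|_F \leq \sqrt{d}\,\|\sin\Theta\|_{op} \leq \sqrt{d}\,\|\hat M - M\|_{op}/\delta$.

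The second step chooses the orthogonal $Q$ optimally. Write the singular value decomposition $\hat{\textbf{V}}^T \textbf{V} = U \mathrm{diag}(\cos\theta_1,\ldots,\cos\theta_d) W^T$, and set $Q := U W^T \in \mathcal{O}(d)$. A direct expansion gives
\begin{equation*}
\|\hat{\textbf{V}} Q - \textbf{V}\|_2^2 = 2d - 2\,\mathrm{tr}(Q^T \hat{\textbf{V}}^T \textbf{V}) = 2\sum_{i=1}^d (1 - \cos\theta_i) \leq 2\sum_{i=1}^d (1 - \cos^2\theta_i) = 2\|\sin\Theta\|_F^2,
\end{equation*}
where the inequality uses $\cos\theta_i \in [0,1]$. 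This is the standard Procrustes bound: aligning $\hat{\textbf{V}}$ and $\textbf{V}$ via the polar factor of $\hat{\textbf{V}}^T \textbf{V}$ costs at most $\sqrt{2}$ times the $\sin\Theta$ distance.

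Combining the two steps yields $\|\hat{\textbf{V}} Q - \textbf{V}\|_2 \leq \sqrt{2}\,\|\sin\Theta\|_F \leq \sqrt{2d}\cdot \|\hat M - M\|_{op}/\delta$, and inflating the constant slightly to absorb the two-sided bookkeeping in the $\sin\Theta$ theorem (one has to handle the eigenvalues above index $r$ and below index $s$ separately, which contributes the additional factor of $2$) gives the stated $\sqrt{8d}$. The only genuinely delicate point is the first step: the classical Davis--Kahan theorem is usually stated for a single eigengap (either at the top or at the bottom), so one must invoke the two-sided version (e.g.\ Wedin, or Yu--Wang--Samworth 2015) to handle the eigenvalues strictly between $\lambda_{r}$ and $\lambda_s$ inside the selected block. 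Everything else---the Procrustes identity and the passage between $\sin\Theta$ norms---is purely linear-algebraic and numerical-constant bookkeeping.
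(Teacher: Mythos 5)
You should know that the paper does not actually prove this lemma: it is imported verbatim (keeping only the operator-norm numerator) from Theorem 2 of Yu, Wang and Samworth (2015), cited right above the statement. Your sketch is essentially a reconstruction of the proof of that cited theorem — classical Davis--Kahan $\sin\Theta$ bound plus a Procrustes alignment — so the route is the same as the one the paper leans on rather than a new one. The second step is correct as written: with $Q=UW^T$ from the SVD $\hat{\mathbf{V}}^T\mathbf{V}=U\,\mathrm{diag}(\cos\theta_i)\,W^T$ one gets $\|\hat{\mathbf{V}}Q-\mathbf{V}\|_2^2=2\sum_i(1-\cos\theta_i)\le 2\|\sin\Theta\|_2^2$, and $\|\sin\Theta\|_2\le\sqrt{d}\,\|\sin\Theta\|_{op}$ since there are at most $d$ principal angles.

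The inaccuracy is in your first step and in where you locate the factor $2$. The inequality $\|\sin\Theta\|_{op}\le\|\hat M-M\|_{op}/\delta$, with $\delta$ measured on the spectrum of $M$ alone, is false in general: take $M=\mathrm{diag}(\delta,0)$ and $E=\mathrm{diag}(-\delta/2-\eta,\,\delta/2+\eta)$ with small $\eta>0$, so that the top eigenvector of $\hat M$ is $e_2$ while that of $M$ is $e_1$; then $\|\sin\Theta\|_{op}=1$ but $\|E\|_{op}/\delta\approx 1/2$. The classical $\sin\Theta$ theorem requires a gap between the eigenvalues of $M$ inside the block and those of $\hat M$ outside it; the whole point of the Yu--Wang--Samworth variant is to replace that mixed condition by a gap on $M$ only, via Weyl's inequality when $\delta\ge 2\|\hat M-M\|_{op}$ and the trivial bound $\|\sin\Theta\|_2\le\sqrt{d}$ otherwise, and this is exactly where the factor $2$ (hence $\sqrt{8d}$ instead of $\sqrt{2d}$) enters. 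It does not come from ``handling the eigenvalues above index $r$ and below index $s$ separately'': the two-sided block is already handled by the interval form of the classical theorem at no cost in the constant. Since you do explicitly defer the delicate step to the two-sided variant and budget a factor $2$ into the constant, your final bound is the correct one; just state the first step with the factor $2$ (or with the effective gap $\delta-\|\hat M-M\|_{op}$ together with the trivial regime) instead of attributing it to two-sidedness.
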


 The assumptions of Lemma \ref{DavisKahan} are therefore fulfilled for the orthonormal eigenvectors $(\sqrt{2/n_0}){\bf u}^{(1)}_{\si}$ and $(\sqrt{2/n_0}){\bf v}^{(1)}_{\si}$ and the positive spectral gaps $(\lambda_0 -\lambda_1) \wedge (\lambda_2-\lambda_3) > 0.$ Hence, for $\hxsp_S$ and ${\bf x}^{**}_S = ({\bf u}^{(1)}_{\si},{\bf v}^{(1)}_{\si})^T$ in $\mathbb{R}^{2 \times n}$, Lemma \ref{DavisKahan} entails 
\begin{equation*}
  \sqrt{ \frac{2}{n_0}} \|Q\hxsp_S - {\bf x}^{**}_S \|_2 \lesssim  \frac{\|A_{S,S}-R_{\si}\|_{op}}{(\lambda_0 - \lambda_1) \wedge (\lambda_2 - \lambda_{3})}
\end{equation*} for some $Q\in \Ocal.$

It remains to control $\|A_{S,S}-R_{\si}\|_{op}$ and the spectral gap. Since $A_{ii}=0$ for all $i$ (by convention), we introduce the matrix $F_{SS}^{(0)}$ such that $F_{ii}^{(0)}=0$ for all $i$, and $F_{ij}^{(0)}=F_{ij}$ for all $i \neq j$.
\begin{equation}
\label{eq:2_dec} 
\|A_{SS}-R_{\si}\|_{op} \leq \|A_{SS}-F_{SS}^{(0)}\|_{op} + \|F_{SS}^{(0)}-F_{SS}\|_{op} + \|F_{SS}-R_{\si}\|_2\enspace , 
\end{equation}
using the triangular inequality and 
the fact that the operator norm is smaller than the Frobenius norm. The second term $\|F_{SS}^{(0)}-F_{SS}\|_{op}$ is smaller than $1$ since $F_{SS}-F_{SS}^{(0)}$ is the diagonal matrix with diagonal coefficients $f(x^*_i,x_i^*) \leq 1$, the last inequality coming from $f\in \mathcal{BL}[c_l,c_L,c_e]$.
To control the operator norm of the noise matrix, we shall use the following result \cite[Corollary 4.4.8]{vershynin2018high}. See the same reference for the definition of sub-Gaussian norms $\|.\|_{\psi_2}$.

\begin{lem}[norm of symmetric matrices with sub-gaussian entries]\label{thm:perturbationBound} Let $A$ be an $n_0 \times n_0$ symmetric random matrix whose entries $A_{ij}$ on and above the diagonal are independent mean-zero sub-gaussian random variables. Then, for any $t >0,$ we have$$\|A\|_{op} \leq C K (\sqrt{n}+t)$$ with probability at least $1-4e^{-t^2}.$ Here $K = \textup{max}_{i,j} \|A_{i,j}\|_{\psi_2}.$
\end{lem}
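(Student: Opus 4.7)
This is a classical random matrix concentration bound (it coincides with Corollary 4.4.8 in Vershynin), so my approach would be the standard $\epsilon$-net argument. Since $A$ is symmetric, $\|A\|_{op}=\sup_{x\in S^{n_0-1}}|x^{T}A x|$, where $S^{n_0-1}$ denotes the Euclidean unit sphere of $\R^{n_0}$. The first step is to replace this supremum over the sphere by a supremum over a finite $\epsilon$-net $\Ncal_{\epsilon}\subset S^{n_0-1}$. A volumetric argument bounds $|\Ncal_{\epsilon}|\leq (1+2/\epsilon)^{n_0}$, and with $\epsilon=1/4$ a short perturbation argument (writing any unit $y$ as $x+(y-x)$ with $x\in \Ncal_{1/4}$ closest to $y$) yields $\|A\|_{op}\leq 2\max_{x\in \Ncal_{1/4}}|x^{T}A x|$.

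The second step is to obtain a sub-Gaussian tail for the quadratic form $x^{T}A x$ at a fixed $x\in S^{n_0-1}$. I would decompose
\[
x^{T}A x \;=\; \sum_{i} A_{ii}\, x_i^2 \;+\; 2\sum_{i<j} A_{ij}\, x_i x_j,
\]
which is a sum of independent, mean-zero sub-Gaussian random variables whose individual sub-Gaussian norms are bounded by $K x_i^2$ and $2K|x_i x_j|$ respectively. Since
\[
\sum_{i} x_i^4 + \sum_{i<j} 4 x_i^2 x_j^2 \;=\; 2\pa{\sum_i x_i^2}^2 - \sum_i x_i^4 \;\leq\; 2,
\]
the overall sub-Gaussian parameter of $x^{T}A x$ is of order $K$, giving $\P\pa{|x^{T}A x|>s}\leq 2\exp(-c s^2/K^2)$ for some numerical $c>0$.

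Combining the two steps via a union bound over $\Ncal_{1/4}$ gives
\[
\P\pa{\|A\|_{op}>2s}\;\leq\; 2\cdot 9^{n_0}\exp\pa{-c s^2/K^2}.
\]
Choosing $s=C'K(\sqrt{n_0}+t)$ with $C'$ large enough that $c(C')^2\geq 2\log 9 + 1$, the right-hand side is bounded by $4\exp(-t^2)$, which yields the claimed bound with $C=2C'$. The main obstacle is the calibration between the net and the concentration: one needs the Gaussian-like tail of the scalar quadratic form to be strong enough to absorb the $9^{n_0}$ term from the cardinality of the $\epsilon$-net, which is precisely what forces the $\sqrt{n_0}$ scaling in the final bound. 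A Hanson--Wright-style analysis would yield slightly sharper constants, but the bound above (linear in $K$ and $\sqrt{n_0}+t$) is sufficient for the statement.
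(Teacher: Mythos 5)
Your proof is correct. Note that the paper does not actually prove this lemma: it is quoted verbatim from Vershynin (Corollary 4.4.8 of \emph{High-Dimensional Probability}) and used as a black box, so there is no internal proof to compare against. Your argument is a valid self-contained derivation, and it differs slightly from the route taken in the cited source: Vershynin obtains the symmetric case by splitting $A$ into its upper- and lower-triangular parts, applying the rectangular-matrix bound (proved with a two-net argument on $y^{T}Ax$) to each part, and using the triangle inequality — which is where his factor $4=2+2$ in the failure probability comes from — whereas you treat the symmetric quadratic form directly with a single $1/4$-net, the identity $\sum_i x_i^4+4\sum_{i<j}x_i^2x_j^2=2\big(\sum_i x_i^2\big)^2-\sum_i x_i^4\le 2$ (which is correct), and general Hoeffding for independent sub-Gaussian summands. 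Your version is arguably more direct for the symmetric case, avoids invoking the rectangular result, and even yields the slightly stronger $\sqrt{n_0}$ in place of $\sqrt{n}=2\sqrt{n_0}$ appearing in the statement; the calibration $s=C'K(\sqrt{n_0}+t)$ with $(\sqrt{n_0}+t)^2\ge n_0+t^2$ indeed absorbs the $9^{n_0}$ cardinality term and gives the claimed $1-4e^{-t^2}$ probability.
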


Applying the above lemma with $t = C \sqrt{\log(n)}$ (for a large enough numerical constant $C$) to the difference $A_{SS}-F_{SS}^{(0)}$, we obtain $\|A_{SS}-F_{SS}^{(0)}\|_{op} \lesssim  \sqrt{n}$ with probability higher than $1-1/n^2$.

Together with Lemma \ref{cl:approxRmatrix} and the bound \eqref{eq:2_dec}, we deduce that  $\|A_{SS}-R_{\si}\|_{op} \leq C_{lLea}  \sqrt{n \log(n)}$, so that
\begin{equation*}
  \|Q\hxsp_S - {\bf x}^{**}_S \|_2\leq C_{lLea}  \frac{n \sqrt{\log(n)}}{(\lambda_0 - \lambda_1) \wedge (\lambda_2 - \lambda_{3})} \enspace .
\end{equation*} Then, we deduce from the Cauchy-Schwarz inequality that
\begin{equation*}
    \underset{Q' \in \mathcal{O}}{\textup{min}}\|\hxsp_S - Q'{\bf x}^{**}_S \|_{1}\leq \|Q\hxsp_S - {\bf x}^{**}_S \|_{1} \leq C_{lLea}  \frac{n \sqrt{n\log(n)}}{(\lambda_0 - \lambda_1) \wedge (\lambda_2 - \lambda_{3})} \enspace .
\end{equation*}
The bounds \eqref{bound:spec} and \eqref{ineq:diff:2spec} for $C_{lLea}=4C''_{lLea}$ allow us to replace the above spectral gaps by $(\lambda_0^{*(S)}- \lambda_1^{*(S)})\wedge (\lambda_2^{*(S)}- \lambda_3^{*(S)})$.
Finally, by \eqref{eq:approx} and the equivalence between the distance $d$ in $\Ccal$ and the $\ell^1$-norm in $\mathbb{R}^2$, we have $\underset{Q \in \mathcal{O}}{\textup{min}}\|{\bf x}^{*}_{S} - Q{\bf x}^{**}_S\|_{1}\leq C_a \sqrt{n\log(n)}$. Since $\lambda_0\leq n$ (all the entries of $F_{SS}$ belong to $[0,1]$), we then deduce from the triangular inequality that
\[
    \underset{Q \in \mathcal{O}}{\textup{min}}\|\hxsp_S - Q{\bf x}^{*}_{S} \|_{1}  \leq C'_{lLea} \frac{n \sqrt{n \log(n)}}{(\lambda^{*(S)}_0 - \lambda^{*(S)}_1) \wedge (\lambda^{*(S)}_2 - \lambda^{*(S)}_{3})}\enspace .
\]
Proposition \ref{conj:graphgeo:l1:new} is proved.  \hfill $\square$

\subsubsection{Proofs of technical lemmas}

\begin{proof}[Proof of Lemma \ref{cl:approxRmatrix}] In this proof, we replace the notation $F_{SS}$ by $F_{({\bf x}^*_{S},f)}$ for clarity.
 Since the vector ${\bf x}^{*}_{S}$ satisfies \eqref{GraphGeo:assump:latentPosition:S}, there exists ${\bf x}_S\in \bPi_{n_0}$ such that the following inequality holds
\begin{equation}\label{eq:upper_l_infini}
d_{\infty}({\bf x}^{*}_{S},{\bf x}_S)\leq C_a \sqrt{\log(n)/n}\enspace .
\end{equation}
Combining this with the bi-Lipschitz condition \eqref{cond:lipsch}, we get 
$\max_{ij \in [n_0]}|f(x_i^*,x_j^*) - f(x_i,x_j) | \leq C_{Lea} \sqrt{\log(n)/n}$, that is, 
\begin{equation}\label{ineq:proof:cl:spec:xstarstar}
    \|F_{({\bf x}^{*}_{S},f)} - F_{({\bf x}_S,f)} \|_{\infty} \leq C_{Lea} \sqrt{\frac{\log(n)}{n}} \enspace,
\end{equation}
for the matrices $F_{({\bf x}^*_{S},f)} := [f(x_i^*,x_j^*)]_{i,j\in[n_0]}$ and  $F_{({\bf x}_S,f)} := [f(x_i,x_j)]_{i,j\in[n_0]}$.

Recall that $S=\{1,\ldots,n_0\}$ for the ease of exposition. Let $\tau$ be some permutation that orders  ${\bf x}_S=(x_1,\ldots,x_{n_0})$ on the unit sphere, that is, such that $x_{\tau(1)},\ldots ,x_{\tau(n_0)}$ is ordered. Then, one can observe that the matrix $F_{({\bf x}_S,f), \tau}$ is symmetric circulant since $f$ is a symmetric function which satisfies the geometric condition \eqref{def:geometricSetting} with respect to the geodesic distance $d$ on the unit sphere $\Ccal$.

The matrix $F_{({\bf x}_S,f), \tau}$ is therefore defined by a single vector ${\bf a}$ of size $n_0$ as in Definition~\ref{defi:circulan} of circulant matrices. This vector satisfies $a_s= g(2\pi s/n_0)$ for $s =0,\ldots ,\lfloor n_0/2\rfloor$, where we recall that $g(d(x,y)) =f(x,y)$ in the geometric setting.  From the Lipschitz condition \eqref{cond:lipschLower}, we deduce that ${\bf a}$ satisfies some kind of weak non-increasing condition, that is $a_t \geq a_s \geq 0$ for all $0 \leq t < s \leq \lfloor n_0/2 \rfloor$ such that $s-t \geq C_{le} \sqrt{n\log(n)}$. 

From the bi-Lipschitz condition \eqref{cond:lipsch}, it is easy to see that ${\bf a}$ can be uniformly approximated by a non-increasing vector ${\bf a}'$ such that $\max_j|a_j -a'_j| \leq C_{lLe} \sqrt{\log(n)/n}$. 
Denoting $R$ the circulant circular R-matrix based on the vector ${\bf a}'$, this means that $\max_{ij}|R_{ij}-f(x_{\tau(i)},x_{\tau(j)})| \leq C_{lLe} \sqrt{\log(n)/n}\enspace .$ Hence,
$$\|F_{({\bf x}_S,f)}-R_{\tau^{-1}}\|_{\infty} = \|F_{({\bf x}_S,f), \tau}-R\|_{\infty} \leq C_{lLe} \sqrt{\frac{\log(n)}{n}}\enspace .$$ 
The first result of Lemma \ref{cl:approxRmatrix} is a consequence of  \eqref{ineq:proof:cl:spec:xstarstar} and the last display, setting $\si = \tau^{-1}.$

Next, by definition of $\tau$, the vector  $({\bf x}_{S})_{\sigma^{-1}}= ({\bf x}_S)_{\tau}$ is ordered, and  it therefore equals any other ordered vector in  $\bPi_{n_0}$  up to an orthogonal transformation. Hence, we have  $[({\bf x}_S)_{\sigma^{-1}}]_i= Q({u}^{(1)}_i, {v}^{(1)}_i)$ for some orthogonal transformation $Q$ in $\mathbb{R}^2$, by definition of   ${\bf u}^{(1)}$ and ${\bf v}^{(1)}$. Equivalently, we have $({\bf  x}_S)_i= Q( ({\bf u}_{\sigma}^{(1)})_i, ({\bf  v}_{\sigma}^{(1)})_i)$. Then, we conclude again from  \eqref{eq:upper_l_infini} that the vector ${\bf x}^{**}_S:=({\bf u}_{\sigma}^{(1)}, {\bf v}_{\sigma}^{(1)})$ satisfies the second result of the lemma.  
 
\end{proof}

\begin{proof}[Proof of Lemma \ref{cl:orthonormal:vector}]
Since $\sum_{k=0}^{n_0-1}e^{\iota 4\pi k/n_0}=0$, we have $\sum_{k=0}^{n_0-1} \cos(4\pi k/n_0)= 0$ and $\sum_{k=0}^{n_0-1} \sin(4\pi k/n_0)$ $= 0$. Then, combining with the trigonometric formulas, $\cos(2x)= 2\cos^2(x)-1$, and $\sin(2x)= 2\cos(x)\sin(x)$, we get
\begin{align}\label{cosSum}
    \|{\bf u}^{(1)}\|_2^2 &= \sum_{k=0}^{n_0-1} \cos^2(2\pi k/n_0)= \frac{n_0}{2} \enspace , \\
    \langle {\bf u}^{(1)}, {\bf v}^{(1)}\rangle  &= \sum_{k=0}^{n_0-1} \cos(2\pi k/n_0)\sin(2\pi k/n_0)=  0 \nonumber \enspace .
\end{align}
Besides, $\|{\bf u}^{(1)}\|_2^2 + \|{\bf v}^{(1)}\|_2^2 = n_0$ since $(u^{(1)}_i, v^{(1)}_i)$ for any $i\in [n_0]$ is a point of the unit sphere $\Ccal$. The combination with \eqref{cosSum} leads to $\|{\bf v}^{(1)}\|_2^2 = n_0/2$. 
 \end{proof}

\subsection{Proof of Lemma \ref{lem:fourier}}
Similarly to the proof of Lemma~\ref{cl:approxRmatrix}, we consider a vector ${\bf x}\in \bPi_n$ achieving $d_{\infty}({\bf x}^*,{\bf x})=d_{\infty}({\bf x}^*,\bPi_n)\leq C_a\sqrt{\log(n)/n}$. By the Bi-Lipschitz condition, the matrices $F_{{\bf x^*},f}$ and $F_{{\bf x},f}$ satisfy
\[
\|F_{{\bf x},f}-F_{{\bf x^*},f}\|_{\infty}\leq C_{Lea}\sqrt{\frac{\log(n)}{n}}\enspace . 
\] 
Since $f$ is geometric and ${\bf x}$ belongs to $\bPi_n$, it follows that, up to a permutation, $F_{{\bf x},f}$ is a symmetric circulant matrix associated to the vector $a_j = g(j\frac{2\pi}{n})$ for $j= 0, \ldots ,\lfloor \frac{n}{2} \rfloor$. It then follows from Lemma~\ref{lem:spectrumformula}, that the eigenvalues of $F_{{\bf x},f}$ are equal to the discrete Fourier transform $\Fcal_{m,n}(g)$ of $g$. 

The sequence $(a_j)$, for $j=0, \ldots, \lfloor \frac{n}{2} \rfloor$ is not non-increasing  because the function $g$ is not exactly decreasing with respect to the distance. Still, arguing as in the proof of Lemma~\ref{cl:approxRmatrix}, we can build an non-increasing sequence ${\bf a}'$ satisfying $\max_j|a_j -a'_j| \leq C_{lLe} \sqrt{\log(n)/n}$. The eigenvalues of the corresponding circulant and circular $R$-matrix $R$ are also given by Lemma~\ref{lem:spectrumformula}. We denote them $\alpha_0, \alpha_1,\ldots, \alpha_{\lfloor n/2\rfloor}$. It follows from the definition of the Fourier transform that 
\[
    |\Fcal_{m,n}(g) - \alpha_{m}| \leq C_{lLea} \sqrt{n \log(n)} \enspace, \qquad 0\leq m\leq  \lfloor \frac{n}{2} \rfloor\enspace .    
\]
Hence, the gaps in the Fourier transform $\Phi_{1} = \Fcal_{0,n}(g) - \Fcal_{1,n}(g)$ and 
$\Phi_{2} =  \min_{j =2,\ldots,\lfloor n/2\rfloor} \ \, \Fcal_{1,n}(g) - \Fcal_{j,n}(g)$ satisfy
\begin{equation*}
  \Big|\Phi_1 - (\alpha_0-\alpha_1)\Big| \vee \Big|\Phi_2 - \underset{m =  2,\ldots,\lfloor \frac{n}{2} \rfloor}{\textup{min}} \, (\alpha_1-\alpha_m)\Big| \leq C_{lLea} \sqrt{n \log(n)} \enspace.
\end{equation*}
To conclude, it remains to prove that 
\begin{align} \label{eq:obj2_lem_fourier}  
    \Big|(\alpha_0-\alpha_1)- \Delta_1 \Big|&\leq  C'_{lLea} \sqrt{n \log(n)}\ ;\\ \nonumber
    \Big|\underset{m =  2,\ldots,\lfloor \frac{n}{2} \rfloor}{\textup{min}} \, (\alpha_1-\alpha_m)- \Delta_2 \Big| &\leq C'_{lLea} \sqrt{n \log(n)} \enspace .    
\end{align}
By Lemma~\ref{lem:strictEigenvaluesOptim}, we have $\alpha_0 > \alpha_1 \geq \max_{j=2,\ldots,\lfloor \frac{n}{2} \rfloor }\alpha_j$. Hence, if we denote $\lambda_0\geq \lambda_1\ldots \geq \lambda_{n-1}$ the ordered eigenvalues of $R$ we have 
\begin{equation}\label{eq:transfo_eigen}
    \lambda_0 = \alpha_0 \geq \lambda_1 = \alpha_1 \geq \lambda_2 = \alpha_1 \geq \lambda_3 \enspace ,
\end{equation} where $\lambda_3=\max\{\alpha_2,\ldots,\alpha_{\lfloor n/2 \rfloor}\}$. By definition of ${\bf a}'$ and ${\bf x}$, there exists a permutation $\tau$ of $[n]$ such that 
\[
\|F_{{\bf x}^*,f}-R_{\tau}\|_{\infty}\leq C_{lLea}\sqrt{\frac{\log(n)}{n}}\enspace ,     
\]
which implies that $\|F_{{\bf x}^*,f}-R_{\tau}\|_{op}\leq \|F_{{\bf x}^*,f}-R_{\tau}\|_{2}\leq C_{lLea}\sqrt{n\log(n)}$. Denoting $\lambda_0^*\geq \lambda_1^*\geq \lambda_2^*\geq \ldots$ the ordered eigenvalues of $F_{{\bf x}^*,f}$, we deduce from Weyl's inequality that 
\[
\max_{j=0,\ldots, n-1}|\lambda^*_j-\lambda_j|\leq     C_{lLea}\sqrt{n \log(n)}\enspace .
\]
Together with~\eqref{eq:transfo_eigen}, we deduce that $\Delta_1= \lambda_0^*-\lambda_1^*$ and $\Delta_2=\lambda_2^*-\lambda_3^*$ satisfy~\eqref{eq:obj2_lem_fourier} which concludes the proof.

\subsection{Proof of Corollary \ref{coro:gapspectral} (spectral gap for affine functions)}
\label{proof:specGap}

We will show that, for $n$ large enough,  the gaps in the Fourier    $\Phi_{1} := \Fcal_{0,n}(g) - \Fcal_{1,n}(g)$ and 
$\Phi_{2}: =  \min_{j =2,\ldots,\lfloor n/2\rfloor} \ \, \Fcal_{1,n}(g) - \Fcal_{j,n}(g)$ are at least of the the order of $n$.  Corollary~\ref{coro:gapspectral} will then follow directly from Theorem~\ref{thm:graphgeo:new}  and  Lemma~\ref{lem:fourier}.

Recall that the $m$-th coefficient Fourier transform is defined as
$$\Fcal_{m,n}(g) =  \sum_{j=0}^{n-1} g\left(j \frac{2\pi}{n} \right) \cos\left(j \frac{2\pi m}{n}\right)\enspace . $$
For simplicity, we only consider the case where $n$ is odd -- the case of even $n$ being similar. Let $n=2p+1$ with $p\geq 2$. Using the fact that $ g(x) = 1-x/(2\pi)$, we get
\begin{equation}\label{spectre:fourier:new}
\Fcal_{m,n}(g) = 1  + 2 \sum_{j=1}^p (1- \frac{j}{n}) \cos\left(j \frac{2\pi m}{n}\right)\enspace , \qquad m=0,\ldots, p\enspace . 
\end{equation}
For convenience, $\Fcal_{m,n}(g)$ is denoted by $\alpha_m$ in the sequel. 
Let us show that, for $n$ large enough,  $(\alpha_0-\alpha_1) \gtrsim n$ and $\min_{m\geq 2}(\alpha_1-\alpha_m) \gtrsim n$. For $m=0$,  
\begin{equation}\label{eq:upper_alpha_0}
    \alpha_0  = 1 + 2p - \frac{2}{n} \sum_{j=1}^p  j  = 1+ 2p - \frac{p(p+1)}{n} = n - \frac{(n-1)(n+1)}{4n} = \frac{3n}{4} + \frac{1}{4n} \enspace .
\end{equation}
For $m\geq 1$, we can still work out explicitly $\alpha_m$. 
\begin{align}\label{alphaCoro}
 \alpha_m= - 2\sum_{j=0}^{p}\frac{j}{n} \cos\left(j \frac{2\pi m}{n}\right) &= -\frac{2}{n} Re\left[\sum_{j=1}^p je^{\iota j 2\pi m/n}\right]\\ \nonumber
 &= -\frac{2}{n} Re\left[-\iota f'\left(\frac{2\pi m}{n}\right)\right] 
 \enspace,
\end{align}
where the function $f$ is defined as $f(x):= \sum_{j=0}^p   e^{\iota j x} = \frac{e^{\iota (p+1)x }-1}{e^{\iota x}-1}$ for $x\in (0, \pi)$. We work out $f'(x)$:
\begin{align*}
- \iota f'(x) &= \frac{(p+1)e^{\iota (p+1)x }(e^{\iota x}-1) - e^{\iota x }(e^{\iota (p+1)x }-1)}{(e^{\iota x}-1)^2}\\
&=  -\frac{(p+1)e^{\iota px }(e^{\iota x}-1) - (e^{\iota (p+1)x }-1)}{4\sin^2(x/2)},\\
&= - \iota \frac{(p+1)e^{\iota (p+\frac{1}{2})x }\sin(\frac{x}{2}) - e^{\iota \frac{p+1}{2}x }\sin(\frac{p+1}{2}x)}{ 2\sin^2(x/2) },
\end{align*}
where the second line follows from  $(e^{\iota x}-1)^2 = -4\sin^2(x/2) e^{\iota x}$. Hence,
\begin{align*}
Re\left[-\iota f'(x)\right] = \frac{(p+1)\sin((p+\frac{1}{2})x)\sin(\frac{x}{2}) - \sin^2(\frac{p+1}{2}x)}{ 2\sin^2(x/2) }\enspace.
\end{align*}
Taking $x_m = \frac{2\pi m}{n}$, the first term of the numerator is equal to zero since   $\sin((p+\frac{1}{2})x_m) = \sin(\pi m) =0$. Then, combining the above with \eqref{alphaCoro} yields 
\begin{align*}
\alpha_m=  -\frac{2}{n} Re\left[-\iota f'\left(\frac{2\pi m}{n}\right)\right]  =  \frac{  \sin^2(\frac{p+1}{2}x_m)}{ n\sin^2(\tfrac{\pi m}{n}) }\enspace.
\end{align*}
Since $ \frac{p +1}{2} x_m = m \frac{\pi}{2} + m\frac{\pi}{2n}$, 
$$
\alpha_m = \left\{
    \begin{array}{ll}
        \frac{  \cos^2(\frac{m \pi}{2 n})}{ n\sin^2(\frac{\pi m}{n}) }  & \mbox{if} \, m \textup{ is odd,} \\
          & \mbox{} \\
        \frac{  \sin^2(\frac{m \pi}{2 n})}{ n\sin^2(\frac{\pi m}{n}) }  & \mbox{if} \, m \textup{ is even.}
    \end{array}
\right.
$$
Hence, the sequence of eigenvalues with odd indices is decreasing: $$\alpha_1 > \alpha_3 > \alpha_5 > \ldots > \ldots \enspace ,$$ 
since the fraction $\cos^2(m\pi /2n)/\sin^2(\pi m /n)$ decreases  with $m\in[p]$. In the (remaining) case of even indices, the numerator can be upper bounded as follows:   \, $\sin^2(\frac{m \pi}{2 n}) \leq \sin^2(\pi/4) \leq \textup{min}_{k\in [p]} \cos^2(\frac{k \pi}{2 n})$, which leads to  $\alpha_{2r} \leq \alpha_{2r-1}$ for all $r=1,\ldots,\lfloor p/2 \rfloor$. 

In other words, each eigenvalue of even index is upper bounded by the previous eigenvalue. In light of this, we only need to prove that, for $n$ large enough,
\[
(\alpha_0-\alpha_1) \wedge (\alpha_1-\alpha_2)\wedge (\alpha_1-\alpha_3)\gtrsim  n\enspace .   
\]
From~\eqref{eq:upper_alpha_0}, we deduce that $\alpha_0$ is equivalent to $3n/4$. Besides, we deduce from the explicit form of $\alpha_m$ in the general case that $\alpha_1$, $\alpha_2$, and $\alpha_3$ are respectively equivalent to $\frac{n}{\pi^2}$, $1/(4n)$, and $n/(4\pi^2)$. This completes the proof. 
\hfill $\square$

\subsection{Proof of Proposition \ref{conj:graphgeo:l1} }\label{proof:bidon:spectral:remaniement}

For the first inequality of the proposition, the proof is the same as for Proposition \ref{conj:graphgeo:l1:new}, after replacing $\hxsp_S,{\bf x}^*_{S}$, $A_{SS}$   respectively by $\hxsp,{\bf x}^*$, $A$. The second inequality of the proposition follows from Lemma \ref{lem:fourier}. \hfill $\square$

\section{Probabilistic inequalities}

We recall Hoeffding inequality for hypergeometric distributions.

\begin{lem}
For $N \geq 1$, $p\in [0,1]$ and $n \geq N$, let $X$ be a hypergeometric random variable with parameters $(N,p,n)$. Then, for all $t >0$,
\begin{equation}\label{Hoeffding}
    \P\left(|X- Np| \geq \sqrt{\frac{Nt}{2}} \right) \leq 2 e^{-t} \enspace .
\end{equation}
\end{lem}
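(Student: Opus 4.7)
The plan is to prove the standard one-sided bound $\mathbb{P}(X - Np \geq \sqrt{Nt/2}) \leq e^{-t}$, and then use symmetry (the variable $N-X$ is itself hypergeometric with parameters $(N,1-p,n)$ and mean $N(1-p)$) to obtain the matching lower tail, yielding the two-sided bound with the extra factor of $2$. This is the classical Hoeffding inequality for sampling without replacement, originally due to Hoeffding~(1963), and I would follow his scheme.

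The first step is the standard Chernoff argument: for $\lambda>0$,
\[
\mathbb{P}(X - Np \geq u) \leq e^{-\lambda u}\,\mathbb{E}\!\left[e^{\lambda(X - Np)}\right].
\]
The heart of the argument is the reduction to the binomial case: one shows that if $Y \sim \mathrm{Bin}(N,p)$, then $\mathbb{E}[\phi(X)] \leq \mathbb{E}[\phi(Y)]$ for every convex $\phi$. To see this, write $X = \sum_{i=1}^N Z_i$ where $(Z_1,\ldots,Z_N)$ is a uniform sample without replacement from a population of size $n$ containing $np$ successes. One can realize this by first choosing an i.i.d. sample with replacement and then applying a conditioning/averaging argument, or equivalently by checking that the hypergeometric distribution is a mixture of convex combinations of binomial laws. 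The exponential function $\phi_\lambda(s)=e^{\lambda(s-Np)}$ being convex, this comparison yields $\mathbb{E}[e^{\lambda(X-Np)}] \leq \mathbb{E}[e^{\lambda(Y-Np)}]$.

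Next, since $Y$ is the sum of $N$ independent Bernoulli$(p)$ variables taking values in $[0,1]$, Hoeffding's lemma for bounded random variables (sub-Gaussianity of centered bounded increments) gives
\[
\mathbb{E}\!\left[e^{\lambda(Y - Np)}\right] \leq e^{\lambda^2 N/8}.
\]
Combining the three bounds and optimizing in $\lambda$ by taking $\lambda = 4u/N$ produces $\mathbb{P}(X - Np \geq u) \leq e^{-2u^2/N}$. Setting $u = \sqrt{Nt/2}$ gives exactly $e^{-t}$, and the symmetric argument on the lower tail finishes the proof.

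The main obstacle, as always for this inequality, is establishing the MGF domination step $\mathbb{E}[\phi(X)] \leq \mathbb{E}[\phi(Y)]$ for convex $\phi$; everything else is routine Chernoff-type optimization. The cleanest way I would write it out is via Hoeffding's explicit representation, expressing $X$ as a conditional expectation of $Y$ with respect to a suitable $\sigma$-algebra and then invoking Jensen's inequality. Alternatively, one may invoke this fact as a black-box reference to Hoeffding's 1963 paper or to Theorem~4 in Serfling's 1974 article, since the result is so standard that the present paper itself merely states it under the heading ``We recall Hoeffding inequality for hypergeometric distributions.''
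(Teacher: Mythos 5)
Your proposal is correct: the Chernoff bound combined with Hoeffding's convex-domination of the hypergeometric by the binomial, Hoeffding's lemma for $[0,1]$-valued increments, optimization at $\lambda=4u/N$, and the symmetry argument for the lower tail via $N-X$ give exactly the stated two-sided bound with $u=\sqrt{Nt/2}$. The paper itself provides no proof---it merely recalls this classical inequality of Hoeffding (1963)---so your argument is precisely the standard proof the paper is implicitly invoking, and there is nothing to compare beyond noting that the only nontrivial ingredient is the convex-ordering step, which you correctly identify and may cite as a black box.
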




\bibliographystyle{imsart-number} 
\bibliography{biblio_seriation}       


\end{document}